\newtheorem{definition}{Definition}[section]
\newtheorem{theorem}{Theorem}[section]
\newtheorem{lemma}[theorem]{Lemma}
\newtheorem{proposition}{Proposition}[section]
\newtheorem{corollary}{Corollary}[theorem]
\theoremstyle{definition}
\newtheorem{remark}{Remark}[section]
\newtheorem{notation}{Notation}[section]
\newtheorem{task}{Task}[section]
\numberwithin{equation}{section}
\DeclareMathOperator{\Tr}{Tr} 
\newcommand{\Ft}{\widehat}
\newcommand{\Gp}{\mathbb{G}}
\newcommand{\DuGp}{\Ft{\mathbb{G}}}
\newcommand{\SU}{\mathrm{SU}}
\newcommand{\Op}{\mathbf{Op}}
\newcommand{\dist}{\mathrm{dist}}
\newcommand{\Df}{\mathbf{D}}
\newcommand{\hN}{\frac{1}{2}\mathbb{N}_0}
\newcommand{\Ind}{\{+,-,0\}}
\author{Chengyang Shao}
\title{On the Cauchy Problem of Spherical Capillary Water Waves}
\begin{document}
\maketitle
\begin{abstract}
The spherical capillary water waves equation describes the motion of an almost spherical water droplet under zero gravity governed by water-air interface tension. Using para-differential calculus on compact Lie groups and homogeneous spaces developed by the author, the system is symmetrized into a quasi-linear dispersive para-differential equation of order 1.5 defined on the 2-sphere. An immediate consequence of this symmetrization is a new proof of local well-posedness of the system under much weaker regularity assumption compared to previous results.
\end{abstract}
\begin{spacing}{1.2}

\section{Introduction}
This is the sequel of the author's previous paper \cite{Shao2023Toolbox}, being the second split from \cite{Shao2023}. In this paper, we reduce the spherical capillary water waves equation into a para-differential form, thus giving a new proof of the local well-posedness of the Cauchy problem. This reduction sets stage for the study of long-time behaviour of the system.

\subsection{Equation of Motion for a Water Drop}
We are interested in the initial value problem for the motion of a water droplet under zero gravity, which is the starting point of a program proposed by the author in \cite{Shao2022}. Let us first describe the physical scenario. We pose the following assumptions on the fluid motion we aim to describe:
\begin{itemize}
    \item (A1) The perfect, incompressible, irrotational fluid of constant density $\rho_0$ occupies a smooth, compact region in $\mathbb{R}^3$.
    \item (A2) There is no gravity or any other external force in presence.
    \item (A3) The air-fluid interface is governed by the Young-Laplace law, and the effect of air flow is neglected.
\end{itemize}
Hydrodynamics of water droplet governed by (A1)-(A3) is a long-standing interest for hydrophysicists and astronautical engineers. To mention a few, hydrophysicists Tsamopoulos-Brown \cite{TB1983}, Natarajan-Brown \cite{NB1986} and Lundgren-Mansour \cite{LM1988} all carried out initial ``weakly nonlinear analysis" towards the fluid motion satisfying assumptions (A1)-(A3). There are also numbers of visual materials on such experiments conducted in spacecrafts by astronauts\footnote{See for example \url{https://www.youtube.com/watch?v=H_qPWZbxFl8&t} or \url{https://www.youtube.com/watch?v=e6Faq1AmISI&t}.}. In this paper, we aim to set stage for mathematical study of this problem.

We assume that the boundary of the fluid region has the topological type of a smooth compact orientable surface $M$, and is described by a time-dependent embedding $\iota(t,\cdot):M\to\mathbb{R}^3$. We will denote a point on $M$ by $x$, the image of $M$ under $\iota(t,\cdot)$ by $M_t$, and the region enclosed by $M_t$ by $\Omega_t$. The unit outer normal will be denoted by $N(\iota)$. We also write $\bar\nabla$ for the flat connection on $\mathbb{R}^3$.

Adopting assumption (A3), we have the Young-Laplace equation:
$$
\sigma_0 H_\iota=p_i-p_e,
$$
where $H_\iota$ is the (scalar) mean curvature of the embedding, $\sigma_0$ is the surface tension coefficient (which is assumed to be a constant), and $p_i,p_e$ are respectively the inner and exterior air pressure at the boundary; they are scalar functions on the boundary and we assume that $p_e$ is a constant. Under assumptions (A1) and (A2), we obtain Bernoulli's equation, sometimes referred as the pressure balance condition, on the evolving surface:
\begin{equation}\label{BernEq}
\left.\frac{\partial\Phi}{\partial t}\right|_{M_t}+\frac{1}{2}|\bar\nabla\Phi|_{M_t}|^2-p_e=\frac{\sigma_0}{\rho_0}H_\iota,
\end{equation}
where $\Phi$ is the velocity potential of the velocity field of the air. Note that $\Phi$ is determined up to a function in $t$, so we shall leave the external (constant) pressure $p_e$ around for convenience reason. According to assumption (A1), the function $\Phi$ is a harmonic function within the region $\Omega_t$, so it is uniquely determined by its boundary value, and the velocity field within $\Omega_t$ is $\bar\nabla\Phi$. The kinetic equation on the free boundary $M_t$ is naturally obtained as
\begin{equation}\label{VelEq}
\frac{\partial\iota}{\partial t}\cdot N(\iota)
=\bar\nabla\Phi|_{M_t}\cdot N(\iota).
\end{equation}

We would like to discuss the conservation laws for (\ref{BernEq})-(\ref{VelEq}). The conservation of volume $\text{Vol}(\Omega_t)=\text{Vol}(\Omega_0)$ is a consequence of incompressibility. Since the flow is Eulerian without any external force, the center of mass must move at a uniform speed along a fixed direction, i.e.
\begin{equation}\label{CenterofMass}
\frac{1}{\mathrm{Vol}(\Omega_0)}\int_{\Omega_t}Pd\mathrm{Vol}(P)=V_0t+C_0,
\end{equation}
with Vol being the Lebesgue measure, $P$ marking points in $\mathbb{R}^3$, $V_0$ and $C_0$ being the velocity and starting position of center of mass respectively. Furthermore, the total momentum is conserved, and since the flow is a potential incompressible one, the conservation of total momentum is expressed as
\begin{equation}\label{ConsMomentum}
\int_{M_t}\rho_0\Phi N(\iota)d\mathrm{Area}(M_t)\equiv\rho_0\mathrm{Vol}(\Omega_0) V_0.
\end{equation}
Most importantly, as Zakharov pointed out in \cite{Zakharov1968}, (\ref{BernEq})-(\ref{VelEq}) is a Hamilton system, with Hamiltonian
\begin{equation}\label{Hamiltonian}
\sigma_0\mathrm{Area}(\iota)+\frac{1}{2}\int_{\Omega_t}\rho_0|\bar\nabla\Phi|^2d\mathrm{Vol}
=\sigma_0\mathrm{Area}(M_t)+\frac{1}{2}\int_{M_t}\rho_0\Phi|_{M_t}\left(\bar\nabla\Phi|_{M_t}\cdot N(\iota)\right) d\mathrm{Area},
\end{equation}
i.e. potential energy proportional to surface area plus kinetic energy of the fluid.

We explain why the scenario of oscillating almost spherical water drop is of particular interest. If the system is static, then the kinetic equation (\ref{VelEq}) implies that the outer normal derivative of the velocity potential $\Phi$ is zero, so $\Phi\equiv\text{const}$. The Bernoulli equation on boundary (\ref{BernEq}) then implies that the embedded surface $\iota(M)$ has constant mean curvature. Although the topology of $M$ is not prescribed, the famous rigidity theorem of Alexandrov asserts that $\iota(M)$ must be an Euclidean sphere (see for example \cite{MP2019}). Thus, Euclidean ball is the \emph{only} static configuration for the physical system, and oscillating almost spherical water drop is the \emph{only} possible perturbative oscillation near a static configuration. We emphasize that surface tension plays an important role: since there is no gravity in presence, it is the only constraining force preventing the fluid from spreading in the space. This is in contrast to the familiar gravity or gravity-capillary waves oscillating near the horizontal level.

\subsection{Spherical Capillary Water Waves Equation}
It is not hard to verify that the system (\ref{BernEq})-(\ref{VelEq}) is invariant if $\iota$ is composed with a diffeomorphism of $M$. We may thus regard it as a \emph{geometric flow}. If we are only interested in perturbation near a given configuration, we may reduce system (\ref{BernEq})-(\ref{VelEq}) to a non-degenerate dispersive differential system concerning two scalar functions defined on $M$, just as Beyer and G\"{u}nther did in \cite{BeGu1998}. In fact, during a short time of evolution, the interface can be represented as the graph of a function defined on the initial surface: if $\iota_0:M\to\mathbb{R}^3$ is a fixed embedding close to the initial embedding $\iota(0,x)$, we may assume that $\iota(t,x)=\iota_0(x)+\zeta(t,x)N_0(x)$, where $\zeta$ is a scalar ``height" function defined on $M_0$ and $N_0$ is the outer normal vector field of $M_0$. See Figure \ref{Height}.

With this observation, we shall transform the system (\ref{BernEq})-(\ref{VelEq}) into a non-local system of two real scalar functions $(\zeta,\phi)$ defined on $M$, where $\zeta$ is the ``height" function described as above, and $\phi(t,x)=\Phi(t,\iota(t,x))$ is the boundary value of the velocity potential, pulled back to the underlying manifold $M$.

\begin{figure}[h]\label{Height}
\centering
\includegraphics[width=0.4\textwidth,angle=0]{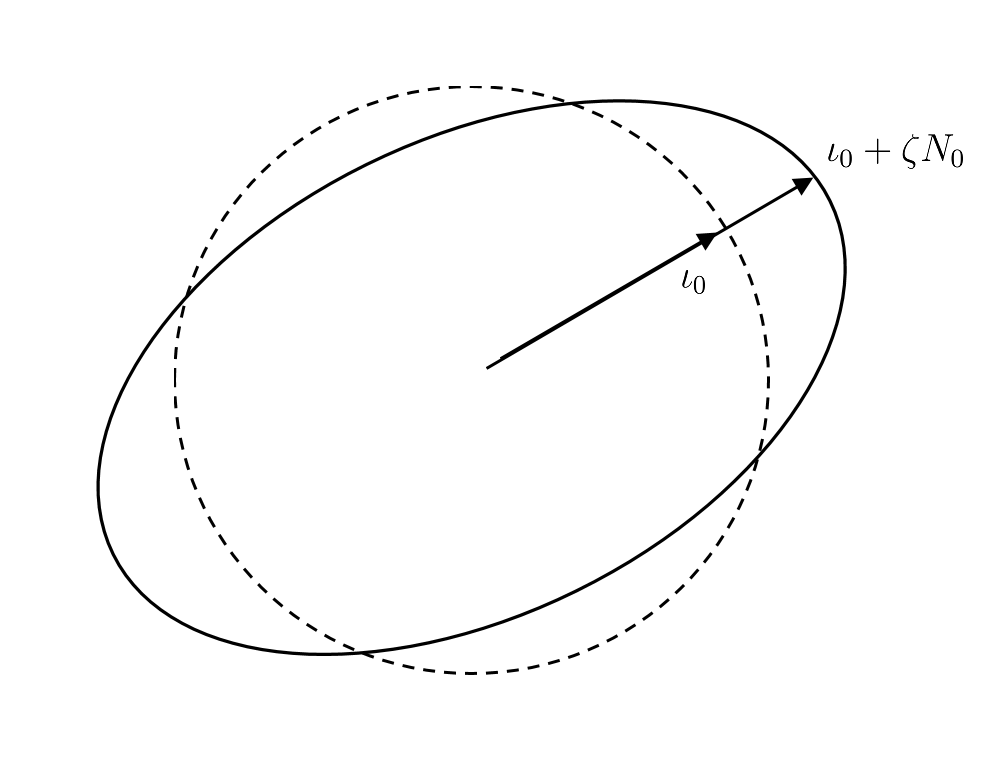}
\caption{The surface defined by a height function.}
\end{figure}

Set
$$
B_\zeta:\phi\to\bar\nabla\Phi|_{M_t}
$$
to be the operator mapping the pulled-back Dirichlet boundary value $\phi$ to the boundary value of the gradient of $\Phi$. Define the \emph{Dirichlet-Neumann operator} $D[\zeta]\phi$ corresponding to the region enclosed by the image of $\iota=\iota_0+\zeta N_0$ as the weighted outer normal derivative:
$$
D[\zeta]\phi:=\frac{\bar{\nabla}\Phi\cdot N(\iota)}{N_0\cdot N(\iota)}.
$$
Thus the kinetic equation (\ref{VelEq}) becomes
$$
\frac{\partial\zeta}{\partial t}=D[\zeta]\phi.
$$

We also need to calculate the restriction of $\partial_t\Phi$ on $M_t$ in terms of $\phi$ and $\iota$. By the chain rule,
$$
\begin{aligned}
\left.\frac{\partial\Phi}{\partial t}\right|_{M_t}
&=\frac{\partial\phi}{\partial t}-\bar\nabla\Phi|_{M_t}\cdot\frac{\partial\iota}{\partial t}\\
&=\frac{\partial\phi}{\partial t}-\left(\bar\nabla\Phi|_{M_t}\cdot N_0\right)\frac{\partial\zeta}{\partial t}\\
&=\frac{\partial\phi}{\partial t}-\left(\bar\nabla\Phi|_{M_t}\cdot N_0\right)\cdot D[\zeta]\phi.
\end{aligned}
$$
We thus arrive at the following nonlinear system:
\begin{equation}\label{EQ(M)}\tag{EQ(M)}
\left\{
\begin{aligned}
\frac{\partial\zeta}{\partial t}
&=D[\zeta]\phi,\\
\frac{\partial\phi}{\partial t}
&=\left(B_\zeta\phi\cdot N_0\right)\cdot D[\zeta]\phi-\frac{1}{2}|B_\zeta\phi|^2+\frac{\sigma_0}{\rho_0}H(\zeta)+p_e,
\end{aligned}
\right.
\end{equation}
where $H(\zeta)$ is the (scalar) mean curvature of the surface given by the height function $\zeta$.

For $M=\mathbb{S}^2$, the case that we shall discuss in detail, we name the system as \emph{spherical capillary water waves equation}. To simplify our discussion, we will be working under the center of mass frame, and require the mean of $\phi$ vanishes for all $t$. This could be easily accomplished by absorbing the mean into $\phi$ since the equation is invariant by a shift of $\phi$. The quadratic terms in the right-hand-side of (\ref{EQ(M)}) are also computed explicitly using the Riemann connection $\nabla_0$ on $\mathbb{S}^2$ corresponding to the standard spherical metric $g_0$. In a word, from now on, we will be focusing on the non-dimensional capillary spherical water waves equation
\begin{equation}\label{EQ}\tag{EQ}
\left\{
\begin{aligned}
\frac{\partial\zeta}{\partial t}
&=D[\zeta]\phi,\\
\frac{\partial\phi}{\partial t}
&=-\frac{|\nabla_0\phi|_{g_0}^2}{2(1+\zeta)^2}
+\frac{\left((1+\zeta)^2D[\zeta]\phi+\nabla_0\zeta\cdot\nabla_0\phi\right)^2}{2\big((1+\zeta)^2+|\nabla_0\zeta|_{g_0}^2\big)(1+\zeta)^2}
+\frac{\sigma_0}{\rho_0}H(\zeta)+p_e.
\end{aligned}
\right.
\end{equation}
By suitable spatial scaling, we may assume that $\sigma_0/\rho_0=1$, the total volume of the fluid is $4\pi/3$, and $p_e=-2$, so that the conservation of volume is expressed as
\begin{equation}\label{Vol}
\frac{1}{3}\int_{\mathbb{S}^2}(1+\zeta)^3d\mu_0\equiv\frac{4\pi}{3},
\end{equation}
where $\mu_0$ is the standard measure on $\mathbb{S}^2$. The inertial movement of center of mass (\ref{CenterofMass}) and conservation of total momentum (\ref{ConsMomentum}) under our center of mass frame are expressed respectively as
\begin{equation}\label{SphereCons.}
\int_{\mathbb{S}^2}(1+\zeta)^4N_0d\mu_0=0,
\quad
\int_{\mathbb{S}^2}\phi N(\iota)d\mu(\iota)=0,
\end{equation}
where $\mu(\iota)$ is the induced surface measure. Further, the Hamiltonian of the system is
\begin{equation}\label{SHamilton}
\mathbf{H}[\zeta,\phi]=\mathrm{Area}(\iota)+\frac{1}{2}\int_{\mathbb{S}^2}\frac{\phi\cdot D[\zeta]\phi}{N_0\cdot N(\iota)} d\mu(\iota),
\end{equation}
and for a solution $(\zeta,\phi)$ there holds $\mathbf{H}[\zeta,\phi]\equiv\text{Const.}$.

The system (\ref{EQ}) resembles the well-known Zakharov-Craig-Schanz-Sulem formulation of gravity-capillary water waves equation \cite{CSS1992}. Formal linearization of (\ref{EQ}) around the (unique) static solution $(\zeta,\phi)=(0,0)$ indicates that the system is a dispersive one of order 3/2. In fact, if we denote by $\mathcal{E}_n$ the space of degree $n$ spherical harmonics, then the Dirichlet-Neumann operator $D[0]$ acts as the multiplier $n$ on $\mathcal{E}_n$, and the linearization $H'(0)$ acts as the multiplier $-(n-2)(n+1)$ on $\mathcal{E}_n$. Consequently, with $\Pi^{(n)}$ being the projection to $\mathcal{E}_n$, we can introduce the diagonal unknown 
\begin{equation}\label{Formal1}
u=\Pi^{(0)}\zeta+\Pi^{(1)}\zeta
+\sum_{n\geq2}\sqrt{(n-1)(n+2)}\cdot\Pi^{(n)}\zeta+i\sum_{n\geq1}\sqrt{n}\cdot\Pi^{(n)}\phi,
\end{equation}
so that the linearization (\ref{EQ}) around the static solution is a linear dispersive equation
\begin{equation}
\frac{\partial u}{\partial t}+i\Lambda u=0,
\end{equation}
where the $3/2$-order elliptic operator $\Lambda$ is given by a multiplier
\begin{equation}\label{DispersiveRelation}
\Lambda=\sum_{n\geq2}\sqrt{n(n-1)(n+2)}\cdot \Pi^{(n)}=:\sum_{n\geq0}\Lambda(n)\Pi^{(n)}.
\end{equation}
The system (\ref{EQ}) for small amplitude oscillation can be formally re-written as
\begin{equation}\label{Formal4}
\frac{\partial u}{\partial t}+i\Lambda u=\mathfrak{N}(u),
\end{equation}
with $\mathfrak{N}(u)=O(u^{\otimes2})$ vanishing quadratically as $u\simeq0$. In \cite{Shao2022}, the author provided formal analysis regarding small amplitude solutions and pointed out obstructions in understanding the long time behaviour.

From a hydrophysical point of view, the linearized equation of (\ref{EQ}) and the dispersive relation (\ref{DispersiveRelation}) dates back to Lord Rayleigh \cite{Rayleigh1879} (see also the well-known textbook \cite{Lamb1932}, Section 275). It has been studied by several widely cited papers on hydrodynamics \cite{TB1983} \cite{NB1986} \cite{LM1988}, regarding the following topics: Poincar\'{e}-Lindstedt series of periodic solutions (assuming its existence, which is not guaranteed), possible ``chaotic" behaviour, and numerical simulation.

From a mathematical point of view, it is already known that the general free-boundary problem for Euler equation is locally well-posed, due to the work of Coutand-Shkoller \cite{CoSh2007} and Shatah-Zeng \cite{ShZe2008}. The curl equation ensures that if the flow is curl free at the beginning, then it remains so during the evolution. This justifies the local well-posedness of Cauchy problem of (\ref{EQ}), at least for sufficiently regular initial data. On the other hand, Beyer-G\"{u}nther \cite{BeGu1998} also showed that the Cauchy problem of (\ref{EQ}) is locally well-posed, without referring to general free-boundary problem. They transformed the problem into an ODE problem defined on graded Hilbert spaces. See also \cite{Lannes2005} and \cite{MingZhang2009} for proof of local well-posedness for water waves near horizontal level using Nash-Moser type theorems. The potential-theoretic approach of Wu \cite{Wu1997}-\cite{Wu1999} may also be transplanted to this case.

In summary, the Cauchy problem for (\ref{EQ}) is locally well-posed, at least for sufficiently regular initial data. But this is all we can assert for the motion of a water droplet under zero gravity. None of the mathematical results mentioned above goes beyond the time regime guaranteed by energy estimate. 

By a formal analysis conducted in \cite{Shao2022}, the author observed that the geometry of the sphere plays a fundamental role in the long-time behaviour of (\ref{EQ}), and leads to very different oscillatory behaviour compared to gravity or gravity-capillary water waves near horizontal level. To analyze such oscillations in detail, we need to take into account the spectral properties of the sphere. This calls for the development of new analytic tools that explicitly reflects the geometry of the underlying space.

\subsection{Main Result and Discussion}
The main result of this paper is a sequence of para-differential formulas based on the toolbox developed by the author in \cite{Shao2023Toolbox}. The toolbox is briefed in Section \ref{2}, collecting precise definition of symbols, para-differential operators and a symbolic calculus. The first main result of this paper is the para-linearization formula for the Dirichlet-Neumann operator $D[\zeta]\phi$:
\begin{theorem}\label{Thm1}
Fix $s>3$. Suppose $\zeta\in H^{s+0.5}$, $\phi\in H^s$. Let $\nabla_0$ be the standard connection on $\mathbb{S}^2$, and let $\nabla_{G_0}$ be the standard connection on $\SU(2)$. Introduce the quantities
$$
\mathfrak{b}=\left(1+\frac{|\nabla_{0}\zeta|^2}{(1+\zeta)^{2}}\right)^{-1}\left(
D[\zeta]\phi+\frac{\nabla_{0}\zeta\cdot\nabla_{0}\phi}{(1+\zeta)^{2}}\right),
\quad
\mathfrak{v}=\frac{\nabla_0\phi-\mathfrak{b}\nabla_0\zeta}{(1+\zeta)^2},
$$
so that $\mathfrak{b}$ is the velocity of the boundary along the radial direction, $\mathfrak{v}$ is the velocity of the boundary along the tangential direction of the unperturbed sphere. The lift of $D[\zeta]\phi$ to $\SU(2)$ is para-linearized as follows:
$$
\big(D[\zeta]\phi\big)^\sharp
=T_{\lambda}\big(\phi^\sharp-T_{\mathfrak{b}^\sharp}\zeta^\sharp\big)
-T_{\mathfrak{v}^\sharp}\cdot\nabla_{G_0}\zeta^\sharp
+f_1(\zeta,\phi).
$$
The definition of lifting to $\SU(2)$ is given by (\ref{Fsharp}). The symbol $\lambda$ is defined in  (\ref{DNlambda}), satisfying
$$
T_\lambda\simeq |\nabla_{G_0}|.
$$
The error term $f_1(\zeta,\phi)$ projects to a function on $\mathbb{S}^2$ with $H^{s+0.5}$ norm controlled by $\big(\|\zeta\|_{H^{s+0.5}}+\|\phi\|_{H^s}\big)^2$ when $(\zeta,\phi)\simeq0$.
\end{theorem}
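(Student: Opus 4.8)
The plan is to follow the by-now-standard para-linearization of the Dirichlet--Neumann operator for free-surface problems (in the spirit of Alazard--M\'etivier and Alazard--Burq--Zuily for water waves on $\mathbb{R}^d$), transported to the spherical setting through the calculus on $\SU(2)$ recalled in Section \ref{2}. Write a point of the droplet $\Omega_t$ in polar form $rx$ with $x\in\mathbb{S}^2$ and $0\le r\le 1+\zeta(x)$, so that the harmonic extension $\Phi$ of $\phi$ satisfies $\big(\partial_r^2+\tfrac2r\partial_r+\tfrac1{r^2}\Delta_{\mathbb{S}^2}\big)\Phi=0$. The diffeomorphism $\rho=r/(1+\zeta(x))$ flattens $\Omega_t$ onto the fixed unit ball and turns this into a uniformly elliptic equation $\mathcal{L}\Phi=0$ on $[0,1]\times\mathbb{S}^2$, with $\Phi|_{\rho=1}=\phi$, whose coefficients are universal rational expressions in $\zeta$, $(1+\zeta)^{-1}$ and $\nabla_0\zeta$. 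Interior elliptic regularity controls $\Phi$ on $\{\rho\le 1-\delta\}$ in terms of $\|\phi\|_{H^s}$ (in particular resolving the coordinate degeneracy at $\rho=0$), so it suffices to work in the collar $\rho\in[1-\delta,1]$; lifting everything to $\SU(2)$ by (\ref{Fsharp}) gives $\mathcal{L}^\sharp\Phi^\sharp=0$ with $\Phi^\sharp|_{\rho=1}=\phi^\sharp$.

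Next I would para-linearize the interior equation: by the Bony-type para-linearization and the product and composition rules of the toolbox, every coefficient becomes a para-multiplication operator plus a remainder that is one full derivative smoother and quadratically small as $(\zeta,\phi)\simeq0$; modulo such remainders this gives $\mathcal{L}^\sharp=\partial_\rho^2-T_{\beta(\rho)}\cdot\nabla_{G_0}\partial_\rho+T_{\gamma(\rho)}$ with $\beta,\gamma$ symbols of order $1$ and $2$ depending smoothly on $\rho$ and on $(\zeta,\nabla_0\zeta)$. The heart of the proof is the elliptic factorization: construct a symbol $a=a(\rho;g,\xi)$ of order $1$, elliptic with $\mathrm{Re}\,a$ bounded below, such that $\mathcal{L}^\sharp=(\partial_\rho+T_a-T_\beta\cdot\nabla_{G_0})(\partial_\rho-T_a)$ modulo a $\rho$-family of para-differential operators of order $\le 0$. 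This amounts to solving a Riccati equation for $a$ by the symbolic calculus, paying attention to uniformity in $\rho$ and to the fact that the coefficients lie only in $H^{s+0.5-k}$. Solving the backward para-differential transport equation $(\partial_\rho-T_a)\Psi=(\text{smoothing})$ from $\rho=1-\delta$ up to $\rho=1$, with Cauchy data supplied by the interior estimate, pins down $(\partial_\rho\Phi^\sharp)|_{\rho=1}=T_{a|_{\rho=1}}\phi^\sharp$ modulo a smooth quadratic remainder.

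It remains to recognize the claimed algebraic structure and to close the error estimate. Expressing the weighted normal derivative $D[\zeta]\phi$ in the flattened coordinates, it equals $(\partial_\rho\Phi^\sharp)|_{\rho=1}$ times a metric factor plus tangential-derivative corrections; the chain rule for $\rho=r/(1+\zeta)$ relates $\nabla_0\phi$, the physical tangential velocity $\mathfrak{v}$ and $(\partial_\rho\Phi)|_{\rho=1}$, and para-linearizing while identifying $(\partial_\rho\Phi)|_{\rho=1}$ with $\mathfrak{b}$ up to lower order, exactly the good unknown $\phi^\sharp-T_{\mathfrak{b}^\sharp}\zeta^\sharp$ and the transport term $T_{\mathfrak{v}^\sharp}\cdot\nabla_{G_0}\zeta^\sharp$ emerge. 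Computing the principal symbol of $T_{a|_{\rho=1}}$ yields the symbol $\lambda$ of (\ref{DNlambda}); at $\zeta=0$ it reduces to the symbol of $\sqrt{-\Delta_{\mathbb{S}^2}+\tfrac14}-\tfrac12$, the operator acting as multiplication by $n$ on degree-$n$ spherical harmonics, equivalently the first-order part of $|\nabla_{G_0}|$ on right-$U(1)$-invariant functions, whence $T_\lambda\simeq|\nabla_{G_0}|$. Finally all remainders accumulated above are collected into $f_1(\zeta,\phi)$: each is one derivative smoother than the term it corrects and quadratically small, and together with the elliptic bound $\|\Phi^\sharp\|\lesssim\|\phi\|_{H^s}$ they give $\|f_1\|_{H^{s+0.5}}\lesssim(\|\zeta\|_{H^{s+0.5}}+\|\phi\|_{H^s})^2$ near $(\zeta,\phi)=0$.

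I expect the main obstacle to be the elliptic factorization together with the rigorous transverse para-differential ODE on the curved homogeneous space: constructing $a(\rho;\,\cdot\,)$ with good bounds despite the limited regularity of the coefficients, obtaining estimates uniform in $\rho$, handling the apex $\rho=0$ by patching with interior elliptic regularity, and tracking exactly the right amount of smoothing (one and a half derivatives) so that the remainder $f_1$ lands in $H^{s+0.5}$ and not merely in $H^{s}$.
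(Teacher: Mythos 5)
Your overall scheme (flatten to a fixed domain, para-linearize the interior elliptic equation, factorize into two first-order para-differential operators via a Riccati-type symbol equation, apply a transverse maximal-regularity lemma, then read off the boundary value) is the same scheme as the paper's, and your coordinate choice (ball-flattening with interior elliptic regularity to dispose of the apex, versus the paper's collar coordinate $(x,y)\mapsto(1+\zeta(x)+y)N_0(x)$) is an acceptable variant. The genuine gap is the placement of Alinhac's good unknown. You para-linearize the equation for $\Phi^\sharp$ itself, assert that the remainders are ``one full derivative smoother and quadratically small,'' and conclude that the transverse ODE ``pins down $(\partial_\rho\Phi^\sharp)|_{\rho=1}=T_{a|_{\rho=1}}\phi^\sharp$ modulo a smooth quadratic remainder.'' This intermediate claim fails at the accuracy the theorem requires. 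Replacing the coefficients by para-multiplications leaves behind terms in which the \emph{unknown} sits in the low-frequency slot and the \emph{coefficient} carries the high frequencies, e.g.\ $T_{\partial_\rho\Phi^\sharp}\Delta_{G_0}\zeta^\sharp$ and $T_{\partial^2_\rho\Phi^\sharp}|\nabla_{G_0}\zeta^\sharp|^2$; with $\zeta\in H^{s+0.5}$ these lie only in $H^{s-1.5}$ and $H^{s-0.5}$, not one derivative better, so after the factorization and the one-derivative gain of the transverse regularity lemma the boundary error is at best of size $H^{s-0.5}$. Indeed the correct boundary statement is not $T_a\phi^\sharp$ but $T_a\big(\phi^\sharp-T_{\mathfrak{b}^\sharp}\zeta^\sharp\big)$ plus an $H^{s+0.5}$ error, and the discrepancy $T_aT_{\mathfrak{b}^\sharp}\zeta^\sharp$ is a quadratic term of order one acting on $\zeta^\sharp$, i.e.\ exactly of size $H^{s-0.5}$: it can neither be absorbed into $f_1$ nor be produced afterwards by para-linearizing the zeroth-order metric factors in the expression of $D[\zeta]\phi$, which only generate the transport term $-T_{\mathfrak{v}^\sharp}\cdot\nabla_{G_0}\zeta^\sharp$ and cancellations of tangential pieces.

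The fix — and this is what the paper does — is to introduce the good unknown \emph{in the interior, before} the factorization: one sets $W^\sharp=\Psi^\sharp-T_{1/\rho^\sharp}T_{\mathfrak{B}^\sharp}\zeta^\sharp$ with $\mathfrak{B}=\rho\partial_y\Psi$, shows (Proposition \ref{GoodUnknown}) that $\mathcal{P}_\zeta W^\sharp$ is controlled in $C^0_yH^{s+0.5}_x$ — the proof uses the extra structural fact that $\rho\partial_\rho\Phi$ is again harmonic, so that $\mathcal{L}_\zeta\mathfrak{B}^\sharp=0$ and the dangerous terms cancel — and only then applies the factorization and the evolutionary-regularity lemma to $W^\sharp$, obtaining $\partial_yW^\sharp|_{y=0}=T_AW^\sharp|_{y=0}$ modulo $H^{s+0.5}$. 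Since $W^\sharp|_{y=0}=\phi^\sharp-T_{\mathfrak{b}^\sharp}\zeta^\sharp$ up to smoother terms, the structure $T_\lambda(\phi^\sharp-T_{\mathfrak{b}^\sharp}\zeta^\sharp)$ is supplied by the interior reduction, not by boundary algebra. Your plan would need to be reorganized accordingly; the rest of it (Riccati solution of the first-order symbol, uniformity in the transverse variable, identification of $\lambda$ at $\zeta=0$ with the multiplier $n$) is consistent with the paper's argument.
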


Section \ref{4} is devoted to the proof of Theorem \ref{Thm1}. The idea of the proof is quite straightforward: one factorizes the Laplacian as the product of two first order elliptic operators near the boundary, and extract the boundary value. Using this para-linearization formula, the system (\ref{EQ}) is converted to a quasi-linear para-differential system of order 1.5, resembling the formal linearization (\ref{Formal1})-(\ref{Formal4}):

\begin{theorem}\label{Thm2}
Suppose $s>3$, and $\zeta\in C^0_tH_x^{s+0.5}$, $\phi\in C^0_tH^s_x$, so that $\|\zeta\|_{H^{s+0.5}_x}$ remains small. Keep the notation of Theorem \ref{Thm1}, and introduce the ``good unknown" $w^\sharp=\phi^\sharp-T_{\mathfrak{b}^\sharp}\zeta^\sharp$. The spherical capillary water waves system (\ref{EQ}) is then equivalent to a quasi-linear para-differential system on $\SU(2)$:
\begin{equation}\label{EQSymm}
\partial_t\left(\begin{matrix}
T_p\zeta^\sharp \\ T_q w^\sharp \end{matrix}\right)
=\left(\begin{matrix}
 & T_\gamma \\
-T_\gamma & 
\end{matrix}\right)\left(\begin{matrix}
T_p\zeta^\sharp \\ T_q w^\sharp \end{matrix}\right)
-T_{\mathfrak{v}^\sharp}\cdot\nabla_{G_0}\left(\begin{matrix}
T_p\zeta^\sharp \\ T_q w^\sharp \end{matrix}\right)
+f_3(\zeta,\phi).
\end{equation}
Here 
\begin{itemize}
\item The para-differential operator $T_p\simeq|\nabla_{G_0}|^{0.5}$, and $q$ is a scalar function;
\item The ``dispersive relation" $T_\gamma\simeq|\nabla_{G_0}|^{1.5}$ is almost self-adjoint:
$$
T_\gamma-T_\gamma^*\text{ has order }0;
$$
\item The ``transporting operator" $T_{\mathfrak{v}^\sharp}\cdot\nabla_{G_0}$ has purely imaginary principal symbol, so taking into account the almost self-adjointness of $T_\gamma$, (\ref{EQSymm}) is of dispersive type;
\item The error $f_3$ is quadratic: when $(\zeta,\phi)\simeq0$ in $H^{s+0.5}_x\times H^s_x$, $\|f_3(\zeta,\phi)\|_{H^s_x}\lesssim\left(\|\zeta\|_{H_x^{s+0.5}}+\|\phi\|_{H_x^s}\right)^2$.
\end{itemize}
\end{theorem}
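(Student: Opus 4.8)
The plan is to para-linearize the right-hand side of (\ref{EQ}) term by term after lifting to $\SU(2)$, then to introduce the good unknown $w^\sharp=\phi^\sharp-T_{\mathfrak{b}^\sharp}\zeta^\sharp$ in order to cancel the non-symmetric top-order contributions, and finally to conjugate the resulting $2\times 2$ para-differential system by a diagonal matrix of elliptic para-differential operators so as to reach (\ref{EQSymm}). By Theorem \ref{Thm1}, the first equation of (\ref{EQ}) lifts to $\partial_t\zeta^\sharp=T_\lambda w^\sharp-T_{\mathfrak{v}^\sharp}\cdot\nabla_{G_0}\zeta^\sharp+f_1(\zeta,\phi)$, which already has the desired shape. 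For the second equation, the two algebraic quadratic terms are treated by Bony's para-linearization $ab=T_ab+T_ba+R(a,b)$: one retains the para-products in which the factor of lower regularity is differentiated, and the smoother bilinear remainders $R(a,b)$ are put into the error — the threshold $s>3$ is precisely what forces these remainders on the $2$-dimensional base to land in $H^s_x$. The surface-tension term is para-linearized as a quasi-linear operator of order $2$,
$$
\big(H(\zeta)\big)^\sharp=-T_{h_2}\zeta^\sharp+\big(\text{quadratic error in }H^s_x\big),\qquad T_{h_2}\simeq|\nabla_{G_0}|^2,
$$
the principal symbol of $h_2$ being the induced-metric norm $|\xi|^2$ plus a zeroth-order curvature correction that reproduces the multiplier $-(n-2)(n+1)$ on $\mathcal{E}_n$. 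Thus the lifted second equation becomes $\partial_t\phi^\sharp=-T_{h_2}\zeta^\sharp+(\text{order-}1\text{ and transport terms})+(\text{quadratic error})$.

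Next I would differentiate $w^\sharp=\phi^\sharp-T_{\mathfrak{b}^\sharp}\zeta^\sharp$ in time and substitute the two equations above; the terms $T_{\partial_t\mathfrak{b}^\sharp}\zeta^\sharp$ and $T_{\mathfrak{b}^\sharp}\partial_t\zeta^\sharp$ are designed to combine with the order-$1$ pieces produced in the first step so that, modulo quadratic errors of order $\le s$,
$$
\partial_t w^\sharp=-T_{h_2}\zeta^\sharp-T_{\mathfrak{v}^\sharp}\cdot\nabla_{G_0}w^\sharp+(\text{quadratic error}).
$$
This is the Alinhac-type cancellation, and the identification of $\mathfrak{b}$ and $\mathfrak{v}$ with the radial and tangential boundary velocities is exactly what makes the relevant sub-principal symbols align. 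At this stage the system has become $\partial_t(\zeta^\sharp,w^\sharp)^{\mathsf{T}}=\bigl(\begin{smallmatrix}0&T_\lambda\\-T_{h_2}&0\end{smallmatrix}\bigr)(\zeta^\sharp,w^\sharp)^{\mathsf{T}}-T_{\mathfrak{v}^\sharp}\cdot\nabla_{G_0}(\zeta^\sharp,w^\sharp)^{\mathsf{T}}$ plus a quadratic error.

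Finally I would symmetrize: one seeks $T_p\simeq|\nabla_{G_0}|^{0.5}$ and a positive scalar function $q$ such that conjugating by $\mathrm{diag}(T_p,T_q)$ turns the off-diagonal block into $\bigl(\begin{smallmatrix}0&T_\gamma\\-T_\gamma&0\end{smallmatrix}\bigr)$. By the symbolic calculus of Section \ref{2}, the requirements $T_pT_\lambda\simeq T_\gamma T_q$ and $T_qT_{h_2}\simeq T_\gamma T_p$ reduce, at the level of principal and sub-principal symbols, to the identities $p^2\lambda\simeq q^2h_2$ and $\gamma\simeq p\lambda q^{-1}$, so that $\gamma=\sqrt{\lambda h_2}+(\text{lower order})$ is of order $1.5$, $T_\gamma\simeq|\nabla_{G_0}|^{1.5}$, and $p$ is of order $0.5$; the residual order-$0$ freedom in $(p,q)$ is spent forcing $T_\gamma-T_\gamma^*$ to have order $0$, which is possible since the principal symbol of $\gamma$ is real and positive. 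It then remains to check that $[\partial_t,T_p]$, $[\partial_t,T_q]$, $[T_{\mathfrak{v}^\sharp}\cdot\nabla_{G_0},T_p]$, $[T_{\mathfrak{v}^\sharp}\cdot\nabla_{G_0},T_q]$ and all composition errors have order low enough and are quadratically small — the non-constant parts of $p,q,\mathfrak{v}$, as well as $\partial_t\zeta$ and $\partial_t\phi$, vanish linearly at $(\zeta,\phi)=0$ — so that they can be absorbed into $f_3$; the purely imaginary principal symbol of the transport term follows at once from the reality of $\mathfrak{v}$. Since the least regular error contribution is $T_pf_1\in H^s_x$, this yields (\ref{EQSymm}).

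The hard part will be the sub-principal symbol bookkeeping in the last two steps: one must propagate exact sub-principal symbols through the para-linearization of $H(\zeta)$ and of the quadratic terms, through the good-unknown substitution, and through the conjugation by $\mathrm{diag}(T_p,T_q)$, and verify that the antisymmetry of the coupling block and the almost self-adjointness of $T_\gamma$ hold modulo operators of order $0$, not merely modulo bounded operators of the principal order. This is heavier than in the flat setting because the symbolic calculus on the non-abelian group $\SU(2)$ carries curvature corrections and because $\nabla_{G_0}$ is Lie-algebra-valued, so commutators that would vanish identically on the torus must here be estimated and shown to be of genuinely lower order.
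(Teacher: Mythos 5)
Your first two steps (para-linearization of both equations via Theorem \ref{Thm1} and Bony's theorem, the first-variation computation of the mean curvature, and the good-unknown substitution giving $\partial_t w^\sharp=-T_{\mathfrak{v}^\sharp}\cdot\nabla_{G_0}w^\sharp-T_h\zeta^\sharp+\text{quadratic error}$) coincide with the paper's Propositions \ref{H(zeta)}--\ref{Quad} and are sound. The genuine gap is in the symmetrization. You fix $\gamma\simeq\sqrt{\lambda h_2}$ from the two intertwining requirements $T_pT_\lambda\simeq T_\gamma T_q$, $T_qT_h\simeq T_\gamma T_p$, and then claim the ``residual order-$0$ freedom in $(p,q)$'' can be spent to make $T_\gamma-T_\gamma^*$ of order $0$ ``since the principal symbol of $\gamma$ is real and positive.'' A Hermitian principal symbol only gives $T_\gamma-T_\gamma^*$ of order $1.5-1=0.5$: by Theorem \ref{ParaAdj} the adjoint produces the term $\sum_\mu\Df_\mu\partial_\mu(\gamma^{(1.5)})^*$ of order $0.5$, and cancelling it is a constraint on the skew part of the sub-principal symbol $\gamma^{(0.5)}$, which is already pinned down by the sub-principal levels of your two intertwining relations once $p^{(0.5)}$ and the scalar function $q$ are chosen. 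Counting, you are imposing five symbol-level conditions (two levels of each intertwining relation plus the order-$0.5$ self-adjointness condition) on exactly five remaining unknowns ($p^{(0.5)},p^{(-0.5)},q,\gamma^{(1.5)},\gamma^{(0.5)}$), so solvability is not a matter of spare freedom but of a compatibility condition you never verify. Concretely, with $q$ a scalar function the sub-principal equation for $q$ is the overdetermined gradient equation appearing in (\ref{SymmTemp}): $\nabla_{G_0}\log q$ must equal a prescribed vector field built from $\lambda^{(1)},h^{(2)},\rho^\sharp$, and this is solvable (yielding (\ref{Symmq})) only because that field happens to be exact.

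The missing idea is the structural input that guarantees this compatibility: the weighted self-adjointness of $T_\lambda$ and $T_h$, namely $T_{(\rho^\sharp)^{-2}}T_\lambda^*=T_\lambda T_{(\rho^\sharp)^{-2}}\mod\Op\Sigma^{-0.5}_{<1/2}$ and $T_{(\rho^\sharp)^{-2}}T_h^*=T_hT_{(\rho^\sharp)^{-2}}\mod\Op\Sigma^{0.5}_{<1/2}$ (Propositions \ref{AdjDN} and \ref{T_hAdjoint}), which the paper proves geometrically rather than symbolically: the Dirichlet--Neumann operator is symmetric with respect to the weighted measure $\rho^2d\mu_0$, and $\rho^2H'(\zeta)$ is symmetric because the second variation of area is a symmetric bilinear form. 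With these in hand the paper defines $\gamma$ through $T_\gamma^2=\tfrac12\big(T_hT_\lambda+T_\lambda^*T_h^*\big)\mod\Op\Sigma^{0.5}_{<1/2}$, so almost self-adjointness is built in, and the burden shifts to the intertwining condition (\ref{Tq}) for the scalar $q$, whose solvability then follows from the adjoint identities together with the quasi-homogeneous calculus of Proposition \ref{PSSymbol}. Without identifying and proving this weighted symmetry (or an equivalent compatibility statement), your symmetrization step does not close; the remaining items in your outline --- the transport commutators, $\partial_tp$, $\partial_tq$, and the quadratic bound on $f_3$ --- are handled as in the paper's Step 4 and are not the issue.
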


Proof of Theorem \ref{Thm2} occupies Section \ref{5}. An initial application of Theorem \ref{Thm2} is the following local well-posedness theorem for (\ref{EQ}):
\begin{theorem}\label{Thm3}
Fix $s>3$. Suppose the initial data for the system (\ref{EQ}) satisfies $(\zeta(0),\phi(0))\in H^{s+0.5}\times H^s$, sufficiently small in norm. Then there is a time interval $[0,T]$, on which the Cauchy problem of (\ref{EQ}) admits a unique solution
$$
(\zeta,\phi)\in \left(C_TH^{s+0.5}_x\times C_TH^{s}_x\right)
\cap \left(C^1_TH^{s-1}_x\times C_TH^{s-1.5}_x\right).
$$
\end{theorem}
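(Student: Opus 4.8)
The plan is to derive Theorem \ref{Thm3} from the symmetrized system \eqref{EQSymm} of Theorem \ref{Thm2} by a standard energy method adapted to the compact-group para-differential calculus of Section \ref{2}. First I would set up the natural energy. Writing $\Theta^\sharp = (T_p\zeta^\sharp, T_q w^\sharp)$, the system \eqref{EQSymm} reads $\partial_t\Theta^\sharp = \mathcal{A}\Theta^\sharp + f_3$, where $\mathcal{A}$ is skew-adjoint modulo operators of order $0$: the off-diagonal block $\mathrm{diag}$-antisymmetric part $\begin{pmatrix} & T_\gamma \\ -T_\gamma & \end{pmatrix}$ is skew-adjoint up to order $0$ because $T_\gamma - T_\gamma^*$ has order $0$, and $-T_{\mathfrak{v}^\sharp}\cdot\nabla_{G_0}$ has purely imaginary principal symbol hence is also skew-adjoint up to order $0$ (here the self-adjointness is with respect to the $L^2(\SU(2))$ inner product, and one uses the para-product rule and adjoint rule from the toolbox). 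Then for the energy $E_\sigma(t) = \|\langle\nabla_{G_0}\rangle^\sigma \Theta^\sharp\|_{L^2(\SU(2))}^2$ at regularity level $\sigma = s - 1$ (so that $\zeta^\sharp \in H^{s+0.5}$ corresponds to $T_p\zeta^\sharp \in H^{s}$ and $\Theta^\sharp \in H^{s}$; more precisely one should track each component at its own level), the commutator $[\langle\nabla_{G_0}\rangle^\sigma, \mathcal{A}]$ has order $\sigma + 0.5$, so $\frac{d}{dt}E_\sigma \lesssim \|\Theta^\sharp\|_{H^\sigma}^2 + \|\Theta^\sharp\|_{H^\sigma}\|f_3\|_{H^\sigma}$, and by the quadratic estimate on $f_3$ together with the ellipticity $T_p \simeq |\nabla_{G_0}|^{0.5}$, $T_q$ invertible, one converts $E_\sigma$ back into an equivalent energy for $(\zeta,\phi)$ in $H^{s+0.5}_x \times H^s_x$ and closes a Gronwall inequality on a time interval $[0,T]$ depending only on the size of the data.

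Second, existence: I would construct solutions by an iteration or a parabolic/Friedrichs regularization scheme. The cleanest route is to mollify \eqref{EQSymm} — replace $\mathcal{A}$ by $J_\varepsilon \mathcal{A} J_\varepsilon$ with $J_\varepsilon$ a smoothing operator on $\SU(2)$ built from the heat semigroup of $\Delta_{G_0}$ (which descends nicely to $\mathbb{S}^2$) — obtaining an ODE on a Hilbert space with a locally Lipschitz vector field, hence local solutions $\Theta^\sharp_\varepsilon$ by Picard–Lindelöf. The energy estimate above, applied uniformly in $\varepsilon$, gives a uniform time of existence $T$ and uniform bounds in $C_T H^s_x$; a compactness/Aubin–Lions argument on $\mathbb{S}^2$ (using the $C^1_t H^{s-1}$, resp. $C^1_t H^{s-1.5}$, bounds which come directly from the equation) extracts a limit $(\zeta,\phi)$ solving \eqref{EQSymm}, hence \eqref{EQ} via the equivalence in Theorem \ref{Thm2}. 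One must also check that the limit lies on $\mathbb{S}^2$-valued data, i.e.\ that $\zeta^\sharp, \phi^\sharp$ remain lifts of genuine functions on $\mathbb{S}^2$ — this is preserved because the lifting subspace (right $K$-invariant functions, with $K$ the stabilizer) is invariant under $\nabla_{G_0}$, $T_\gamma$, $T_{\mathfrak v^\sharp}\cdot\nabla_{G_0}$ and the mollifier, a point already implicit in Section \ref{2}.

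Third, uniqueness and continuous dependence: given two solutions at level $s$, I would estimate their difference $(\delta\zeta, \delta\phi)$ at the lower level $H^{s-0.5}_x \times H^{s-1}_x$. The para-differential structure is quasi-linear, so the coefficients $p, q, \gamma, \mathfrak{b}, \mathfrak{v}$ depend on $(\zeta,\phi)$; their differences are estimated by the difference of the unknowns times the $H^s$-norm of the solutions, using the Lipschitz dependence of symbols on their arguments (the symbol classes are stable under composition with smooth functions, per the toolbox). This yields a linear Gronwall inequality for the lower-order energy of the difference, giving uniqueness; running the same estimate with nonzero initial difference gives continuous dependence, and the standard Bona–Smith argument upgrades weak-to-strong continuity in time, so that $(\zeta,\phi) \in C_T H^{s+0.5}_x \times C_T H^s_x$ rather than merely $C_w$.

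The main obstacle I anticipate is \emph{not} the abstract energy scheme but the bookkeeping of regularities across the three coupled unknowns $\zeta$, $\phi$, $w$ at their different Sobolev levels, combined with verifying that all the auxiliary quantities — the symbol $\lambda$ of Theorem \ref{Thm1}, $p$, $q$, $\gamma$, and the transport coefficient $\mathfrak v^\sharp$ — genuinely obey the symbol estimates (with the stated low threshold $s > 3$) when $(\zeta,\phi)$ has only the limited regularity $H^{s+0.5}\times H^s$; in particular the quadratic error bounds on $f_1$ and $f_3$ must be shown to be stable under the difference estimates, which is where the sharpness of the constants from Theorems \ref{Thm1} and \ref{Thm2} is actually used. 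A secondary technical point is ensuring the mollified system still preserves (approximately) the constraints \eqref{Vol}–\eqref{SphereCons.} well enough that the ellipticity of $T_p$ and invertibility of $T_q$ are uniform in $\varepsilon$; this is handled by noting these are low-frequency/finite-dimensional conditions that pass to the limit without affecting the high-frequency energy estimate.

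\end{spacing}
\end{document}
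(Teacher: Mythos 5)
The central step of your plan --- the energy estimate --- has a genuine gap. You take the weight $\langle\nabla_{G_0}\rangle^{\sigma}$, correctly note that $[\langle\nabla_{G_0}\rangle^{\sigma},T_\gamma]$ has order $\sigma+0.5$ (Corollary \ref{ParaComm} gives $\sigma+1.5-1$), and then nevertheless write $\tfrac{d}{dt}E_\sigma\lesssim\|\Theta^\sharp\|_{H^\sigma}^2+\|\Theta^\sharp\|_{H^\sigma}\|f_3\|_{H^\sigma}$; these two statements are incompatible. Pairing the commutator term against $\langle\nabla_{G_0}\rangle^{\sigma}\Theta^\sharp$ produces $\|\Theta^\sharp\|_{H^{\sigma+0.5}}\|\Theta^\sharp\|_{H^\sigma}$, and the block structure $\left(\begin{smallmatrix} & T_\gamma\\ -T_\gamma & \end{smallmatrix}\right)$ does not cancel it: what appears in the energy identity is the anti-self-adjoint part of $\langle\nabla_{G_0}\rangle^{\sigma}[\langle\nabla_{G_0}\rangle^{\sigma},T_\gamma]$, which is still of order $2\sigma+0.5$ in general. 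So the Gronwall inequality loses half a derivative and does not close. This is precisely where the paper needs Proposition \ref{OrderS}: the weight must be adapted to the equation, namely $\Gamma_\alpha=T_{(\gamma^{(1.5)})^{2\alpha/3}}$, for which $[\Gamma_\alpha,T_\gamma]$ has order $\le\alpha$ rather than $\alpha+0.5$, because the leading parts of $\Df_{\mu}(\gamma^{(1.5)})^{2\alpha/3}\partial_\mu\gamma^{(1.5)}-\Df_{\mu}\gamma^{(1.5)}\partial_\mu(\gamma^{(1.5)})^{2\alpha/3}$ cancel; on $\SU(2)$ this cancellation is not automatic (symbols do not commute) and is established via the quasi-homogeneous symbol machinery of Propositions \ref{Dkappa} and \ref{PSSymbol}. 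Without this device (or an equivalent one), neither your uniform-in-$\varepsilon$ bounds for the mollified system nor your difference estimates for uniqueness can be closed, so the gap propagates through every later step of your argument.

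The remaining structural choices you make are different from the paper's but not objectionable. The paper proves well-posedness and tame energy estimates, at every level $\alpha\le s$ including negative ones, for the \emph{linear} system with coefficients frozen at a trial pair (Proposition \ref{Energy}, via the $\Gamma_\alpha$-energy and a duality argument), and then runs a Banach fixed point argument on the ball $\bar{\mathfrak{X}}_R$ equipped with the weaker metric $C_TH^{s-1}_x\times C_TH^{s-1.5}_x$, with the contraction estimate handled by Proposition \ref{SymbolVeryRough} and Proposition \ref{T_aNegIndex} for the rough symbol differences; your Friedrichs mollification plus Picard--Lindel\"of, Aubin--Lions compactness, and a Bona--Smith continuity step is the alternative route that the paper's final remark explicitly acknowledges (it is the one used in \cite{ABZ2011}), so the scheme itself is fine. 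Two smaller points: your difference estimate is taken at $H^{s-0.5}\times H^{s-1}$ whereas the paper contracts at $H^{s-1}\times H^{s-1.5}$, which is what the rough-coefficient symbol bounds actually support; and preservation of the constraints (\ref{Vol})--(\ref{SphereCons.}) is not needed for the ellipticity of $T_p$ or invertibility of $T_q$ --- smallness of $\|\zeta\|_{H^{s+0.5}}$ suffices. But all of these run through the commutator estimate above, so that is the item to repair first.
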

Proof of Theorem \ref{Thm3} is only sketched in Section \ref{6}, as the key computations for this are quite parallel to the counterparts in \cite{ABZ2011}; we confine ourselves elaborating what is different. Theorem \ref{Thm3} shares the same regime of regularity as the main result in \cite{ABZ2011} does for nearly horizontal water waves. The requirement on regularity is obviously milder than that in \cite{BeGu1998}, which is $(\zeta,\phi)\in H^{5.5}\times H^{5}$; or that in \cite{CoSh2007}, which is $(\zeta,\phi)\in H^{6.5}\times H^{6}$; or that in \cite{ShZe2008}, which is $(\zeta,\phi)\in H^7\times H^{6.5}$. However, this should not be surprising, as the results in \cite{ShZe2008} and \cite{CoSh2007} apply to \emph{general} motion of perfect fluid, without assuming irrotationality, while the argument in \cite{BeGu1998} deals with ODE on graded spaces so that some information will certainly be lost.

Let us briefly describe previous attempts towards para-differential calculus on curved manifolds. In \cite{KR2006}, Klainerman and Rodnianski extended the Littlewood-Paley theory to compact surfaces via smooth cut-off of the Laplacian spectrum. In \cite{Delort2015}, Delort introduced a para-differential calculus on general compact Riemann manifold characterized by commutators. In \cite{BGdP2021}, Bonthonneau, Guillarmou and de Poyferr{\'e} developed a para-differential calculus for any smooth manifold via local coordinate chart. 

However, we proceed within a formalism different from all above. In this paper, we extensively utilize the symmetry of the 2-sphere, on which (\ref{EQ}) is defined. The 2-sphere is considered as a $\SU(2)$-homogeneous space via Hopf fiberation, and the Cauchy problem of (\ref{EQ}) is lifted to $\SU(2)$, a compact Lie group. We then employ the para-differential calculus for rough symbols on compact Lie groups proposed in \cite{Shao2023Toolbox}. The advantage of this approach is that the lower order symbols, being non-neglible for quasi-linear problems of order $>1$, are manipulated in an explicit, global, coordinate-independent way. We present this toolbox in Section \ref{2} and Section \ref{3}. The theory turns out to be parallel to para-differential calculus on $\mathbb{R}^n$ or $\mathbb{T}^n$. 

With para-differential calculus constructed, the para-linearization and symmetrization of (\ref{EQ}) becomes parallel to the gravity-capillary water waves problem for the flat water level. This procedure is accomplished in Section \ref{4}-\ref{5}, with method very similar to \cite{ABZ2011}. The system is then reduced to a quasi-linear, non-local, dispersive para-differential equation of order 3/2, with precise form given in formula (\ref{EQSymm}). The local existence result, sketched in Section \ref{6}, then follows from a standard fixed point argument. 

The complexity of this para-differential reduction raises the natural question of whether it is necessary for the study of the water drop problem at all. Given the previous local well-posedness results already mentioned, such reduction does not seem unavoidable, if one is concerned with the local Cauchy theory. For example, if one employs Delort's coordinate-free definition \cite{Delort2015} of para-differential operators on manifolds, it is expected that the same regime of regularity remains valid for the local theory for initial fluid-air interface with higher genus. Anyway, it would be hard to imagine that the regime of short-time regularity depends on the topology of the interface.

However, previous approaches of para-differential calculus may fail to incorporate spectral properties of the sphere, and thus may cause extra difficulty in understanding long-time solutions. For example, no symbolic calculus is developed in \cite{KR2006} at all, since no additional structure beyond a compact surface is assumed. Both constructions in \cite{Delort2015} and \cite{BGdP2021} result in \emph{calculus involving principal symbol alone}, similar to H\"{o}rmander calculus on curved manifolds. These approaches thus may not capture the lower order symbols in quasi-linear problems that are crucial for long-time evolution. Furthermore, much of the spectral information related to dispersive properties become implicit, although not lost, under the frameworks of them. 

It then seems feasible to utilize the representation-theory-based approach to para-differential calculus, since it clearly reflects the spectral properties of the dispersive system (\ref{EQ}), while giving all the lower-order terms in symbolic calculus very explicit, manipulable form. Evidence of this convenience is obvious, if one takes into account the extensive volume of literature on gravity-capillary water waves with periodic boundary condition. People usually reduce the gravity-capillary water waves equation to a para-differential form, and use it to obtain global or almost global results. See for example \cite{DIPP2017}, \cite{BD2018}, \cite{IoPu2019}, or the review \cite{IP2018}. Given that (\ref{EQSymm}) is quite similar to the para-differential form of gravity-capillary water waves equation, we regard it as the starting point for the study of long-time behaviour of small amplitude solutions of (\ref{EQ}).

\section{Toolbox of Para-differential Calculus}\label{2}
In this section, we list the results of para-differential calculus on a compact Lie group that are necessary for our study. We shall treat the collection of these results as a toolbox. Detailed explanation and proof can be found in \cite{Shao2023Toolbox}.

\subsection{Representation and Spectral Properties}
Let $\Gp$ be a simply connected, compact, semisimple Lie group with dimension $n$ and rank $\varrho$, i.e. the dimension of any maximal torus in $\Gp$. We write $e$ for the identity element of $\Gp$. Let $\mathfrak{g}$ be the Lie algebra of $\Gp$, identified with the space of left-invariant vector fields on $\Gp$. If a basis $X_1,\cdots,X_n$ for $\mathfrak{g}$ is given, then for any multi-index $\alpha\in\mathbb{N}_0^n$, write 
\begin{equation}\label{NormalOrder}
X^\alpha=X_1^{\alpha_1}\cdots X_n^{\alpha_n}
\end{equation}
for the left-invariant differential operator composed by these basis vectors, in the ``normal" ordering.

Since $\Gp$ is semi-simple and compact, the Killing form
$$
\Tr\big(\mathrm{Ad}(X)\circ\mathrm{Ad}(Y)\big),\quad X,Y\in\mathfrak{g}
$$ 
on $\Gp$ is non-degenerate and negative definite, where $\mathrm{Ad}$ is the adjoint representation. A Riemann metric on $\Gp$ can thus be introduced as the opposite of the Killing form. The Laplace operator corresponding to this metric, usually called \emph{Casimir element} in representation theory, is given by
$$
\Delta=\sum_{i=1}^n X_i^2,
$$
where $\{X_i\}$ is any orthonormal basis of $\mathfrak{g}$; the operator is independent from the choice of basis.

Let $\DuGp$ be the dual of $\Gp$, i.e. the set of equivalence classes of irreducible unitary representations of $\Gp$. For simplicity, we do not distinguish between an equivalence class $\xi\in\DuGp$ and a certain unitary representation that realizes it. The ambient space of $\xi\in\DuGp$ is an Hermite space $\Hh[\xi]$, with (complex) dimension $d_\xi$. If we equip the space $\Hh[\xi]$ with an orthonormal basis containing $d_\xi$ elements, then $\xi:\Gp\to\mathrm{U}(\Hh[\xi])$ can be equivalently viewed as a unitary-matrix-valued function $(\xi_{jk})_{j,k=1}^{d_\xi}$.

We shall equip the Lie group $\Gp$ with the normalized Haar measure. For simplicity, we will denote the integration with respect to this measure by $dx$. The \emph{Fourier transform} of $f\in\mathcal{D}'(\Gp)$ is defined by 
$$
\Ft f(\xi):=\int_\Gp f(x)\xi^*(x)dx\in\mathrm{End}(\Hh[\xi]).
$$
Conversely, for every field $a(\xi)$ on $\DuGp$ such that $a(\xi)\in\mathrm{End}(\Hh[\xi])$ for all $\xi\in\DuGp$, the \emph{Fourier inversion} of $a$ is defined by
$$
\check{a}(x)=\sum_{\xi\in\DuGp} d_\xi\Tr\big(a(\xi)\cdot\xi(x)\big),
$$
as long as the right-hand-side converges at least in the sense of distribution. 

Two different norms for $a(\xi)\in\mathrm{End}(\Hh[\xi])$ will be used in this paper, one being the operator norm
$$
\|a(\xi)\|:=\sup_{v\in\Hh[\xi],v\neq0}\frac{|a(\xi)v|}{|v|},
$$
one being the Hilbert-Schmidt norm
$$
\HS[a(\xi)]:=\sqrt{\Tr(a(\xi)\cdot a^*(\xi))}.
$$
To simplify notation, when there is no risk of confusion, we shall omit the dependence of these norms on the representation $\xi$.

We fix the convolution on $\Gp$ to be \emph{right} convolution:
$$
(f*g)(x)=\int_\Gp f(y)g(y^{-1}x)dy.
$$
The only property not inherited from the usual convolution on Euclidean spaces is commutativity. Every other property, including the convolution-product duality, Young's inequality, is still valid. The only modification is that 
$$
\Ft{f*g}(\xi)=(\Ft g\cdot\Ft f)(\xi),
$$
which in general does not coincide with $(\Ft f\cdot\Ft g)(\xi)$ since the Fourier transform is now a matrix.

The Peter-Weyl theorem is fundamental for harmonic analysis on Lie groups:
\begin{theorem}[Peter-Weyl]\label{PeterWeyl}
Let $\mathcal{M}_\xi$ be the subspace in $L^2(\Gp)$ spanned by matrix entries of the representation $\xi\in\DuGp$. Then $\mathcal{M}_\xi$ is a bi-invariant subspace of $L^2(\Gp)$ of dimension $d_\xi^2$, and there is a Hilbert space decomposition
$$
L^2(\Gp)=\bigoplus_{\xi\in\DuGp}\mathcal{M}_\xi.
$$
If $f\in L^2(\Gp)$, the Fourier inversion
$$
(\Ft f)^\vee(x)=\sum_{\xi\in\DuGp} d_\xi\Tr\left(\Ft f(\xi)\cdot\xi(x)\right)
$$
converges to $f$ in the $L^2$ norm, and there holds the Plancherel identity
$$
\|f\|_{L^2(\Gp)}^2=\sum_{\xi\in\DuGp} d_\xi\HS[\Ft f(\xi)]^2.
$$
\end{theorem}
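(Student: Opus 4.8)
The statement to prove is the Peter--Weyl theorem, so the plan is to reconstruct the standard representation-theoretic argument, adapted to the normalization conventions fixed in the excerpt (normalized Haar measure, right convolution, the matrix-valued Fourier transform $\Ft f(\xi)=\int_\Gp f\,\xi^*$).

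\medskip
\textbf{Step 1: The spaces $\mathcal M_\xi$ are finite-dimensional, bi-invariant, and mutually orthogonal.} First I would observe that $\mathcal M_\xi$ is spanned by the $d_\xi^2$ functions $x\mapsto\xi_{jk}(x)$, hence $\dim\mathcal M_\xi\le d_\xi^2$; left and right translation of a matrix entry is again a linear combination of matrix entries of the same $\xi$ (this is just $\xi(gxh)=\xi(g)\xi(x)\xi(h)$), so $\mathcal M_\xi$ is bi-invariant. The orthogonality $\mathcal M_\xi\perp\mathcal M_\eta$ for inequivalent $\xi,\eta$, together with the precise inner products $\langle\xi_{jk},\eta_{lm}\rangle_{L^2}=\delta_{\xi\eta}\,d_\xi^{-1}\delta_{jl}\delta_{km}$, is the Schur orthogonality relations: I would prove them by the usual averaging trick, namely for a linear map $A:\Hh[\eta]\to\Hh[\xi]$ set $\tilde A=\int_\Gp \xi(x)A\eta(x)^*\,dx$, check $\tilde A$ intertwines $\eta$ and $\xi$, and apply Schur's lemma ($\tilde A=0$ if $\xi\not\cong\eta$; $\tilde A=\lambda\,\mathrm{Id}$ with $\lambda=\Tr(A)/d_\xi$ if $\xi=\eta$), then specialize $A$ to matrix units. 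This also yields $\dim\mathcal M_\xi=d_\xi^2$ exactly, since the $d_\xi^2$ entries are then linearly independent. In particular $\bigoplus_{\xi}\mathcal M_\xi$ is a well-defined orthogonal Hilbert sum inside $L^2(\Gp)$, and unravelling definitions shows the orthogonal projection of $f$ onto $\mathcal M_\xi$ is exactly $x\mapsto d_\xi\Tr(\Ft f(\xi)\,\xi(x))$, which is the $\xi$-term of the claimed Fourier inversion.

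\medskip
\textbf{Step 2: Completeness, i.e. $\overline{\bigoplus_\xi\mathcal M_\xi}=L^2(\Gp)$.} This is the heart of the matter and the step I expect to be the main obstacle. Let $V$ be the closure of $\bigoplus_\xi\mathcal M_\xi$; it is a closed bi-invariant subspace, so its orthogonal complement $V^\perp$ is also closed and bi-invariant, and the goal is $V^\perp=\{0\}$. Suppose not. The plan is a compact-operator/spectral-theory argument: pick $0\ne f\in V^\perp$, and after replacing $f$ by $f*\tilde f^{\,*}$-type combinations arrange to have a nonzero \emph{self-adjoint} kernel; more precisely, consider the convolution operator $T_g:h\mapsto h*g$ (or $g*h$) on $L^2(\Gp)$ for a suitable $g$ built from $f$. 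Such an operator is Hilbert--Schmidt (its kernel $(x,y)\mapsto g(y^{-1}x)$ lies in $L^2(\Gp\times\Gp)$ because $\Gp$ is compact with finite Haar measure), hence compact, and can be made self-adjoint and nonzero. By the spectral theorem for compact self-adjoint operators it has a nonzero eigenvalue with finite-dimensional eigenspace $E$. Because $T_g$ commutes with left translations (this is where the choice of right convolution matters and must be checked against the excerpt's convention), $E$ is a finite-dimensional, translation-invariant, hence smooth, subspace of $L^2(\Gp)$; decomposing the left-regular representation on $E$ into irreducibles shows $E\subset\bigoplus_\xi\mathcal M_\xi=:V_0$. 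But $V^\perp$ is left-invariant, so $T_g$ maps $V^\perp$ into $V^\perp$, forcing $E\cap V^\perp\ne\{0\}$ if $g$ was chosen inside $V^\perp$ — contradicting $E\subset V_0\perp V^\perp$. Hence $V^\perp=\{0\}$. The delicate points here are: producing a nonzero \emph{self-adjoint} compact convolution operator localized in $V^\perp$ (one typically uses an approximate identity $\phi_\varepsilon$ and $g=f*\phi_\varepsilon^{\,*}$ or similar, checking it stays in $V^\perp$ and is eventually nonzero), and correctly tracking which side the convolution and translations act on so that the commutation statement is literally true under the paper's conventions.

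\medskip
\textbf{Step 3: Assemble the conclusions.} Once completeness is known, $L^2(\Gp)=\bigoplus_\xi\mathcal M_\xi$ as an orthogonal Hilbert-space direct sum, which is the displayed decomposition. The Fourier inversion series $\sum_\xi d_\xi\Tr(\Ft f(\xi)\xi(x))$ is then the expansion of $f$ into its orthogonal projections onto the $\mathcal M_\xi$, so it converges to $f$ in $L^2$. Finally the Plancherel identity follows by Parseval applied to this orthogonal decomposition, using the normalization from Step 1: the squared norm of the $\xi$-component is $d_\xi\HS[\Ft f(\xi)]^2$ (the factor $d_\xi$ rather than $d_\xi^2$ coming precisely from the $d_\xi^{-1}$ in the Schur orthogonality normalization), and summing gives $\|f\|_{L^2}^2=\sum_\xi d_\xi\HS[\Ft f(\xi)]^2$. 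I would present Steps 1 and 3 briskly since they are bookkeeping with the conventions, and devote the real effort to the compact-operator argument in Step 2.
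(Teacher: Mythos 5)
The paper does not prove this theorem at all: Theorem \ref{PeterWeyl} is quoted as classical background (part of the toolbox imported from \cite{Shao2023Toolbox}), so there is no in-paper argument to compare against. Your reconstruction is the standard textbook proof and is essentially correct: Schur orthogonality via the averaging map and Schur's lemma gives the inner products $\langle \xi_{jk},\eta_{lm}\rangle=\delta_{\xi\eta}d_\xi^{-1}\delta_{jl}\delta_{km}$, which with the paper's conventions ($\Ft f(\xi)=\int f\,\xi^*$, normalized Haar measure) identifies $d_\xi\Tr(\Ft f(\xi)\xi(x))$ as the orthogonal projection onto $\mathcal{M}_\xi$ and yields the factor $d_\xi$ in Plancherel; completeness then follows from the compact self-adjoint convolution operator argument. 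The one point you flag as delicate can be nailed down exactly as you suggest: with $V$ the closure of $\bigoplus_\xi\mathcal{M}_\xi$ and $0\neq f\in V^\perp$, set $\tilde f(x)=\overline{f(x^{-1})}$; since each $\mathcal{M}_\xi$ is stable under $h\mapsto\tilde h$, so is $V^\perp$, and $g:=f*\tilde f$ is continuous with $g(e)=\|f\|_{L^2}^2>0$, satisfies $\tilde g=g$ (self-adjoint kernel), and lies in $V^\perp$ because $h*g'=\int h(y)\,L_yg'\,dy$ expresses right convolution as a superposition of left translates. The same identity shows the range of $T_g:h\mapsto h*g$ lies in $V^\perp$, while $(g*g)(e)=\|g\|_{L^2}^2>0$ shows $T_g\neq0$; any eigenspace for a nonzero eigenvalue is then finite-dimensional, left-translation invariant (your commutation check is correct for the paper's right-convolution convention), consists of continuous functions (eigenvectors are $\lambda^{-1}h*g$ with $g$ continuous), hence sits inside $\bigoplus_\xi\mathcal{M}_\xi$, contradicting that it also sits inside $V^\perp$. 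So the proposal is sound; only the slightly loose phrase "forcing $E\cap V^\perp\neq\{0\}$" should be replaced by the range argument just described (or by your approximate-identity variant, which gives $g\in V$ directly).
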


In the following, standard constructions and results in highest weight theory will be directly cited. They are already collected in Berti-Procesi \cite{BP2011}.

Let $\mathfrak{t}\subset\mathfrak{g}$ be the Lie algebra of a maximal torus of $\Gp$, and write $\mathfrak{t}^*$ for its dual. Let $\{\bm{\alpha}_i\}_{i=1}^\varrho\subset\mathfrak{t}^*$ be the set of \emph{positive simple roots} of $\mathfrak{g}$ with respect to $\mathfrak{t}$. They form a basis for $\mathfrak{t}^*$. The \emph{fundamental weights} $\bm{\varpi}_1,\cdots,\bm{\varpi}_\varrho\in\mathfrak{t}^*$is the unique set of vectors $\{\bm{\varpi}_i\}_{i=1}^\varrho$ satisfying 
$$
(\bm{\varpi}_i,\bm{\alpha}_j)=\frac{1}{2}\delta_{ij}|\bm{\alpha}_j|^2,
\quad
i,j=1,\cdots,\varrho.
$$
Here the inner product on the dual is inherited from the Killing form on $\mathfrak{g}$.

\begin{theorem}[Highest Weight Theorem]
Irreducible unitary representations of $\Gp$ are in 1-1 correspondence with the discrete cone
$$
\mathscr{L}^+:=\left\{\sum_{i=1}^\varrho n_i\bm{\varpi}_i,\, n_i\in\mathbb{N}_0\right\}.
$$
The correspondence assigns an irreducible unitary representation to its highest weight vector.
\end{theorem}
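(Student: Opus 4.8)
The plan is to reduce the classification to the complexified Lie algebra, classify irreducible finite-dimensional modules there by their highest weights, and transport the answer back to $\Gp$. Set $\mathfrak{g}_{\mathbb{C}}=\mathfrak{g}\otimes_{\mathbb{R}}\mathbb{C}$; since $\Gp$ is compact and semisimple this is a complex semisimple Lie algebra, and since $\Gp$ is simply connected, differentiation followed by complex-linear extension identifies finite-dimensional complex representations of $\Gp$ with those of $\mathfrak{g}_{\mathbb{C}}$, in a way preserving (ir)reducibility; averaging a Hermitian form over the normalized Haar measure shows every finite-dimensional representation of $\Gp$ is unitarizable, so ``irreducible unitary representation of $\Gp$'' may be replaced by ``irreducible finite-dimensional $\mathfrak{g}_{\mathbb{C}}$-module''. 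Fix the Cartan subalgebra $\mathfrak{t}_{\mathbb{C}}$, the triangular decomposition $\mathfrak{g}_{\mathbb{C}}=\mathfrak{n}^{-}\oplus\mathfrak{t}_{\mathbb{C}}\oplus\mathfrak{n}^{+}$ attached to the positive simple roots $\{\bm{\alpha}_i\}_{i=1}^{\varrho}$, the Borel subalgebra $\mathfrak{b}=\mathfrak{t}_{\mathbb{C}}\oplus\mathfrak{n}^{+}$, and for each $i$ the $\mathfrak{sl}_2$-triple $(e_i,h_i,f_i)$ with $h_i=\bm{\alpha}_i^{\vee}$, so that $\mu(h_i)=\langle\mu,\bm{\alpha}_i^{\vee}\rangle$ for $\mu\in\mathfrak{t}^{*}$.

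The infinitesimal steps are essentially formal. Let $V$ be an irreducible finite-dimensional $\mathfrak{g}_{\mathbb{C}}$-module; decompose it into its finitely many $\mathfrak{t}_{\mathbb{C}}$-weight spaces and pick a weight $\lambda$ maximal for the dominance order. Any nonzero $v^{+}$ of weight $\lambda$ is annihilated by $\mathfrak{n}^{+}$, and Poincar\'e--Birkhoff--Witt gives $V=U(\mathfrak{g}_{\mathbb{C}})v^{+}=U(\mathfrak{n}^{-})v^{+}$; hence the $\lambda$-weight space is one-dimensional and $\lambda$ is an invariant of $V$. Restricting the action to the $i$-th $\mathfrak{sl}_2$-triple, $v^{+}$ is a highest weight vector for it, so $\mathfrak{sl}_2$-theory forces $\lambda(h_i)\in\mathbb{N}_0$; by the defining property of the fundamental weights $\bm{\varpi}_i$ this says exactly that $\lambda=\sum_i\lambda(h_i)\bm{\varpi}_i\in\mathscr{L}^{+}$. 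Injectivity of $V\mapsto\lambda$ is a Schur-type argument: if $V,V'$ are irreducible with a common highest weight $\lambda$ and highest weight vectors $v^{+}\in V$, $u^{+}\in V'$, the submodule $W\subseteq V\oplus V'$ generated by $(v^{+},u^{+})$ has one-dimensional $\lambda$-weight space, hence is proper, surjects onto each factor by irreducibility, and intersects neither factor in a nonzero element; therefore $W$ is the graph of an isomorphism $V\cong V'$.

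The content lies in surjectivity. Given $\lambda=\sum_i n_i\bm{\varpi}_i\in\mathscr{L}^{+}$, let $\mathbb{C}_{\lambda}$ be the one-dimensional $\mathfrak{b}$-module with $\mathfrak{t}_{\mathbb{C}}$ acting by $\lambda$ and $\mathfrak{n}^{+}$ by zero, and form the Verma module $M(\lambda)=U(\mathfrak{g}_{\mathbb{C}})\otimes_{U(\mathfrak{b})}\mathbb{C}_{\lambda}$. It has a unique maximal proper submodule and hence a unique irreducible quotient $L(\lambda)$, with highest weight $\lambda$ and highest weight vector $v^{+}$. One must show $\dim L(\lambda)<\infty$. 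Since $\lambda(h_i)=n_i$, the identity $e_i\,f_i^{\,n_i+1}v^{+}=(n_i+1)\big(\lambda(h_i)-n_i\big)f_i^{\,n_i}v^{+}=0$, together with $[e_j,f_i]=0$ for $j\neq i$, shows that $f_i^{\,n_i+1}v^{+}$ is a singular vector of weight $\lambda-(n_i+1)\bm{\alpha}_i\neq\lambda$; it therefore lies in the maximal submodule and vanishes in $L(\lambda)$. Consequently every $e_i$ and $f_i$ acts locally nilpotently on $L(\lambda)$, which integrates the $\mathfrak{sl}_2$-actions and makes the weight set of $L(\lambda)$ invariant under the Weyl group $W$. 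Being also contained in $\lambda-\sum_i\mathbb{N}_0\bm{\alpha}_i$, this $W$-invariant weight set is bounded, hence finite; each weight space is finite-dimensional, so $\dim L(\lambda)<\infty$. Pulling $L(\lambda)$ back to a representation of $\Gp$ using simple connectedness and unitarizing produces the desired irreducible unitary representation of highest weight $\lambda$.

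The main obstacle is exactly the finite-dimensionality of $L(\lambda)$ — equivalently, the integrability of the $\mathfrak{sl}_2$-actions and the ensuing $W$-invariance of its weights — which rests on the singular-vector computation above and on the Serre relations for $\mathfrak{g}_{\mathbb{C}}$; the remaining steps are bookkeeping with weight decompositions and Schur's lemma. If one prefers to avoid the Verma-module machinery, the same existence statement follows from the Borel--Weil theorem, which realizes $L(\lambda)$ as the space of global holomorphic sections of the homogeneous line bundle $\mathcal{L}_{\lambda}$ over the flag manifold $\Gp_{\mathbb{C}}/B$. Since this is the classical Cartan--Weyl theorem, in the body of the paper we shall simply invoke it in the form collected in \cite{BP2011}.
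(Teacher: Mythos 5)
Your outline is a correct rendition of the classical Cartan--Weyl argument (complexification, highest-weight extraction and integrality via the $\mathfrak{sl}_2$-triples, Schur-type uniqueness, and surjectivity through the Verma quotient $L(\lambda)$ with the singular-vector computation forcing local nilpotence, Weyl-group invariance of the weights, and hence finite dimension); the paper itself offers no proof, treating the theorem as a standard result cited from the highest-weight theory collected in \cite{BP2011}, which is also where your proposal ends up. The only step you leave implicit is that an irreducible \emph{unitary} representation of the compact group $\Gp$ is automatically finite dimensional (Peter--Weyl, already stated in the paper), which is what licenses the replacement of ``irreducible unitary representation of $\Gp$'' by ``irreducible finite-dimensional $\mathfrak{g}_{\mathbb{C}}$-module'' at the outset; with that remark added, the sketch is complete and consistent with the paper's usage.
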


The weight lattice compeletely describes the eigenvalues of $\Delta$.

\begin{theorem}\label{DeltaSpec}
Each space $\mathcal{M}_\xi$ is an eigenspace of the Laplace operator $\Delta$, with eigenvalue
$$
-|\bm{J}(\xi)+\bm{\varpi}|^2+|\bm{\varpi}|^2,
$$
where $\bm{J}(\xi)$ is the highest weight vector corresponding to $\xi$, and $\bm{\varpi}=\sum_{i=1}^\varrho \bm{\varpi}_i$. 
\end{theorem}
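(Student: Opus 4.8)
\textbf{Proof proposal for Theorem \ref{DeltaSpec}.}

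The plan is to reduce the statement to the classical computation of the Casimir eigenvalue on an irreducible representation, and then to translate that formula into the geometric normalization fixed earlier in the section (Riemann metric = opposite of the Killing form, so $\Delta$ is the sum $\sum X_i^2$ over a Killing-orthonormal basis). First I would recall that, by the Peter--Weyl theorem (Theorem \ref{PeterWeyl}), $L^2(\Gp)$ decomposes as the orthogonal sum of the $\mathcal{M}_\xi$, and each $\mathcal{M}_\xi$ is bi-invariant; since $\Delta = \sum_i X_i^2$ is built from left-invariant vector fields, it commutes with right translations, and an elementary argument (the Casimir is central in the universal enveloping algebra $U(\mathfrak{g})$) shows that $\Delta$ acts as a scalar on each $\mathcal{M}_\xi$. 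Thus it suffices to identify that scalar on the irreducible representation $\xi$ with highest weight $\bm{J}(\xi)$.

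The core step is the standard highest-weight computation of the Casimir scalar. Pick a Killing-orthonormal basis adapted to the root-space decomposition: an orthonormal basis $H_1,\dots,H_\varrho$ of $\mathfrak{t}$ together with root vectors $E_{\pm\alpha}$ for each positive root $\alpha$, normalized so that $(E_\alpha, E_{-\alpha})$ pairs correctly under the Killing form. Writing the Casimir as $\sum_k H_k^2 + \sum_{\alpha>0}(E_\alpha E_{-\alpha} + E_{-\alpha}E_\alpha)$ and acting on a highest weight vector $v_{\bm{J}}$ (so $E_\alpha v_{\bm{J}} = 0$ for $\alpha>0$), one commutes each $E_\alpha E_{-\alpha}$ past to pick up $[E_\alpha,E_{-\alpha}] = H_\alpha$, and obtains the eigenvalue $(\bm{J},\bm{J}) + 2(\bm{J},\bm{\varpi})$ where $\bm{\varpi} = \sum_i \bm{\varpi}_i$ is the half-sum of positive roots (equivalently the sum of fundamental weights). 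Completing the square, $(\bm{J},\bm{J}) + 2(\bm{J},\bm{\varpi}) = |\bm{J}+\bm{\varpi}|^2 - |\bm{\varpi}|^2$. Since $\Delta$ here is the \emph{negative} of the usual positive Laplacian (it is $\sum X_i^2$, a nonpositive operator), the eigenvalue on $\mathcal{M}_\xi$ is $-\big(|\bm{J}(\xi)+\bm{\varpi}|^2 - |\bm{\varpi}|^2\big) = -|\bm{J}(\xi)+\bm{\varpi}|^2 + |\bm{\varpi}|^2$, exactly as claimed.

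The only point needing care — and the main obstacle — is bookkeeping of normalizations: one must check that the inner product on $\mathfrak{t}^*$ appearing in the statement (inherited from the Killing form, per the definition of the fundamental weights given just above the theorem) is precisely the one that shows up in the Casimir computation, with no stray factor of $2$ or sign error, given that the Riemann metric on $\Gp$ was declared to be the \emph{opposite} of the Killing form and $\Delta$ the associated Laplace--Beltrami operator. I would verify this by a direct check: $\Delta = \sum_i X_i^2$ for $\{X_i\}$ orthonormal with respect to $-(\cdot,\cdot)_{\text{Killing}}$, and trace through that the resulting scalar, expressed via the dual inner product on $\mathfrak{t}^*$ normalized exactly as in the definition of $\bm{\varpi}_i$, is the stated one. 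Everything else is the textbook argument (see e.g. the references to highest weight theory already cited), so the write-up can be kept short.
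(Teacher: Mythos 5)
Your proposal is correct and is exactly the standard argument the paper relies on: the paper states Theorem \ref{DeltaSpec} without proof, citing it as a standard consequence of highest weight theory (via \cite{BP2011}), namely that the Casimir $-\Delta$ acts on the irreducible of highest weight $\bm{J}$ by $(\bm{J},\bm{J}+2\bm{\varpi})=|\bm{J}+\bm{\varpi}|^2-|\bm{\varpi}|^2$, with the sign flip coming from $\Delta=\sum_i X_i^2$ for a basis orthonormal with respect to the negative of the Killing form. Your normalization bookkeeping (identifying $\sum_i\bm{\varpi}_i$ with the half-sum of positive roots and using the Killing-induced inner product on $\mathfrak{t}^*$) is precisely the check needed, so there is nothing to add.
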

We write $\lambda_\xi=|\bm{J}(\xi)+\bm{\varpi}|^2-|\bm{\varpi}|^2$, so that $\{\lambda_\xi\}$ is the spectrum of the positive self-adjoint operator $-\Delta$. Imitating commutative harmonic analysis, we set the size functions $|\xi|:=\sqrt{|\lambda_\xi|}$, $\size[\xi]:=(1+\lambda_\xi)^{1/2}$. The operator $|\nabla|$, which is the square-root of $-\Delta$ defined via spectral analysis, will be frequently used. The values of $\size[\xi]$ are exactly the eigenvalues of $(1-\Delta)^{1/2}$. For $s\in\mathbb{R}$, the Sobolev space $H^s(\Gp)$ is defined to be the subspace of $f\in\mathcal{D}'(\Gp)$ such that $\|(1-\Delta)^{s/2}f\|_{L^2(\Gp)}<\infty$. By the Peter-Weyl theorem and the above characterization of Laplacian, the Sobolev norm is computed by 
$$
\|f\|_{H^s(\Gp)}^2=\sum_{\xi\in\DuGp} d_\xi\size[\xi]^{2s}\HS[\Ft f(\xi)]^2.
$$

To proceed further, we need to introduce a notion of partial ordering on $\mathscr{L}^+$. Define the cone 
$$
\mathscr{R}^+:=\left\{\sum_{i=1}^\varrho n_i\bm{\alpha}_i,\, n_i\in\mathbb{N}_0\right\}.
$$
We then introduce the \emph{dominance order} on $\mathscr{L}^+$ as follows: for $\bm{j},\bm{k}\in\mathscr{L}^+$, we set $\bm{j}\prec \bm{k}$ if $\bm{j}\neq \bm{k}$ and $\bm{k}-\bm{j}\in\mathscr{R}^+$, and set $\bm{j}\preceq \bm{k}$ to include the possibility $\bm{j}=\bm{k}$. The relation $\prec$ defines a partial ordering on $\mathscr{L}^+$. With the aid of this root system, we obtain the following characterization of product of Laplace eigenfunctions (see, for example, \cite{Procesi2007}, Proposition 3 of page 345):

\begin{proposition}\label{SpecPrd0}
Let $u_{\bm{j}}\in\mathcal{M}_{\Xi(\bm{j})}$, $u_{\bm{k}}\in\mathcal{M}_{\Xi(\bm{k})}$ be Laplace eigenfunctions corresponding to highest weights $\bm{j},\bm{k}\in\mathscr{L}^+$ respectively. Then the product $u_{\bm{j}}u_{\bm{k}}$ is in the space
$$
\bigoplus_{\bm{l}\in\mathscr{L}^+:\,\bm{l}\preceq \bm{j}+\bm{k}}\mathcal{M}_{\Xi(\bm{l})}.
$$
\end{proposition}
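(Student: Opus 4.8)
The plan is to prove Proposition \ref{SpecPrd0} by combining the highest weight theory already recalled with the decomposition of tensor products of irreducible representations. First I would observe that the statement is purely representation-theoretic once we identify $\mathcal{M}_\xi$ correctly: by Peter--Weyl, $\mathcal{M}_{\Xi(\bm{j})}$ is spanned by matrix coefficients $x\mapsto\langle \Xi(\bm{j})(x)v,w\rangle$ of the irreducible representation with highest weight $\bm{j}$, and the pointwise product of a matrix coefficient of $\Xi(\bm{j})$ with one of $\Xi(\bm{k})$ is exactly a matrix coefficient of the \emph{tensor product} representation $\Xi(\bm{j})\otimes\Xi(\bm{k})$ acting on $\Hh[\Xi(\bm{j})]\otimes\Hh[\Xi(\bm{k})]$. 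Concretely, $\langle \Xi(\bm{j})(x)v_1,w_1\rangle\langle \Xi(\bm{k})(x)v_2,w_2\rangle=\langle(\Xi(\bm{j})\otimes\Xi(\bm{k}))(x)(v_1\otimes v_2),w_1\otimes w_2\rangle$, so $u_{\bm j}u_{\bm k}$ lies in the span of matrix coefficients of $\Xi(\bm{j})\otimes\Xi(\bm{k})$.

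Next I would invoke the Clebsch--Gordan / complete reducibility theorem: the finite-dimensional representation $\Xi(\bm{j})\otimes\Xi(\bm{k})$ decomposes as a direct sum $\bigoplus_{\bm l} m_{\bm l}\,\Xi(\bm{l})$ of irreducibles, where the sum runs over a finite subset of $\mathscr{L}^+$ and $m_{\bm l}\in\mathbb{N}_0$ are multiplicities. The span of matrix coefficients of a direct sum of irreducibles is the direct sum of the corresponding $\mathcal{M}_{\Xi(\bm{l})}$ (with repetitions collapsing, since $\mathcal{M}_{\Xi(\bm{l})}$ depends only on the isomorphism class). Hence $u_{\bm j}u_{\bm k}\in\bigoplus_{\bm l:\,m_{\bm l}>0}\mathcal{M}_{\Xi(\bm{l})}$, and it remains to show that every highest weight $\bm l$ appearing with $m_{\bm l}>0$ satisfies $\bm{l}\preceq\bm{j}+\bm{k}$.

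The key step — and the one I expect to be the main obstacle to state cleanly rather than to prove — is the weight-theoretic bound $\bm{l}\preceq\bm{j}+\bm{k}$. This follows from the standard fact that the weights of $\Xi(\bm{j})$ are all of the form $\bm{j}-\sum n_i\bm{\alpha}_i$ with $n_i\in\mathbb{N}_0$, i.e.\ they all satisfy $\mu\preceq\bm{j}$ in the dominance order, and likewise for $\Xi(\bm{k})$. The weights of the tensor product are sums of a weight of $\Xi(\bm{j})$ and a weight of $\Xi(\bm{k})$, hence each such weight $\nu$ satisfies $\nu\preceq\bm{j}+\bm{k}$ (the cone $\mathscr{R}^+$ is closed under addition). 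In particular the highest weight $\bm{l}$ of any irreducible constituent, being itself a weight of $\Xi(\bm{j})\otimes\Xi(\bm{k})$, satisfies $\bm{l}\preceq\bm{j}+\bm{k}$. One should note the top constituent $\bm{l}=\bm{j}+\bm{k}$ always occurs with multiplicity one (the Cartan component), which is where the reference \cite{Procesi2007} is convenient to cite wholesale.

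I would therefore structure the write-up as: (i) reduce the pointwise product of eigenfunctions to matrix coefficients of $\Xi(\bm{j})\otimes\Xi(\bm{k})$; (ii) decompose this tensor product into irreducibles by complete reducibility; (iii) match spans of matrix coefficients with the $\mathcal{M}_\xi$ via Peter--Weyl; (iv) bound the highest weights of the constituents using the dominance order on weights. Since all four ingredients are either in the excerpt (Peter--Weyl, Highest Weight Theorem, the definition of $\prec$) or are textbook highest-weight theory already collected in \cite{BP2011} and \cite{Procesi2007}, the proof is short; the only care needed is to keep the distinction between a representation and its isomorphism class so that the direct sum over $\bm{l}$ in the conclusion is genuinely indexed by $\mathscr{L}^+$ with no repetitions.
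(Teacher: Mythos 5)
Your argument is correct and is essentially the standard proof of this fact: the paper itself offers no independent argument but cites \cite{Procesi2007} (Proposition 3, p.~345), whose proof rests on exactly the ingredients you use — products of matrix coefficients are matrix coefficients of $\Xi(\bm{j})\otimes\Xi(\bm{k})$, complete reducibility, and the dominance bound $\bm{l}\preceq\bm{j}+\bm{k}$ for highest weights of the constituents. No gaps; the multiplicity-one remark about the Cartan component is not needed for the statement.
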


An important spectral localization property for products can be deduced from this characterization, which will play a fundamental role in dyadic analysis:
\begin{corollary}[Corollary 2.3.1. \cite{Shao2023Toolbox}]\label{SpecPrd}
There is a constant $0<c\leq1$, depending only on the algebraic structure of $\Gp$, with the following property. If $u_1\in\mathcal{M}_{\xi_1}$, $u_2\in\mathcal{M}_{\xi_2}$, then the product $u_1u_2\in\bigoplus_{\xi}\mathcal{M}_\xi$, where the range of sum is for
$$
c\big||\xi_1|-|\xi_2|\big|
\leq|\xi|
\leq|\xi_1|+|\xi_2|.
$$
Roughly speaking, in the frequency space, product of two eigenfunctions ``localizes" in between the sum and difference of their frequencies.
\end{corollary}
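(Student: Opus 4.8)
\emph{Proof proposal.} The plan is to deduce both inequalities from Proposition~\ref{SpecPrd0} together with the spectral identity $\lambda_\xi=|\bm J(\xi)+\bm\varpi|^2-|\bm\varpi|^2$ of Theorem~\ref{DeltaSpec}. Throughout I abbreviate $D:=|\bm\varpi|^2$, so that $|\xi|^2=|\bm J(\xi)+\bm\varpi|^2-D$ for every $\xi$. The single geometric lemma I need is a monotonicity statement: if $\bm\mu\preceq\bm\nu$ are both \emph{dominant} weights, then $|\bm\mu|\le|\bm\nu|$. This follows at once from $|\bm\nu|^2-|\bm\mu|^2=(\bm\nu-\bm\mu,\bm\nu+\bm\mu)\ge0$, using that $\bm\nu-\bm\mu\in\mathscr{R}^+$, that $\bm\nu+\bm\mu$ is dominant, and that $(\bm\alpha_i,\bm\varpi_j)=\tfrac12\delta_{ij}|\bm\alpha_i|^2\ge0$ forces every element of $\mathscr{R}^+$ to pair nonnegatively with every dominant weight. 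I will also use freely that inner products of dominant weights are nonnegative and that $\bm\varpi$ is strictly dominant, so $(\bm J,\bm\varpi)\ge0$ for any dominant $\bm J$ (strictly when $\bm J\ne0$); in particular $|\bm J|^2=|\bm J+\bm\varpi|^2-2(\bm J,\bm\varpi)-D\le|\bm J+\bm\varpi|^2-D$.

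For the upper bound, Proposition~\ref{SpecPrd0} applies directly to $u_1u_2$ and shows that any $\xi$ with nonzero $\mathcal{M}_\xi$-component in $u_1u_2$ satisfies $\bm J(\xi)+\bm\varpi\preceq\bm J(\xi_1)+\bm J(\xi_2)+\bm\varpi$, both sides dominant, so $|\bm J(\xi)+\bm\varpi|^2\le|\bm J(\xi_1)+\bm J(\xi_2)+\bm\varpi|^2$ by the lemma. Expanding the right-hand side and using $|\bm J(\xi_i)+\bm\varpi|^2-D=|\xi_i|^2$ together with $(\bm J(\xi_i),\bm\varpi)\ge0$, the inequality collapses to $(\bm J(\xi_1),\bm J(\xi_2))\le|\xi_1|\,|\xi_2|$, which is Cauchy--Schwarz once one notes $|\bm J(\xi_i)|^2=|\xi_i|^2-2(\bm J(\xi_i),\bm\varpi)\le|\xi_i|^2$. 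Hence $|\xi|\le|\xi_1|+|\xi_2|$.

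For the lower bound I intend to use an adjointness maneuver. Suppose the $\mathcal{M}_\xi$-component $v$ of $u_1u_2$ is nonzero; then $0\ne\|v\|_{L^2}^2=\langle u_1u_2,v\rangle=\langle u_1,\overline{u_2}\,v\rangle$, so the $\mathcal{M}_{\xi_1}$-component of $\overline{u_2}\,v$ is nonzero. Since $\overline{\mathcal{M}_{\xi_2}}=\mathcal{M}_{\xi_2^*}$ for the contragredient $\xi_2^*$, and since the Casimir acts on $\xi_2^*$ with the same eigenvalue as on $\xi_2$ (so $|\bm J(\xi_2^*)+\bm\varpi|=|\bm J(\xi_2)+\bm\varpi|=\sqrt{|\xi_2|^2+D}$), Proposition~\ref{SpecPrd0} applied to $\overline{u_2}\cdot v$ yields $\bm J(\xi_1)+\bm\varpi\preceq\bm J(\xi)+\bm\varpi+\bm J(\xi_2^*)$, both sides dominant. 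By the lemma, the triangle inequality, and $|\bm J(\xi_2^*)|\le|\bm J(\xi_2^*)+\bm\varpi|$, this gives $\sqrt{|\xi_1|^2+D}\le\sqrt{|\xi|^2+D}+\sqrt{|\xi_2|^2+D}$. Since the product is commutative we may assume $|\xi_1|\ge|\xi_2|$, whence
\[
|\xi|^2\ \ge\ \Bigl(\sqrt{|\xi_1|^2+D}-\sqrt{|\xi_2|^2+D}\Bigr)^{2}-D .
\]

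I expect this last step to be the main obstacle: the additive constant $D=|\bm\varpi|^2$ --- an artifact of the $\bm\varpi$-shift in the Casimir eigenvalue --- swamps $|\xi_1|-|\xi_2|$ exactly when the latter is small, so the display does not by itself deliver $|\xi|\ge c\,\bigl||\xi_1|-|\xi_2|\bigr|$. I plan to remedy this by fixing a threshold $K=K(\Gp)$ and splitting into cases. When $|\xi_1|-|\xi_2|\ge K$, I will write the difference of square roots as $\frac{|\xi_1|^2-|\xi_2|^2}{\sqrt{|\xi_1|^2+D}+\sqrt{|\xi_2|^2+D}}\ge\frac{K}{K+2\sqrt D}\,(|\xi_1|-|\xi_2|)$ (using $|\xi_1|\ge K$ and $\sqrt{|\xi_i|^2+D}\le|\xi_i|+\sqrt D$), and then absorb the $-D$ into $(|\xi_1|-|\xi_2|)^2\ge K^2$; taking $K$ large makes the resulting coefficient at least $\tfrac12$. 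When $0<|\xi_1|-|\xi_2|<K$, I will note that $\xi$ cannot be trivial --- a nonzero constant Fourier mode in $u_1u_2$ would force $\xi_2\cong\xi_1^{*}$ and hence $|\xi_1|=|\xi_2|$ --- so $|\xi|\ge\sqrt{\delta_0}$, where $\delta_0>0$ is the least nonzero eigenvalue of $-\Delta$ (attained and positive, since $\{\bm J(\eta)\}$ is discrete and $(\bm J(\eta),\bm\varpi)>0$ whenever $\bm J(\eta)\ne0$), giving $|\xi|\ge\frac{\sqrt{\delta_0}}{K}(|\xi_1|-|\xi_2|)$. The case $|\xi_1|=|\xi_2|$ is trivial. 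The constant $c:=\min\{\tfrac1{\sqrt2},\tfrac{\sqrt{\delta_0}}{K}\}$ then works and depends only on the algebraic structure of $\Gp$, which will complete the proof.
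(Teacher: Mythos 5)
Your argument is correct, and every step checks out against the ingredients available in this paper: the upper bound follows from Proposition \ref{SpecPrd0} plus the monotonicity of the norm along the dominance order after the $\bm\varpi$-shift (your expansion reducing matters to $(\bm J(\xi_1),\bm J(\xi_2))\le|\xi_1||\xi_2|$ is right, since $|\bm J(\xi_i)|\le|\xi_i|$); the lower bound via the duality pairing $\langle u_1u_2,v\rangle=\langle u_1,\overline{u_2}v\rangle$, the identification $\overline{\mathcal{M}_{\xi_2}}=\mathcal{M}_{\xi_2^*}$ with $|\xi_2^*|=|\xi_2|$, and the two-regime treatment of the additive constant $|\bm\varpi|^2$ (large gap: quotient estimate for the difference of square roots; small gap: exclusion of the trivial representation and the spectral gap $\delta_0$) is sound, and the resulting constant $c$ depends only on $|\bm\varpi|^2$ and $\delta_0$, hence only on $\Gp$. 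Note, however, that this paper does not prove the corollary at all — it is quoted from the companion toolbox paper \cite{Shao2023Toolbox} — so there is no in-paper proof to compare with; your derivation is a legitimate, self-contained reconstruction from Proposition \ref{SpecPrd0} and Theorem \ref{DeltaSpec}, with the only genuinely nontrivial ideas being exactly the duality maneuver for the lower frequency cut-off and the case split needed to absorb the $\bm\varpi$-shift, both of which you supply correctly.
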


\subsection{Symbol and Symbolic Calculus}
A global notion of symbol can be defined on Lie groups due to their high symmetry. An explicit symbolic calculus was formally constructed by a series of works of Ruzhansky, Turunen, Wirth. Fischer's work \cite{Fis2015} provided a complete study of the object. We will closely follow \cite{Fis2015}.

We fix our compact Lie group $\Gp$ as in the previous subsection. The starting point will be a Taylor's formula on $\Gp$.
\begin{proposition}[Taylor expansion]\label{TaylorGp}
Let $q=(q_1,\cdots,q_M)$ be an $M$-tuple of smooth functions on $\Gp$, all vanishing at $e\in\Gp$, such that $(\nabla q_i)_{i=1}^M$ has rank $n$. For a multi-index $\alpha\in\mathbb{N}_0^m$, set $q^\alpha:=q_1^{\alpha_1}\cdots q_M^{\alpha_M}$. Corresponding to each multi-index $\alpha$, there is a left-invariant differential operator $X_q^{(\alpha)}$ of order $|\alpha|$ on $\Gp$, such that the following Taylor's formula holds for every smooth function $f$ on $\Gp$ and every $N\in\mathbb{N}_0$:
$$
f(xy)=\sum_{|\alpha|<N}q^\alpha(y^{-1})X_q^{(\alpha)}f(x)
+R_{N}(f;x,y).
$$
Here the remainder $R_{N}(f;x,y)$ depends linearly on derivatives of $f$ of order $\leq N$, is smooth in $x,y$, and satisfies $R_{N}(f;x,y)=O\big(|f|_{C^N}\dist(y,e)^N\big)$.
\end{proposition}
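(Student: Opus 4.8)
\textit{Proof proposal.}
The plan is to reduce the statement to a neighbourhood of $e$, apply the ordinary multivariate Taylor theorem in a local chart, and then recognize the Euclidean Taylor coefficients as left-invariant differential operators acting on $f$ via the identity $f(xy)=(f\circ L_x)(y)$, finally converting the coordinate monomials into the monomials $q^\alpha(y^{-1})$ by a change-of-variables argument that respects the order of vanishing at $e$. First I would fix a relatively compact open neighbourhood $\mathcal{V}\ni e$ and observe that, once the operators $X_q^{(\alpha)}$ are constructed, one simply \emph{defines} $R_N(f;x,y):=f(xy)-\sum_{|\alpha|<N}q^\alpha(y^{-1})X_q^{(\alpha)}f(x)$ globally; this is automatically smooth in $(x,y)$ and linear in $f$. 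On $\Gp\setminus\mathcal{V}$ one has $\dist(y,e)\ge c_0>0$, and since $\Gp$ is compact the $q^\alpha$ are bounded while $|X_q^{(\alpha)}f(x)|\lesssim|f|_{C^{N-1}}$, so there $|R_N|\lesssim|f|_{C^{N-1}}\le c_0^{-N}\dist(y,e)^N|f|_{C^N}$. Hence only the bound for $y$ near $e$, together with the construction of the operators, remains.

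For the local construction I would choose a smooth chart $\kappa\colon\mathcal{V}\to\mathbb{R}^n$ with $\kappa(e)=0$, and for a multi-index $\beta$ and smooth $\phi$ near $e$ set $\partial_\kappa^\beta\phi(e):=\partial_z^\beta(\phi\circ\kappa^{-1})(0)$. Define $D^{(\beta)}f(x):=\partial_\kappa^\beta\big[y\mapsto f(xy)\big](e)$. Because $f(xy)=(f\circ L_x)(y)$, each $D^{(\beta)}$ is a left-invariant differential operator of order $|\beta|$: left-invariance is the identity $D^{(\beta)}(f\circ L_z)(x)=\partial_\kappa^\beta[y\mapsto f(zxy)](e)=D^{(\beta)}f(zx)$, and the order — in fact $D^{(\beta)}$ is a constant-coefficient combination of the normally ordered operators $X^\gamma$ of (\ref{NormalOrder}) with $|\gamma|\le|\beta|$ — follows from the chain rule for the multiplication map at $e$. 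Ordinary Taylor's theorem in the chart $\kappa$, applied to $f\circ L_x$, then yields for $y\in\mathcal{V}$
\[
f(xy)=\sum_{|\beta|<N}\frac{1}{\beta!}\,D^{(\beta)}f(x)\,\kappa(y)^\beta+\rho_N(f;x,y),
\]
where, by the integral form of the Euclidean remainder, $\rho_N$ is a linear functional of the order-$\le N$ derivatives of $f$ at the points $x\kappa^{-1}(tz)$, $t\in[0,1]$, with $|\rho_N(f;x,y)|\lesssim|f|_{C^N}|\kappa(y)|^N\simeq|f|_{C^N}\dist(y,e)^N$, using the comparability of $|\kappa(y)|$ and $\dist(y,e)$ near $e$.

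It then remains to replace the coordinate monomials $\kappa(y)^\beta$ by the monomials $q^\alpha(y^{-1})=\prod_j q_j(y^{-1})^{\alpha_j}$. Put $\tilde q_j(y):=q_j(y^{-1})$; then $\tilde q_j(e)=0$ and $d\tilde q_j|_e=-dq_j|_e$, so the differentials $d\tilde q_j|_e$ still span $T_e^*\Gp$. Choosing $n$ of the $\tilde q_j$ with independent differentials at $e$ — which by the inverse function theorem form a chart near $e$ — each coordinate $\kappa_i$ is a smooth function of them vanishing at $e$; feeding this into $\kappa(\cdot)^\beta=\prod_i\kappa_i^{\beta_i}$ and expanding by induction on order of vanishing (a purely Euclidean computation) shows that, modulo the ideal $\mathfrak{m}_e^N$ of smooth functions vanishing to order $N$ at $e$, every $\kappa(\cdot)^\beta$ with $|\beta|<N$ is a finite linear combination of the $\tilde q^\alpha$ with $|\beta|\le|\alpha|<N$, its degree-$|\beta|$ part being a combination of those with $|\alpha|=|\beta|$. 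Substituting and collecting terms produces left-invariant operators $X_q^{(\alpha)}$ of order $|\alpha|$ — each a constant-coefficient combination of the $D^{(\beta)}$ with $|\beta|\le|\alpha|$ — and the identity
\[
f(xy)=\sum_{|\alpha|<N}q^\alpha(y^{-1})\,X_q^{(\alpha)}f(x)+R_N(f;x,y),
\]
where $R_N$ equals $\rho_N$ plus the $\mathfrak{m}_e^N$-truncation errors; each of the latter is an order-$\le N-1$ derivative of $f$ at $x$ times a smooth function of $y$ vanishing to order $N$, hence $O(|f|_{C^{N-1}}\dist(y,e)^N)$ and absorbed. Linearity in $f$ (with dependence only on derivatives of order $\le N$), smoothness in $(x,y)$, and the bound $O(|f|_{C^N}\dist(y,e)^N)$ follow, and this agrees with the global formula for $R_N$ used on $\Gp\setminus\mathcal{V}$.

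The hard part will be organizational rather than conceptual: tracking, through the change of variables $\kappa_i\leftrightarrow\tilde q_j$ and the truncation at order $N$, that every incurred error is genuinely of size $O(|f|_{C^{N-1}}\dist(y,e)^N)$ — so that nothing worse than $|f|_{C^N}$ ever appears — and verifying cleanly that the assembled combinations $X_q^{(\alpha)}$ are left-invariant differential operators of the asserted order. It is worth noting that when $M>n$ the operators $X_q^{(\alpha)}$ are not unique, since then the $q^\alpha$ satisfy linear relations modulo functions vanishing to higher order at $e$; any admissible choice works, and this ambiguity is harmless in the applications.
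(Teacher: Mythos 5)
Your proposal is correct, and it is essentially the same argument as the one behind the cited toolbox result (the paper defers the proof to \cite{Shao2023Toolbox}, which follows the Ruzhansky--Turunen--Fischer construction): work in a chart at $e$, apply the Euclidean Taylor formula to $f\circ L_x$ so the coefficients become left-invariant operators, and then rewrite the coordinate monomials as combinations of the $q^\alpha(y^{-1})$ modulo functions vanishing to order $N$, absorbing all truncation errors (and the region away from $e$) into the remainder. Your closing remarks on non-uniqueness of $X_q^{(\alpha)}$ for $M>n$ and on the uniform-in-$x$ bound are the right points to flag, and the proof as outlined goes through.
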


Note that the differential operators $X_q^{(\alpha)}$ are defined independently for every multi-index, so the equality $X_q^{(\alpha+\beta)}=X_q^{(\alpha)}X_q^{(\beta)}$ is, in general, not valid.

A \emph{symbol} $a$ on $\Gp$ is simply a field $a$ defined on $\Gp\times\DuGp$, such that $a(x,\xi)$ is a distribution of value in $\mathrm{End}(\Hh[\xi])$ for each $\xi\in\DuGp$. If a basis for $\Hh[\xi]$ is chosen, the value $a(x,\xi)$ can be simply understood as a matrix function of size $d_\xi\times d_\xi$. We define the \emph{quantization} of a symbol $a$ formally by
\begin{equation}\label{Op(a)}
\Op(a)f(x):=\sum_{\xi\in\DuGp} d_\xi\Tr\left(a(x,\xi)\cdot\Ft f(\xi)\cdot\xi(x)\right).
\end{equation}
Conversely, if $A:C^\infty(\Gp)\to\mathcal{D}'(\Gp)$ is a continuous operator, then it is the quantization of the symbol
\begin{equation}\label{Op-Symbol}
\sigma[A](x,\xi):=\xi^*(x)\cdot(A\xi)(x).
\end{equation}
Here $A\xi$ is understood as entry-wise action. In this case the series (\ref{Op(a)}) then converges in $\mathcal{D}'(\Gp)$. The \emph{associated right convolution kernel} for $a(x,\xi)$ is defined by
\begin{equation}\label{Symbol-Ker}
\mathcal{K}(x,y):=\left(a(x,\cdot)\right)^\vee(y)
=\sum_{\xi\in\DuGp}d_\xi\Tr\left(a(x,\xi)\cdot\xi(y)\right),
\end{equation}
where the Fourier inversion is taken with respect to $\xi$. Formally, the action $\Op(a)f$ can be written as a convolution:
\begin{equation}\label{Ker-Conv}
f(\cdot)*\mathcal{K}(x,\cdot)
=\int_{\Gp}f(y)\mathcal{K}(x,y^{-1}x)dy.
\end{equation}

An intrinsic notion of difference operators acting on symbols was introduced by Fischer \cite{Fis2015}, generalizing the differential operator with respect to $\xi$ in harmonic analysis on $\mathbb{R}^n$. For any (continuous) unitary representation $(\tau,\mathcal{H}_\tau)$, Maschke's theorem ensures that $\tau=\oplus_j\xi_j$ for finitely many $\xi_j\in\DuGp$. A symbol $a(x,\xi)$ can be naturally extended to any $\tau$ by $a(x,\tau)=\oplus_ja(x,\xi_j)$, up to equivalence of representations. The definition of difference operators is then given by
\begin{definition}
Given any extended symbol $a(x,\cdot)$ and representation $\tau$, the difference operator $\Df_\tau$ gives rise to a new extended symbol in the following manner:
$$
\Df_\tau a(x,\pi):=a(x,\tau\otimes\pi)-a(x,\I[\tau]\otimes\pi).
$$
\end{definition}
For a tuple of representations $\boldsymbol{\tau}=(\tau_1,\cdots,\tau_p)$, write $\Df^{\boldsymbol{\tau}}=\Df_{\tau_1}\cdots\Df_{\tau_p}$, and $\Df^{\boldsymbol{\tau}}a(x,\xi)$ is then an endomorphism of $\mathcal{H}_{\tau_1}\otimes\cdots\otimes\mathcal{H}_{\tau_p}\otimes\Hh[\xi]$.

Corresponding to the functions $q=\{q_i\}_{i=1}^M$ as in Proposition \ref{TaylorGp}, Ruzhansky et al. \cite{RT2009} -- \cite{RTW2014} introduced the so-called \emph{admissible difference operators}:

\begin{definition}\label{RTAdm}
An $M$-tuple of smooth functions $q=(q_i)_{i=1}^M$ on $\Gp$ is said to be RT-admissible if they all vanishing at $e\in\Gp$ and $(\nabla q_i)_{i=1}^M$ has rank $n$. If in addition the only common zero of $(q_i)_{i=1}^M$ is the identity element, then the $M$-tuple is said to be strongly RT-admissible.
\end{definition}

\begin{definition}\label{Difference}
Given a function $q\in C^\infty(\Gp)$, the corresponding RT-difference operator $\Df_q$ acts on the Fourier transform of a $f\in\mathcal{D}'(\Gp)$ by
$$
\Big(\Df_q\Ft f\Big)(\xi):=\Ft{qf}(\xi).
$$
The corresponding collection of RT-difference operators corresponding to an $M$-tuple $q=(q_i)_{i=1}^M$ is the set of difference operators
$$
\Df_q^\alpha:=\Df_{q^\alpha}=\Df_{q_1}^{\alpha_1}\cdots\Df_{q_M}^{\alpha_M}.
$$
If the tuple $q$ is RT-admissible (strongly RT-admissible), the corresponding collection of RT difference operators is said to be RT-admissible (strongly RT-admissible).
\end{definition}

We write $\Df_{q,\xi}$ for the action of a difference operator on the $\xi$ variable. Formally, we have
\begin{equation}\label{Diff_qa}
(\Df_{q,\xi}a)(x,\xi)=\int_\Gp q(y)\left(\sum_{\eta\in\DuGp} d_\eta\Tr\left(a(x,\eta)\cdot\eta(y)\right)\right)\xi^*(y)dy,
\end{equation}
so $\Df_{q,\xi}$ commutes with any differential operator acting on $x$. To compare with $\mathbb{R}^n$, we simply notice that given a symbol $a(x,\xi)$ on $\mathbb{R}^n$, the Fourier inversion of $\partial_\xi a(x,\xi)$ with respect to $\xi$ is $iy a^{\vee}(x,y)$, i.e. multiplication by a polynomial. The functions $q$ on $\Gp$ then play the role of monomials on $\mathbb{R}^n$.

With the aid of difference operators, Fischer \cite{Fis2015} introduced symbol classes of interest.
\begin{definition}\label{S^mrd}
Let $m\in\mathbb{R}$, $0\leq\delta\leq\rho\leq1$. Fix a basis $X_1,\cdots,X_n$ of $\mathfrak{g}$, and define $X^\alpha$ as in Proposition \ref{NormalOrder}. The symbol class $\mathscr{S}^m_{\rho,\delta}(\Gp)$ is the set of all symbols $a(x,\xi)$, such that $a(x,\xi)$ is smooth in $x\in\Gp$, and for any tuple of representation $\boldsymbol{\tau}=(\tau_1,\cdots,\tau_p)$ and any left-invariant differential operator $X^\alpha$, there is a constant $C_{\alpha\boldsymbol{\tau}}$ such that
$$
\big\|X^\alpha\Df^{\boldsymbol{\tau}}a(x,\xi)\big\|
\leq C_{\alpha\boldsymbol{\tau}}\size[\xi]^{m-\rho p+\delta|\alpha|}.
$$
Here the norm is taken to be the operator norm of $\mathcal{H}_{\tau_1}\otimes\cdots\otimes\mathcal{H}_{\tau_p}\otimes\Hh[\xi]$.
\end{definition}
\begin{definition}\label{S^mrdAdmi}
Let $m\in\mathbb{R}$, $0\leq\delta\leq\rho\leq1$. Fix a basis $X_1,\cdots,X_n$ of $\mathfrak{g}$, and define $X^\alpha$ as in Proposition \ref{NormalOrder}. Let $q=(q_1,\cdots,q_M)$ be an RT-admissible $M$-tuple. The symbol class $\mathscr{S}^m_{\rho,\delta}(\Gp;\Df_q)$ is the set of all symbols $a(x,\xi)$, such that $a(x,\xi)$ is smooth in $x\in\Gp$, and 
$$
\big\|X^{\alpha}_x\Df_{q,\xi}^{\beta}a(x,\xi)\big\|
\leq C_{\alpha}\size[\xi]^{m+\delta|\alpha|-\rho|\beta|}.
$$
We can also introduce the norms
\begin{equation}\label{RhoDeltaNorm}
\mathbf{M}_{k,\rho;l,\delta;q}^m(a):=\sup_{|\alpha|\leq k}\sup_{|\beta|\leq l}
\sup_{x,\xi}\size[\xi]^{\rho|\beta|-m-\delta|\alpha|}
\left\|X^\alpha_x\Df_{q;\xi}^\beta a(x,\xi)\right\|.
\end{equation}
In particular, for $\rho=\delta=1$, we write $\mathbf{M}_{k,l;q}^m(a)$ for simplicity.
\end{definition}

In \cite{Fis2015}, Fischer proved that if $q=(q_i)_{i=1}^M$ is a strongly RT-admissible tuple, then the symbol class in Definition \ref{S^mrdAdmi} does not depend on the choice of $q$, and in fact gives rise to the usual H\"{o}rmander class of pseudo-differential operators.

\begin{theorem}[Fischer \cite{Fis2015}, Theorem 5.9. and Corollary 8.13.]\label{Fischer}
\hfill\par
(1) Suppose $0\leq\delta\leq\rho\leq1$. If $q=(q_i)_{i=1}^M$ is a strongly RT-admissible tuple, then a symbol $a\in \mathscr{S}^m_{\rho,\delta}$ if and only if all the norms $\mathbf{M}_{r,\rho;l,\delta;q}^m(a)$ are finite.

(2) Moreover, if $\rho>\delta$ and $\rho\geq1-\delta$, then the operator class $\Op \mathscr{S}^m_{\rho,\delta}$ coincides with the H\"{o}rmander class $\Psi^m_{\rho,\delta}$ of $(\rho,\delta)$-pseudo-differential operators defined via local charts.
\end{theorem}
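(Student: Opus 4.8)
The plan is to route all three descriptions of $\mathscr{S}^m_{\rho,\delta}$ appearing in the statement --- Fischer's intrinsic difference operators $\Df^{\boldsymbol{\tau}}$, the RT-difference operators $\Df_q$ attached to a strongly RT-admissible $q$, and the local-chart H\"{o}rmander class --- through one coordinate- and representation-free object, the associated right convolution kernel $\mathcal{K}_a(x,y)$ of (\ref{Symbol-Ker}). The first step is the \emph{kernel dictionary}. Since $\Df_q$ is Fourier-dual to multiplication by $q$, the kernel of $X^\alpha_x\Df^\beta_{q,\xi}a$ is simply $q(y)^\beta\,X^\alpha_x\mathcal{K}_a(x,y)$; likewise, taking $q$ to be the entries of $\tau(\cdot)-\mathrm{Id}$ identifies $\Df^{\boldsymbol{\tau}}a$ with a matrix-coefficient-weighted version of $\mathcal{K}_a$. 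Feeding in the Plancherel weights $d_\xi$, the eigenvalue asymptotics $\lambda_\xi\sim|\bm{J}(\xi)|^2$ of Theorem \ref{DeltaSpec}, and the dyadic localization of Corollary \ref{SpecPrd}, one converts \emph{both} families of symbol estimates (Definitions \ref{S^mrd} and \ref{S^mrdAdmi}) into the single assertion that $\mathcal{K}_a(x,\cdot)$ is smooth and rapidly decaying on the Fourier-dyadic scale away from $e$ and obeys, near the diagonal,
$$
\big|X^\alpha_x\, Y^\beta_y\, \mathcal{K}_a(x,y)\big|\;\lesssim\;\dist(y,e)^{-N(m,\rho,\delta,|\alpha|,|\beta|)}\qquad(y\neq e),
$$
with $Y^\beta_y$ running over left-invariant fields in $y$ and $N$ the exponent dictated by the Euclidean $(\rho,\delta)$-kernel calculus (with the customary logarithmic correction at the borderline) --- i.e. the kernel profile of a $(\rho,\delta)$-operator transplanted to $\Gp$.

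Granting the dictionary, Part (1) is soft. Two strongly RT-admissible tuples $q,q'$ both vanish only at $e$ with $(\nabla q_i)$ of rank $n$, so $\sum_i q_i^2\asymp\dist(\cdot,e)^2\asymp\sum_j(q_j')^2$ near $e$ and both are $\asymp1$ elsewhere; Hadamard's lemma --- the rank condition gives local coordinates near $e$, and a partition of unity patches the region away from $e$ where the $q_i$ are jointly bounded below --- yields smooth $g_{ij}$ with $q_j'=\sum_i q_ig_{ij}$, hence a Leibniz identity $\Df_{q_j'}=\sum_i\Df_{q_i}\circ(\text{mult.\ by }g_{ij})+(\text{lower order})$ and the two $q$-norm families dominate each other. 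Taking $q$ to be the entries of $\tau(\cdot)-\mathrm{Id}$ for a faithful finite-dimensional representation $\tau$ of $\Gp$ --- a strongly RT-admissible tuple --- makes $\Df_q$ the components of $\Df_\tau$, and a further Leibniz/asymptotic step expresses a general $\Df_{\tau'}$ through products of these; so the $q$-class equals Fischer's class, and the ``if and only if'' is just the two-sided statement that both sides encode the kernel bounds above.

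For Part (2) I would transplant the kernel characterization to a finite atlas. In a chart $\kappa\colon U\to\mathbb{R}^n$, pseudolocality lets one cut $\mathcal{K}_a$ off near the diagonal (the far part is smoothing, hence in $\Psi^{-\infty}$), and in $\kappa$-coordinates the quasi-homogeneous bounds above are exactly the kernel estimates characterizing a classical $(\rho,\delta)$-pseudodifferential operator; inverting the Euclidean kernel-to-symbol correspondence produces a local amplitude in H\"{o}rmander's class. Conversely a chartwise $\Psi^m_{\rho,\delta}$ operator on $\Gp$ has quasi-homogeneous kernel bounds in each chart, hence globally, hence --- by Part (1) --- a global symbol in $\mathscr{S}^m_{\rho,\delta}$. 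The hypotheses $\rho>\delta$ and $\rho\geq1-\delta$ enter here and only here: they are H\"{o}rmander's condition guaranteeing that ``$(\rho,\delta)$-operator in local charts'' is a diffeomorphism-invariant notion (amplitudes transform into amplitudes of the same class modulo lower order), so the chartwise identification glues to a well-defined operator class on the manifold --- absent this condition the right-hand side of (2) is not even well posed.

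The main obstacle is the kernel dictionary of the first step. Making ``difference-operator estimates $\Longleftrightarrow$ quasi-homogeneous kernel estimates'' rigorous on $\Gp$ demands a genuine Littlewood--Paley-type analysis adapted to the group: one must control the sum $\sum_\xi d_\xi\,\Tr(a(x,\xi)\xi(y))$ defining $\mathcal{K}_a$ through the Plancherel weights and the eigenvalue growth, passing between operator-norm symbol bounds and pointwise kernel bounds uniformly across the dyadic scale; and the Hadamard division in the maximal ideal at $e$ must be carried out with uniform smooth control across the join of the coordinate region near $e$ and its complement. Once these two technical points are secured, the independence of $q$, the identification with $\Df^{\boldsymbol{\tau}}$, and the chart comparison are all bookkeeping with Leibniz rules and standard Euclidean pseudodifferential facts.
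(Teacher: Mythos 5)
This statement is imported verbatim from Fischer \cite{Fis2015} (Theorem 5.9 and Corollary 8.13); the paper gives no proof of it, so your proposal has to stand on its own, and as written it does not. The pivot of your argument, the ``kernel dictionary,'' asserts that the symbol estimates of Definitions \ref{S^mrd} and \ref{S^mrdAdmi} are \emph{equivalent} to quasi-homogeneous pointwise bounds on $X^\alpha_x Y^\beta_y\mathcal{K}_a(x,y)$ away from $y=e$. Only one direction of that is true: symbol estimates do imply such kernel bounds (summation by parts against $q^\beta(y)\xi(y)$), but pointwise kernel bounds off the diagonal are far from sufficient for membership in a $(\rho,\delta)$ symbol class, already on $\mathbb{R}^n$. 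Operators with smooth Calder\'on--Zygmund kernels off the diagonal need not be pseudo-differential at all (generic CZ operators, paraproducts with rough symbols), and no amount of decay of the kernel away from the origin controls, say, $\sup_\xi\|a(x,\xi)\|\size[\xi]^{-m}$ or the gain $\size[\xi]^{-\rho|\beta|}$ under differences; that information sits in the fine (dyadic-piece-by-dyadic-piece) structure of the kernel near $e$, not in its off-diagonal profile. Consequently the step in Part (2) where you ``invert the Euclidean kernel-to-symbol correspondence'' to produce a local amplitude in $\Psi^m_{\rho,\delta}$ does not exist: there is no characterization of H\"{o}rmander classes by kernel bounds alone, and the same defect undercuts your reduction of the two definitions in Part (1) to ``the single assertion'' about $\mathcal{K}_a$. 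The actual proofs go differently: equivalence with the chartwise H\"{o}rmander class is obtained either via a commutator characterization of $\Psi^m_{\rho,\delta}$ (Beals--Cordes type, as in Ruzhansky--Turunen for $\rho=1,\delta=0$) or, as in Fischer, by comparing the global symbol with the localized Euclidean symbols directly through Taylor expansion and oscillatory-integral remainder estimates, with kernel bounds used only as auxiliary (necessary) estimates; and it is precisely there, not merely for well-posedness of the chart definition, that $\rho>\delta$, $\rho\geq1-\delta$ are used.

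The parts of your sketch that do not pass through the kernel dictionary are essentially sound and close in spirit to what is actually done: the independence of the class from the choice of strongly RT-admissible tuple via Hadamard division near $e$ (rank condition) plus a lower bound on $\sum_i q_i^2$ away from $e$, the Leibniz rule (\ref{Leibniz}), and the expression of the intrinsic differences $\Df_{\boldsymbol{\tau}}$ through RT-differences for the fundamental tuple, since matrix coefficients of any $\tau$ minus $\delta_{ij}$ lie in the ideal generated by the fundamental entries. If you replace the kernel-based equivalence by one of the two standard mechanisms above (commutator characterization, or local symbol comparison with controlled remainders), the outline becomes a viable reconstruction of Fischer's theorem; as it stands, Part (2) in particular would fail at the converse step.
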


A convenient choice of strongly RT-admissible tuple is necessary for calculations. From now on, we will just defined the \emph{fundamental tuple of $\Gp$} as
\begin{equation}\label{QFund}
Q:=\bigcup_{\tau:\text{ fundamental representation}}\left\{\tau_{ij}-\delta_{ij}:i,j=1,\cdots,d_\tau\right\}.
\end{equation}
It is not hard to verify that the tuple $Q$ is indeed a strongly RT-admissible tuple. 

A particular property of the fundamental tuple $Q$ deserves a specific mention: if $\tau$ is a fundamental representation of $\Gp$, then for $q_{ij}(x)=\tau_{ij}(x)-\delta_{ij}$, there holds
\begin{equation}\label{LeibnizFund}
q_{ij}(xy)=q_{ij}(x)+q_{ij}(y)+\sum_{k=1}^{d_\tau}q_{ik}(x)q_{kj}(y).
\end{equation}
Thus we have the Leibniz type property:
\begin{equation}\label{Leibniz}
\Df_{q_{ij}}(ab)
=\Df_{q_{ij}}a\cdot b+a\cdot\Df_{q_{ij}}b
+\sum_{k=1}^{d_\tau}\Df_{q_{ik}}a\cdot \Df_{q_{kj}}b.
\end{equation}

\begin{theorem}[\cite{Fis2015}, Corollary 7.9.]\label{RegCompo}
Suppose $0\leq\delta<\rho\leq1$. If $a\in\mathscr{S}^m_{\rho,\delta}$, $b\in\mathscr{S}^{m'}_{\rho,\delta}$, then the composition $\Op(a)\circ\Op(b)\in\Op\mathscr{S}^{m+m'}_{\rho,\delta}$, and in fact if $q$ is any strongly RT-admissible tuple and $X^{(\alpha)}_q$ is as in Proposition \ref{TaylorGp}, then the symbol $\sigma$ of $\Op(a)\circ\Op(b)$ satisfies
$$
\sigma(x,\xi)-\sum_{|\alpha|< N}\left(\Df_{q,\xi}^\alpha a\cdot X_{q,x}^{(\alpha)}b\right)(x,\xi)
\in\mathscr{S}^{m+m'-(\rho-\delta)N}_{\rho,\delta}.
$$
One can thus write
$$
\sigma(x,\xi)\sim\sum_{\alpha}\left(\Df_{q,\xi}^\alpha a\cdot X_{q,x}^{(\alpha)}b\right)(x,\xi).
$$
\end{theorem}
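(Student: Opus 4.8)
The plan is to mirror the classical symbol calculus on $\mathbb{R}^n$, with the oscillatory integrals replaced by the kernel representation (\ref{Ker-Conv}) and the Taylor expansion of the second symbol in its space variable replaced by Taylor's formula on $\Gp$ (Proposition \ref{TaylorGp}). I would fix once and for all a strongly RT-admissible tuple — the fundamental tuple $Q$ of (\ref{QFund}), so that the Leibniz rule (\ref{Leibniz}) is available — and record the bookkeeping used throughout: by Theorem \ref{Fischer}(1) the class $\mathscr{S}^m_{\rho,\delta}$ is characterised by the $\Df_Q$-seminorms (\ref{RhoDeltaNorm}), hence $\Df_{q,\xi}^\alpha a\in\mathscr{S}^{m-\rho|\alpha|}_{\rho,\delta}$ (since $\Df_q^\alpha\Df_q^\beta=\Df_q^{\alpha+\beta}$), $X_{q,x}^{(\alpha)}b\in\mathscr{S}^{m'+\delta|\alpha|}_{\rho,\delta}$ (since $\Df_\xi$ commutes with $x$-differentiation), and a product of symbols of orders $m,m'$ lies in $\mathscr{S}^{m+m'}_{\rho,\delta}$ (again by (\ref{Leibniz}), submultiplicativity of the operator norm, and $[X_x,\Df_\xi]=0$). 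All manipulations are first performed for symbols with finite $\xi$-support, where every series and integral is finite, and then extended by density using the seminorm bounds proved along the way.

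First I would derive an exact integral formula for the composed symbol: using $\Op(a)f(x)=\int_\Gp f(y)\mathcal{K}_a(x,y^{-1}x)\,dy$ and the analogous formula for $b$, a change of variables in the iterated integral (set $y=xv$, then unfold the inner kernel through its Fourier transform) shows that $\Op(a)\Op(b)$ is the quantization of
$$\sigma(x,\xi)=\int_\Gp \mathcal{K}_a(x,v^{-1})\,\xi(v)\,b(xv,\xi)\,dv,$$
which collapses to $a(x,\xi)b(\xi)$ when $b$ is $x$-independent. Then I would apply Proposition \ref{TaylorGp} to $f(\cdot)=b(\cdot,\xi)$ at the point $x$ with increment $v$, so $b(xv,\xi)=\sum_{|\alpha|<N}q^\alpha(v^{-1})X_{q,x}^{(\alpha)}b(x,\xi)+R_N(b(\cdot,\xi);x,v)$, and substitute; using that by the very definition of the RT-difference operator $\int_\Gp q^\alpha(v^{-1})\mathcal{K}_a(x,v^{-1})\xi(v)\,dv=\widehat{q^\alpha\mathcal{K}_a(x,\cdot)}(\xi)=\Df_{q,\xi}^\alpha a(x,\xi)$, this gives the \emph{exact} identity
$$\sigma(x,\xi)=\sum_{|\alpha|<N}\Df_{q,\xi}^\alpha a(x,\xi)\,X_{q,x}^{(\alpha)}b(x,\xi)+r_N(x,\xi),\quad r_N(x,\xi)=\int_\Gp \mathcal{K}_a(x,v^{-1})\,\xi(v)\,R_N(b(\cdot,\xi);x,v)\,dv.$$
This already exhibits the asserted expansion; everything now reduces to showing $r_N\in\mathscr{S}^{m+m'-(\rho-\delta)N}_{\rho,\delta}$.

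The hard part will be precisely this remainder estimate. The crude bound $\|R_N(b(\cdot,\xi);x,v)\|\lesssim\langle\xi\rangle^{m'+\delta N}\mathrm{dist}(v,e)^N$ together with the size of the singular kernel $\mathcal{K}_a$ only yields boundedness of $r_N$ for $N$ large, and misses the gain $\langle\xi\rangle^{-\rho N}$ entirely, since $\mathcal{K}_a$ carries no $\xi$. The remedy is to use the \emph{integral} form of Taylor's remainder on $\Gp$, writing $R_N(b(\cdot,\xi);x,v)=\sum_{|\alpha|=N}q^\alpha(v^{-1})\int_0^1 c_\alpha(t,v)\,(X_{q,x}^{(\alpha)}b)(x\gamma_v(t),\xi)\,dt$ with bounded coefficients $c_\alpha$ and a path $\gamma_v$ from $e$ to $v$. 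Then $q^\alpha(v^{-1})\mathcal{K}_a(x,v^{-1})$ is exactly the convolution kernel of the symbol $\Df_{q,\xi}^\alpha a\in\mathscr{S}^{m-\rho N}_{\rho,\delta}$, so $r_N$ is recognised as the symbol of an amplitude operator built from this order-$(m-\rho N)$ piece and the order-$(m'+\delta N)$ family $(X_{q,x}^{(\alpha)}b)(x\gamma_v(t),\cdot)$; invoking the amplitude-to-symbol reduction (equivalently, running the present scheme once more on that amplitude) puts $r_N$ into $\mathscr{S}^{(m-\rho N)+(m'+\delta N)}_{\rho,\delta}=\mathscr{S}^{m+m'-(\rho-\delta)N}_{\rho,\delta}$. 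The accompanying seminorm estimates $\|X_x^\gamma\Df_{q,\xi}^\beta r_N\|\lesssim\langle\xi\rangle^{m+m'-(\rho-\delta)N+\delta|\gamma|-\rho|\beta|}$ follow by differentiating under the integral (the $X_x^\gamma$ falling on $\mathcal{K}_a$, the kernel of $X_x^\gamma a\in\mathscr{S}^{m+\delta|\gamma|}_{\rho,\delta}$, and on the smooth $x$-dependence of $R_N$) and transporting $\Df_{q,\xi}^\beta$ to the kernel side, where it is absorbed via (\ref{Leibniz}) and $[X_x,\Df_\xi]=0$. I expect this re-packaging of the remainder as a strictly lower-order amplitude — the only device that makes the $(\rho-\delta)N$ gain visible — to be the sole genuinely non-routine point; it makes the remainder bound essentially a self-improving instance of the theorem.

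Finally, the first assertion follows by specialising $N=1$: then $\sigma=ab+r_1$ with $ab\in\mathscr{S}^{m+m'}_{\rho,\delta}$ and $r_1\in\mathscr{S}^{m+m'-(\rho-\delta)}_{\rho,\delta}\subset\mathscr{S}^{m+m'}_{\rho,\delta}$ (here $\rho>\delta$ is used), whence $\Op(a)\Op(b)=\Op(\sigma)\in\Op\mathscr{S}^{m+m'}_{\rho,\delta}$; letting $N\to\infty$ yields the stated asymptotic expansion.
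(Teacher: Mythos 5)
First, a point of comparison: the paper does not prove Theorem \ref{RegCompo} at all — it is quoted from Fischer \cite{Fis2015} (Corollary 7.9) as part of the toolbox — so your proposal can only be measured against the standard proof in that source. Your skeleton is exactly that proof: the exact formula $\sigma(x,\xi)=\int_\Gp \mathcal{K}_a(x,v^{-1})\,\xi(v)\,b(xv,\xi)\,dv$ is correct for the paper's right-convolution conventions (and does collapse to $a(x,\xi)b(\xi)$ when $b$ is $x$-independent), the Taylor expansion of $b(x\,\cdot,\xi)$ via Proposition \ref{TaylorGp}, and the identification $\int_\Gp q^\alpha(v^{-1})\mathcal{K}_a(x,v^{-1})\xi(v)\,dv=\Df^\alpha_{q,\xi}a(x,\xi)$ are all sound, as is the order bookkeeping for $\Df^\alpha_{q,\xi}a$, $X^{(\alpha)}_{q,x}b$ and products.

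The genuine gap is the remainder estimate, which you rightly call the crux but do not actually supply. Two concrete problems. (i) You invoke an integral form of the Taylor remainder, $R_N=\sum_{|\alpha|=N}q^\alpha(v^{-1})\int_0^1c_\alpha(t,v)\,(X^{(\alpha)}_{q,x}b)(x\gamma_v(t),\xi)\,dt$ with bounded coefficients; Proposition \ref{TaylorGp} only gives $R_N=O\big(|f|_{C^N}\dist(v,e)^N\big)$, and extracting the explicit global factors $q^\alpha(v^{-1})$, $|\alpha|=N$, is a separate lemma (away from $e$ one must divide by the $q_i$, which uses strong admissibility; near $e$ one must match vanishing orders) — it is precisely where the structure of the tuple enters and cannot be assumed. (ii) You then appeal to an ``amplitude-to-symbol reduction'' that is not in the paper's toolbox, and you describe it as ``running the present scheme once more''; as stated this is circular, since each run produces a remainder of exactly the same type and the induction never terminates. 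What closes the argument in Fischer/Ruzhansky--Turunen is a set of kernel estimates for $\mathscr{S}^m_{\rho,\delta}$ symbols — integrability and decay of $q^\alpha(\cdot)\mathcal{K}_a(x,\cdot)$ near and away from the identity, uniformly in $x$ — which is where the gain $\langle\xi\rangle^{-\rho N}$ becomes quantitative, plus the bookkeeping for left-invariant $x$-derivatives hitting the shifted argument $x\gamma_v(t)$ (these come back $\mathrm{Ad}$-twisted, harmless on a compact group but not free). Without those kernel bounds your sketch does not yet yield $r_N\in\mathscr{S}^{m+m'-(\rho-\delta)N}_{\rho,\delta}$; with them, it becomes the standard proof of the cited result.
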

\begin{theorem}[\cite{Fis2015}, Corollary 7.6.]\label{RegAdj}
Suppose $0\leq\delta<\rho\leq1$. If $a\in\mathscr{S}^m_{\rho,\delta}$, then the adjoint $\Op(a)^*\in\Op\mathscr{S}^{m}_{\rho,\delta}$, and in fact if $q$ is any strongly RT-admissible tuple and $X^{(\alpha)}_q$ is as in Proposition \ref{TaylorGp}, then the symbol $a^{\bullet;q}$ of $\Op(a)^*$ satisfies
$$
a^{\bullet;q}(x,\xi)-\sum_{|\alpha|< N}\left(\Df_{q,\xi}^\alpha X_{q,x}^{(\alpha)}a^*\right)(x,\xi)
\in\mathscr{S}^{m+m'-(\rho-\delta)N}_{\rho,\delta}.
$$
One can thus write
$$
a^{\bullet;q}(x,\xi)\sim\sum_{\alpha}\left(\Df_{q,\xi}^\alpha X_{q,x}^{(\alpha)}a^*\right)(x,\xi).
$$
\end{theorem}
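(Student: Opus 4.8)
\textbf{Proof proposal for Theorem \ref{RegAdj} (symbolic expansion for adjoints).}

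The plan is to reduce the adjoint formula to the composition formula of Theorem \ref{RegCompo}, or, if one prefers a self-contained argument, to mimic its proof directly at the level of convolution kernels. First I would recall that for $a \in \mathscr{S}^m_{\rho,\delta}$ the operator $\Op(a)$ acts by right convolution against the kernel $\mathcal{K}(x,y)$ of (\ref{Symbol-Ker}), so that $\Op(a)f(x) = \int_\Gp f(y)\,\mathcal{K}(x, y^{-1}x)\,dy$. Taking the $L^2$-adjoint and changing variables $y \mapsto xy^{-1}$ (using unimodularity of $\Gp$), one finds that $\Op(a)^*$ is again a convolution-type operator whose kernel involves $\overline{\mathcal{K}}$ evaluated at shifted points; the shift is exactly what produces the differential operators $X^{(\alpha)}_{q,x}$ upon Taylor-expanding via Proposition \ref{TaylorGp}. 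Thus the structural content is: (i) $\Op(a)^* = \Op(b)$ for some symbol $b$ whose leading term is $a^*(x,\xi)$, and (ii) the correction is governed by how $a^*$ fails to be independent of $x$.

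The key steps, in order, are as follows. Step 1: Write down $\sigma[\Op(a)^*](x,\xi) = \xi^*(x)\,(\Op(a)^*\xi)(x)$ from (\ref{Op-Symbol}) and expand $(\Op(a)^*\xi)(x) = \int_\Gp \overline{\mathcal{K}(z, x^{-1}z)}\,\xi(z)\,dz$ after using $\langle \Op(a)^*\xi, g\rangle = \langle \xi, \Op(a)g\rangle$ entrywise. Step 2: Substitute $z = xw$, so the integral becomes $\int_\Gp \overline{\mathcal{K}(xw, w)}\,\xi(xw)\,dw$; here the ``bad'' dependence is the first slot $xw$ of $\overline{\mathcal{K}}$, which I Taylor-expand in $w$ around $w = e$ using Proposition \ref{TaylorGp} applied to the smooth function $x' \mapsto \overline{\mathcal{K}(x', w)}$ (for fixed $w$), writing $\overline{\mathcal{K}(xw,w)} = \sum_{|\alpha|<N} q^\alpha(w^{-1})\,X^{(\alpha)}_{q,x}\overline{\mathcal{K}(x,w)} + R_N$. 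Step 3: Recognize that integrating $q^\alpha(w^{-1})\,(\cdot)\,\xi(xw)\,dw$ against a kernel produces, after re-Fourier-transforming, precisely $\Df^\alpha_{q,\xi}$ applied to the symbol of the operator with kernel $X^{(\alpha)}_{q,x}\overline{\mathcal{K}(x,\cdot)}$, which by (\ref{Symbol-Ker}) and the fact that conjugation of the kernel corresponds to $a \mapsto a^*$ at the symbol level is $X^{(\alpha)}_{q,x} a^*(x,\xi)$; this yields the asserted main terms $\sum_{|\alpha|<N}(\Df^\alpha_{q,\xi} X^{(\alpha)}_{q,x} a^*)(x,\xi)$. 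Step 4: Estimate the remainder: the Taylor remainder $R_N$ carries a factor $O(\dist(w,e)^N)$, which upon Fourier analysis and the symbol estimates of Definition \ref{S^mrd}/\ref{S^mrdAdmi} gains $N(\rho-\delta)$ orders of decay, placing the error in $\mathscr{S}^{m-(\rho-\delta)N}_{\rho,\delta}$ — and here I would simply invoke the composition theorem's remainder bookkeeping rather than redo it, noting that $\Op(a)^* \circ \Op(a)$ or the doubling trick $\Op(a)^* = (\Op(a)^{**})^* $ lets one bootstrap. (I note in passing that the exponent ``$m + m' - (\rho-\delta)N$'' in the statement should read ``$m - (\rho-\delta)N$'' since there is only one symbol; this is evidently a typo inherited from the composition statement.)

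The main obstacle I expect is Step 2--3, namely justifying that the formal kernel manipulation converges and that the Taylor expansion of the first kernel slot legitimately commutes past the Fourier sums defining the symbol. Concretely, $\mathcal{K}(x,\cdot)$ is only a distribution in its second argument in general, so the pairing $\int \overline{\mathcal{K}(xw,w)}\,\xi(xw)\,dw$ must be interpreted via the duality $\mathcal{D}'$--$C^\infty$ and the expansion carried out against test functions, with all convergence claims reduced to the symbol estimates $\|X^\alpha \Df^{\boldsymbol\tau} a\| \lesssim \size{\xi}^{m - \rho p + \delta|\alpha|}$. Since this is exactly the analysis underpinning Theorem \ref{RegCompo} (Fischer's Corollary 7.9), the cleanest route is to obtain Theorem \ref{RegAdj} as a corollary: apply the composition formula to $\Op(a)^* \circ \Op(b)$ for a parametrix $b$, or more simply observe that $(x,\xi) \mapsto a^*(x,\xi)$ together with the left-invariance structure reduces the adjoint to a single ``change of quantization'' (right-to-left symbol conversion), for which Fischer's machinery already supplies the asymptotic series with remainder in the stated class. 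Everything else — the precise form of $X^{(\alpha)}_q$, the admissibility of $q$, the gain $(\rho-\delta)$ per order — is then automatic from the results quoted above.
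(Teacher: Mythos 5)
This statement is not proved in the paper at all: it is quoted verbatim (Theorem \ref{RegAdj}) from Fischer \cite{Fis2015}, Corollary 7.6, as part of the toolbox, so there is no internal proof to compare your proposal against. Your sketch follows the standard Ruzhansky--Turunen/Fischer route that the cited source itself uses: pass to the right-convolution kernel, write the adjoint's kernel with the shifted first slot, Taylor-expand that slot via Proposition \ref{TaylorGp} so that multiplication by $q^\alpha$ becomes $\Df^\alpha_{q,\xi}$ on the symbol side while the left-invariant operators $X^{(\alpha)}_{q,x}$ land on $a^*$, and estimate the Taylor remainder using the $\mathscr{S}^m_{\rho,\delta}$ bounds; your observation that the exponent $m+m'-(\rho-\delta)N$ in the stated error class should read $m-(\rho-\delta)N$ is also correct (the $m'$ is a leftover from the composition theorem). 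The one genuinely weak point is your Step 4: the remainder bound cannot simply be ``bootstrapped'' from Theorem \ref{RegCompo} — composing with a parametrix presupposes ellipticity, and the identity $\Op(a)^*=(\Op(a)^{**})^*$ gives no new information — so the gain of $(\rho-\delta)N$ orders for the adjoint expansion must be established directly (as Fischer does), by estimating the kernel remainder $R_N$, with its $\dist(w,e)^N$ factor, against the difference-operator/derivative bounds of Definition \ref{S^mrd}; this is parallel to, but not a formal corollary of, the composition remainder analysis. With that step carried out honestly, your argument is the same as the one in the cited reference.
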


\subsection{Order of a Symbol}
Unlike the case of $\mathbb{R}^n$, symbolic calculus on the non-commutative Lie group $\Gp$ involves endomorphisms of the representation spaces, hence suffers from non-commutativity. Thus, for example, properties of the commutator\footnote{Note that this is not the commutator of pseudo-differential operators.} $(ab-ba)(x,\xi)=:[a,b](x,\xi)$ of two symbols $a(x,\xi)$ and $b(x,\xi)$ is not as clear as on $\mathbb{R}^n$ (it simply vanishes for symbols on $\mathbb{R}^n$). With the aid of Fischer's theorem, however, we are able to show that the commutator of symbols of order $m$ and $m'$ respectively ``is reduced by order 1".

We introduce a formal definition of order as follows.
\begin{definition}\label{2Order}
Let $m\in\mathbb{R}$. We say that a symbol $a(x,\xi)$ on $\Gp$, regardless of regularity in $x$, is of order $m$, if for some strongly RT-admissible tuple $q$, there always holds
$$
\sup_{x\in\Gp}\big\|\Df_q^\beta a(x,\xi)\big\|\lesssim \size[\xi]^{m-|\beta|}.
$$
\end{definition}

Thus the class of symbols of order $m$, in our convention, is the collection of symbols that ``possess best decays upon differentiation in $\xi$". It necessarily includes all the $\mathscr{S}^m_{1,\delta}(\Gp)$ with $0\leq\delta\leq1$. We also find that the commutator of two symbols, for which the order can be defined, is reduced in order by 1.
\begin{proposition}\label{2OrderComm}
Let $a,b$ be symbols on $\Gp$ of order $m$ and $m'$ and type 1 respectively. Then the commutator $[a,b]:=ab-ba$ is of order $m+m'-1$. In particular, if $\delta\in[0,1]$, $a\in\mathscr{S}^m_{1,\delta}(\Gp)$ and $b\in\mathscr{S}^{m'}_{1,\delta}(\Gp)$, then $[a,b]\in\mathscr{S}^{m+m'-1}_{1,\delta}(\Gp)$.
\end{proposition}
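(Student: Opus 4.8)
The plan is to reduce everything to the symbolic composition formula of Theorem \ref{RegCompo}. The point is that the commutator $[a,b] = ab - ba$ is the \emph{difference} of the principal (order-zero-in-$\alpha$) terms of two compositions $\Op(a)\circ\Op(b)$ and $\Op(b)\circ\Op(a)$, so the non-commutativity of the pointwise product of symbols is controlled by the non-commutativity of the corresponding operator compositions — and the latter is, up to lower order, commutative. Concretely, pick any strongly RT-admissible tuple $q$ (say the fundamental tuple $Q$ of (\ref{QFund})). By Theorem \ref{Fischer}(1), membership in $\mathscr{S}^m_{1,\delta}$ is equivalent to finiteness of the $\mathbf{M}$-norms relative to $q$, so we are free to work with $\Df_q$-differences throughout. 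Since $a \in \mathscr{S}^m_{1,\delta}$ and $b \in \mathscr{S}^{m'}_{1,\delta}$ with $\rho = 1 > \delta$, Theorem \ref{RegCompo} applies: the symbol $\sigma_{ab}$ of $\Op(a)\circ\Op(b)$ satisfies, with $N = 1$,
$$
\sigma_{ab}(x,\xi) - a(x,\xi) b(x,\xi) \in \mathscr{S}^{m+m'-(1-\delta)}_{1,\delta},
$$
and symmetrically $\sigma_{ba}(x,\xi) - b(x,\xi)a(x,\xi) \in \mathscr{S}^{m+m'-(1-\delta)}_{1,\delta}$.

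Next I would subtract these two relations. On the operator side, $\Op(a)\circ\Op(b) - \Op(b)\circ\Op(a)$ is the operator commutator of a $\Psi^m_{1,\delta}$ and a $\Psi^{m'}_{1,\delta}$ operator, which by the standard Hörmander calculus (available here via Theorem \ref{Fischer}(2), noting $\rho = 1 \geq 1 - \delta$) lies in $\Psi^{m+m'-(1-\delta)}_{1,\delta} = \Op\mathscr{S}^{m+m'-(1-\delta)}_{1,\delta}$: the leading symbols cancel because they are scalar-valued multiplication... no — more carefully, one must argue that the order-$(m+m')$ parts of $\sigma_{ab}$ and $\sigma_{ba}$ agree. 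But that is exactly what the two displayed relations give: $\sigma_{ab} = ab + (\text{order } m+m'-(1-\delta))$ and $\sigma_{ba} = ba + (\text{order } m+m'-(1-\delta))$, hence
$$
\sigma_{ab} - \sigma_{ba} = (ab - ba) + r, \qquad r \in \mathscr{S}^{m+m'-(1-\delta)}_{1,\delta}.
$$
Now here is the subtlety, and the main obstacle: the left-hand side $\sigma_{ab} - \sigma_{ba}$ is the symbol of the operator commutator $[\Op(a),\Op(b)]$, and I claim this operator has order $m+m'-1$, not merely $m+m'-(1-\delta)$. For $\delta < 1$ the crude bound $m+m'-(1-\delta)$ is weaker than the claimed $m+m'-1$, so Theorem \ref{RegCompo} at $N=1$ alone does not suffice; one has to iterate. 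The resolution is to use the \emph{full asymptotic expansion}: by Theorem \ref{RegCompo},
$$
\sigma_{ab}(x,\xi) \sim \sum_\alpha \big(\Df_{q,\xi}^\alpha a \cdot X_{q,x}^{(\alpha)} b\big)(x,\xi),
\qquad
\sigma_{ba}(x,\xi) \sim \sum_\alpha \big(\Df_{q,\xi}^\alpha b \cdot X_{q,x}^{(\alpha)} a\big)(x,\xi).
$$
The $|\alpha| = 0$ terms are $ab$ and $ba$; subtracting, the commutator symbol is $ab - ba$ plus a series whose $|\alpha| = k$ term is a finite sum of products of a $k$-fold $\Df_q$-difference of one symbol (order $\leq m - k$ or $\leq m' - k$ by Definition \ref{2Order}, using that $a,b$ have type $1$) with a $k$-fold left-invariant $x$-derivative of the other (which, by Definition \ref{S^mrd} with $\rho = 1$, is of order $\leq m' + \delta k$ or $\leq m + \delta k$). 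Thus each $|\alpha| = k$ contribution for $k \geq 1$ is of order $\leq m + m' - k + \delta k = m+m' - (1-\delta)k \leq m+m'-(1-\delta) \leq m+m'$; this is not yet order $m+m'-1$ either.

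So the \textbf{genuine crux} is to show the $k=1$ terms of $\sigma_{ab}$ and $\sigma_{ba}$, \emph{after subtraction}, drop by a full order rather than just by $1-\delta$: that is, $\sum_{|\alpha|=1}(\Df_{q,\xi}^\alpha a\cdot X_{q,x}^{(\alpha)}b - \Df_{q,\xi}^\alpha b\cdot X_{q,x}^{(\alpha)}a)$ must be of order $m+m'-1$, which requires a cancellation internal to the definition of order rather than a symbol-class bound. The mechanism I would exploit is that for symbols of order $m$ and $m'$ (type $1$), the pointwise commutator $ab-ba$ of their values in $\mathrm{End}(\Hh[\xi])$ is already controlled by the difference operators: applying $\Df_q^\beta$ to $ab - ba$ and using the Leibniz rule (\ref{Leibniz}) — whose cross term $\sum_k \Df_{q_{ik}}a\cdot\Df_{q_{kj}}b$ is what encodes the non-commutativity — one finds that $\Df_q(ab) - \Df_q(ba) = \Df_q a\cdot b + a\cdot\Df_q b - \Df_q b\cdot a - b\cdot\Df_q a + (\text{cross terms})$, and the cross terms, being products of two first-differences, are already of order $m+m'-2$; the remaining four terms reassemble into $[\Df_q a, b] + [a, \Df_q b]$, each summand a commutator of a symbol of order $\leq m-1$ (resp. $m'$) with one of order $m'$ (resp. $\leq m'-1$). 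This sets up an induction on the order: if commutators of symbols of orders $\mu, \mu'$ are known to have order $\mu+\mu'-1$, then $[\Df_q a, b]$ and $[a,\Df_q b]$ have order $(m-1)+m'-1$ and $m+(m'-1)-1$ respectively, i.e. order $m+m'-2$, so $\Df_q([a,b])$ has order $m+m'-2 = (m+m'-1)-1$, which is precisely the decay required for $[a,b]$ to be of order $m+m'-1$ — closing the induction. The base case is handled by the composition formula: for $\mu+\mu'$ sufficiently negative (or by the $N$-term expansion taken far enough that the remainder already meets the target), $[a,b]$ itself is negligible of the desired order. Finally, the stated refinement ``$[a,b]\in\mathscr{S}^{m+m'-1}_{1,\delta}$ when $a\in\mathscr{S}^m_{1,\delta}$, $b\in\mathscr{S}^{m'}_{1,\delta}$'' follows by the same bookkeeping, now also tracking $X^\alpha_x$-derivatives: each $x$-derivative costs $\size[\xi]^\delta$ on each factor by Definition \ref{S^mrd}, and the commutator structure is untouched by differentiating in $x$ since $X^\alpha_x$ acts as a derivation and its cross terms (by the ordinary Leibniz rule in $x$) contribute products with at least one extra $\Df_q$-difference, hence lower order; so the order-$(m+m'-1)$ bound persists under all $X^\alpha_x\Df_q^\beta$, which by Theorem \ref{Fischer}(1) is membership in $\mathscr{S}^{m+m'-1}_{1,\delta}$.
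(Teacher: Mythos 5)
Your argument has a genuine gap at exactly the point you yourself flag as the crux: the undifferenced estimate $\sup_x\big\|[a,b](x,\xi)\big\|\lesssim\langle\xi\rangle^{m+m'-1}$, which is the $\beta=0$ case of Definition \ref{2Order} and is the entire content of the proposition (the trivial bound is only $\langle\xi\rangle^{m+m'}$, and the claim is a gain of one full power). Your Leibniz-in-$\Df_q$ recursion only converts the $|\beta|\geq1$ estimates for $[a,b]$ into $\beta=0$ estimates for commutators of lower-order pairs $[\Df_q a,b]$, $[a,\Df_q b]$ plus harmless quadratic cross terms; at every level of the recursion the undifferenced bound is still what is needed, and it is never produced. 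The proposed base case does not exist: ``for $\mu+\mu'$ sufficiently negative, $[a,b]$ is negligible of the desired order'' is false, because the required gain is \emph{relative} (one power of $\langle\xi\rangle$ over the product bound $\langle\xi\rangle^{\mu+\mu'}$), so the difficulty is identical at every order and there is no threshold below which it becomes automatic. The fallback ``take the $N$-term expansion far enough'' also cannot work: Theorem \ref{RegCompo} concerns operator composition, its $|\alpha|$-th term only decays like $\langle\xi\rangle^{m+m'-(1-\delta)|\alpha|}$, so for $\delta>0$ no number of terms yields a full order-one gain (your own first computation shows this route saturates at a gain of $1-\delta$), and for symbols that are merely ``of order $m$'' in the sense of Definition \ref{2Order}, with no $x$-regularity, the quantities $X^{(\alpha)}_{q,x}b$ need not even be defined. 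Indeed, already on $\mathbb{R}^n$ the operator commutator of two $(1,\delta)$ operators in general drops only by $1-\delta$, so any purely composition-based argument that keeps the $x$-dependence is structurally unable to reach $m+m'-1$ for all $\delta\in[0,1]$.

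The mechanism the paper relies on (``with the aid of Fischer's theorem'') exploits that Definition \ref{2Order} involves no $x$-regularity and is uniform in $x$: freeze $x=x_0$ and regard $a(x_0,\cdot)$, $b(x_0,\cdot)$ as Fourier multipliers. By Theorem \ref{Fischer}(1) these lie in $\mathscr{S}^m_{1,0}$ and $\mathscr{S}^{m'}_{1,0}$ (all $x$-derivatives vanish), with seminorms bounded uniformly in $x_0$. For multipliers the quantization is exactly multiplicative, $\Op(a(x_0,\cdot))\circ\Op(b(x_0,\cdot))=\Op\big(a(x_0,\cdot)b(x_0,\cdot)\big)$, so $\Op\big([a,b](x_0,\cdot)\big)=\big[\Op(a(x_0,\cdot)),\Op(b(x_0,\cdot))\big]$ precisely; by Theorem \ref{Fischer}(2) this is a commutator of H\"{o}rmander operators of type $(1,0)$, hence of order $m+m'-1$ (the scalar-valued principal symbols commute), and the converse direction of Fischer's characterization converts this back into the difference decay $\|\Df^\beta_q[a,b](x_0,\xi)\|\lesssim\langle\xi\rangle^{m+m'-1-|\beta|}$, uniformly in $x_0$. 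This freezing step is what makes the result $\delta$-independent and covers $\delta=1$. Your Leibniz bookkeeping in $x$ and in $\Df_q$ is the right way to then deduce the refinement $[a,b]\in\mathscr{S}^{m+m'-1}_{1,\delta}$, but without the frozen-$x$ operator-level input the central one-order gain is missing from your proof.
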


\subsection{Littlewood-Paley Decomposition}
The Littlewood-Paley decomposition on $\Gp$ is defined via spectral calculus. Fix an even function $\phi\in C_0^\infty(\mathbb{R})$, such that $\phi(\lambda)=1$ for $|\lambda|\leq1/2$, and $\phi(\lambda)=0$ for $|\lambda|\geq1$. Setting $\psi(\lambda)=-\lambda\phi'(\lambda)$, we obtain a continuous partition of unity
$$
1=\phi(\lambda)+\int_1^\infty\psi\Big(\frac{\lambda}{t}\Big)\frac{dt}{t}.
$$
The \emph{continuous Littlewood-Paley decomposition} of a distribution $f\in\mathcal{D}'(\Gp)$ will then be defined by 
\begin{equation}\label{LPCont}
f=\phi\big(|\nabla|\big)f+\int_1^\infty\psi_t\big(|\nabla|\big)f\frac{dt}{t},
\quad 
\psi_t(\cdot)=\psi\left(\frac{\cdot}{t}\right)
\end{equation}
We also write the \emph{partial sum operator} as
$$
\phi_T(|\nabla|)f:=\phi\left(\frac{|\nabla|}{T}\right)f
=\phi\big(|\nabla|\big)f+\int_1^T\psi_t\big(|\nabla|\big)f\frac{dt}{t}.
$$
This is the convention employed by H\"{o}rmander \cite{Hormander1997}, Chapter 9.

The Littlewood-Paley characterization of Sobolev space is obtained immediately: 
\begin{proposition}\label{LPHs}
Given $s\in\mathbb{R}$, a distribution $f$ belongs to $H^s(\Gp)$ if and only if for some non-vanishing $h\in C_0^\infty(0,\infty)$, there holds
$$
\left\|\phi\big(|\nabla|\big)f\right\|_{L^2}^2
+\int_0^\infty t^{2s-1}\left\|h_t\big(|\nabla|\big)f\right\|_{L^2_x}^2dt
<\infty,
$$
where $h_t(\lambda)=h(\lambda/t)$. The lower bound of integral does not cause singularity since $h=0$ near 0. Similarly, distribution $f$ belongs to $H^s(\Gp)$ if and only if for some non-zero $h\in C_0^\infty(0,\infty)$, there holds
$$
\left\|\phi\big(|\nabla|\big)f\right\|_{L^2}^2
+\sum_{j\geq0}2^{2sj}\left\|h_{2^j}\big(|\nabla|\big)f\right\|_{L^2}^2.
$$
The square root of either of the above quadratic forms is equivalent to $\|f\|_{H^s}$.
\end{proposition}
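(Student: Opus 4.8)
The plan is to diagonalize every operator involved by means of the Peter--Weyl theorem, thereby reducing both quadratic forms, as well as $\|f\|_{H^s}^2$ itself, to weighted series over $\DuGp$ in the single family of numbers $\|f_\xi\|_{L^2}^2$, where $f_\xi$ denotes the $\mathcal{M}_\xi$--component of $f$ (well defined for $f\in\mathcal{D}'(\Gp)$ via the Peter--Weyl series). By Plancherel, $\|f_\xi\|_{L^2}^2=d_\xi\HS[\Ft f(\xi)]^2$, and by Theorem \ref{DeltaSpec} the operator $|\nabla|=\sqrt{-\Delta}$ acts on $\mathcal{M}_\xi$ as the scalar $|\xi|=\sqrt{\lambda_\xi}$; in particular $|\xi|=0$ exactly for the trivial representation $\mathbf 1$, whose block is the constants. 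Hence, for any bounded Borel function $m$ on $[0,\infty)$, the spectral multiplier $m(|\nabla|)$ acts on $\mathcal{M}_\xi$ as multiplication by $m(|\xi|)$, so that $\|m(|\nabla|)f\|_{L^2}^2=\sum_{\xi}|m(|\xi|)|^2\|f_\xi\|_{L^2}^2$; in particular $\|f\|_{H^s}^2=\sum_\xi\size[\xi]^{2s}\|f_\xi\|_{L^2}^2$ with $\size[\xi]^2=1+|\xi|^2$.

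For the continuous characterization, the crux is a one--variable identity: for $\mu>0$ the substitution $v=\mu/t$ gives
$$
\int_0^\infty t^{2s-1}\big|h(\mu/t)\big|^2\,dt=c_h\,\mu^{2s},\qquad c_h:=\int_0^\infty v^{-2s-1}\big|h(v)\big|^2\,dv,
$$
with $c_h\in(0,\infty)$ precisely because $h$ is non-zero and supported away from the origin; for $\mu=0$ the integral vanishes since $h(0)=0$. All integrands being non-negative, Tonelli's theorem permits exchanging the $t$--integral with the series over $\DuGp$, producing
$$
\int_0^\infty t^{2s-1}\big\|h_t(|\nabla|)f\big\|_{L^2}^2\,dt=c_h\sum_{\xi\neq\mathbf 1}|\xi|^{2s}\|f_\xi\|_{L^2}^2 ,
$$
the trivial block dropping out because $|\mathbf 1|=0$. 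It then remains to compare $\|\phi(|\nabla|)f\|_{L^2}^2+c_h\sum_{\xi\neq\mathbf 1}|\xi|^{2s}\|f_\xi\|_{L^2}^2$ with $\|f\|_{H^s}^2$.

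Two elementary facts close the gap. First, connectedness of $\Gp$ forces $\ker\Delta$ to consist of the constants $\mathcal{M}_{\mathbf 1}$ only, so there is a spectral gap $\lambda_*>0$ with $\lambda_\xi\geq\lambda_*$ for all $\xi\neq\mathbf 1$; therefore $1\leq\size[\xi]^2/|\xi|^2=1+\lambda_\xi^{-1}\leq1+\lambda_*^{-1}$, whence $|\xi|^{2s}$ and $\size[\xi]^{2s}$ are comparable uniformly over $\xi\neq\mathbf 1$, with constants depending only on $s$ and $\Gp$. Second, $\|\phi(|\nabla|)f\|_{L^2}^2=\sum_\xi|\phi(|\xi|)|^2\|f_\xi\|_{L^2}^2$ is a sum over the finitely many $\xi$ with $|\xi|<1$, on which $\size[\xi]$ is bounded above and below; and since $\phi(0)=1$ it dominates $\|f_{\mathbf 1}\|_{L^2}^2$ from below. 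Combining these two observations yields the two-sided bound, hence the continuous statement, valid in fact for every non-zero $h\in C_0^\infty(0,\infty)$. The discrete statement follows from the same reduction with $\int_0^\infty t^{2s-1}(\cdot)\,dt$ replaced by $\sum_{j\geq0}2^{2sj}(\cdot)$: the upper bound $\sum_{j\geq0}2^{2sj}|h(\mu2^{-j})|^2\lesssim(1+\mu)^{2s}$ holds for any $h$ since for each $\mu$ only boundedly many dyadic terms are non-zero, while the matching lower bound requires choosing $h$ whose dyadic dilates cover the range of nonzero frequencies $[\sqrt{\lambda_*},\infty)$ with a uniform positive lower bound — e.g. $h\geq1$ on $[\sqrt{\lambda_*},2\sqrt{\lambda_*}]$ works — the low-frequency contribution again being absorbed by $\phi(|\nabla|)f$.

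The only step that requires genuine care, beyond routine changes of variables and bookkeeping, is the treatment of the bottom of the spectrum: the weight $|\xi|^{2s}$ degenerates at the trivial representation, so it cannot be compared with $\size[\xi]^{2s}$ globally, and it is exactly the spectral gap of $-\Delta$ together with the low-frequency cutoff $\phi(|\nabla|)$ that repairs this. In the dyadic version the same phenomenon forces one to pick $h$ adapted to $\lambda_*$, since — unlike on $\mathbb{R}^n$ — there is no dilation symmetry available to normalize away the position of $\mathrm{supp}\,h$ relative to the discrete spectrum.
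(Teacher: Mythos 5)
Your argument is correct, and it follows the natural route that the paper itself only gestures at (the proof of Proposition \ref{LPHs} is deferred to \cite{Shao2023Toolbox}): diagonalize everything by Peter--Weyl, use Tonelli and the scale-invariance identity $\int_0^\infty t^{2s-1}|h(\mu/t)|^2\,dt=c_h\mu^{2s}$, and repair the degeneracy of $|\xi|^{2s}$ at the trivial representation by the spectral gap of $-\Delta$ together with the low-frequency block $\phi(|\nabla|)f$. You also correctly isolate the one genuine subtlety in the dyadic version -- that ``for some non-zero $h$'' must be read as an $h$ whose dyadic dilates cover $[\sqrt{\lambda_*},\infty)$ with a uniform lower bound, since otherwise the lower bound in the norm equivalence can fail for spectrally lacunary $f$ -- which is exactly the adaptation to the discrete spectrum that has no analogue on $\mathbb{R}^n$. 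I see no gap; the only cosmetic remark is that the finiteness-implies-membership direction should (as you implicitly do) be phrased so that all interchanges are of non-negative series, which Tonelli licenses for arbitrary distributions because each $h_t(|\nabla|)f$ and $\phi(|\nabla|)f$ is a finite sum of eigenfunction blocks.
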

Similarly, the Littlewood-Paley characterization of Zygmund space $C^r_*(\Gp)$ is available:

\begin{proposition}\label{LPZyg}
For $r\in\mathbb{R}$, a distribution $f\in\mathcal{D}'(\Gp)$ is in the Zygmund class $C^r_*(\Gp)$ if and only if
$$
\sup|\phi(|\nabla|)f|+\sup_{t\geq1}t^r\left|\psi_t\big(|\nabla|\big)f\right|_{L^\infty}<\infty.
$$
This quantity is equivalent to the Zygmund space norm defined via local coordinate charts.
\end{proposition}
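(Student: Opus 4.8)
The plan is to compare the spectrally defined pieces $\phi(|\nabla|)f$ and $\psi_t(|\nabla|)f$ with the classical dyadic pieces of $f$ read in local coordinate charts, where the Zygmund norm of a distribution on $\mathbb{R}^n$ is characterized by $\|g\|_{C^r_*}\simeq\sup_{k\ge-1}2^{kr}\|P_k g\|_{L^\infty}$, the $P_k$ being the usual dyadic Littlewood--Paley projections on $\mathbb{R}^n$. Fix once and for all a finite atlas $\{(U_\alpha,\kappa_\alpha)\}$ of $\Gp$ and a subordinate partition of unity $\{\chi_\alpha\}$, so that by definition $\|f\|_{C^r_*(\Gp)}\simeq\sum_\alpha\|(\chi_\alpha f)\circ\kappa_\alpha^{-1}\|_{C^r_*(\mathbb{R}^n)}$. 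The whole statement then reduces to two comparison estimates between the spectral ladder $\{\psi_t(|\nabla|)\}_{t\ge1}$ on $\Gp$ and the Euclidean ladder $\{P_k\}$ transplanted into a chart.

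The analytic input is a family of kernel bounds, uniform in $t\ge1$. Since $\psi$ is even and lies in $C_0^\infty(\mathbb{R})$, write $\psi(\lambda)=\int_{\mathbb{R}}m(s)\cos(s\lambda)\,ds$ with $m$ Schwartz; then
\[
\psi_t(|\nabla|)=\int_{\mathbb{R}}m(s)\cos\!\left(\tfrac{s}{t}|\nabla|\right)ds .
\]
By finite propagation speed for the wave group on the compact Riemannian manifold $\Gp$, the Schwartz kernel of $\cos((s/t)|\nabla|)$ is supported in $\{\dist(x,y)\le|s|/t\}$; inserting this, together with the standard parametrix estimates for the half-wave operator, into the $s$-integral gives for the kernel $K_t$ of $\psi_t(|\nabla|)$ the bounds $|X^\alpha_x K_t(x,y)|\lesssim_{N,\alpha}t^{\,n+|\alpha|}\bigl(1+t\,\dist(x,y)\bigr)^{-N}$ for every $N$, while $\phi(|\nabla|)$ has a fixed smooth kernel. (That $\psi_t(|\nabla|)$ is bounded on $L^\infty$ uniformly in $t$ also follows directly from the Fischer calculus of Section~\ref{2}, since $\xi\mapsto\psi(|\xi|/t)$ lies in $\mathscr{S}^0_{1,0}(\Gp)$ uniformly in $t$.) Consequently: each $\psi_t(|\nabla|)$ and $\phi(|\nabla|)$ is bounded on $L^\infty$ uniformly; and since $\psi_t$ is supported in the annulus $\{|\lambda|\sim t\}$ while $-\Delta$ has principal symbol $|\xi|_g^2$ in a chart, transplanting through $\kappa_\alpha$ shows that ``$\psi_t(|\nabla|)$ followed by, or preceded by, $P_k$, read in the chart'' has $L^\infty\to L^\infty$ norm $O_M\bigl(\min\{t/2^k,\,2^k/t\}^M\bigr)$ for every $M$ --- the scale distortion of $\kappa_\alpha$ being bounded and the kernel tails being rapidly decaying, so that multiplication by a smooth cutoff only adds Schwartz tails in frequency.

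Granting these bounds, the two directions run as follows. Suppose first $f\in C^r_*(\Gp)$. Expand $g_\alpha:=(\chi_\alpha f)\circ\kappa_\alpha^{-1}=\sum_{k\ge-1}P_k g_\alpha$ and transplant back (with an auxiliary cutoff to keep supports inside the chart), so that $\chi_\alpha f=\sum_k f_{\alpha,k}$ with $\|f_{\alpha,k}\|_{L^\infty}\lesssim 2^{-kr}\|f\|_{C^r_*(\Gp)}$ and $f_{\alpha,k}$ concentrated at chart-frequency $2^k$; applying $\psi_t(|\nabla|)$ and the almost-orthogonality estimate yields $\|\psi_t(|\nabla|)f\|_{L^\infty}\lesssim\sum_k\min\{t/2^k,2^k/t\}^M 2^{-kr}\|f\|_{C^r_*(\Gp)}\lesssim t^{-r}\|f\|_{C^r_*(\Gp)}$ once $M>|r|$, summing a geometric series, and $\phi(|\nabla|)f$ is estimated the same way. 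Conversely, suppose the right-hand quantity is finite. Feed the reproducing formula (\ref{LPCont}) into $\chi_\alpha f$, transplant, apply $P_k$, bound each $P_k\bigl((\chi_\alpha\psi_t(|\nabla|)f)\circ\kappa_\alpha^{-1}\bigr)$ by the same almost-orthogonality estimate with the roles of $2^k$ and $t$ interchanged, and integrate $\int_1^\infty(\cdot)\,dt/t$ plus the harmless low-frequency contribution of $\phi(|\nabla|)f$; this gives $2^{kr}\|P_k g_\alpha\|_{L^\infty}\lesssim\sup|\phi(|\nabla|)f|+\sup_{t\ge1}t^r|\psi_t(|\nabla|)f|_{L^\infty}$, i.e. $g_\alpha\in C^r_*(\mathbb{R}^n)$, and summing over $\alpha$ finishes both inequalities.

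The genuine obstacle is not $L^\infty$ boundedness but the scale bookkeeping: transplanting through a chart turns a clean frequency annulus into a full, merely rapidly decaying, tail, so one must verify that the overlap between the spectral and Euclidean ladders stays summable --- exactly the point where the ``for all $N$'' decay of $K_t$ (equivalently, the arbitrarily high smoothing order hidden in the parametrix) is indispensable; the partition-of-unity and cutoff manipulations and the geometric summations are then routine. One should note that, $r$ being allowed negative, $C^r_*$ is to be read throughout as the Besov space $B^r_{\infty,\infty}$ and (\ref{LPCont}) is applied in $\mathcal{D}'(\Gp)$; and that the $L^2$-based analogue, Proposition~\ref{LPHs}, is the much easier Plancherel version of the same comparison, requiring none of the kernel analysis above.
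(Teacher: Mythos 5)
Since the paper only quotes this proposition from the toolbox \cite{Shao2023Toolbox} and gives no proof here, there is nothing to compare line by line; judged on its own terms, your strategy is the standard one for such statements — reduce to the Euclidean dyadic characterization of $C^r_*=B^r_{\infty,\infty}$ in a finite atlas, prove uniform kernel bounds $|X^\alpha_x K_t(x,y)|\lesssim_N t^{\,n+|\alpha|}(1+t\,\dist(x,y))^{-N}$ for $\psi_t(|\nabla|)$ via the cosine-transform/finite-propagation-speed argument, and close both implications with an almost-orthogonality estimate between the spectral ladder $\{\psi_t(|\nabla|)\}$ and the transplanted Euclidean ladder $\{P_k\}$ — and it does yield the proposition. (A proof intrinsic to the group could instead obtain the same kernel bounds from the explicit Fourier analysis on $\Gp$, but the wave-equation route is perfectly adequate.)

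Two points need repair. First, the parenthetical claim that uniform $L^\infty$-boundedness of $\psi_t(|\nabla|)$ ``follows directly from the Fischer calculus since $\psi(|\xi|/t)\in\mathscr{S}^0_{1,0}$ uniformly in $t$'' is not a valid argument: operators of class $\Op\mathscr{S}^0_{1,0}$ are in general \emph{not} bounded on $L^\infty$. Your kernel bound is the correct (and sufficient) justification, so nothing is lost, but the remark should be deleted. Second, and more importantly, the half of the almost-orthogonality estimate where $2^k\ll t$ does not follow from the kernel decay and ``Schwartz tails added by the cutoff'' alone: the operator $\phi_t(|\nabla|)$ satisfies exactly the same kernel bounds, yet the estimate fails for it. What is really used there is that $\psi$ vanishes identically near $\lambda=0$, so that one may write $\psi_t(|\nabla|)=t^{-2M}(-\Delta)^M\,\Psi_{M,t}(|\nabla|)$ with $\Psi_M(\lambda)=\psi(\lambda)\lambda^{-2M}$ again an admissible bump (uniform kernel bounds), and then let $(-\Delta)^M$, written in the chart as a $2M$-th order operator with smooth coefficients, fall on the chart-frequency-localized piece, respectively be integrated by parts against the kernel of $P_k$, paying $2^{2kM}$ against the gain $t^{-2M}$. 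You do invoke the annulus support of $\psi_t$, so the needed ingredient is present in your list, but as written the mechanism is misattributed to the kernel tails; spelling out this factorization (the regime $2^k\gg t$ is fine as you argue it, using the vanishing of the symbol of $P_k$ near the origin together with the derivative bounds on $K_t$) turns your sketch into a complete proof, the partition-of-unity bookkeeping, the $dt/t$ integration and the geometric sums being routine as you say.
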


\subsection{(1,1) Pseudo-differential Operator}
Even in the Euclidean case, the symbol class $\mathscr{S}^m_{1,1}(\mathbb{R}^n)$ exhibits exotic properties compared to smaller classes $\mathscr{S}^m_{1,\delta}(\mathbb{R}^n)$ with $\delta<1$, and ``must remain forbidden fruit" as commented by Stein \cite{SteMur1993} (Subsection 1.2., Chapter 7). We thus cannot expect a satisfactory calculus for general $\mathscr{S}^m_{1,1}(\Gp)$ symbols. But in analogy to the Euclidean case, a series of theorems and constructions still remain valid for $\mathscr{S}^m_{1,1}(\Gp)$.

\begin{theorem}[Stein]\label{SteinTheorem}
Suppose $a\in \mathscr{S}^m_{1,1}(\Gp)$. Then for $s>0$, $\Op(a)$ is a bounded linear operator from $H^{s+m}$ to $H^s$. The effective version reads
$$
\|\Op(a)f\|_{H^s}\leq C_{s;q}\mathbf{M}^m_{[s]+1,n+2;q}(a)\|f\|_{H^{s+m}},
$$
where $q$ is a strongly RT-admissible tuple, and the norm $\mathbf{M}^m_{k,l;q}$ is defined in (\ref{RhoDeltaNorm}).
\end{theorem}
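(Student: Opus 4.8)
The plan is to run, on the group side, the classical Littlewood--Paley argument showing that operators with symbols of type $(1,1)$ are bounded on $H^s$ for $s>0$ (Meyer, Bony, Stein). The three inputs from the toolbox are the kernel representation (\ref{Symbol-Ker})--(\ref{Ker-Conv}), the spectral localization of products (Corollary \ref{SpecPrd}), and the dyadic Sobolev characterization of Proposition \ref{LPHs}. Write $f=\sum_{k\ge0}P_kf$ for the dyadic Littlewood--Paley decomposition, with $P_0$ absorbing the low frequencies and $\widehat{P_kf}$ supported in $|\xi|\sim 2^k$ for $k\ge1$, and set $g_k:=\Op(a)P_kf=\Op(a^{(k)})P_kf$, where $a^{(k)}(x,\xi):=a(x,\xi)\,\theta(2^{-2k}\lambda_\xi)$ localizes $\xi$ to $|\xi|\sim 2^k$. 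Cutting off in $\xi$ is harmless: by the Leibniz rule (\ref{Leibniz}) for the fundamental tuple, $a^{(k)}$ still obeys $\big\|X^\alpha_x\Df^\beta_{q}a^{(k)}(x,\xi)\big\|\lesssim 2^{k(m+|\alpha|-|\beta|)}\,\mathbf{M}^m_{[s]+1,\,n+2;q}(a)$ for $|\alpha|\le[s]+1$, $|\beta|\le n+2$, and is supported in $|\xi|\lesssim 2^k$. Thus $\Op(a)f=\sum_k g_k$, and by Proposition \ref{LPHs} it suffices to bound $\sum_j 2^{2js}\big\|P_j\big(\sum_k g_k\big)\big\|_{L^2}^2$.

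First I would establish the blockwise $L^2$ bound $\|g_k\|_{L^2}\lesssim 2^{mk}\,\mathbf{M}^m_{0,n+2;q}(a)\,\|P_kf\|_{L^2}$. The identity $q^\beta(y)\,\mathcal{K}^{(k)}(x,y)=\big(\Df^{\beta}_{q,\xi}a^{(k)}(x,\cdot)\big)^{\vee}(y)$, the Weyl-type count $\sum_{|\xi|\lesssim 2^k}d_\xi^2\lesssim 2^{nk}$, and the lower bound $\max_{|\beta|=l}|q^\beta(y)|\gtrsim\dist(y,e)^l$ (a consequence of strong RT-admissibility) yield $|\mathcal{K}^{(k)}(x,y)|\lesssim 2^{(n+m)k}\big(1+2^k\dist(y,e)\big)^{-(n+2)}\mathbf{M}$, a bound uniform in $x$ whose $y$-integral is $\lesssim 2^{mk}\mathbf{M}$; Schur's test applied to the $(x,y)$-kernel $\mathcal{K}^{(k)}(x,y^{-1}x)$, using left- and right-invariance of Haar measure, closes this. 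Next, the frequency-localization statement: the $x$-Littlewood--Paley block of $a^{(k)}(\cdot,\eta)$ at frequency $2^\ell$ has size $\lesssim 2^{-(\ell-k)N}2^{mk}\mathbf{M}$ with $N=[s]+1$, directly from the $(1,1)$ estimate (here $\delta=1$ is used in full force). Feeding this into Corollary \ref{SpecPrd}, applied to the product of $a^{(k)}(\cdot,\eta)$ (essentially of $x$-frequency $\lesssim 2^k$) with the matrix entries $\eta_{ij}$ (of $x$-frequency $|\eta|\sim 2^k$), forces $P_jg_k$ to see only the pieces of $a^{(k)}(\cdot,\eta)$ of $x$-frequency $\sim 2^j$ once $2^j\gg 2^k$; re-running the Schur estimate of the previous step on those pieces then gives $\|P_jg_k\|_{L^2}\lesssim 2^{-(j-k)_+N}\,2^{mk}\,\mathbf{M}\,\|P_kf\|_{L^2}$.

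Finally I would sum. Put $\beta_k:=\|P_kf\|_{L^2}$ and $\delta_k:=2^{(s+m)k}\beta_k$, so $\|\delta\|_{\ell^2}\sim\|f\|_{H^{s+m}}$ by Proposition \ref{LPHs}. Then
\[
2^{js}\Big\|P_j\sum_kg_k\Big\|_{L^2}\le\sum_k 2^{js}\|P_jg_k\|_{L^2}\lesssim\mathbf{M}\sum_k \gamma_{j-k}\,\delta_k,\qquad \gamma_l:=2^{ls}\mathbf{1}_{\{l\le0\}}+2^{l(s-N)}\mathbf{1}_{\{l>0\}}.
\]
Since $s>0$ and $N=[s]+1>s$, the sequence $\gamma$ lies in $\ell^1$, and Young's inequality for sequences gives $\big\|\big(2^{js}\|P_j\Op(a)f\|_{L^2}\big)_j\big\|_{\ell^2}\lesssim\mathbf{M}^m_{[s]+1,n+2;q}(a)\,\|f\|_{H^{s+m}}$; adding the trivial low-frequency contribution and invoking Proposition \ref{LPHs} once more yields the claim. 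The main obstacle is the frequency-localization step: there is no Paley--Wiener theorem on $\Gp$, so one must replace the assertion ``$a^{(k)}(\cdot,\eta)$ has $x$-spectrum $\lesssim 2^k$'' by rapid decay of its high-frequency $x$-blocks, and then marry this to Corollary \ref{SpecPrd}, which only governs products of honest eigenfunctions. This is exactly the place where the ``forbidden'' nature of $\mathscr{S}^m_{1,1}$ surfaces, where $s>0$ becomes indispensable (it is what makes $\sum_{l\le0}2^{ls}$ converge), and where the derivative count $N=[s]+1$ is forced (one needs $N>s$ so that $\sum_{l>0}2^{l(s-N)}$ converges). The remaining ingredients --- the kernel bound, Schur's test, Young's inequality --- are routine once the group-side bookkeeping from \cite{Shao2023Toolbox} is in place.
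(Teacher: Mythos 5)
Your proposal is correct and is essentially the intended argument: the present paper defers the proof of Theorem \ref{SteinTheorem} to \cite{Shao2023Toolbox}, and the proof there is exactly this Meyer--Stein Littlewood--Paley scheme transplanted to $\Gp$ --- blockwise $L^2$ bounds from kernel estimates built with $n+2$ RT-difference operators, the Weyl count and Schur's test, spectral localization of products (Corollary \ref{SpecPrd}) in place of the Euclidean Fourier-support argument, and the final summation using $s>0$ and $N=[s]+1>s$. The only points to write out in full are the ones you already flag yourself: uniformity of the symbol bounds for the $\xi$-cutoffs $a^{(k)}$ (via the Leibniz property (\ref{Leibniz}) and the fact that spectral cutoffs are uniformly of order $0$), and the extension of Corollary \ref{SpecPrd} from single eigenfunctions to band-limited blocks, which follows by linearity; neither causes difficulty.
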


A special subclass of $\mathscr{S}^m_{1,1}$ can be introduced, namely the set of symbols satisfying the spectral condition:
\begin{definition}
Fix $\delta>0$. The subclass $\Sigma_\delta^m(\Gp)\subset \mathscr{S}^m_{1,1}(\Gp)$ consists of all symbols $a\in \mathscr{S}^m_{1,1}(\Gp)$ such that the partial Fourier transform of matrix entries of $a(x,\xi)$ satisfies
$$
\Ft{a_{ij}}(\eta,\xi)
:=\int_{\Gp}a_{ij}(x,\xi)\eta^*(x)dx=0
\quad\mathrm{if}\quad|\eta|\geq \delta\size[\xi],
\quad i,j=1,\cdots,d_\xi,
$$
when $\size[\xi]$ is large enough. This is called the spectral condition with parameter $\delta$.
\end{definition}

The subclass $\Sigma^m_\delta$ is important because the quantization enjoys boundedness property on all Sobolev spaces.

\begin{theorem}\label{Stein'}
Let $\delta\in(0,1/2)$, $m\in\mathbb{R}$. Suppose $a\in \Sigma_\delta^m(\Gp)$. Then the operator $\Op(a)$ maps $H^{s+m}(\Gp)$ to $H^{s}(\Gp)$ continuously for all $s\in\mathbb{R}$. More precisely, with $s_+=0$ for $s\leq0$ and $s_+=s$ for $s>0$, for any strongly RT-admissible tuple $q$,
$$
\|\Op(a)f\|_{H^s}\leq C_{s,\delta;q}\mathbf{M}^m_{[s_+]+1,n+2;q}(a)\|f\|_{H^{s+m}}.
$$
\end{theorem}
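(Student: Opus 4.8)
The plan is to derive boundedness on every $H^s$ from a single Littlewood--Paley decomposition in the dual variable, the spectral condition being exactly what rescues the estimate in the range $s\le 0$, where $\mathscr{S}^m_{1,1}(\Gp)$ alone fails. For $s>0$ nothing new is needed: since $\Sigma^m_\delta(\Gp)\subset\mathscr{S}^m_{1,1}(\Gp)$, Theorem~\ref{SteinTheorem} already gives $\|\Op(a)f\|_{H^s}\lesssim \mathbf{M}^m_{[s]+1,n+2;q}(a)\|f\|_{H^{s+m}}$, so the content is $s\le 0$. Fix $\phi,\psi$ as in \eqref{LPCont}, so that $\psi=-\lambda\phi'$ is supported in $[1/2,1]$, and write $f=\phi(|\nabla|)f+\int_1^\infty\psi_t(|\nabla|)f\,\tfrac{dt}{t}$. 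Since $\psi_t(|\nabla|)$ is a Fourier multiplier and $\psi_t(|\xi|)$ is scalar, $\Op(a)\psi_t(|\nabla|)=\Op(a_t)$ with $a_t(x,\xi):=a(x,\xi)\psi_t(|\xi|)$; this $a_t$ satisfies $\mathbf{M}^m_{k,l;q}(a_t)\lesssim\mathbf{M}^m_{k,l;q}(a)$ uniformly in $t\ge1$, inherits the spectral condition with the same $\delta$, and is supported in $|\xi|\in[t/2,t]$. Writing $g:=\Op(a)f=\Op\big(a\,\phi(|\xi|)\big)f+\int_1^\infty\Op(a_t)f\,\tfrac{dt}{t}$, the three steps are: \textbf{(A)} each $\Op(a_t)f$ is frequency-localized to a fixed annulus $|\eta|\in[\kappa_1 t,\kappa_2 t]$ with $0<\kappa_1<\kappa_2$ independent of $t$; \textbf{(B)} $\|\Op(a_t)f\|_{L^2}\lesssim t^m\,\mathbf{M}^m_{1,n+2;q}(a)\,\|\psi_t(|\nabla|)f\|_{L^2}$; and then reassemble $g$ via Proposition~\ref{LPHs}.

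For \textbf{(A)}, restrict first to $t\ge 2$, so that on $\mathrm{supp}_\xi a_t$ one has $|\xi|\ge t/2\ge 1$, hence $\langle\xi\rangle\le 2|\xi|$. A matrix entry $a_t(x,\xi)_{ij}$, viewed as a function on $\Gp$, has spectrum inside $\bigcup\{\mathcal{M}_\eta:|\eta|<\delta\langle\xi\rangle\}\subset\bigcup\{\mathcal{M}_\eta:|\eta|<2\delta|\xi|\}$ by the spectral condition, while $\xi(x)_{lm}\in\mathcal{M}_\xi$; Corollary~\ref{SpecPrd} then places the spectrum of each product $a_t(x,\xi)_{ij}\,\xi(x)_{lm}$ inside $\bigcup\{\mathcal{M}_{\eta'}:c\,\big||\eta|-|\xi|\big|\le|\eta'|\le|\eta|+|\xi|\}$. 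Because $2\delta<1$ we have $|\eta|<|\xi|$ throughout, so the lower bound is $c(|\xi|-|\eta|)>c(1-2\delta)|\xi|\ge\tfrac{c(1-2\delta)}{2}t$ and the upper bound is $|\eta|+|\xi|<(1+2\delta)|\xi|\le(1+2\delta)t$; the inner radius is a fixed positive multiple of $t$ \emph{precisely because} $\delta<1/2$. Summing over all $\xi$ with $|\xi|\in[t/2,t]$ and over matrix entries preserves the localization, giving \textbf{(A)} for $t\ge 2$ with $\kappa_1=\tfrac{c(1-2\delta)}{2}$ and $\kappa_2=1+2\delta$. The boundedly many scales $1\le t<2$ are lumped with $\Op(a\phi(|\xi|))f$ into a single operator whose symbol is supported in a bounded range of $\xi$; such an operator is $L^2$-bounded and maps into frequencies $|\eta|\lesssim 1$, hence is bounded $H^{s+m}\to H^s$ for every $s$ and may be set aside.

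For \textbf{(B)} the spectral condition is used a second time, through a bootstrap off Theorem~\ref{SteinTheorem}. Fix any $s_0\in(0,1)$. Then $\|\Op(a_t)f\|_{H^{s_0}}=\|\Op(a)\psi_t(|\nabla|)f\|_{H^{s_0}}\lesssim\mathbf{M}^m_{1,n+2;q}(a)\,\|\psi_t(|\nabla|)f\|_{H^{s_0+m}}\lesssim t^{\,s_0+m}\,\mathbf{M}^m_{1,n+2;q}(a)\,\|\psi_t(|\nabla|)f\|_{L^2}$, the last step because $\psi_t(|\nabla|)f$ has spectrum in $|\xi|\in[t/2,t]$. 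On the other hand, \textbf{(A)} forces every frequency of $\Op(a_t)f$ to be $\ge\kappa_1 t$, so $\|\Op(a_t)f\|_{H^{s_0}}\gtrsim t^{\,s_0}\|\Op(a_t)f\|_{L^2}$; dividing by $t^{s_0}$ gives \textbf{(B)}. For the reassembly, apply Proposition~\ref{LPHs} to $g$ with a non-vanishing $h\in C_0^\infty(0,\infty)$:
\begin{equation*}
\|g\|_{H^s}^2\ \sim\ \big\|\phi(|\nabla|)g\big\|_{L^2}^2+\int_1^\infty\tau^{2s}\big\|h_\tau(|\nabla|)g\big\|_{L^2}^2\,\frac{d\tau}{\tau}.
\end{equation*}
Since $h_\tau(|\nabla|)\Op(a_t)f=0$ unless $\tau$ lies in a fixed bounded ratio of $t$ (by \textbf{(A)}), Minkowski's inequality and the $L^2$-boundedness of $h_\tau(|\nabla|)$ give $\|h_\tau(|\nabla|)g\|_{L^2}\lesssim\int_{t\sim\tau}\|\Op(a_t)f\|_{L^2}\,\tfrac{dt}{t}\lesssim\mathbf{M}^m_{1,n+2;q}(a)\,\tau^m\!\int_{t\sim\tau}\|\psi_t(|\nabla|)f\|_{L^2}\,\tfrac{dt}{t}$ by \textbf{(B)}, whence by the Cauchy--Schwarz inequality (the $t$-range is a bounded logarithmic interval) and Fubini's theorem
\begin{align*}
\int_1^\infty\tau^{2s}\big\|h_\tau(|\nabla|)g\big\|_{L^2}^2\frac{d\tau}{\tau}
&\lesssim \mathbf{M}^m_{1,n+2;q}(a)^2\int_1^\infty\tau^{2s+2m}\!\!\int_{t\sim\tau}\!\!\big\|\psi_t(|\nabla|)f\big\|_{L^2}^2\frac{dt}{t}\frac{d\tau}{\tau}\\
&\lesssim \mathbf{M}^m_{1,n+2;q}(a)^2\,\|f\|_{H^{s+m}}^2,
\end{align*}
using $\tau\sim t$. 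The low-frequency term $\phi(|\nabla|)g$ is controlled as in \textbf{(A)}. Combining with the easy case $s>0$, and noting $[s_+]+1=1$ when $s\le 0$ and $=[s]+1$ when $s>0$, yields the stated effective bound.

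The step I expect to be the genuine obstacle is \textbf{(B)}: Theorem~\ref{SteinTheorem} is available only for positive Sobolev exponents, so $\Op(a_t)$ cannot be estimated directly on $L^2$; the bootstrap resolves this by trading a $t^{s_0}$ from the positive-exponent bound against a $t^{-s_0}$ coming from \textbf{(A)}, and it works only because \textbf{(A)} pins the output of $\Op(a_t)$ to frequencies of size $\sim t$ with constants $\kappa_1,\kappa_2$ uniform in $t$ --- which is exactly the point at which the hypothesis $\delta<1/2$ is consumed. The remaining work is the routine almost-orthogonal bookkeeping already familiar from the Euclidean theory of the spectral (Meyer) subclass of $(1,1)$-symbols.
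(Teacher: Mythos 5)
Your argument is essentially correct, but it is worth noting that this paper does not prove Theorem \ref{Stein'} at all: it is quoted from the toolbox paper \cite{Shao2023Toolbox}, where the proof (as in the Euclidean model of Stein/Meyer for the spectral subclass of $(1,1)$ symbols) runs by estimating each dyadic block $\Op(a\,\psi_t(|\xi|))$ directly on $L^2$ through kernel bounds extracted from the $n+2$ difference seminorms, and then summing by almost orthogonality. Your route differs at exactly that point: instead of redoing kernel estimates, your step (B) bootstraps the block bound $\|\Op(a_t)f\|_{L^2}\lesssim t^m\mathbf{M}^m_{1,n+2;q}(a)\|\psi_t(|\nabla|)f\|_{L^2}$ out of the already available positive-exponent result (Theorem \ref{SteinTheorem}) applied at a fixed $s_0\in(0,1)$, trading the factor $t^{s_0}$ against the lower frequency bound $|\eta|\gtrsim t$ for the output supplied by step (A); the identity $\Op(a_t)=\Op(a)\circ\psi_t(|\nabla|)$ (legitimate because $\psi_t(|\xi|)$ is scalar) makes this clean, and the seminorm $\mathbf{M}^m_{1,n+2;q}$ matches $[s_+]+1=1$ for $s\le0$. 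Both proofs consume $\delta<1/2$ in the same place (the output annulus via Corollary \ref{SpecPrd}) and reassemble via Proposition \ref{LPHs}; what your version buys is economy — no new kernel analysis on $\Gp$ — at the price of treating Theorem \ref{SteinTheorem} as a black box rather than obtaining the $L^2$ block bound intrinsically. Two small patches are needed but are routine: the spectral condition is only assumed for $\langle\xi\rangle$ large, so the finitely many additional scales below that threshold must be lumped with your low piece exactly as you treat $1\le t<2$; and your claim that the lumped low-frequency operator "maps into frequencies $|\eta|\lesssim1$" is not justified (the spectral condition may fail there, and smooth symbol entries have unbounded spectrum), but this does not matter — that piece involves finitely many $\xi$, so it is trivially bounded $H^{s+m}\to L^2\cap H^1$ with constant controlled by $\mathbf{M}^m_{1,n+2;q}(a)$, which suffices for its contribution both to $\|\phi(|\nabla|)g\|_{L^2}$ and to the integral $\int_1^\infty\tau^{2s-1}\|h_\tau(|\nabla|)\cdot\|_{L^2}^2\,d\tau$ when $s\le0$.
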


As an initial application, we are able to define para-products on $\Gp$ in a coordinate-free manner. Given two distributions $a$ and $u$, we define the \emph{para-product} $T_au$ by
\begin{equation}\label{T_au}
T_au
:=\int_1^\infty\left[\phi_{2^{-10}t}\big(|\nabla|\big)a\right]\cdot\psi_t\big(|\nabla|\big)u\frac{dt}{t},
\end{equation}
where $\phi$ and $\psi$ are as in the continuous Littlewood-Paley decomposition (\ref{LPCont}). Similarly as in the Euclidean case, the gap 10 is inessential. The symbol of $T_a$ is 
$$
\sigma[T_a](x,\xi)
=\int_1^\infty\left[\phi_{2^{-10}t}\big(|\nabla|\big)a(x)\right]\cdot\psi_t\big(|\xi|\big)\cdot\I[\xi]\frac{dt}{t}.
$$
\begin{proposition}
If $a\in L^\infty$, the symbol $\sigma[T_a]$ of the para-product operator $T_a$ is of class $\Sigma^0_{2^{-10}}$.
\end{proposition}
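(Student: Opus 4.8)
The plan is to verify directly that $\sigma[T_a]$ satisfies both defining conditions of the class $\Sigma^0_{2^{-10}}$, namely the spectral condition with parameter $2^{-10}$ and the $\mathscr{S}^0_{1,1}$ symbol estimates. The symbol is
$$
\sigma[T_a](x,\xi)=\left(\int_1^\infty\left[\phi_{2^{-10}t}\big(|\nabla|\big)a(x)\right]\cdot\psi_t\big(|\xi|\big)\frac{dt}{t}\right)\I[\xi],
$$
so it is scalar-valued (a multiple of the identity on each $\Hh[\xi]$), which immediately trivializes all the non-commutativity issues: the difference operators $\Df^{\boldsymbol\tau}$ act only through $\psi_t(|\xi|)$, and $X^\alpha_x$ acts only through $\phi_{2^{-10}t}(|\nabla|)a(x)$.

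First I would check the spectral condition. For fixed $t$, the function $x\mapsto\phi_{2^{-10}t}(|\nabla|)a(x)$ is spectrally localized in $\{|\eta|\le 2^{-10}t\}$ by the definition of the partial sum operator. On the other hand, $\psi_t(|\xi|)$ is supported in $\{t/2\le|\xi|\le t\}$, i.e. $|\xi|\simeq t$; more precisely $\psi_t(|\xi|)\ne0$ forces $|\xi|\ge t/2$, hence $t\le 2|\xi|\le 2\size[\xi]$. Therefore on the support of the integrand, $|\eta|\le 2^{-10}t\le 2^{-9}\size[\xi]$. This is not quite $\le 2^{-10}\size[\xi]$, so the cleanest route is to note that the spectral condition is stable under adjusting the parameter by a fixed constant (and that the ``gap 10 is inessential" remark already signals the flexibility here); alternatively one fixes the constants at the outset so the bookkeeping gives exactly $2^{-10}$. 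Either way, $\Ft{(\sigma[T_a])_{ij}}(\eta,\xi)=0$ once $|\eta|\gtrsim\size[\xi]$, which is the spectral condition, and this also forces membership in $\Sigma_\delta^m$ once the symbol estimates below are in place (recall $\Sigma_\delta^m\subset\mathscr{S}^m_{1,1}$ by definition, so the estimates are still needed).

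Next I would establish the $\mathscr{S}^0_{1,1}$ bounds $\|X^\alpha_x\Df^{\boldsymbol\tau}\sigma[T_a](x,\xi)\|\lesssim\size[\xi]^{|\alpha|-p}$. For the $\xi$-differences: since $\psi\in C^\infty_0(\mathbb{R})$ is supported away from $0$ on the relevant scales, the scalar symbol $m_a(x,\xi):=\int_1^\infty[\phi_{2^{-10}t}(|\nabla|)a(x)]\psi_t(|\xi|)\,dt/t$ behaves like a standard radial Littlewood-Paley multiplier in $\xi$, and one shows by the usual Mikhlin-type estimates that applying $p$ difference operators $\Df^{\boldsymbol\tau}$ (which, on a scalar symbol depending on $\xi$ only through $|\xi|$, act essentially as finite differences comparable to $\size[\xi]^{-1}$-scaled derivatives, cf. the discussion following \eqref{Diff_qa}) gains a factor $\size[\xi]^{-p}$. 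For the $x$-derivatives: $X^\alpha_x$ hits only $\phi_{2^{-10}t}(|\nabla|)a(x)$, and because this is spectrally truncated at frequency $\simeq t\simeq|\xi|$, the Bernstein inequality gives $\|X^\alpha_x\phi_{2^{-10}t}(|\nabla|)a\|_{L^\infty}\lesssim t^{|\alpha|}\|a\|_{L^\infty}\lesssim\size[\xi]^{|\alpha|}\|a\|_{L^\infty}$ on the support of $\psi_t(|\xi|)$. Combining, $\|X^\alpha_x\Df^{\boldsymbol\tau}\sigma[T_a](x,\xi)\|\lesssim\size[\xi]^{|\alpha|-p}\|a\|_{L^\infty}$, and the $dt/t$ integral converges because for each fixed $\xi$ the integrand is supported in the fixed-length (logarithmic) window $t\simeq|\xi|$. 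This is exactly the $\mathscr{S}^0_{1,1}$ estimate, completing the verification.

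The main obstacle I anticipate is purely bookkeeping rather than conceptual: matching the spectral-condition parameter exactly to $2^{-10}$ given that $\psi_t(|\xi|)\ne0$ only forces $|\xi|\ge t/2$ (a factor-$2$ loss), and more delicately, handling the frequency overlap near $|\xi|\simeq t/2$ where the truncation $\phi_{2^{-10}t}(|\nabla|)$ and the Littlewood-Paley piece $\psi_t(|\xi|)$ interact — one must be sure the low-frequency cutoff on $a$ genuinely sits strictly below the $\psi_t$ band uniformly in $t$. This is handled exactly as in the Euclidean theory (the ``gap is inessential'' principle), so I would state it as such and refer to \cite{Shao2023Toolbox} for the routine constants, spending the bulk of the written proof on the Bernstein and Mikhlin estimates above.
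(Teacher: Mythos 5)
Your argument is correct and is essentially the intended one (the paper quotes this proposition from \cite{Shao2023Toolbox} without proof): the spectral condition follows from the support bookkeeping $|\eta|\le 2^{-10}t$ for the spectrum of $\phi_{2^{-10}t}(|\nabla|)a$ together with $t\le 2|\xi|$ on the support of $\psi_t(|\xi|)$, and the $\mathscr{S}^0_{1,1}$ bounds follow from Bernstein in $x$ plus difference estimates on the scalar multiplier $\psi_t(|\xi|)$, the $dt/t$ integration being harmless because for fixed $\xi$ the integrand lives in a logarithmic window $t\simeq|\xi|$. Two small points to tighten: your bookkeeping actually yields the parameter $2^{-9}$ rather than $2^{-10}$ (harmless, since everything downstream only uses membership in $\Sigma^0_\delta$ for some $\delta<1/2$, as in Theorem \ref{Stein'}), and the gain $\langle\xi\rangle^{-p}$ under the difference operators is not a formal ``Mikhlin'' fact on a compact group --- it should be justified via the kernel identity $\Df_{q}^{\beta}\big(\psi_t(|\xi|)\big)=\widehat{q^{\beta}K_t}(\xi)$, where $K_t$ is the convolution kernel of $\psi_t(|\nabla|)$, which concentrates at scale $t^{-1}$ near the identity so that multiplication by $q^{\beta}$ gains $t^{-|\beta|}\simeq\langle\xi\rangle^{-|\beta|}$ (or simply cited from the Littlewood--Paley estimates of \cite{Shao2023Toolbox}).
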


By Theorem \ref{Stein'}, the para-product operator $T_a$ is thus continuous from $H^s$ to itself for all $s\in\mathbb{R}$. Just as in the Euclidean case, we have the para-product decomposition on $\Gp$:
\begin{theorem}\label{ParaPrd}
If $s>0$, $a,u\in (L^\infty\cap H^s)(\Gp)$, then
$$
au=T_au+T_ua+R(a,u)+\phi\big(|\nabla|\big)a\cdot\phi\big(|\nabla|\big)u
$$
where the smoothing remainder 
$$
\|R(a,u)\|_{H^s}\lesssim |a|_{L^\infty}\|u\|_{H^s}.
$$
\end{theorem}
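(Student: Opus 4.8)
The plan is to expand both factors of $au$ with the continuous Littlewood--Paley decomposition (\ref{LPCont}), reorganize the resulting sum of bilinear pieces according to which of the two frequencies dominates, and recognize the four groups that appear as $T_au$, $T_ua$, the low--low term $\phi(|\nabla|)a\cdot\phi(|\nabla|)u$, and a ``resonant'' contribution; the stated identity then holds by definition, and the only real point is the $H^s$-bound on the resonant contribution.

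Carrying this out, I would write $a=\phi(|\nabla|)a+\int_1^\infty\psi_s(|\nabla|)a\,\frac{ds}{s}$ and likewise for $u$, and multiply out into four pieces. The piece $\phi(|\nabla|)a\cdot\phi(|\nabla|)u$ is the last summand. Using $\phi_{2^{-10}t}(|\nabla|)=\phi(|\nabla|)+\int_1^{2^{-10}t}\psi_s(|\nabla|)\,\frac{ds}{s}$ inside the definition (\ref{T_au}) of $T_a$, a short computation identifies $T_au$ with the sum of $\int_1^\infty\phi(|\nabla|)a\cdot\psi_t(|\nabla|)u\,\frac{dt}{t}$ and the portion $\iint_{1\le s<2^{-10}t}\psi_s(|\nabla|)a\cdot\psi_t(|\nabla|)u$ of the double integral, and symmetrically for $T_ua$. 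Hence
$$
R(a,u)=au-T_au-T_ua-\phi(|\nabla|)a\cdot\phi(|\nabla|)u
=\iint_{\substack{s,t\ge 1\\ 2^{-10}t\,\le\, s\,\le\, 2^{10}t}}\psi_s(|\nabla|)a\cdot\psi_t(|\nabla|)u\,\frac{ds}{s}\,\frac{dt}{t},
$$
the diagonal band where the two frequencies are comparable. Introducing the ``fattened'' low-pass $S_t:=\phi_{2^{10}t}(|\nabla|)-\phi_{\max(1,\,2^{-10}t)}(|\nabla|)$, which is spectrally supported in $|\xi|\lesssim t$ and uniformly bounded on $L^\infty$ (a standard property of the Littlewood--Paley projections of \cite{Shao2023Toolbox}), we may rewrite $R(a,u)=\int_1^\infty g_t\,\frac{dt}{t}$ with $g_t:=(S_ta)\cdot\psi_t(|\nabla|)u$.

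The bound then follows from two ingredients. First, $g_t$ is spectrally localized: $S_ta$ has frequencies $\lesssim t$, $\psi_t(|\nabla|)u$ has frequencies of size $\sim t$, so by Corollary \ref{SpecPrd} the product $g_t$ lies in $\bigoplus_{|\xi|\lesssim t}\mathcal{M}_\xi$; moreover $\|g_t\|_{L^2}\le\|S_ta\|_{L^\infty}\,\|\psi_t(|\nabla|)u\|_{L^2}\lesssim|a|_{L^\infty}\,\|\psi_t(|\nabla|)u\|_{L^2}$. Second, a ``ball summation'' estimate: for $s>0$, if each $g_t$ is spectrally supported in $|\xi|\lesssim t$, then
$$
\Big\|\int_1^\infty g_t\,\frac{dt}{t}\Big\|_{H^s}\lesssim\Big(\int_1^\infty t^{2s}\,\|g_t\|_{L^2}^2\,\frac{dt}{t}\Big)^{1/2}.
$$
This I would prove by applying $\psi_\tau(|\nabla|)$, noting $\psi_\tau(|\nabla|)g_t=0$ unless $t\gtrsim\tau$, using the Plancherel-type characterization in Proposition \ref{LPHs}, and running a Schur test against the kernel $\tau^s t^{-s}$ restricted to $t\gtrsim\tau$ --- both Schur bounds being finite precisely because $s>0$. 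Combining the two ingredients and then invoking Proposition \ref{LPHs} for $u$ yields $\|R(a,u)\|_{H^s}\lesssim|a|_{L^\infty}\|u\|_{H^s}$; the low-frequency part $\phi(|\nabla|)R(a,u)$ is estimated directly by $\int_1^\infty\|g_t\|_{L^2}\,\frac{dt}{t}\lesssim|a|_{L^\infty}\|u\|_{H^s}$, again using $s>0$.

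The main obstacle I anticipate is not analytic but organizational: making the bookkeeping of the three frequency regimes ($s\ll t$, $t\ll s$, $s\sim t$) match the precise definition (\ref{T_au}) of the para-product, including the $\phi(|\nabla|)$-piece concealed in $\phi_{2^{-10}t}(|\nabla|)$, so that $R(a,u)$ is exactly the clean resonant band written above. The sole external input is the uniform-in-scale $L^\infty$ boundedness of the fattened low-pass operators $S_t$, which belongs to the Littlewood--Paley calculus of \cite{Shao2023Toolbox}; and one should note that the hypothesis $s>0$ enters essentially (in the Schur test and in the low-frequency Cauchy--Schwarz bound), the statement failing at $s=0$.
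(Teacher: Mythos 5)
Your argument is correct and is essentially the expected one: the paper itself defers the proof of Theorem \ref{ParaPrd} to \cite{Shao2023Toolbox}, and your route is precisely the standard frequency trichotomy, with the ball-type spectral localization of the resonant band supplied by Corollary \ref{SpecPrd}, the summation for $s>0$ handled by the square-function characterization of Proposition \ref{LPHs} via the Schur test, and the uniform $L^\infty$-boundedness of the low-pass operators as the only additional Littlewood--Paley input. One bookkeeping caveat: for $1\le t\le 2^{10}$ the cutoff $\phi_{2^{-10}t}(|\nabla|)$ sits below frequency $1$, so $R(a,u)$ is not exactly your clean diagonal band but differs from it by low-scale corrections such as $\int_1^{2^{10}}\big[(\phi-\phi_{2^{-10}t})(|\nabla|)a\big]\cdot\psi_t(|\nabla|)u\,\frac{dt}{t}$ (and its symmetric counterpart), which are spectrally supported in a fixed ball and trivially admissible --- the organizational point you already flagged.
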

\begin{remark}\label{R(a)Symbol}
If in addition $a\in C^r_*(\Gp)$, then from the spectral characterization of Zygmund spaces, we find that in fact $R[a]\in \mathscr{S}^{-r}_{1,1}$, so 
$$
\|R(a,u)\|_{H^s}\lesssim|a|_{C^r_*}\|u\|_{H^{s-r}}.
$$
\end{remark}

We also have a direct generalization of Bony's para-linearization theorem.
\begin{theorem}[Bony]\label{Bony}
Suppose $F\in C^\infty(\mathbb{C};\mathbb{C})$ (understood as smooth mapping on the plane instead of holomorphic function), $F(0)=0$. Let $r>0$, and suppose $u\in C^r_*$. Write $u_t=\phi(|\nabla|/t)u$, where $\phi$ is the function in the Littlewood-Paley decomposition (\ref{LPCont}). Then with the symbol
$$
l_u(x,\xi):=\int_1^\infty F'(u_t(x))\cdot\psi_t\big(|\xi|\big)\cdot\I[\xi]\frac{dt}{t},
$$
there holds $F(u)=F(u_1)+\Op(l_u)u$, and we have the para-linearization formula
$$
F(u)=F(u_1)+T_{F'(u)}u+\Op(R[u])u
$$
with the symbol $R[u](x,\xi)\in \mathscr{S}^{-r}_{1,1}$. Consequently, if $u\in C^r_*\cap H^s$, then $F(u)-T_{F'(u)}u\in H^{s+r}$. In particular, for $s>n/2$, $F(u)-T_{F'(u)}u\in H^{2s-n/2}$.
\end{theorem}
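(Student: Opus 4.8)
The plan is to run Bony's telescoping argument along the continuous Littlewood--Paley flow $u_t=\phi(|\nabla|/t)u$ and then recognize the resulting operator as the para-product $T_{F'(u)}$ plus a smoothing operator.

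\emph{Step 1 (the exact identity).} Since $\psi(\lambda)=-\lambda\phi'(\lambda)$, one has $\partial_t[\phi(\lambda/t)]=\tfrac1t\psi(\lambda/t)$, hence $\partial_t u_t=\tfrac1t\psi_t(|\nabla|)u$ and $\partial_t F(u_t)=\tfrac1t F'(u_t)\,\psi_t(|\nabla|)u$. Integrating over $t\in[1,\infty)$ and using that $u_t\to u$ uniformly (so $F(u_t)\to F(u)$, all $u_t$ having range in a fixed compact set on which $F$ is uniformly continuous) gives
$$
F(u)=F(u_1)+\int_1^\infty F'(u_t)\,\psi_t(|\nabla|)u\,\frac{dt}{t}.
$$
The integral converges absolutely in $L^\infty$: by Proposition \ref{LPZyg} one has $\|\psi_t(|\nabla|)u\|_{L^\infty}\lesssim t^{-r}|u|_{C^r_*}$, while $\|F'(u_t)\|_{L^\infty}$ is bounded and $\int_1^\infty t^{-r}\,dt/t<\infty$ since $r>0$ (this also shows $F(u)-F(u_1)\in C^r_*$). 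Finally, reading the quantization formula (\ref{Op(a)}) termwise and interchanging the $t$-integral with the $\xi$-sum (justified by the same absolute convergence), the integrand equals $\Op(l_u)u$, where $l_u(x,\xi)$ acts as pointwise multiplication by $F'(u_t(x))$ in $x$ followed by the Fourier multiplier $\psi_t(|\nabla|)$ in $\xi$. This proves $F(u)=F(u_1)+\Op(l_u)u$.

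\emph{Step 2 (subtracting the para-product).} Comparing $l_u$ with $\sigma[T_{F'(u)}](x,\xi)=\int_1^\infty[\phi_{2^{-10}t}(|\nabla|)F'(u)](x)\,\psi_t(|\xi|)\,\mathrm{Id}\,\tfrac{dt}{t}$, their difference is
$$
R[u](x,\xi)=\int_1^\infty g_t(x)\,\psi_t(|\xi|)\,\mathrm{Id}\,\frac{dt}{t},\qquad g_t:=F'(u_t)-\phi_{2^{-10}t}(|\nabla|)F'(u),
$$
so that $F(u)=F(u_1)+T_{F'(u)}u+\Op(R[u])u$ follows from $\Op(\sigma[T_{F'(u)}])=T_{F'(u)}$. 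It remains to prove $R[u]\in\mathscr{S}^{-r}_{1,1}$. Since $R[u]$ has exactly the structural shape of a para-product symbol, with the low-pass piece $\phi_{2^{-10}t}(|\nabla|)F'(u)$ replaced by $g_t$, the behaviour of the RT-difference operators $\Df_q^\beta$ acting on the radial multiplier $\psi_t(|\xi|)\,\mathrm{Id}$ --- namely gaining $|\beta|$ powers of $\langle\xi\rangle$ while preserving the localization $|\xi|\sim t$ --- is precisely the input already underlying $\sigma[T_a]\in\Sigma^0_{2^{-10}}$, and I would import it from the toolbox. Then the only genuinely new ingredient needed is
$$
\|X^\alpha_x g_t\|_{L^\infty}\lesssim t^{|\alpha|-r},\qquad t\ge1,
$$
for every multi-index $\alpha$: combined with the $\Df_q^\beta$ estimate and the fact that the defining $t$-integral runs only over $t\in[|\xi|,2|\xi|]$ (the support of $\psi_t(|\xi|)$), this yields $\|X^\alpha_x\Df^\beta_{q,\xi}R[u](x,\xi)\|\lesssim\langle\xi\rangle^{-r+|\alpha|-|\beta|}$, i.e. $R[u]\in\mathscr{S}^{-r}_{1,1}$ (bounded $|\xi|$ is trivial, each $g_t$ being smooth in $x$).

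\emph{Step 3 (the $g_t$ bound and conclusion).} To bound $\|X^\alpha g_t\|_{L^\infty}$ I would split on $|\alpha|\ge r$ versus $|\alpha|<r$. For $|\alpha|\ge r$ no cancellation is needed: the chain rule together with $\|X^\beta u_t\|_{L^\infty}\lesssim t^{(|\beta|-r)_+}$ and $\|X^\beta\phi_{2^{-10}t}(|\nabla|)F'(u)\|_{L^\infty}\lesssim t^{(|\beta|-r)_+}$ --- the latter via the standard composition estimate $F'(u)\in C^r_*$, available independently of para-linearization --- controls each term of $g_t$ separately by $t^{|\alpha|-r}$. For $|\alpha|<r$ one needs genuine decay, and here I would use $g_t=[F'(u_t)-F'(u)]+[(1-\phi_{2^{-10}t}(|\nabla|))F'(u)]$: the second term is the high-frequency part of $F'(u)\in C^r_*$ and satisfies $\|X^\alpha(1-\phi_{2^{-10}t}(|\nabla|))F'(u)\|_{L^\infty}\lesssim t^{|\alpha|-r}$ by integrating $\|X^\alpha\psi_s(|\nabla|)F'(u)\|_{L^\infty}\lesssim s^{|\alpha|-r}$ over $s\ge2^{-10}t$ (convergent because $|\alpha|<r$), and the first term is handled via $F'(u)-F'(u_t)=-\int_t^\infty F''(u_s)\,\psi_s(|\nabla|)u\,\tfrac{ds}{s}$, the Leibniz rule, and the same Zygmund-space bounds. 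With $R[u]\in\mathscr{S}^{-r}_{1,1}$ in hand, Theorem \ref{SteinTheorem} gives $\Op(R[u]):H^s\to H^{s+r}$ whenever $s+r>0$; since $u_1$ is supported in finitely many spectral modes, $F(u_1)\in C^\infty(\Gp)$, so $F(u)-T_{F'(u)}u=F(u_1)+\Op(R[u])u\in H^{s+r}$ for $u\in C^r_*\cap H^s$. The last assertion follows by choosing $r=s-n/2$ in the Sobolev embedding $H^s(\Gp)\hookrightarrow C^{s-n/2}_*(\Gp)$, valid for $s>n/2$ as a consequence of Propositions \ref{LPHs}--\ref{LPZyg}, which gives $H^{s+r}=H^{2s-n/2}$.

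The step I expect to be the main obstacle is the bound $\|X^\alpha g_t\|_{L^\infty}\lesssim t^{|\alpha|-r}$ for $|\alpha|<r$, as this is exactly where the cancellation between $F'(u_t)$ and $\phi_{2^{-10}t}(|\nabla|)F'(u)$ must be extracted; everything else is the telescoping identity, routine Littlewood--Paley bookkeeping, and the quoted mapping and embedding theorems. A borderline logarithmic loss occurs when $r\in\mathbb{N}$ and $|\alpha|=r$, which I would remove by the usual argument involving an arbitrarily small loss of derivative (or by the slightly sharper dyadic estimate available since the $t$-integral in $R[u]$ only runs over a bounded range).
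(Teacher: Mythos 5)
Your route is the standard Bony--Meyer argument run along the continuous Littlewood--Paley flow, and it is essentially the intended proof: the statement itself records the intermediate identity $F(u)=F(u_1)+\Op(l_u)u$, which is exactly your Step 1 (note the paper does not reprove the theorem here but cites the toolbox paper \cite{Shao2023Toolbox}, so the comparison is with that expected argument). Steps 1 and 2 are fine: the telescoping identity, the identification of the integrand with $\Op(l_u)u$ via (\ref{Op(a)}), the localization $t\in[|\xi|,2|\xi|]$ forced by $\mathrm{supp}\,\psi$, and the reduction of $R[u]\in\mathscr{S}^{-r}_{1,1}$ to the estimate $\|X^\alpha g_t\|_{L^\infty}\lesssim t^{|\alpha|-r}$ together with the behaviour of $\Df_{q,\xi}^\beta$ on the multiplier $\psi_t(|\xi|)\I[\xi]$, which is the same input as in the proposition asserting $\sigma[T_a]\in\Sigma^0_{2^{-10}}$. (The use of $F'$ for a non-holomorphic $F$ carries the same interpretational convention as the statement itself, so no issue there.)

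The one real soft spot is the borderline you flag, and your two proposed repairs are not equally good. When $r\in\mathbb{N}$ and $|\alpha|=r$, your splitting $g_t=[F'(u_t)-F'(u)]+[(1-\phi_{2^{-10}t}(|\nabla|))F'(u)]$ genuinely loses a logarithm, because the second piece only has the $C^r_*$ regularity of $F'(u)$ to draw on and $\sum_{s\gtrsim t}s^{|\alpha|-r}$ diverges at $|\alpha|=r$; the same log appears in your crude bound $\|X^\beta\phi_{2^{-10}t}(|\nabla|)F'(u)\|_{L^\infty}\lesssim t^{(|\beta|-r)_+}$ at $|\beta|=r$. Your second suggested fix (exploiting that the $t$-integral runs over a bounded range) does not help, since the loss sits in the $x$-derivative bound for $g_t$ at fixed $t$, not in the $t$-integration; the $\varepsilon$-loss fix works but only yields $\mathscr{S}^{-r+\varepsilon}_{1,1}$, hence a weaker conclusion than stated. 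The clean repair is to split instead
$$
g_t=\big(1-\phi_{2^{-10}t}(|\nabla|)\big)F'(u_t)
+\phi_{2^{-10}t}(|\nabla|)\big[F'(u_t)-F'(u)\big],
$$
so that the high-frequency piece exploits the smoothness of $u_t$ (take any $M>r$ and trade $t^{-M}$ against $\sup_{|\beta|=M}\|X^\beta F'(u_t)\|_{L^\infty}\lesssim t^{M-r}$ via Fa\`a di Bruno), while the low-frequency piece uses only Bernstein and $\|F'(u_t)-F'(u)\|_{L^\infty}\lesssim t^{-r}$; this gives $\|X^\alpha g_t\|_{L^\infty}\lesssim t^{|\alpha|-r}$ for all $\alpha$ with no logarithm, and with it the sharp membership $R[u]\in\mathscr{S}^{-r}_{1,1}$ and the endpoint conclusions exactly as stated. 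With that substitution your argument is complete.
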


\subsection{Para-differential Operators on Compact Lie Group}
Just like $\mathbb{R}^n$, para-differential calculus starts with rough symbols.

\begin{definition}\label{Rough}
For $r\geq0$ and $m\in\mathbb{R}$, define the symbol class $\mathcal{A}_r^m(\Gp)$ to be the collection of all symbols $a(x,\xi)$ on $\Gp\times\DuGp$, such that if $q=(q_i)_{i=1}^M$ is a strongly RT-admissible tuple, then
$$
\big\|\Df_{q,\xi}^\beta a(x,\xi)\big\|_{r;x}\leq C_\beta\size[\xi]^{m-|\beta|},\quad\forall\xi\in\DuGp.
$$
Here the norm $\|a(x,\xi)\|_{r;x}$ for $a(\cdot,\xi):\Gp\to\mathrm{End}(\Hh[\xi])$ is defined by
$$
\|a(x,\xi)\|_{r;x}:=
\sup_{B^*\in\mathrm{End}(\Hh[\xi])^*}\frac{\big|\langle B^*, a(x,\xi)\rangle\big|_{C^r_{*x}}}{\|B^*\|}.
$$
Here we consider $\mathrm{End}(\Hh[\xi])$ merely as a normed space instead of a normed algebra. Introduce the following norm on $\mathcal{A}_r^m(\Gp)$:
$$
\mathbf{W}^{m;r}_{l;q}(a):=\sup_{\xi\in\DuGp}\sum_{|\beta|\leq l}\size[\xi]^{|\beta|-m}\big\|\Df_{q,\xi}^\beta a(x,\xi)\big\|_{r;x}
$$
\end{definition}
The definition of $\mathcal{A}^m_r(\Gp)$ does not depend on the choice of $q$, so we can choose any set of functions $q$ that facilitates our computation. In particular, the fundamental tuple $Q$ defined in (\ref{QFund}) is a convenient choice.

\begin{definition}[Admissible cut-off]\label{AdmCutoff}
An admissible cut-off function $\chi$ with parameter $\delta\in(0,1/2)$ is a smooth function on $\mathbb{R}\times\mathbb{R}$ such that
$$
\chi(\mu,\lambda)=\left\{
\begin{aligned}
1,&\quad |\mu|\leq\frac{\delta}{2}\size[\lambda],\\
0,&\quad |\mu|\geq\delta\size[\lambda],
\end{aligned}
\right.
$$
and there also holds
$$
|\partial_\mu^k\partial_\lambda^l\chi(\mu,\lambda)|
\lesssim_{k,l}\langle\lambda\rangle^{-k-l}.
$$
\end{definition}

\begin{definition}[Para-differential Operator]
Let $\chi$ be an admissible cut-off function with some parameter $\delta$. For $a\in\mathcal{A}_r^m(\Gp)$, set $a^\chi (x,\xi)=\chi(|\nabla_x|,|\xi|)a(x,\xi)$, the regularized symbol corresponding to $a$. Define the para-differential operator $T_a^\chi$ corresponding to $a\in\mathcal{A}_r^m(\Gp)$ as
$$
T_a^\chi u(x):= \Op(a^\chi)u(x).
$$
\end{definition}

If $r>0$ and $a\in\mathcal{A}^m_r$, then $a^\chi$ is a (1,1) symbol satisfying the spectral condition with parameter $\delta$. For different choices of $\chi$, the para-differential operator $T_a^\chi$ are equal modulo $\Op\Sigma^{m-r}_{<1/2}$. Thus it is safe to omit the dependence on $\chi$ and talk about \emph{the} para-differential operator.

Para-differential operators still enjoy a symbolic calculus.

\begin{definition}
Let $r>0$ be a real number. Let $q$ be a RT-admissible tuple, and $X_q^{(\alpha)}$ be the left-invariant differential operators as in Proposition \ref{TaylorGp}. If $a,b$ are any symbol of at least $C^r_*$ regularity in $x$, define
$$
(a\#_{r;q} b)(x,\xi):=\sum_{\alpha:|\alpha|\leq r}\Df_{q,\xi}^\alpha a(x,\xi)\cdot X_{q,x}^{(\alpha)} b(x,\xi).
$$
\end{definition}

\begin{theorem}[Composition of Para-differential Operators]\label{Compo2}
Suppose $r>0$, $m,m'$ are real numbers. Let $q$ be a RT-admissible tuple, whose components are are linear combinations of the fundamental tuple $Q$ of $\Gp$, and let $X_q^{(\alpha)}$ be the left-invariant differential operators as in Proposition \ref{TaylorGp}. Given $a\in \mathcal{A}_r^{m}$, $b\in\mathcal{A}_r^{m'}$, it follows that
$$
T_a\circ T_b-T_{a\#_{r,q}b}
\in\Op\Sigma_{<1/2}^{m+m'-r}.
$$
More precisely, the operator norm of $T_a\circ T_b-T_{a\#_{r,q}b}\in\mathcal{L}(H^{s+m+m'-r},H^s)$ is bounded by $C_s\mathbf{W}^{m;r}_{l;q}(a)\mathbf{W}^{m';r}_{l;q}(b)$, where the integer $l$ does not depend on $a,b$.
\end{theorem}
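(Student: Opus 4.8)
The plan is to transplant the dyadic proof of Bony's para-differential composition theorem, using the kernel-level computation behind Theorem \ref{RegCompo} as a template and the spectral product localization of Corollary \ref{SpecPrd} in place of the Euclidean annulus lemmas. Write $T_a=\Op(a^\chi)$ and $T_b=\Op(b^\chi)$, so $a^\chi,b^\chi$ are $(1,1)$-type symbols of orders $m,m'$ obeying the spectral condition with a parameter $\delta<1/2$; it is harmless to take $\delta$ small, since, as noted after the definition of para-differential operators, different admissible cut-offs change $T_a$ and $T_b$ only by operators in $\Op\Sigma_{<1/2}^{m-r}$ and $\Op\Sigma_{<1/2}^{m'-r}$. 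First I would decompose both regularized symbols by Littlewood--Paley projections in the representation variable, $a^\chi=\sum_{k\geq0}a^\chi\,\psi_{2^k}(|\xi|)$ and likewise for $b^\chi$, so that $T_a=\sum_kT_a^{(k)}$ and $T_b=\sum_{k'}T_b^{(k')}$. Because $T_a^{(k)}$ only sees the frequency-$2^k$ part of its argument, while by Corollary \ref{SpecPrd} the symbol $b^\chi\psi_{2^{k'}}(|\xi|)$ --- of $x$-spectrum $\lesssim\delta 2^{k'}$ and $\xi$-support $\sim 2^{k'}$ --- spreads a frequency-$2^{k'}$ input only into an annulus around $2^{k'}$, the block $T_a^{(k)}T_b^{(k')}$ vanishes unless $|k-k'|\leq C_0$ for a structural constant $C_0$; hence $T_aT_b=\sum_k\sum_{|k'-k|\leq C_0}T_a^{(k)}T_b^{(k')}$, each summand an honest pseudo-differential operator concentrated in one dyadic shell.

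On a single near-diagonal block I would run the kernel computation underlying Theorem \ref{RegCompo} --- Taylor's formula on $\Gp$ (Proposition \ref{TaylorGp}) together with the convolution representation (\ref{Ker-Conv}) --- but retain the \emph{exact} Taylor remainder $R_N$ with $N=\lfloor r\rfloor+1$, rather than discarding it. This writes the symbol of $T_a^{(k)}T_b^{(k')}$ as $\sum_{|\alpha|<N}\bigl(\Df_{q,\xi}^\alpha a^\chi\cdot X_{q,x}^{(\alpha)}b^\chi\bigr)$ restricted to the relevant shells, plus a remainder built from $R_N$. Here the hypothesis that the components of $q$ are linear combinations of the fundamental tuple $Q$ enters essentially: the difference operators do not satisfy $\Df_q^{\alpha+\beta}=\Df_q^\alpha\Df_q^\beta$, and it is the Leibniz identity (\ref{Leibniz}) --- valid for $Q$, hence for its linear combinations --- that organizes the matrix-valued, non-commuting coefficients across the expansion. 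Re-summing the truncated expansions over the near-diagonal blocks reconstitutes the symbol $a^\chi\#_{r;q}b^\chi$, which, after re-regularization by the cut-off $\chi$, is the symbol of $T_{a\#_{r;q}b}$ modulo $\Op\Sigma_{<1/2}^{m+m'-r}$; and since every cut-off used here is a frequency projection, all error symbols produced so far inherit a spectral condition with parameter $<1/2$.

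It then remains to estimate the $R_N$-remainder block by block. On the $2^k$ shell the remainder carries $N$ left-invariant derivatives $X_{q,x}$ falling on $b$; because $b$ is only $C^r_*$ in $x$ while its $k$-th piece is frequency-localized, a Bernstein inequality applied to the Littlewood--Paley pieces of $b^\chi$ in $x$, together with the Zygmund characterization (Proposition \ref{LPZyg}), bounds this contribution by $2^{k(m'+N-r)}\,\mathbf{W}^{m';r}_{l;q}(b)$, whereas the $N$ difference operators falling on $a$ supply a gain $2^{-kN}$ (the order of $a$ dropping to $m-N$) against the weight $2^{km}$. The net size of the $k$-th piece of the remainder symbol is therefore $\lesssim 2^{k(m+m'-r)}\,\mathbf{W}^{m;r}_{l;q}(a)\mathbf{W}^{m';r}_{l;q}(b)$, uniformly in $k$, for an integer $l$ depending only on $r,m,m'$ and $n=\dim\Gp$; the analogous estimates on $\Df_\xi$- and $X_x$-derivatives of the remainder symbol follow the same way. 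Assembling the dyadic bounds places the whole error symbol in $\Sigma_{<1/2}^{m+m'-r}(\Gp)$ with norm $\lesssim\mathbf{W}^{m;r}_{l;q}(a)\mathbf{W}^{m';r}_{l;q}(b)$, and Theorem \ref{Stein'} upgrades this to the asserted continuity in $\mathcal{L}(H^{s+m+m'-r},H^s)$ with the stated operator-norm bound.

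The step I expect to be the genuine obstacle is the borderline bookkeeping that yields the sharp loss $r$ rather than merely the integer $N$: one must propagate the non-integer regularity through the remainder $R_N(\,\cdot\,;x,y)$ of Proposition \ref{TaylorGp}, interpolating the $C^r_*$ control on $b$ against its integer-order derivative bounds via Proposition \ref{LPZyg}, all while respecting both the non-commutativity of the endomorphism-valued symbols and the failure of the naive product rule for $\Df_q$. The remaining ingredients --- the frequency localization, the almost-orthogonal summation over shells (Proposition \ref{LPHs}), and the mapping property of $\Op\Sigma_{<1/2}^{\bullet}$ --- are routine given the toolbox already assembled.
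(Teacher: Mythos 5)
Your proposal is correct in outline and is essentially the expected argument for this statement, which the paper itself does not prove but imports from the toolbox reference \cite{Shao2023Toolbox}: regularize both symbols with an admissible cut-off of small parameter, use the spectral condition together with Corollary \ref{SpecPrd} to reduce to near-diagonal dyadic blocks, expand each block by Taylor's formula (Proposition \ref{TaylorGp}) keeping the order-$N$ remainder with $N=\lfloor r\rfloor+1$, estimate that remainder by trading the $N$ derivatives on the $C^r_*$ symbol $b$ against its frequency localization (Proposition \ref{LPZyg}/Bernstein) and the $\dist(y,e)^N$ weight against the kernel of $a$, and conclude via Theorem \ref{Stein'}. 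The one step you should write out rather than gesture at is the final re-regularization comparison, namely that $\Op\bigl(a^\chi\#_{r;q}b^\chi\bigr)-\Op\bigl((a\#_{r;q}b)^\chi\bigr)\in\Op\Sigma^{m+m'-r}_{<1/2}$ (term by term in $\alpha$, using that $\Df^\alpha_{q}a$ has order $m-|\alpha|$ with $C^r_*$ coefficients while $X^{(\alpha)}_{q}b$ has order $m'$ with $C^{r-|\alpha|}_*$ coefficients, and that $\delta$ is small enough that products of spectrally localized factors keep parameter $<1/2$), since this is where the claimed order $m+m'-r$, rather than the integer $N$, is finally secured.
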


\begin{corollary}\label{ParaComm}
Suppose $r>0$, $m,m'$ are real numbers.  Given $a\in \mathcal{A}_r^{m}$, $b\in\mathcal{A}_r^{m'}$, the commutator $[T_a,T_b]$ is in the class $\Op\Sigma_{<1/2}^{m+m'-1}$, with operator norm in $\mathcal{L}(H^{s+m+m'-1},H^s)$ bounded by $C_s\mathbf{W}^{m;r}_{l;q}(a)\mathbf{W}^{m';r}_{l;q}(b)$, where the integer $l$ does not depend on $a,b$.
\end{corollary}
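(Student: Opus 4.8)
The plan is to deduce the statement from the composition formula of Theorem \ref{Compo2} together with the one-order gain for symbol commutators recorded in Proposition \ref{2OrderComm}; no genuinely new estimate is required, only a careful matching of orders and of seminorms. Write $c_{ab}:=a\#_{r,q}b$ and $c_{ba}:=b\#_{r,q}a$. Applying Theorem \ref{Compo2} in both orderings of the product,
$$
T_a\circ T_b=T_{c_{ab}}+E_{ab},\qquad T_b\circ T_a=T_{c_{ba}}+E_{ba},
$$
with $E_{ab},E_{ba}\in\Op\Sigma^{m+m'-r}_{<1/2}$, the operator norms of $E_{ab},E_{ba}$ in $\mathcal{L}(H^{s+m+m'-r},H^s)$ being bounded by $C_s\mathbf{W}^{m;r}_{l;q}(a)\,\mathbf{W}^{m';r}_{l;q}(b)$ for a fixed integer $l$. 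Since the para-differential quantization $b\mapsto T_b$ is linear in the symbol, subtracting these identities gives $[T_a,T_b]=T_{\sigma}+(E_{ab}-E_{ba})$ with
$$
\sigma:=c_{ab}-c_{ba}=\sum_{|\alpha|\le r}\Big(\Df^\alpha_{q,\xi}a\cdot X^{(\alpha)}_{q,x}b-\Df^\alpha_{q,\xi}b\cdot X^{(\alpha)}_{q,x}a\Big).
$$
Everything is thereby reduced to showing that $T_\sigma\in\Op\Sigma^{m+m'-1}_{<1/2}$ with the right bound and that the remainder $E_{ab}-E_{ba}$ does no harm.

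The only place where non-commutativity is used is the $\alpha=0$ contribution to $\sigma$, which is precisely the symbol commutator $[a,b]=ab-ba$. Both $a$ and $b$ are of type $1$ and of orders $m$ and $m'$ in the sense of Definition \ref{2Order}, since a rough symbol $a\in\mathcal{A}^m_r$ satisfies $\sup_x\|\Df^\beta_{q,\xi}a(x,\xi)\|\lesssim\langle\xi\rangle^{m-|\beta|}$; hence Proposition \ref{2OrderComm} applies and $[a,b]$ has order $m+m'-1$. It also keeps $C^r_*$ regularity in $x$, because $C^r_*$ with $r>0$ is a multiplication algebra, so $[a,b]\in\mathcal{A}^{m+m'-1}_r$. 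Every remaining term has $|\alpha|\ge 1$: the difference operator $\Df^\alpha_{q,\xi}$ lowers the $\xi$-order by $|\alpha|$, so $\Df^\alpha_{q,\xi}a$ has order $m-|\alpha|$, while $X^{(\alpha)}_{q,x}b$ still has order $m'$ (of regularity $C^{r-|\alpha|}_*$ in $x$); by the Leibniz rule (\ref{Leibniz}) the product has order $m+m'-|\alpha|\le m+m'-1$, and symmetrically for the $b$-first terms. Thus $\sigma$ is a finite sum of rough para-differential symbols, each of order $\le m+m'-1$ and of positive regularity in $x$, whose $\mathbf{W}$-seminorms are dominated by products of those of $a$ and $b$. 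Regularizing $\sigma$ by an admissible cut-off therefore produces a $(1,1)$ symbol obeying the spectral condition with parameter $<1/2$, that is $T_\sigma\in\Op\Sigma^{m+m'-1}_{<1/2}$, and Theorem \ref{Stein'} yields $\|T_\sigma f\|_{H^s}\lesssim_s\mathbf{W}^{m;r}_{l;q}(a)\,\mathbf{W}^{m';r}_{l;q}(b)\,\|f\|_{H^{s+m+m'-1}}$.

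Finally I would absorb the composition remainder: $E_{ab}-E_{ba}\in\Op\Sigma^{m+m'-r}_{<1/2}$ is of order $m+m'-r\le m+m'-1$ whenever $r\ge 1$ — which is the only regime needed in the sequel, the rough symbols produced by para-linearization having regularity of order $\sim s-3/2$ with $s>3$ — so it too lies in $\Op\Sigma^{m+m'-1}_{<1/2}$, with operator norm $\lesssim_s\mathbf{W}^{m;r}_{l;q}(a)\,\mathbf{W}^{m';r}_{l;q}(b)$ by Theorem \ref{Stein'}; adding the two contributions gives $[T_a,T_b]\in\Op\Sigma^{m+m'-1}_{<1/2}$ with the asserted bound in $\mathcal{L}(H^{s+m+m'-1},H^s)$ (for $0<r<1$ the asymptotic sums collapse to $c_{ab}=ab$, $c_{ba}=ba$, so $\sigma=[a,b]$ and only the gain $\min(r,1)$ is produced, which is all one can extract from the composition formula at that regularity). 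The conceptual heart of the argument is Proposition \ref{2OrderComm}: if $\mathrm{End}(\mathcal{H}_\xi)$ were commutative the leading symbols $ab$ and $ba$ would cancel outright, but here it is precisely the fact that their commutator is controlled \emph{one order lower} that supplies the gain. Beyond this I anticipate no real obstacle — the remaining work is the routine bookkeeping of orders, and of which seminorms $\mathbf{W}^{m;r}_{l;q}$ the constants in Theorems \ref{Compo2} and \ref{Stein'} depend on.
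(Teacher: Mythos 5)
Your proposal is correct and follows essentially the route the paper intends: apply Theorem \ref{Compo2} in both orders, note that the $\alpha=0$ part of the symbol difference is the symbol commutator $[a,b]$, whose order drops to $m+m'-1$ by Proposition \ref{2OrderComm}, check that the $|\alpha|\geq1$ terms already have order $\leq m+m'-1$, and conclude the operator bounds via Theorem \ref{Stein'}. The one caveat — which you flag yourself — is that the composition remainder only lies in $\Op\Sigma^{m+m'-r}_{<1/2}$, so the full one-order gain asserted in the corollary is really only obtained for $r\geq1$ (for $0<r<1$ the commutator gains only $\min(r,1)$, exactly as in the Euclidean paraproduct case); this is harmless for the paper, where every rough symbol produced by the para-linearization has regularity $r>1$.
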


\begin{theorem}[Adjoint of Para-differential Operator]\label{ParaAdj}
Suppose $r>0$, $m$ is a real number. Let $q$ be a RT-admissible tuple, whose components are from the fundamental tuple $Q$ of $\Gp$, and let $X_q^{(\alpha)}$ be the left-invariant differential operators as in Proposition \ref{TaylorGp}. Given $a\in \mathcal{A}_r^{m}$, the adjoint operator $T_a^*$ satisfies
$$
T_a^*-T_{a^{\bullet;r,q}}\in\Op\Sigma^{m-r}_{<1/2}
$$
where
$$
a^{\bullet;r,q}(x,\xi)=\sum_{|\alpha|\leq r}\left(\Df_{q,\xi}^\alpha X_{q,x}^{(\alpha)}a^*\right)(x,\xi).
$$
More precisely, the operator norm of $T_a^*-T_{a^{\bullet;r,q}}\in\mathcal{L}(H^{s+m-r},H^s)$ is bounded by $C_s\mathbf{W}^{m;r}_{l;q}(a)$, where the integer $l$ does not depend on $a$.
\end{theorem}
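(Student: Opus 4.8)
The plan is to transcribe the Euclidean proof of the para-differential adjoint formula (the one that also underlies the composition Theorem \ref{Compo2}), using Fischer's smooth adjoint expansion (Theorem \ref{RegAdj}), which itself rests on the Taylor formula of Proposition \ref{TaylorGp}, as the engine, and letting the spectral condition do the work needed to tame the otherwise ``forbidden'' $\mathscr S^m_{1,1}$ behaviour. First I would reduce to the regularized symbol: since $T_a=\Op(a^\chi)$ with $a^\chi(x,\xi)=\chi(|\nabla_x|,|\xi|)a(x,\xi)$, we have $T_a^*=\Op(a^\chi)^*$, and $a^\chi\in\Sigma^m_\delta\subset\mathscr S^m_{1,1}$ is smooth in $x$ with $x$-frequencies truncated to $|\eta|\leq\delta\langle\xi\rangle$. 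The refinement to record, inherited from $a\in\mathcal A^m_r$ via the Bernstein inequality for the fundamental tuple $Q$, is
$$
\big\|X^\beta_x\,\Df_{q,\xi}^\gamma a^\chi(x,\xi)\big\|\ \lesssim\ \mathbf W^{m;r}_{l;q}(a)\,\langle\xi\rangle^{\,m-|\gamma|+(|\beta|-r)_+},\qquad |\gamma|\leq l,
$$
the exponent being simply $m-|\gamma|$ when $|\beta|\leq r$, i.e. no order is lost under the first $\lfloor r\rfloor$ derivatives in $x$.

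Second, I would run the proof of Theorem \ref{RegAdj} directly on $a^\chi$ rather than invoke its statement (whose hypothesis $\rho>\delta$ fails here): applying Proposition \ref{TaylorGp} to the right convolution kernel of $\Op(a^\chi)$ yields, for every integer $N$, a decomposition
$$
\Op(a^\chi)^*=\Op\Big(\,\sum_{|\alpha|<N}\Df_{q,\xi}^\alpha X_{q,x}^{(\alpha)}(a^\chi)^*+\rho_N\,\Big).
$$
By the displayed bound the $\alpha$-term has order $m-|\alpha|+(|\alpha|-r)_+=m-\min(|\alpha|,r)$, so it genuinely improves only until $|\alpha|$ reaches $r$, while the whole construction preserves the localization $|\eta|\lesssim\delta\langle\xi\rangle$. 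Choosing $N$ an integer exceeding $r$, both the tail $\sum_{r\leq|\alpha|<N}$ and the remainder $\rho_N$ then belong to $\Sigma^{m-r}_{\delta'}$ for some $\delta'<1/2$, so Theorem \ref{Stein'} bounds the corresponding operators in $\mathcal L(H^{s+m-r},H^s)$ with norm $\lesssim_s\mathbf W^{m;r}_{l;q}(a)$.

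It then remains to match the surviving sum $\sum_{|\alpha|\leq r}\Df_{q,\xi}^\alpha X_{q,x}^{(\alpha)}(a^\chi)^*$ with $(a^{\bullet;r,q})^\chi$ modulo $\Op\Sigma^{m-r}_{<1/2}$. The key remark is that $\chi(|\nabla_x|,|\xi|)$ acts as a scalar on each Peter--Weyl block $\mathcal M_\eta$ in $x$, hence commutes both with the left-invariant operators $X_{q,x}^{(\alpha)}$, which map every $\mathcal M_\eta$ into itself, and with the fibrewise adjoint (as $\chi$ is real), so that $X_{q,x}^{(\alpha)}(a^\chi)^*=(X_{q,x}^{(\alpha)}a^*)^\chi$. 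The only genuine discrepancy is then between $\Df_{q,\xi}^\alpha(b^\chi)$ and $(\Df_{q,\xi}^\alpha b)^\chi$ for $b=X_{q,x}^{(\alpha)}a^*$; this is governed by finite differences of $\chi$ in its $|\xi|$-argument, which by the symbol bounds in Definition \ref{AdmCutoff} cost a factor $\langle\xi\rangle^{-1}$ while preserving the $\delta$-localization, and so are absorbed into the $\Op\Sigma^{m-r}_{<1/2}$ remainder. Collecting the three steps, and noting that only finitely many of the seminorms $\mathbf W^{m;r}_{l;q}(a)$, with $l$ depending on $s$ and $n$ only, and of $\chi$, enter the estimates, one arrives at $T_a^*-T_{a^{\bullet;r,q}}\in\Op\Sigma^{m-r}_{<1/2}$ together with the asserted bound on its $\mathcal L(H^{s+m-r},H^s)$ norm.

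The hard part will be the remainder bookkeeping in the second step: showing that the Taylor remainder $\rho_N$ produced by Proposition \ref{TaylorGp}, once the frequency truncation is reinstated, yields a symbol in $\Sigma^{m-r}_{<1/2}$ and not merely an $\mathscr S^{m-r}_{1,1}$ symbol --- this is precisely where the $(1,1)$ calculus is ``forbidden fruit'' and must be earned from the spectral condition. One has to verify both that the $x$-frequency localization $|\eta|\lesssim\delta\langle\xi\rangle$ survives uniformly in $\xi$ in the remainder, and that the mere $C^r_*$-regularity of $a$ in $x$ is consumed cleanly, with no loss beyond the advertised $\langle\xi\rangle^{-r}$; this is what forces $N$ to be chosen in terms of $r$ and pins down the Bernstein estimate of the first step. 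Once that is settled, the remainder of the argument is a faithful transcription of the Euclidean proof through Fischer's symbolic calculus.
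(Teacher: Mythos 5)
A preliminary remark: this paper never proves Theorem \ref{ParaAdj}. Section \ref{2} is a toolbox whose proofs are deferred to \cite{Shao2023Toolbox}, so there is no in-paper argument to compare yours against, and I can only judge the proposal on its own merits. Your outline --- regularize to $a^\chi$, rerun the Taylor-expansion proof of Theorem \ref{RegAdj} directly on $a^\chi$ because its hypothesis $\rho>\delta$ fails for $(1,1)$ symbols, cut the expansion at $|\alpha|\le r$ where the $C^r_*$ regularity stops paying, and use the preserved spectral localization together with Theorem \ref{Stein'} to handle the tail and the remainder --- is the natural transcription of the Euclidean para-differential adjoint proof and is almost certainly the intended skeleton (it is also how Theorem \ref{Compo2} must be proved).

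There is, however, a concrete gap in your matching step. You dismiss the discrepancy between $\Df^\alpha_{q,\xi}(b^\chi)$ and $(\Df^\alpha_{q,\xi}b)^\chi$, with $b=X^{(\alpha)}_{q,x}a^*$, on the grounds that finite differences of $\chi$ in its $|\xi|$-argument ``cost a factor $\langle\xi\rangle^{-1}$''. Count the orders: by the Leibniz rule for difference operators, the discrepancy consists of terms in which $|\beta|\ge1$ differences fall on the multiplier $\chi(|\eta|,|\cdot|)$ and only $|\alpha|-|\beta|$ fall on $b$; the factor $\langle\xi\rangle^{-|\beta|}$ coming from Definition \ref{AdmCutoff} is exactly offset by the fact that $\Df^{\alpha-\beta}_{q,\xi}b$ has order $m-|\alpha|+|\beta|$ rather than $m-|\alpha|$, so this bookkeeping yields order $m-|\alpha|$ --- no net gain at all, which falls short of $m-r$ as soon as $r>1$, the case of interest throughout the paper. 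What actually saves the step is not the smoothness of $\chi$ but its support: the $\lambda$-differences of $\chi(\mu,\lambda)$ vanish off the transition annulus $|\mu|\sim\delta\langle\lambda\rangle$, so the discrepancy only sees the Littlewood--Paley pieces of $b$ at $x$-frequencies $|\eta|\sim\delta\langle\xi\rangle$, where the $C^{r-|\alpha|}_*$ regularity of $X^{(\alpha)}_{q,x}a^*$ supplies the extra factor $\langle\xi\rangle^{-(r-|\alpha|)}$; only then does each discrepancy term land at order $m-r$ with the required localization. The same regularity-into-decay mechanism, which you correctly flag but do not execute, is what must control the Taylor remainder $\rho_N$; and there you should also track the spectral parameter, since passing to adjoints moves $x$-frequency support from $|\eta|\le\delta\langle\xi\rangle$ to roughly $|\eta|\le\tfrac{\delta}{1-\delta}\langle\xi\rangle$, so to land literally in $\Op\Sigma^{m-r}_{<1/2}$ one must either start with $\delta$ small or invoke the independence of $T_{a^{\bullet;r,q}}$ of the admissible cut-off modulo $\Op\Sigma^{m-r}_{<1/2}$. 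With these repairs your proposal is, in my judgment, the right proof.
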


Sometimes it is also necessary to discuss symbols $a(x,\xi)$ which are merely $C^{-r}_*$ in $x$ with $r>0$. We have the following proposition:
\begin{proposition}\label{T_aNegIndex}
Suppose $r>0$. Fix a basis $X_1,\cdots,X_n$ of $\mathfrak{g}$, and define $X^\alpha$ as in Proposition \ref{NormalOrder}. Suppose $a\in\mathcal{A}_{-r}^m(\Gp)$ \footnote{The definition of symbol class $\mathcal{A}_{r}^m(\Gp)$ of course can be directly extended beyond $r>0$.} and $\chi$ is an admissible cut-off function with parameter $\delta$. Then $a^\chi(x,\xi)\in\Sigma^{m+r}_{<1/2}$. In fact, for any strongly RT-admissible tuple $q$, we have
$$
\big\|X_x^\alpha\Df_{q,\xi}^\beta a^\chi (x,\xi)\big\|
\lesssim_{\alpha,\beta;q}
\mathbf{W}^{m;r}_{|\beta|;q}(a)\cdot\size[\xi]^{m+r+|\alpha|-|\beta|}.
$$
\end{proposition}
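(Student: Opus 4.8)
The plan is to use that the admissible cut-off truncates the $x$-frequency of $a(\cdot,\xi)$ below $\delta\size[\xi]$, turning negative Zygmund regularity in $x$ into a frequency gain of order $\size[\xi]^{r}$, and then to peel off $X_x^{\alpha}$ and $\Df_{q,\xi}^{\beta}$ one at a time. The spectral condition is immediate: by Definition~\ref{AdmCutoff} the partial $x$-Fourier transform of each entry $a^{\chi}_{ij}(\cdot,\xi)$ is supported in $\{|\eta|\le\delta\size[\xi]\}$, and since $\delta<1/2$ this places $a^{\chi}$ in $\Sigma^{m+r}_{\delta}\subset\Sigma^{m+r}_{<1/2}$ once the pointwise bound is proved. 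To remove $X_x^{\alpha}$, write $\chi_{\xi}(\mu):=\chi(\mu,|\xi|)$, a bump supported in $|\mu|\le\delta\size[\xi]$, identically $1$ on $|\mu|\le\delta\size[\xi]/2$, with $|\partial_{\mu}^{k}\chi_{\xi}|\lesssim\size[\xi]^{-k}$. The multiplier $\chi_{\xi}(|\nabla_{x}|)$ is right-convolution by a class function, hence commutes with left-invariant differentiation, and it commutes with $\Df_{q,\xi}$ since the latter acts only on $\xi$; therefore
$$
X_x^{\alpha}\Df_{q,\xi}^{\beta}a^{\chi}=\Df_{q,\xi}^{\beta}\bigl((X^{\alpha}a)^{\chi}\bigr),\qquad X^{\alpha}a\in\mathcal{A}^{m}_{-(r+|\alpha|)},\quad \mathbf{W}^{m;-(r+|\alpha|)}_{|\beta|;q}(X^{\alpha}a)\lesssim_{\alpha}\mathbf{W}^{m;-r}_{|\beta|;q}(a),
$$
because left-invariant differentiation maps $C^{-r}_{*}\to C^{-(r+|\alpha|)}_{*}$ boundedly and leaves the $\xi$-decay untouched. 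It thus suffices to prove, for every $\rho>0$ and $b\in\mathcal{A}^{m}_{-\rho}$, the bound $\|\Df_{q,\xi}^{\beta}b^{\chi}(x,\xi)\|\lesssim_{\beta;q}\mathbf{W}^{m;-\rho}_{|\beta|;q}(b)\,\size[\xi]^{m+\rho-|\beta|}$, and then to apply it with $b=X^{\alpha}a$, $\rho=r+|\alpha|$.

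Next I remove $\Df_{q,\xi}^{\beta}$. On the $x$-Fourier side $\widehat{b^{\chi}}(\eta,\xi)=\chi(|\eta|,|\xi|)\,\widehat{b}(\eta,\xi)$, with $\Df_{q,\xi}$ acting as a difference in $\xi$ (independently of $\eta$) obeying the Leibniz rule \eqref{Leibniz} of the fundamental tuple. Each of the $|\beta|$ differences either lands on $\widehat{b}$, producing $\widehat{\Df_{q,\xi}^{\gamma}b}$ with $|\gamma|\le|\beta|$, or lands on the scalar factor $\chi(|\eta|,\,\cdot\,)$; in the latter case the order-reducing property of difference operators applied to the order-$0$ symbol $\chi(|\eta|,\,\cdot\,)\,\mathrm{Id}$ (Mikhlin in $\lambda$ uniformly in $\mu$, by Definition~\ref{AdmCutoff}) shows that it produces a scalar $x$-frequency multiplier still supported in $\{|\eta|\lesssim\size[\xi]\}$ with one extra power of $\size[\xi]^{-1}$ per difference and the same type of $\mu$-derivative bounds. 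Hence $\Df_{q,\xi}^{\beta}b^{\chi}(x,\xi)$ is a finite sum of terms $\theta_{\xi}(|\nabla_{x}|)\,c(\cdot,\xi)$ with $c=\Df_{q,\xi}^{\gamma}b$, so that $\|c(\cdot,\xi)\|_{-\rho;x}\le\mathbf{W}^{m;-\rho}_{|\beta|;q}(b)\,\size[\xi]^{m-|\gamma|}$, and $\theta_{\xi}$ an $x$-frequency multiplier supported in $\{|\mu|\le C\size[\xi]\}$ with $|\partial_{\mu}^{k}\theta_{\xi}|\lesssim A\,\size[\xi]^{-k}$ and $A\sim\size[\xi]^{-(|\beta|-|\gamma|)}$. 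It therefore remains to prove the core estimate
$$
\bigl\|\theta_{\xi}(|\nabla_{x}|)\,c(\cdot,\xi)\bigr\|_{L^{\infty}_{x}}\lesssim A\,\size[\xi]^{\rho}\,\|c(\cdot,\xi)\|_{-\rho;x},
$$
after which summing the finitely many terms (the factor $A$ absorbing the gap between $m-|\gamma|$ and $m-|\beta|$) gives the bound of the previous paragraph.

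For the core estimate, by the duality definition of the operator norm and of $\|\cdot\|_{-\rho;x}$ it suffices to bound the scalar $g:=\langle B^{*},c(\cdot,\xi)\rangle\in C^{-\rho}_{*}(\Gp)$, for which $\|g\|_{C^{-\rho}_{*}}\le\|c(\cdot,\xi)\|_{-\rho;x}\|B^{*}\|$. Decompose $g$ by the continuous Littlewood-Paley partition \eqref{LPCont}; Proposition~\ref{LPZyg} gives $|\phi(|\nabla|)g|_{L^{\infty}}\lesssim\|g\|_{C^{-\rho}_{*}}$ and $|\psi_{t}(|\nabla|)g|_{L^{\infty}}\lesssim t^{\rho}\|g\|_{C^{-\rho}_{*}}$. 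Since $\psi_{t}$ is supported in $\{|\mu|\in[t/2,t]\}$ and $\theta_{\xi}$ in $\{|\mu|\le C\size[\xi]\}$, the product $\theta_{\xi}\psi_{t}$ vanishes for $t\gtrsim\size[\xi]$, and for $1\le t\lesssim\size[\xi]$ the operator $\theta_{\xi}(|\nabla|)\psi_{t}(|\nabla|)$ is right-convolution by a kernel of $L^{1}$-norm $\lesssim A$ — a Mikhlin-type bound that follows from the kernel estimates for the fixed bump $\psi$ at frequency $t$ together with the symbol bounds on $\theta_{\xi}$, available from the Littlewood-Paley calculus of \cite{Shao2023Toolbox}. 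Therefore, writing $g=\phi(|\nabla|)g+\int_{1}^{\infty}\psi_{t}(|\nabla|)g\,\frac{dt}{t}$,
$$
\bigl\|\theta_{\xi}(|\nabla|)g\bigr\|_{L^{\infty}}\lesssim A\|\phi(|\nabla|)g\|_{L^{\infty}}+A\int_{1}^{c\size[\xi]}t^{\rho}\|g\|_{C^{-\rho}_{*}}\,\frac{dt}{t}\lesssim A\,\size[\xi]^{\rho}\,\|g\|_{C^{-\rho}_{*}},
$$
using $\rho>0$ to sum the integral. Taking the supremum over $B^{*}$ restores the operator norm, which finishes the core estimate and with it the proposition.

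\textbf{Main obstacle.} The argument is the negative-exponent mirror of the $\mathcal{A}^{m}_{r}$ regularization estimates ($r>0$) of \cite{Shao2023Toolbox}: the only change is that the Littlewood-Paley pieces of $a(\cdot,\xi)$ now grow like $t^{r}$ instead of decaying like $t^{-r}$, which is precisely why the order rises from $m$ to $m+r$. The genuine technical points are (i) the Mikhlin-type $L^{1}$-kernel bound for spectral multipliers on the non-commutative $\Gp$, and (ii) the bookkeeping in the step removing $\Df_{q,\xi}^{\beta}$, whose only subtlety is that the cut-off depends on $\xi$ through $\size[\xi]$ and hence, unlike $X_x^{\alpha}$, does not commute with $\Df_{q,\xi}$; both are routine given the Littlewood-Paley and difference-operator machinery of Section~\ref{2}.
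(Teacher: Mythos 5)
Your overall architecture is sound, and two of the three steps are fine: the spectral condition is indeed immediate from Definition \ref{AdmCutoff}; the reduction removing $X_x^\alpha$ is correct, because for fixed $\xi$ the cut-off is a spectral multiplier of the bi-invariant Laplacian, hence commutes with left-invariant differentiation (and $\Df_{q,\xi}$ commutes with operators acting in $x$ alone), so $X^\alpha_x a^\chi=(X^\alpha a)^\chi$ with $X^\alpha a\in\mathcal{A}^m_{-(r+|\alpha|)}$; and for the term in which all differences land on $\widehat b$, the untouched factor $\chi(|\eta|,|\xi|)$ really is a scalar $x$-frequency multiplier, so your duality-plus-Littlewood--Paley ``core estimate'' (converting $C^{-\rho}_*$ into a factor $\size{\xi}^{\rho}$, using the uniform $L^1$-kernel bound for $\theta_\xi(|\nabla|)\psi_t(|\nabla|)$, $t\lesssim\size{\xi}$) is correct.

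The genuine gap is in the step where $\Df_{q,\xi}$ falls on the $\xi$-dependent cut-off. You assert that this ``produces a scalar $x$-frequency multiplier'' with one extra factor $\size{\xi}^{-1}$ per difference. On a non-commutative group this is false: RT-differences of a scalar Fourier multiplier are matrix-valued — see Theorem \ref{Df_q}, where $\Df_\pm$ applied to a diagonal multiplier $f(l)\,\I[l]$ is supported on an off-diagonal, with entries depending on the matrix indices and not only on $l$. Consequently the Leibniz terms have the form $M_{\beta'}(|\eta|,\xi)\cdot\Df^{\gamma}_{q,\xi}\widehat b(\eta,\xi)$ with $M_{\beta'}(|\eta|,\xi)\in\mathrm{End}(\Hh[\xi])$ varying with the $x$-frequency $|\eta|$. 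Your core estimate cannot be applied to such terms: pairing with a fixed $B^*\in\mathrm{End}(\Hh[\xi])^*$ no longer reduces matters to a single scalar $C^{-\rho}_*$ function, because the functional one would need ($B^*$ composed with multiplication by $M_{\beta'}(|\eta|,\xi)$) changes with $\eta$. Nor can you bound these terms crudely block-by-block, since sharp spectral projections cost dimension factors and summing $|\eta|\lesssim\size{\xi}$ loses $\size{\xi}^{n}$. Closing this requires an additional real argument, e.g.\ an operator-valued Mikhlin/transference lemma for $|\eta|$-radial, $\mathrm{End}(\Hh[\xi])$-valued $x$-multipliers (say by writing $M(\mu,\xi)=\int\widehat M(\tau,\xi)e^{i\mu\tau}d\tau$ and using uniform $L^\infty$ bounds for $e^{i\tau|\nabla_x|}$ on dyadic blocks, exploiting that $\widehat M$ decays on scale $\size{\xi}^{-1}$ while $t\lesssim\size{\xi}$), or an expansion of $\Df^{\beta'}_{q,\xi}\bigl[\chi(\mu,|\cdot|)\I\bigr]$ into terms of the form (scalar $\partial_\lambda$-derivatives of $\chi$)$\times$($\eta$-independent universal matrices such as $\Df_\mu\kappa$, cf.\ Proposition \ref{Dkappa}) with remainders of low enough order to absorb the dimension loss. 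Since this non-scalar behaviour of differences is exactly the non-commutative feature the calculus is built around, it cannot be dismissed as bookkeeping; as written, your proof only establishes the case $|\beta|=0$ (and the $X^\alpha$-derivatives thereof), not the full difference estimates needed for membership in $\Sigma^{m+r}_{<1/2}$. (Minor wording issue, harmless in context: the cut-off does not ``commute with $\Df_{q,\xi}$'' — what you actually use, and what is true, is that $\Df_{q,\xi}$ commutes with $X^\alpha_x$ and that the fixed-$\xi$ cut-off commutes with $X^\alpha_x$.)
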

Since such symbols are quite irrregular, we cannot expect that choosing a different admissible cut-off function would produce a negeligible error. But we may still refer $\Op(a^\chi)$ as a \emph{para-differential operator} $T_a$, which maps $H^{s+r}$ to $H^{s}$ continuously for any $s\in\mathbb{R}$. Fortunately, there is a simple trick to bypass its low regularity and incorporate it into the symbolic calculus that we just constructed. For example, if $0<r<1$ and $a\in\mathcal{A}^m_{-r}$, then we consider $\Delta^{-1}a$ (subtracting the mean value to make this well-defined), and simply notice that
$$
T_af
=\big[\Delta,T_{\Delta^{-1}a}\big] f-2T_{\nabla \Delta^{-1}a\cdot\nabla}f.
$$
The right-hand-side involves only regular symbols. By the formula of compositions, noting that $\Delta^{-1}a\in\mathcal{A}^{m+2}_{2-r}$, we find
$$
\big[\Delta,T_{\Delta^{-1}a}\big]
-2T_{\nabla \Delta^{-1}a\cdot\nabla}
$$
is in fact a para-differential operator of order $m+2-(2-r)=m+r$. 

\subsection{Quasi-homogeneous Symbols}
We define a special class of symbols consisting of symbols of ``homogenized" classical differential operators. The prototype of such symbols is
$$
\sqrt{|\xi|^2+b(x,\xi)},
\quad 
b \text{ is the symbol of a vector field},
$$
i.e. the square-root of perturbed Laplacian. In the commutative group case, symbols like this may be manipulated as usual scalar-valued functions. We find that such manipulation remains partially valid on our group $\Gp$.

\begin{definition}\label{QuasiHomoSym}
A quasi-homogeneous symbol of order $m$ on $\Gp$ takes the form
$$
\kappa(\xi)^mf\left(\frac{b(x,\xi)}{\kappa(\xi)}\right).
$$
Here the scalar Fourier multiplier $\kappa:\DuGp\to(0,+\infty)$ is the symbol of $|\nabla|=\sqrt{-\Delta}$, and $b(x,\xi)$ is the symbol of a vector field on $\Gp$, and the Borel function $f:\mathbb{C}\to\mathbb{C}$ is bounded. The action $f$ on each $\mathrm{End}(\Hh[\xi])$ is defined by spectral calculus of matrices.
\end{definition}

We list some properties concerning the difference operator acting on the scalar symbol $\kappa(\xi)$. For simplicity, we fix a basis $\{X_i\}_{i=1}^n$ of $\mathfrak{g}$, so that $b(x,\xi)=\sum_{i=1}^nb_i(x)\sigma[X_i](\xi)$; and write $\{\Df_\mu\}$ for the RT-difference operators corresponding to the fundamental representations of $\Gp$. Throughout the rest of this subsection, summation with respect to Greek indices is interpreted as summation over these difference operators.

It turns out that the symbol of $|\nabla|$ enjoys a symbolic calculus which is quite similar to the Euclidean case.

\begin{proposition}\label{Dkappa}
Let $\kappa$ be the symbol of $|\nabla|$ on $\Gp$.

(1) There necessarily holds $\kappa\in\mathscr{S}^1_{1,0}$, and $\Df_\mu \kappa$ commutes with every symbol of order $m$ up to an error of order $m-1$. Here the notion of order is as in Definition \ref{2Order}.

(2) For every real number $m\in\mathbb{R}$, the symbol $\Df_\mu\kappa^m-m\kappa^{m-1}\Df_\mu\kappa\in\mathscr{S}^{m-2}_{1,0}$.
\end{proposition}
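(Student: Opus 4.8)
\emph{Sketch of the intended proof.} The starting observation is that $\kappa$ is $x$-independent. By the defining formula (\ref{Op-Symbol}) together with Theorem \ref{DeltaSpec}, the matrix entries of $\xi$ span the $(-\Delta)$-eigenspace $\mathcal{M}_\xi$ with eigenvalue $\lambda_\xi$, so $|\nabla|\xi=|\xi|\,\xi$ entrywise and $\kappa(x,\xi)=\xi^*(x)\,|\xi|\,\xi(x)=|\xi|\,\I[\xi]$. Hence every $x$-derivative of $\kappa$, of $\kappa^m$, and of $\Df_\mu\kappa$ vanishes, the $\delta=0$ loss in $\mathscr{S}^\bullet_{1,0}$ plays no role, and both assertions reduce entirely to estimating RT-difference operators in the $\xi$-variable; throughout I would use the Leibniz identity (\ref{Leibniz}) valid for the fundamental tuple, and I record that $\Df_\mu$ annihilates constant symbols since the components of an RT-admissible tuple vanish at $e$.

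For part (1), the membership $\kappa\in\mathscr{S}^1_{1,0}$ is quickest to see externally: $-\Delta$ is positive elliptic of order $2$ on the closed manifold $\Gp$, so $|\nabla|=(-\Delta)^{1/2}$ is a classical pseudo-differential operator of order $1$ (Seeley's theorem on complex powers of the elliptic operator $-\Delta$, the finite-dimensional kernel of $\Delta$ contributing only a smoothing correction), i.e.\ $|\nabla|\in\Psi^1_{1,0}$; Fischer's Theorem \ref{Fischer}(2) identifies $\Psi^1_{1,0}=\Op\mathscr{S}^1_{1,0}$, and since $\kappa=\sigma[|\nabla|]$ by (\ref{Op-Symbol}) this gives $\kappa\in\mathscr{S}^1_{1,0}$. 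A self-contained route, which I would prefer since it also feeds into part (2), uses the subordination formula $|\xi|=\pi^{-1}\int_0^\infty s^{-1/2}\lambda_\xi(s+\lambda_\xi)^{-1}\,ds$: writing $R_s$ for the symbol $(s+\lambda_\cdot)^{-1}\I$ of $(s-\Delta)^{-1}$ and using $\lambda_\cdot R_s=\I-sR_s$, one gets $\Df_\mu^\beta\kappa(\xi)=-\pi^{-1}\int_0^\infty s^{1/2}\,\Df_\mu^\beta R_s(\xi)\,ds$; differencing the exact identity $R_s(s\,\I-\sigma[\Delta])=\I$ with the Leibniz rule (\ref{Leibniz}), together with the elementary fact that $\sigma[\Delta]=\sum_i\sigma[X_i]^2$ is an $x$-independent symbol of order $2$ on which difference operators act favourably (here $\sigma[X_i](\xi)=(d\xi)(X_i)$ is of norm $\lesssim\size[\xi]$ and $\Df_\mu\sigma[X_i]$ is bounded), produces $s$-weighted bounds on $\Df_\mu^\beta R_s$ whose integral against $s^{1/2}\,ds$ is $\lesssim\size[\xi]^{1-|\beta|}$; the finitely many small representations are handled trivially. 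The commutator statement in (1) is then immediate: $\Df_\mu\kappa$ has order $0$ (Definition \ref{2Order}) and is $x$-independent, so applying Proposition \ref{2OrderComm} in the $\xi$-variable for each fixed $x$ gives $[\Df_\mu\kappa,a]$ of order $m-1$ whenever $a$ has order $m$.

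For part (2) I would exploit the block-scalar structure of powers of $\kappa$. Since $\kappa^m$ is a scalar Fourier multiplier, difference operators read off its values on the irreducible constituents of $\tau_\mu\otimes\xi$: if $\eta\subset\tau_\mu\otimes\xi$ then by the highest-weight correspondence and Theorem \ref{DeltaSpec} one has $\bm{J}(\eta)=\bm{J}(\xi)+w$ for a weight $w$ of the fixed fundamental representation $\tau_\mu$, whence $\lambda_\eta-\lambda_\xi=2(\bm{J}(\xi)+\bm{\varpi},w)+|w|^2=O(|\xi|)$ and therefore $\delta_\eta:=|\eta|-|\xi|=(\lambda_\eta-\lambda_\xi)/(|\eta|+|\xi|)=O(1)$. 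Thus on the block $\eta$ the symbols $\Df_\mu\kappa$, $\Df_\mu\kappa^m$ and $m\kappa^{m-1}\Df_\mu\kappa$ carry the scalars $\delta_\eta$, $|\eta|^m-|\xi|^m$ and $m|\xi|^{m-1}\delta_\eta$, so the block of $E_m:=\Df_\mu\kappa^m-m\kappa^{m-1}\Df_\mu\kappa$ is precisely the first Taylor remainder $|\eta|^m-|\xi|^m-m|\xi|^{m-1}\delta_\eta=\tfrac12 m(m-1)t_\eta^{m-2}\delta_\eta^{\,2}$, with $t_\eta$ between $|\xi|$ and $|\eta|$; since $t_\eta=|\xi|\bigl(1+O(|\xi|^{-1})\bigr)$ and $\delta_\eta=O(1)$ this is $O(\size[\xi]^{m-2})$, i.e.\ $E_m$ has order $m-2$. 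To upgrade this to $E_m\in\mathscr{S}^{m-2}_{1,0}$ one must bound $\Df_\mu^\beta E_m$ for every $\beta$: the term $\Df_\mu^{\beta}\Df_\mu\kappa^m$ is a higher finite difference of $|\cdot|^m$ on the weight lattice, of order $m-1-|\beta|$; the term $\Df_\mu^{\beta}(m\kappa^{m-1}\Df_\mu\kappa)$ is developed by the Leibniz rule into a sum of products $\Df_\mu^{\beta_1}\kappa^{m-1}\cdot\Df_\mu^{\beta_2}\Df_\mu\kappa$, of order $m-1-|\beta|$ when $|\beta_1|+|\beta_2|=|\beta|$ and of strictly lower order for the correction terms, which consume extra difference operators; and the cancellation between the two order-$(m-1-|\beta|)$ contributions is a higher instance of the same Taylor-remainder estimate, proved by the same weight-lattice finite-difference bookkeeping. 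Alternatively one may peel off one more term, $E_m=\tfrac12 m(m-1)\,\kappa^{m-2}(\Df_\mu\kappa)^2+E_m'$, note that $\kappa^{m-2}\in\mathscr{S}^{m-2}_{1,0}$ (same argument as for $\kappa$) and $\Df_\mu\kappa\in\mathscr{S}^0_{1,0}$ by part (1) so the first summand lies in $\mathscr{S}^{m-2}_{1,0}$ by the product rule for these classes, and iterate this expansion finitely often against a symbol-class remainder estimate.

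The routine parts are the $x$-independence, the subordination identity, and the first-order Taylor expansion. The genuine work — and the step I expect to be the main obstacle — is the higher-difference estimate in part (2): one must set up a \emph{finite-difference calculus on the weight lattice}, uniform in the representation, showing that $p$-fold RT-differences of an order-$m$ scalar multiplier decay like $\size[\xi]^{m-p}$ and, more delicately, that the ``chain-rule error'' $E_m$ keeps losing order under further differencing. This is precisely where the matrix non-commutativity — controlled through the Leibniz corrections of (\ref{Leibniz}) and through Proposition \ref{2OrderComm} — interacts with the finite-difference Taylor remainders, and where the explicit eigenvalue formula of Theorem \ref{DeltaSpec}, via the bound $|\eta|-|\xi|=O(1)$, is the decisive input making the finite differences behave asymptotically like derivatives, in sharp contrast with $\mathbb{R}^n$, where the identity $\partial_\xi|\xi|^m=m|\xi|^{m-1}\partial_\xi|\xi|$ is exact and (2) is vacuous.
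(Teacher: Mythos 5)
This proposition is one of the toolbox items the paper imports from \cite{Shao2023Toolbox} and states without proof, so there is no in-paper argument to compare yours against; I can only assess the proposal on its own terms. For part (1) your sketch is essentially complete and correct: $\kappa$ is $x$-independent and equals $|\xi|\,\mathrm{Id}_{\mathcal{H}_\xi}$ on each $\mathcal{M}_\xi$ by Theorem \ref{DeltaSpec}, so only difference estimates matter; both of your routes work (the external one, $|\nabla|\in\Psi^1_{1,0}$ up to a smoothing correction for the kernel of $\Delta$, combined with Theorem \ref{Fischer}(2), and the subordination/resolvent one, whose $s$-integration does return $\lesssim\langle\xi\rangle^{1-|\beta|}$); and once $\Df_\mu\kappa$ is known to have order $0$ in the sense of Definition \ref{2Order}, the commutator claim is a direct application of Proposition \ref{2OrderComm}.

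Part (2) is where the genuine gap lies, and you have flagged it yourself. Your leading-order computation is right and isolates the correct mechanism: $\lambda_\eta-\lambda_\xi=2(\bm{J}(\xi)+\bm{\varpi},w)+|w|^2=O(\langle\xi\rangle)$, hence $|\eta|-|\xi|=O(1)$, and the block of $E_m$ is a Lagrange remainder of size $O(\langle\xi\rangle^{m-2})$. But that establishes the required bound only at difference order zero. Membership in $\mathscr{S}^{m-2}_{1,0}$ means $\|\Df^\beta_q E_m\|\lesssim\langle\xi\rangle^{m-2-|\beta|}$ for \emph{every} $\beta$, and this is exactly what you defer to ``weight-lattice bookkeeping'': you need (i) that $p$-fold RT-differences of a scalar multiplier of order $m$ are $O(\langle\xi\rangle^{m-p})$ --- itself a nontrivial lemma, because iterated differences run over chains of representations with $\xi$-dependent branching coefficients rather than a fixed block decomposition --- and, more delicately, (ii) that the cancellation defining $E_m$ survives further differencing; (ii) does not follow from (i) applied to the two terms separately, since each is only of order $m-1$. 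Your alternative (``peel off $\tfrac12 m(m-1)\kappa^{m-2}(\Df_\mu\kappa)^2$ and iterate'') is circular in the same way: it presupposes $\kappa^{m-2}\in\mathscr{S}^{m-2}_{1,0}$ for arbitrary real exponents and a remainder estimate stable under all differences, i.e.\ the very lemma that is missing. A natural way to close the gap is to prove (2) by the same resolvent device you use in (1): represent $\kappa^{m}$ (after splitting off an even integer power, whose symbol is polynomial in $\sigma[\Delta]$) through the resolvent, and use the exact Leibniz rule (\ref{Leibniz}) on $R_s\,(s\,\mathrm{Id}-\sigma[\Delta])=\mathrm{Id}$ so that the chain-rule error appears with an extra resolvent factor, giving the extra unit of decay uniformly in $\beta$. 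Finally, a minor wrinkle worth a sentence: the block-scalar picture is exact for Fischer's difference $\Df_\tau$, whereas the classes here are defined through the RT-differences $\Df_{q_{ij}}$; these are the matrix entries of $\Df_\tau$ (with bounded coefficients, compare Theorem \ref{Df_q} on $\SU(2)$), so the conclusion persists, but the passage should be stated rather than left implicit.
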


The advantage of introducing quasi-homogeneous symbols is that their differentiation (in the $x$ variable) or difference (in the $\xi$ variable) enjoy approximate Leibniz rules:
\begin{proposition}\label{PSSymbol}
Let $\kappa$ be the symbol of $|\nabla|$ on $\Gp$, $b$ be the symbol of a vector field on $\Gp$. Let $f$ be a bounded holomorphic function defined near $z=0$ on the complex plane, covering the closed disk of radius $R_0:=\sup_{x,\xi}|\xi|^{-1}\|b(x,\xi)\|$.

(1) The quasi-homogeneous symbol $f(b/\kappa)$ is of order 0 in the sense of Definition \ref{2Order}, and the difference $\Df_\mu f(b/\kappa)$ is such that
$$
\Df_\mu f\left(\frac{b(x,\xi)}{\kappa(\xi)}\right)
-\frac{1}{\kappa(\xi)}f'\left(\frac{b(x,\xi)}{\kappa(\xi)}\right)\Df_\mu b
-\frac{\Df_\mu\kappa(\xi)}{\kappa(\xi)^2}f'\left(\frac{b(x,\xi)}{\kappa(\xi)}\right)b(x,\xi)
$$
is a symbol of order $-2$.

(2) If $a$ is the symbol of another vector field, then the commutator $[f(b/\kappa),a]$ is still a symbol of order $0$.

(3) If $b$ has at least $C^1$ coefficients, then for any left-invariant vector field $X$, the symbol $Xf(b/\kappa)$ is of order $0$, and 
$$
Xf\left(\frac{b}{\kappa}\right)-\frac{1}{\kappa}f'\left(\frac{b}{\kappa}\right)Xb
$$
is a symbol of order $-1$.
\end{proposition}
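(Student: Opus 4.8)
The plan is to represent the quasi-homogeneous symbol $f(b/\kappa)$ by a Riesz--Dunford contour integral and then differentiate under the integral sign. Since $f$ is holomorphic on an open neighbourhood $U$ of the closed disk $\overline{D(0,R_0)}$, fix $\varepsilon>0$ with $\overline{D(0,R_0+\varepsilon)}\subset U$ and take $\Gamma=\partial D(0,R_0+\varepsilon)$. For every $x\in\Gp$ and every $\xi\in\DuGp$ with $|\xi|$ large one has $\|b(x,\xi)/\kappa(\xi)\|\le R_0$, so the spectrum of the matrix $b(x,\xi)/\kappa(\xi)$ lies in $\overline{D(0,R_0)}$ and
$$
f\!\left(\frac{b(x,\xi)}{\kappa(\xi)}\right)=\frac{1}{2\pi i}\oint_\Gamma f(z)\Big(z\cdot\mathrm{Id}_{\mathcal{H}_\xi}-\frac{b(x,\xi)}{\kappa(\xi)}\Big)^{-1}dz ,
$$
with the uniform bound $\|(z-b/\kappa)^{-1}\|\le\varepsilon^{-1}$ on $\Gamma$. (The finitely many small frequencies are irrelevant, order being an asymptotic notion; here $R_0<\infty$ because $\|\sigma[X](\xi)\|\lesssim|\xi|$ for any vector field $X$.) This already gives $\sup_x\|f(b/\kappa)(x,\xi)\|\lesssim1$, which is the case $\beta=0$ of the order-$0$ bound of Definition \ref{2Order}.

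Writing $c:=b/\kappa$ and $r(z):=(z-c)^{-1}$, I would control the higher differences by applying difference operators to the identity $(z-c)\,r(z)=\mathrm{Id}$ through the Leibniz rule \eqref{Leibniz}; for a single fundamental difference this produces the recursion
$$
\Df_\mu r=r\,(\Df_\mu c)\,r+r\sum_k(\Df_{\mu_k'}c)(\Df_{\mu_k''}r),
$$
the sum encoding the bilinear correction term in \eqref{Leibniz}. Now $c=b\,\kappa^{-1}$ is of order $0$ and $\Df_\nu c$ is of order $-1$ for each $\nu$ — this is where \eqref{Leibniz} and Proposition \ref{Dkappa}(2), which governs $\Df_\nu\kappa^{-1}$, are used — so a Neumann-type fixed point estimate, valid once $\langle\xi\rangle$ is large, closes the recursion: by induction on $|\beta|$, $\Df^\beta r(z)$ is a finite sum of words $r\,\Df^{\gamma_1}c\,r\,\Df^{\gamma_2}c\cdots r$ with $\sum_j|\gamma_j|\ge|\beta|$, whence $\sup_x\|\Df^\beta r(z)\|\lesssim\langle\xi\rangle^{-|\beta|}$ uniformly on $\Gamma$. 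Integrating over $\Gamma$ gives $\sup_x\|\Df^\beta f(b/\kappa)(x,\xi)\|\lesssim\langle\xi\rangle^{-|\beta|}$, i.e. $f(b/\kappa)$ is of order $0$.

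For the refined formula in (1), I would isolate in the $\Df_\mu$ recursion the main term $r(\Df_\mu c)r$ from the Leibniz correction (bilinear in first differences, hence of order $-2$); using $f'(c)=\tfrac{1}{2\pi i}\oint_\Gamma f(z)\,r(z)^2\,dz$ and the resolvent identity $[\Df_\mu c,r(z)]=-r(z)[c,\Df_\mu c]r(z)$, one finds that $\Df_\mu f(c)-f'(c)\,\Df_\mu c$ equals a contour integral of $r^2[c,\Df_\mu c]\,r$ modulo a symbol of order $-2$, and $[c,\Df_\mu c]$ is of order $0+(-1)-1=-2$ by Proposition \ref{2OrderComm}; thus $\Df_\mu f(c)=f'(c)\,\Df_\mu c$ up to order $-2$. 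Expanding $\Df_\mu c=\Df_\mu(b\,\kappa^{-1})$ by \eqref{Leibniz} and Proposition \ref{Dkappa}(2) produces the two first-order terms displayed in the statement, and substituting gives the claimed identity modulo order $-2$. Part (2) is then immediate: $f(b/\kappa)$ has order $0$, the vector-field symbol $a$ has order $1$, so $[f(b/\kappa),a]$ has order $0+1-1=0$ by Proposition \ref{2OrderComm}. Part (3) uses the same contour representation, with the key simplification that a left-invariant vector field $X$ is a genuine derivation in $x$, so $X r(z)=r(z)(Xc)r(z)$ with no correction term; since $Xc=(Xb)/\kappa$ is of order $0$ (this is where $b\in C^1$ is used, so that $Xb$ is again a vector-field symbol), the sandwich is of order $0$, giving $Xf(b/\kappa)$ of order $0$, and the same commutator manipulation — now with $[Xc,c]$ of order $0+0-1=-1$ — yields $Xf(b/\kappa)-\kappa^{-1}f'(b/\kappa)\,Xb$ of order $-1$.

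I expect the main obstacle to be not any single estimate but the systematic bookkeeping of orders: one must verify that the non-commutativity defects — the resolvent commutators — sit \emph{two} orders below the leading term in (1) and \emph{one} order below in (3), and that the correction term in \eqref{Leibniz}, being bilinear in first differences, is likewise two orders down. This is precisely where Proposition \ref{2OrderComm} (a commutator of symbols drops one order) and the ``split into two differences'' structure of \eqref{Leibniz} are decisive. A secondary subtlety is that the recursion for $\Df^\beta r(z)$ is implicit, so it must be closed by a large-$\langle\xi\rangle$ Neumann argument rather than by naive iteration, with all constants uniform in $z\in\Gamma$ so the contour integral can be estimated word by word.
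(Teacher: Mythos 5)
The paper itself contains no proof of this proposition: it is part of the toolbox imported from \cite{Shao2023Toolbox}, so there is nothing in this text to compare your argument against line by line, and I judge it on its own merits. Your route — the Riesz--Dunford representation $f(c)=\tfrac{1}{2\pi i}\oint_\Gamma f(z)(z-c)^{-1}\,dz$ with $c=b/\kappa$, the Leibniz rule \eqref{Leibniz} applied to $(z-c)r(z)=\mathrm{Id}$ to generate the difference recursion, a large-$\langle\xi\rangle$ Neumann closure of that (implicit) recursion, and Proposition \ref{2OrderComm} to push the resolvent commutators down by the right number of orders — is sound. The resolvent bound $\|(z-c)^{-1}\|\leq\varepsilon^{-1}$ is legitimate because $\|c(x,\xi)\|\leq R_0<|z|$ on $\Gamma$; the first differences of $c$ are of order $-1$ exactly as you say; your ``finite sum of words'' description of $\Df^{\beta}r$ is not literally what the closure produces, but you flag and repair this yourself; and the order bookkeeping in parts (2) and (3), including the exact derivation identity $Xr=r(Xc)r$ and the commutator counts $0+1-1=0$ and $0+0-1=-1$, is correct.

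The one step you assert without computing is the final substitution in part (1), and it does not come out as you claim. Expanding $\Df_\mu c=\Df_\mu(b\kappa^{-1})$ with \eqref{Leibniz} and Proposition \ref{Dkappa}(2) (which gives $\Df_\mu\kappa^{-1}=-\kappa^{-2}\Df_\mu\kappa$ modulo $\mathscr{S}^{-3}_{1,0}$) yields $\Df_\mu c=\kappa^{-1}\Df_\mu b-\kappa^{-2}b\,\Df_\mu\kappa$ modulo order $-2$, so your chain rule produces $\Df_\mu f(c)=\kappa^{-1}f'(c)\Df_\mu b-\kappa^{-2}\Df_\mu\kappa\,f'(c)\,b$ modulo order $-2$: the second corrector carries the \emph{opposite} sign to the one displayed in the statement. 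A direct check on $\SU(2)$ with $f(z)=z$, $b=\sigma_0$, $\mu=0$, using Theorem \ref{Df_q}, confirms the minus sign: the diagonal of $\Df_0(\sigma_0\kappa^{-1})$ is $\tfrac{2}{2l+1}-\tfrac{2n^2}{l(l+1)(2l+1)}+O(l^{-3})$, whereas $\kappa^{-2}\Df_0\kappa\,\sigma_0$ has diagonal $+\tfrac{2n^2}{l(l+1)(2l+1)}$ to leading order. So either the printed statement carries a sign typo (which the paper's own Proposition \ref{Dkappa}(2) suggests, and which is harmless for the later applications such as Proposition \ref{OrderS}, where only orders and the structural form are used), or a plus sign must be produced by something your sketch does not account for. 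As written, your sentence ``produces the two first-order terms displayed in the statement'' is unverified and, on its face, false; you should carry out the substitution explicitly and either correct the formula you prove or record the discrepancy with the statement.
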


\begin{corollary}\label{CoroPSSymbol}
Under the same assumptions of Proposition \ref{PSSymbol}, if in addition the vector field $\Op(b)$ is of class $C^r_*$ for $r>0$, then the symbol $\kappa^m f(b/\kappa)$ is of class $\mathcal{A}^m_r$.
\end{corollary}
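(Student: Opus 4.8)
\emph{Proof idea.} The plan is to represent $\kappa^m f(b/\kappa)$ through the holomorphic functional calculus as a contour integral of resolvents, and to read off the membership $\kappa^mf(b/\kappa)\in\mathcal{A}^m_r$ from hands-on bounds on those resolvents. By Definition \ref{Rough} the class $\mathcal{A}^m_r(\Gp)$ is independent of the strongly RT-admissible tuple used, so I would work with the fundamental tuple $q=Q$ of (\ref{QFund}), whose difference operators obey the Leibniz rule (\ref{Leibniz}), and verify
$$
\big\|\Df^\beta_{q,\xi}\big(\kappa^m f(b/\kappa)\big)(x,\xi)\big\|_{r;x}\lesssim_\beta\size[\xi]^{m-|\beta|}
\qquad\text{for all }\beta\text{ and all }\xi .
$$
Since $b(x,\xi_0)=0$ at the trivial representation, and modifying a symbol over a fixed finite set of representations does not affect membership in $\mathcal{A}^m_r$, it suffices to treat $\size[\xi]$ large, where $|\xi|\asymp\size[\xi]$. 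As $f$ covers $\overline{D(0,R_0)}$, I would fix $\eta>0$ with $f$ holomorphic on $\overline{D(0,R_0+\eta)}$, put $\Gamma=\partial D(0,R_0+\eta/2)$, and use that $\|b(x,\xi)/\kappa(\xi)\|\le R_0<|z|$ for $z\in\Gamma$: the Neumann series gives $\|(z\I[\xi]-b/\kappa)^{-1}\|\le(|z|-R_0)^{-1}=2/\eta$, valid for arbitrary (not necessarily normal) matrices of spectral radius $\le R_0$, and hence
$$
\kappa(\xi)^m f\!\left(\frac{b(x,\xi)}{\kappa(\xi)}\right)
=\frac1{2\pi i}\oint_\Gamma f(z)\,\kappa(\xi)^{m+1}\big(z\kappa(\xi)\I[\xi]-b(x,\xi)\big)^{-1}\,dz .
$$
This reduces the whole problem to uniform control, over $z\in\Gamma$, of the resolvent symbol $A_z^{-1}$ with $A_z:=z\kappa\I[\xi]-b$.

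I would then prove two auxiliary estimates. (a) A \emph{tame product estimate}: for $\mathrm{End}(\Hh[\xi])$-valued symbols, $\|B_1\cdots B_N\|_{r;x}\lesssim_{r,N}\prod_j\|B_j\|_{r;x}$, with constant independent of $\xi$; this reduces to the fact that the scalar H\"{o}lder--Zygmund spaces $C^r_*$ ($r>0$) are Banach algebras, once one pairs with $B^*\in\mathrm{End}(\Hh[\xi])^*$, telescopes $B_1(x)\cdots B_N(x)-B_1(y)\cdots B_N(y)$, and uses the elementary consequences $\|B(x)-B(y)\|\le\|B\|_{r;x}\dist(x,y)^r$ (for $0<r<1$; an analogue with higher finite differences when $r\ge1$) and $\sup_x\|B(x,\xi)\|\lesssim\|B\|_{r;x}$ of the dual-space definition of $\|\cdot\|_{r;x}$. (b) A \emph{base bound}: $\|A_z^{-1}\|_{r;x}\lesssim_{\eta,r}\size[\xi]^{-1}$, uniformly in $z\in\Gamma$ and $\xi$. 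The structural input is that $b(x,\xi)=\sum_ib_i(x)\sigma[X_i](\xi)$ is \emph{linear in its $C^r_*$-coefficients} $b_i$, and $\sigma[X_i]\in\mathscr{S}^1_{1,0}$, so that $\|\Df^\delta_{q,\xi}b\|_{r;x}\lesssim\size[\xi]^{1-|\delta|}$; feeding this, together with the uniform bound $\|A_z^{-1}\|\le(\eta\,\kappa(\xi))^{-1}\lesssim\size[\xi]^{-1}$, into the resolvent identity $A_z^{-1}(x,\xi)-A_z^{-1}(y,\xi)=A_z^{-1}(x,\xi)\big(b(y,\xi)-b(x,\xi)\big)A_z^{-1}(y,\xi)$ (iterated finitely many times when $r\ge1$) yields (b) --- a Moser/composition-type argument done by hand, whose constants are visibly independent of $d_\xi=\dim\Hh[\xi]$. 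I would also record, from Proposition \ref{Dkappa} and iteration, that the $x$-independent scalar symbol $\kappa$ satisfies $\|\Df^\gamma_{q,\xi}\kappa^{m+1}\|\lesssim\size[\xi]^{m+1-|\gamma|}$, hence the same bound for its $\|\cdot\|_{r;x}$ norm.

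The conclusion then follows in two steps. First, by induction on $|\gamma|$ one gets $\|\Df^\gamma_{q,\xi}A_z^{-1}\|_{r;x}\lesssim_{\gamma,\eta}\size[\xi]^{-1-|\gamma|}$ uniformly in $z\in\Gamma$: the base case is (b), and for the inductive step one applies $\Df_{q,\xi}$ to the identity $A_z\cdot A_z^{-1}=\I[\xi]$, solves for $\Df_{q,\xi}A_z^{-1}$ via (\ref{Leibniz}), and estimates the resulting finite sum using (a), the inductive hypothesis, and $\|\Df^\delta_{q,\xi}A_z\|_{r;x}\lesssim\size[\xi]^{1-|\delta|}$ (from the bounds on $\Df^\delta(z\kappa)$ and $\Df^\delta b$, with $z$ in the bounded set $\Gamma$); the single self-referential term comes multiplied by $\sup_x\|A_z^{-1}\Df_{q,\xi}A_z\|\lesssim\size[\xi]^{-1}$ and is therefore absorbed once $\size[\xi]$ exceeds a fixed threshold, the finitely many smaller $\xi$ being treated directly. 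Second, since $\Df^\beta_{q,\xi}$ is linear and commutes with the contour integral,
$$
\Df^\beta_{q,\xi}\big(\kappa^m f(b/\kappa)\big)=\frac1{2\pi i}\oint_\Gamma f(z)\,\Df^\beta_{q,\xi}\!\big(\kappa^{m+1}A_z^{-1}\big)\,dz ;
$$
expanding $\Df^\beta_{q,\xi}(\kappa^{m+1}A_z^{-1})$ by (\ref{Leibniz}) into finitely many terms $\Df^{\gamma_1}_{q,\xi}\kappa^{m+1}\cdot\Df^{\gamma_2}_{q,\xi}A_z^{-1}$ with $|\gamma_1|+|\gamma_2|\ge|\beta|$, each such term is, by (a) and the first step, of size $\lesssim\size[\xi]^{(m+1-|\gamma_1|)+(-1-|\gamma_2|)}\le\size[\xi]^{m-|\beta|}$ in the $\|\cdot\|_{r;x}$ norm, and integration over the fixed bounded contour (with $\sup_\Gamma|f|<\infty$) preserves this bound. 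Hence $\kappa^m f(b/\kappa)\in\mathcal{A}^m_r$.

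The hard part will be the base bound (b) --- specifically, the requirement that its implied constant be uniform in $\xi$, i.e.\ independent of $d_\xi$. This is exactly what forces the two design choices used implicitly above: measuring $x$-regularity through the dual-space definition of $\|\cdot\|_{r;x}$ rather than entrywise, and controlling the resolvent by the Neumann-series bound $\|(z\I[\xi]-b/\kappa)^{-1}\|\le(|z|-R_0)^{-1}$, which holds for arbitrary matrices of spectral radius $\le R_0$ and so never sees the size or conditioning of $b(x,\xi)$. Everything else --- the difference-operator induction, the treatment of the self-referential term, the passage from $0<r<1$ to general $r>0$ via higher finite differences, and the handling of the finitely many low representations --- is routine.
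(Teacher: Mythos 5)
Your argument is essentially correct, but note that the paper itself contains no proof of this corollary: it is part of the toolbox quoted from \cite{Shao2023Toolbox}, and the neighbouring statements (Proposition \ref{PSSymbol} and especially Proposition \ref{SymbolVeryRough}, which works with $\sum_n c_n\kappa^{-n}[b]_na$) indicate that the intended route there is the power-series form of the holomorphic functional calculus: since $f$ is holomorphic on an open set containing the closed disk of radius $R_0$, its Taylor series at $0$ converges on a disk of radius $R_0+\eta$, so $\kappa^mf(b/\kappa)=\kappa^m\sum_n c_n\kappa^{-n}b^n$ with geometric decay, and one estimates term by term, the $C^r_*$ regularity entering only through the scalar coefficients $b_i$ of $b$. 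You instead use the Riesz--Dunford contour integral and run the induction on the resolvent $A_z^{-1}=(z\kappa-b)^{-1}$. Both are legitimate instances of the same functional calculus; your version trades the summation of a series for uniform-in-$z$ bounds on a fixed contour and isolates the analytic content in your lemmas (a) and (b), which do hold: the dual-space definition of $\|\cdot\|_{r;x}$ coincides with the operator-norm-valued finite-difference (Zygmund) norm because the supremum over unit functionals commutes with the suprema defining finite differences, which gives the dimension-free submultiplicativity, and the resolvent identity together with $\|\Df^\delta_{q,\xi}b\|_{r;x}\lesssim\langle\xi\rangle^{1-|\delta|}$ (linearity of $b$ in its $C^r_*$ coefficients, $\sigma[X_i]\in\mathscr{S}^1_{1,0}$) yields (b) and the Leibniz-rule induction, the self-referential term being absorbable for $\langle\xi\rangle$ large. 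The power-series route is arguably shorter here because no resolvent estimates are needed, while your route generalizes more readily to $f$ that are holomorphic near the spectrum but not near $0$.

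Two small points should be repaired. First, the Neumann-series bound $\|(z\,\mathrm{Id}-b/\kappa)^{-1}\|\le(|z|-R_0)^{-1}$ is \emph{not} valid for arbitrary matrices of spectral radius $\le R_0$ (for non-normal matrices the resolvent on $|z|=R_0+\eta/2$ can be as large as the Jordan structure allows); it is valid here only because $R_0$ in Proposition \ref{PSSymbol} is a supremum of \emph{operator norms}, so $\|b(x,\xi)/\kappa(\xi)\|\le R_0<|z|$ — your later remark about ``conditioning'' should be phrased accordingly. Second, $A_z^{-1}$ and the contour representation simply do not exist at the finitely many representations where $\kappa(\xi)$ vanishes or is small, and $\Df^\beta_{q,\xi}$ evaluated at $\xi$ involves the symbol at representations a bounded distance from $\xi$; so the threshold in $\langle\xi\rangle$ must be chosen after $|\beta|$ is fixed, and the finitely many remaining $\xi$ checked directly (Definition \ref{Rough} requires the bound for every $\xi$). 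You indicate both of these, but they deserve explicit statements; with them, and with the higher-finite-difference version of (a)--(b) for $r\ge1$ that you already announce, the proof is complete.
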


Finally, we state a proposition regarding symbols that involve very rough coefficients.
\begin{proposition}\label{SymbolVeryRough}
Suppose $\sum_{n=0}^\infty c_nz^n$ is a convergent power series near $z=0$. Fix $r>0$. Let $s>n/2$, $s>r$. Let $a,b$ be symbols of vector fields on $\Gp$, with $a$ of merely $C^{-r}_*$ regularity in $x$ and $b$ of $H^s$ regularity in $x$, with
$$
\sup_{\xi\in\DuGp}\frac{\|b(x,\xi)\|_{H^s_x}}{|\xi|}
$$
suitably small. Define 
$$
[b]_na:=ab^n+bab^{n-1}+\cdots+b^na.
$$
Then 
$$
\sum_{n=0}^\infty c_n\kappa^{-n}[b]_na
$$
is a symbol of class $\mathcal{A}^{0}_{-r}(\Gp)$.
\end{proposition}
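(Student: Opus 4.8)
The plan is to show that the partial sums of $\sum_{n}c_n\kappa^{-n}[b]_n a$ form a Cauchy sequence in the Fr\'echet topology of $\mathcal{A}^0_{-r}(\Gp)$ given by the seminorms $\mathbf{W}^{0;-r}_{l;Q}$ of Definition \ref{Rough}, with $Q$ the fundamental tuple \eqref{QFund}; completeness of the symbol class then gives the conclusion. Two reductions come first. Since $\kappa(\xi)=|\xi|\,\I[\xi]$ is a scalar multiple of the identity, it commutes with every $\mathrm{End}(\Hh[\xi])$-valued symbol, so $\kappa^{-n}b^{k}ab^{n-k}=B^{k}aB^{n-k}$ with $B:=b/\kappa$ — the quasi-homogeneous symbol of Definition \ref{QuasiHomoSym} for $m=0$, $f=\mathrm{id}$ — and therefore
\[
\sum_{n\ge0}c_n\kappa^{-n}[b]_n a=\sum_{k,l\ge0}c_{k+l}\,B^{k}aB^{l},
\]
a ``divided-difference'' series in which $B$ has order $0$ with $H^{s}$ coefficients and $a$ is a vector-field symbol with only $C^{-r}_{*}$ coefficients. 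Moreover the smallness hypothesis says exactly that $\varepsilon:=\sup_{\xi}|\xi|^{-1}\|b(\cdot,\xi)\|_{H^{s}_{x}}$ — and hence, by Sobolev embedding ($s>n/2$), $\sup_{\xi}\|B(\cdot,\xi)\|_{L^{\infty}_{x}}\lesssim\varepsilon$ — may be chosen below the radius of convergence of $\sum c_n z^{n}$ and below any fixed constant needed below.

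\textbf{The new analytic input.} The ingredient not already contained in Proposition \ref{PSSymbol} and Corollary \ref{CoroPSSymbol} — where the vector-field coefficients carry positive Zygmund regularity — is the product estimate that $H^{s}(\Gp)$ with $s>n/2$, $s>r$ acts as a multiplier algebra on $C^{-r}_{*}(\Gp)$: $\|uv\|_{C^{-r}_{*}}\lesssim\|u\|_{H^{s}}\|v\|_{C^{-r}_{*}}$. I would prove this from the para-product decomposition (Theorem \ref{ParaPrd} and Remark \ref{R(a)Symbol}): $T_{u}v\in C^{-r}_{*}$ since $u\in L^{\infty}$ and para-products preserve every Zygmund class; $T_{v}u$ and the remainder $R(u,v)$ lie in $H^{s-r}$, the remainder being well-defined because $s-r>0$; and $H^{s-r}\hookrightarrow C^{-r}_{*}$ because $s\ge n/2$. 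The $\mathrm{End}(\Hh[\xi])$-valued version used here — $\|M_{1}M_{2}M_{3}\|_{-r;x}\lesssim\|M_{1}\|_{H^{s}_{x}}\|M_{2}\|_{-r;x}\|M_{3}\|_{H^{s}_{x}}$, with constant uniform in $\xi$, for $H^{s}_{x}$-regular $M_{1},M_{3}$ and $C^{-r}_{*x}$-regular $M_{2}$ — is obtained by passing to matrix entries and using the dual characterisation of $\|\cdot\|_{-r;x}$ from Definition \ref{Rough}, which is designed so that no power of $d_{\xi}$ is lost. Applied to $B^{k}aB^{l}$, with $\|B(\cdot,\xi)\|_{H^{s}_{x}}\lesssim\varepsilon$, this puts each summand in $\mathcal{A}^{0}_{-r}$ with seminorms $\lesssim(C\varepsilon)^{k+l}$, so that, $C\varepsilon$ being below the radius of convergence, summing against $|c_{k+l}|$ controls the $\beta=0$ part of every seminorm.

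\textbf{Difference operators.} It remains to estimate $\Df_{Q,\xi}^{\beta}$ applied to the series. Here I would \emph{not} expand $\Df_{\mu}$ on $B^{k}aB^{l}$ through the iterated non-commutative Leibniz rule \eqref{Leibniz}--\eqref{LeibnizFund}, as that produces exponentially-many (in $k+l$) correction terms; instead I would treat $B$ as a unit, exactly as in the proof of Proposition \ref{PSSymbol}. By Proposition \ref{Dkappa} and Proposition \ref{PSSymbol}(1) (with $f=\mathrm{id}$), $\Df_{\mu}B=\kappa^{-1}\Df_{\mu}b+\kappa^{-2}(\Df_{\mu}\kappa)\,b+(\text{order }-2)$ is a symbol of order $-1$ whose $H^{s}_{x}$-norm at each $\xi$ is $\lesssim\varepsilon\,\size[\xi]^{-1}$, while $\Df_{\mu}a$ is a symbol of order $0$ with bounded $\|\cdot\|_{-r;x}$. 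Hence $\Df_{\mu}\bigl(\sum_{k,l}c_{k+l}B^{k}aB^{l}\bigr)$ is again a finite list of divided-difference series of the same shape in which one distinguished factor is replaced by $\Df_{\mu}B$ or $\Df_{\mu}a$, plus a remainder of strictly lower order; each piece is controlled by the $\beta=0$ analysis above, and every difference costs exactly one power of $\size[\xi]$, matching the $\size[\xi]^{-|\beta|}$ in Definition \ref{Rough}. An induction on $|\beta|$ then gives $\mathbf{W}^{0;-r}_{l;Q}<\infty$ for every $l$, i.e. membership in $\mathcal{A}^{0}_{-r}(\Gp)$.

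\textbf{Main obstacle.} The delicate part is the combinatorics of iterated differences acting on an \emph{infinite} series: one must ensure that neither the series index $k+l$ nor the number of difference operators produces a loss. The index is controlled by the geometric factor $(C\varepsilon)^{k+l}$, i.e. by choosing $\varepsilon$ small relative to the radius of convergence and to the multiplier-algebra constant $C$; the differences are controlled by keeping $B$ and each of $\Df_{\mu}B,\Df_{\mu}a$ as units rather than expanding the Leibniz corrections, so that each difference lowers the order by exactly one and spawns only $O(1)$ new terms. Verifying that these are compatible — that the smallness needed to absorb $C$ does not deteriorate as $|\beta|$ grows — is the heart of the argument, and is why the quasi-homogeneous calculus of Propositions \ref{Dkappa} and \ref{PSSymbol} is used in place of bare Leibniz expansions.
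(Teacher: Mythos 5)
The paper itself does not prove this proposition: it is part of the toolbox imported verbatim from \cite{Shao2023Toolbox}, so there is no in-paper argument to compare with and your proposal has to be judged on its own terms. Its overall shape is reasonable and is plausibly close to the intended one: rewrite the series as $\sum_{k,l}c_{k+l}B^kaB^l$ with $B=b/\kappa$ (legitimate, since $\kappa$ is scalar), control the $x$-regularity through the product law $H^s\cdot C^{-r}_*\subset C^{-r}_*$ for $s>n/2$, $s>r$ (your paraproduct sketch of this, using Theorem \ref{ParaPrd}, Remark \ref{R(a)Symbol} and Proposition \ref{T_aNegIndex}, is fine), and absorb all combinatorial growth into the geometric factor $(C\varepsilon)^{k+l}$.

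However, as written there are genuine gaps. First, the $\mathrm{End}(\mathcal{H}_\xi)$-valued product estimate ``with constant uniform in $\xi$'' is exactly what needs proof, and ``passing to matrix entries'' is the one route that does risk losing powers of $d_\xi$ (which grows with $|\xi|$, so any such loss wrecks the order count): the duality in Definition \ref{Rough} linearizes the target norm but does not by itself control the internal index sums in a triple product. The clean fix, which you never invoke, is the special structure of the symbols: $a=\sum_j a_j(x)\,\sigma[X_j](\xi)$ and $B=\sum_i b_i(x)\,\sigma[X_i](\xi)\kappa(\xi)^{-1}$, so $B^kaB^l$ expands into scalar products $b_{i_1}\cdots a_j\cdots b_{i_{k+l}}$ (handled by the scalar product law, the $n^{k+l+1}$-fold count being absorbed into $(C\varepsilon)^{k+l}$) multiplied by fixed Fourier-multiplier matrices of operator norm $O(1)$; no matrix-valued harmonic analysis is then needed. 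Second, the difference-operator step is not substantiated: Proposition \ref{PSSymbol} concerns $f(b/\kappa)$ defined by spectral calculus of a single matrix and does not cover a series with the rough factor $a$ inserted in every position, so ``treating $B$ as a unit exactly as in Proposition \ref{PSSymbol}'' is not an available tool here. The tool you do have is the iterated Leibniz rule (\ref{Leibniz}), and the exponential number of terms it produces---your stated reason for rejecting it---is in fact harmless, precisely because each term still carries $(C\varepsilon)^{k+l}$ while every differenced factor gains a power of $\langle\xi\rangle^{-1}$; so you discard the workable route and replace it by one that does not apply. Third, the order bookkeeping is off by one: since $a$ is a vector-field symbol, $\|B^kaB^l(\cdot,\xi)\|_{-r;x}\lesssim(C\varepsilon)^{k+l}\langle\xi\rangle$, i.e.\ each summand lies in $\mathcal{A}^{1}_{-r}$, not in $\mathcal{A}^{0}_{-r}$ as you assert, and your argument as written only yields $\mathcal{A}^{1}_{-r}$ for the sum. (This tension is already visible in the printed statement---in the intended application, differentiating a quasi-homogeneous symbol in a parameter pairs $[b]_{n-1}$ with $\kappa^{-n}$---but a proof must either carry the missing power of $\kappa^{-1}$ explicitly or flag the normalization, not silently claim the stronger order for an order-one object.)
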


\section{Global Symbolic Calculus on \texorpdfstring{$\SU(2)$}{a} and \texorpdfstring{$\mathbb{S}^2$}{b}}\label{3}
We apply the general theory developed so far to construct para-differetial calculus on $\mathbb{S}^2$, the standard 2-sphere. Although $\mathbb{S}^2$ cannot be endowed with a Lie group structure, it still inherits a symbolic calculus as $\SU(2)$-homogeneous space in a simple manner. The para-differential calculus on $\mathbb{S}^2$ will be constructed in this way. We start by formulating a global symbolic calculus on $\SU(2)$, and then pass to $\mathbb{S}^2$.

\subsection{Representation Theory of \texorpdfstring{$\SU(2)$}{c}}
Let us first sketch the representation theory of $\SU(2)$. An element of $\SU(2)$ will be denoted as
$$
x=\left(\begin{matrix}
x_1 & x_2 \\
-\bar{x}_2 & \bar{x}_1
\end{matrix}\right),
\quad
|x_1|^2+|x_2|^2=1.
$$
The Lie algebra $\mathfrak{su}(2)$, being the set of all skew-Hermitian matrices with trace 0, can be identified with the tangent space at $\mathrm{Id}\in\SU(2)$ and also the Lie algebra of left-invariant vector fields. It is spanned by the real-linear combination of (scaled) Pauli matrices
$$
\frac{1}{2}\left(\begin{matrix}
0 & i \\
i & 0
\end{matrix}\right),
\quad
\frac{1}{2}\left(\begin{matrix}
0 & -1 \\
1 & 0
\end{matrix}\right),
\quad
\frac{1}{2}\left(\begin{matrix}
i & 0 \\
0 & -i
\end{matrix}\right).
$$
They correspond to the 1-dimensional subgroups
$$
\omega_1(t)=\left(\begin{matrix}
\cos\frac{t}{2} & i\sin\frac{t}{2} \\
i\sin\frac{t}{2} & \cos\frac{t}{2}
\end{matrix}\right),
\quad
\omega_2(t)=\left(\begin{matrix}
\cos\frac{t}{2} & -\sin\frac{t}{2} \\
\sin\frac{t}{2} & \cos\frac{t}{2}
\end{matrix}\right),
\quad
\omega_3(t)=\left(\begin{matrix}
e^{it/2} & 0 \\
0 & e^{-it/2}
\end{matrix}\right)
$$
respectively. We denote these subgroups as $\mathbf{T}_{1,2,3}$. 

We employ the Euler angles as parameterization of $\SU(2)$. For 
$$
\varphi\in[0,2\pi),\quad\theta\in[0,\pi],\quad\psi\in[-2\pi,2\pi),
$$
an element of $\SU(2)$ outside a lower dimensional closed subset is factorized as
\begin{equation}\label{EulerAngle}
\left(\begin{matrix}
e^{i(\varphi+\psi)/2}\cos\frac{\theta}{2} & ie^{i(\varphi-\psi)/2}\sin\frac{\theta}{2} \\
ie^{-i(\varphi-\psi)/2}\sin\frac{\theta}{2} & e^{-i(\varphi+\psi)/2}\cos\frac{\theta}{2}
\end{matrix}\right)
=\left(\begin{matrix}
e^{i\varphi/2} & 0 \\
0 & e^{-i\varphi/2}
\end{matrix}\right)
\left(\begin{matrix}
\cos\frac{\theta}{2} & i\sin\frac{\theta}{2} \\
i\sin\frac{\theta}{2} & \cos\frac{\theta}{2}
\end{matrix}\right)
\left(\begin{matrix}
e^{i\psi/2} & 0 \\
0 & e^{-i\psi/2}
\end{matrix}\right).
\end{equation}
The parameters $\varphi\in[0,2\pi),\,\theta\in(0,\pi),\,\psi\in[-2\pi,2\pi)$ are in 1-1 correspondence with elements of $\SU(2)$ excluding a lower dimensional closed subset (which corresponds to $\theta=\pm\pi$). We shall denote the element with Euler angles $\varphi,\theta,\psi$ as $\Omega(\varphi,\theta,\psi)$. The factorization (\ref{EulerAngle}) can be re-written as
\begin{equation}\label{EulerAngle1}
\Omega(\varphi,\theta,\psi)=\omega_3(\varphi)\omega_1(\theta)\omega_3(\psi)\in\mathbf{T}_3\cdot\mathbf{T}_2\cdot\mathbf{T}_3.
\end{equation}

The Killing form of $\mathfrak{su}(2)$ is $4\Tr(XY)$ for $X,Y\in\mathfrak{su}(2)$. If we set the negative of this Killing form as the inner product on $\mathfrak{su}(2)$, then the matrices $\omega_{1,2,3}'(0)$ form an orthonormal basis. Thus, the Riemann metric on $\SU(2)$ under the Euler angles is
$$
G_0=d\theta^2+d\varphi^2+2\cos\theta d\varphi d\psi+d\psi^2,
$$
or in matrix form
$$
\left(\begin{matrix}
(G_0)_{\theta\theta} & (G_0)_{\theta\varphi} & (G_0)_{\theta\psi} \\
* & (G_0)_{\varphi\varphi} & (G_0)_{\varphi\psi} \\
* & * & (G_0)_{\psi\psi}
\end{matrix}\right)
=\left(\begin{matrix}
1 & 0 & 0 \\
0 & 1 & \cos\theta \\
0 & \cos\theta & 1
\end{matrix}\right).
$$
If we identify $\SU(2)$ with the unit 3-sphere in $\mathbb{C}^2$, then $G_0$ is 4 times the inherited metric on unit 3-sphere. The Laplacian of $G_0$ is
$$
\Delta_{G_0}
=\frac{\partial^2}{\partial\theta^2}+\frac{\cos\theta}{\sin\theta}\frac{\partial}{\partial\theta}
+\frac{1}{\sin^2\theta}\frac{\partial^2}{\partial\varphi^2}
-\frac{2\cos\theta}{\sin^2\theta}\frac{\partial^2}{\partial\varphi\partial\psi}+\frac{1}{\sin^2\theta}\frac{\partial^2}{\partial\psi^2}.
$$

In Chapter 3 of \cite{Vilenkin1978}, Vilenkin explicitly computed the left-invariant vector fields on $\SU(2)$ corresponding to the subgroups $\omega_{1,2,3}(t)$. We denote them by $X_{1,2,3}$ respectively. The action of $X_j$ is 
$$
(X_jf)(x):=\frac{d}{dt}f(x\omega_j(t))\Big|_{t=0}.
$$
They are given by, respectively,
\begin{equation}\label{LISU(2)}
\begin{aligned}
X_1&=\cos\psi\frac{\partial}{\partial\theta}+\frac{\sin\psi}{\sin\theta}\frac{\partial}{\partial\varphi}-\frac{\cos\theta}{\sin\theta}\sin\psi\frac{\partial}{\partial\psi},\\
X_2&=-\sin\psi\frac{\partial}{\partial\theta}+\frac{\cos\psi}{\sin\theta}\frac{\partial}{\partial\varphi}-\frac{\cos\theta}{\sin\theta}\cos\psi\frac{\partial}{\partial\psi},\\
X_3&=\frac{\partial}{\partial\psi}.
\end{aligned}
\end{equation}
It is directly verified that these vector fields form an orthonormal basis in the tangent space of any element in $\SU(2)$, and $\Delta_{G_0}=X_1^2+X_2^2+X_3^2$. However, for symbolic calculus, it is more convenient to consider certain linear combination of the $X_j$'s since it is more convenient for the symbols to contain more 0's. We thus introduce the \emph{creation, annihilation and neutral} operators
\begin{equation}\label{CANSU(2)}
\begin{aligned}
\Pa[+]&=iX_1-X_2=e^{-i\psi}\left(i\frac{\partial}{\partial\theta}-\frac{1}{\sin\theta}\frac{\partial}{\partial\varphi}+\frac{\cos\theta}{\sin\theta}\frac{\partial}{\partial\psi}\right),\\
\Pa[-]&=iX_1+X_2=e^{i\psi}\left(i\frac{\partial}{\partial\theta}+\frac{1}{\sin\theta}\frac{\partial}{\partial\varphi}-\frac{\cos\theta}{\sin\theta}\frac{\partial}{\partial\psi}\right),\\
\Pa[0]&=iX_3=i\frac{\partial}{\partial\psi},
\end{aligned}
\end{equation}
just as in \cite{Vilenkin1978} or \cite{RT2013}. Thus 
$$
\Delta_{G_0}=-\frac{1}{2}(\Pa[-]\Pa[+]+\Pa[+]\Pa[-])-\Pa[0]^2,
$$
and the commutator relations for $\mathfrak{su}(2)$ hold:
$$
[\Pa[0],\Pa[+]]=\Pa[+],
\quad
[\Pa[-],\Pa[0]]=\Pa[-],
\quad
[\Pa[+],\Pa[-]]=2\Pa[0].
$$

The table of irreducible unitary representations of $\SU(2)$ can be found in any standard textbook on representation theory. We sketch the result as follows. Since the group $\SU(2)$ has rank 1, one can label irreducible unitary representations of $\SU(2)$ by a parameter $l\in\hN$, the set of half integers. The irreducible unitary representation with label $l$ has dimension $2l+1$. The $l$'th representation is realized as follows: setting $\Hh[l]$ to be the space of complex homogeneous polynomials in a two-dimensional variable $z\in\mathbb{C}^2$ with degree $2l$, the action of $\SU(2)$ on $\Hh[l]$ is realized as
$$
\big(T^l(x)f\big)(z):=f(xz).
$$
The Hermite structure on $\Hh[l]$ is defined by fixing an orthonormal basis
$$
\frac{z_1^{l-k}z_2^{l+k}}{\sqrt{(l-k)!(l+k)!}},
\quad
k=-l,\,-l+1,\,\cdots,\,l-1,\,l.
$$
Following \cite{Vilenkin1978}, we use $T^l=(T^l_{nm})$, $m,n=-l,\,-l+1,\,\cdots,\,l-1,\,l$, to denote the matrix elements of the representation labeled by $l$ under the basis fixed above. We fix the convention that $n$ labels the rows of $T^l$ and $m$ labels the columns of $T^l$. In Euler angles,
$$
T^l_{nm}\big(\Omega(\varphi,\theta,\psi)\big)=e^{-i(n\varphi+m\psi)}P^l_{nm}(\cos\theta),
$$
where 
$$
P^l_{nm}(z)=\frac{2^{-l}(-1)^{l-m}i^{m-n}}{\sqrt{(l-m)!(l+m)!}}\sqrt{\frac{(l+n)!}{(l-n)!}}
\cdot\frac{(1-z)^{(m-n)/2}}{(1+z)^{(m+n)/2}}\left(\frac{d}{dz}\right)^{l-n}\left((1-z)^{l-m}(1+z)^{l+m}\right).
$$
For example, $l=0$ gives the trivial representation $\SU(2)\to\{1\}$; for $l=1/2$, we obtain the fundamental representation, which is also a faithful: $T^{1/2}(\Omega(\varphi,\theta,\psi))=\Omega(\varphi,\theta,\psi)$, and the entries are
$$
\bordermatrix{%
 & m=-\frac{1}{2} & m=\frac{1}{2} \cr
n=-\frac{1}{2} & e^{i(\varphi+\psi)/2}\cos\frac{\theta}{2} & ie^{i(\varphi-\psi)/2}\sin\frac{\theta}{2} \cr
n=\frac{1}{2} & ie^{-i(\varphi-\psi)/2}\sin\frac{\theta}{2} & e^{-i(\varphi+\psi)/2}\cos\frac{\theta}{2}
};
$$
for $l=1$, the entries of $T^{1}(\Omega(\varphi,\theta,\psi))$ are
$$
\bordermatrix{%
 & m=-1 & m=0 & m=1 \cr
n=-1 & e^{i(\varphi+\psi)/2}\cos^2\frac{\theta}{2} & ie^{i\varphi}\frac{\sin\theta}{\sqrt{2}} & -e^{i(\varphi-\psi)/2}\sin^2\frac{\theta}{2} \cr
n=0 & ie^{i\psi}\frac{\sin\theta}{\sqrt{2}} & \cos\theta & ie^{-i\psi}\frac{\sin\theta}{\sqrt{2}}  \cr
n=1 & -e^{-i(\varphi-\psi)/2}\sin^2\frac{\theta}{2} & ie^{-i\varphi}\frac{\sin\theta}{\sqrt{2}} & e^{-i(\varphi+\psi)/2}\cos^2\frac{\theta}{2}
};
$$

From now on, we shall write $\Ft{f}(l)$ for the Fourier transform of $f\in\mathcal{D}'(\SU(2))$:
\begin{equation}\label{FtSU(2)}
\begin{aligned}
\Ft{f}(l)_{mn}
&=\int_{\SU(2)}f(x)\overline{T^l_{nm}}(x)dx\\
&=\frac{1}{16\pi^2}\int_0^{4\pi}\int_{0}^\pi\int_0^{2\pi} f\big(\Omega(\varphi,\theta,\psi)\big)
\overline{T^l_{nm}}\big(\Omega(\varphi,\theta,\psi)\big)\sin\theta \cdot d\varphi d\theta d\psi.
\end{aligned}
\end{equation}
The Peter-Weyl theorem \ref{PeterWeyl} now takes the form
$$
f(x)=\sum_{l\in\hN}(2l+1)\sum_{m,n=-l}^l \Ft{f}(l)_{mn}T^l_{nm}(x).
$$
Here the convention of summation for $m,n$ is that they exhaust all numbers $-l,\,-l+1,\,\cdots,\,l-1,\,l$. The functions $\{T^l_{nm}\}_{m,n=-l}^l$ span the eigenspace $\Hh[l]$ of $\Delta_{G_0}$ with eigenvalue $-l(l+1)$. 

Since $\SU(2)$ can be identified as the unit 3-sphere in $\mathbb{C}^2$, the functions $\{T^l_{nm}\}_{m,n=-l}^l$ are exactly the spherical harmonics on 3-sphere of degree $2l+1$. The spectral localization property, i.e. Corollary \ref{SpecPrd}, now becomes the well-known fact that the product of spherical harmonics of degree $p$ and $q$ is a linear combination of spherical harmonics of degree between $|p-q|$ and $p+q$. However, it should be emphasized that the Euler angle parameterization \emph{does not} coincide with the spherical coordinate parameterization of spherical harmonics.

\subsection{Symbolic Calculus on \texorpdfstring{$\SU(2)$}{d}}

We now construct pseudo-differential calculus on $\SU(2)$. For 
$$
x=\left(\begin{matrix}
x_1 & x_2 \\
-\bar{x}_2 & \bar{x}_1
\end{matrix}\right)\in \SU(2),
$$
the functions
$$
x_1-1,\quad \bar{x}_1-1,\quad x_2,\quad -\bar{x}_2 
$$
form a strongly RT-admissible tuple of $\SU(2)$ defined in \ref{RTAdm}, which is also the fundamental tuple of $\SU(2)$ defined in (\ref{QFund}). The only disadvantage is that this tuple consists of 4 elements and can bring unnecessary tedium for calculation. Following \cite{RT2013}, we introduce instead
\begin{equation}\label{SU(2)q}
\mathscr{Q}_+(x)=-\bar{x}_2,\quad \mathscr{Q}_-(x)=x_2,\quad \mathscr{Q}_0(x)=x_1-\bar{x}_1.
\end{equation}
The tuple $\mathscr{Q}=(\mathscr{Q}_+,\mathscr{Q}_-,\mathscr{Q}_0)$ is RT-admissible but not strongly RT-admissible, since $-\mathrm{Id}$ is the other common zero of them besides $\mathrm{Id}$. A direct computation gives 
$$
(\Pa[+]\mathscr{Q}_+)(\mathrm{Id})=(\Pa[-]\mathscr{Q}_-)(\mathrm{Id})=(\Pa[0]\mathscr{Q}_0)(\mathrm{Id})=-1.
$$
Taylor's formula, i.e. Proposition \ref{TaylorGp} is then valid for $q$ and the differential operators $\Pa[+],\Pa[-],\Pa[0]$. Note that the higher order left-invariant differential operators in Taylor's formula, denoted by $\partial_x^{(\alpha)}$ from now on, have to be defined inductively, since $\Pa[+],\Pa[-],\Pa[0]$ do not commute. 

Since the dual of $\SU(2)$ is labelled by $l\in\hN$, we write $a(x,l)\in\mathrm{End}(\Hh[l])$ for a symbol $a$ on $\SU(2)$. The symbolic calculus on $\SU(2)$ then starts with the symbol of composition
\begin{equation}\label{SU(2)Compo}
(a\#_{r;\mathscr{Q}} b)(x,l)=\sum_{|\alpha|\leq r}\Df^\alpha_{\mathscr{Q},l}a(x,l)\cdot\partial_x^{(\alpha)}b(x,l),
\end{equation}
and the symbol of adjoint
\begin{equation}\label{SU(2)Adj}
a^{\bullet;r,\mathscr{Q}}(x,l)=\sum_{|\alpha|\leq r}\Df^\alpha_{\mathscr{Q},l}\partial_x^{(\alpha)}a^*(x,l).
\end{equation}
We directly cite the following results from \cite{RT2009}:
\begin{theorem}\label{SymPa}
The symbols $\sigma_{+}$, $\sigma_{-}$ and $\sigma_0$ of $\Pa[+]$, $\Pa[-]$ and $\Pa[0]$ respectively are all Fourier multipliers:
$$
\begin{aligned}
\sigma_+(l)_{mn}&=-\sqrt{(l - n)(l + n + 1)}\delta_{m,n+1}\\
\sigma_-(l)_{mn}&=-\sqrt{(l + n)(l - n + 1)}\delta_{m,n-1}\\
\sigma_0(l)_{mn}&=n\delta_{mn}.
\end{aligned}
$$
\end{theorem}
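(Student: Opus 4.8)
The plan is to reduce the statement to the differential of the representation $T^l$ together with a short computation on the monomial basis of $\Hh[l]$. The structural point is that $\Pa[+],\Pa[-],\Pa[0]$ are complex-linear combinations of the left-invariant vector fields $X_1,X_2,X_3$, and for a left-invariant first-order operator the symbol defined by (\ref{Op-Symbol}) is independent of $x$, i.e.\ a Fourier multiplier: applying such an $X$ to the matrix-valued function $x\mapsto T^l(x)$ and using that $T^l$ is a unitary representation, the chain rule gives $(XT^l)(x)=T^l(x)\,dT^l(X)$, where $dT^l$ is the derived Lie-algebra representation, extended complex-linearly to $\mathfrak{sl}(2,\mathbb C)$; hence $\sigma[X](x,l)=\big(T^l(x)\big)^*(XT^l)(x)=dT^l(X)$. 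It therefore suffices to compute the three constant matrices $dT^l(\Pa[0]),dT^l(\Pa[+]),dT^l(\Pa[-])$.

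First I would identify the relevant Lie-algebra elements. From the scaled Pauli matrices $\omega_1'(0),\omega_2'(0),\omega_3'(0)$ displayed above, a direct computation gives
$$\Pa[0]\ \longleftrightarrow\ iX_3=\tfrac12\begin{pmatrix}-1&0\\0&1\end{pmatrix},\qquad \Pa[+]\ \longleftrightarrow\ iX_1-X_2=\begin{pmatrix}0&0\\-1&0\end{pmatrix},\qquad \Pa[-]\ \longleftrightarrow\ iX_1+X_2=\begin{pmatrix}0&-1\\0&0\end{pmatrix},$$
which are, up to normalization and sign, the Cartan generator and the two nilpotent root vectors of $\mathfrak{sl}(2,\mathbb C)$ --- this is precisely why the names neutral/creation/annihilation are appropriate. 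Next I would let these act on the orthonormal basis $e_k(z)=z_1^{l-k}z_2^{l+k}/\sqrt{(l-k)!\,(l+k)!}$, $k=-l,\dots,l$. Since $\big(dT^l(X)f\big)(z)=\tfrac{d}{dt}f(e^{tX}z)\big|_{t=0}$ and each of the three exponentials is explicit (diagonal, resp.\ unipotent lower or upper triangular), one differentiates the resulting monomial in $z$ at $t=0$ and re-expands it in the normalized basis. This yields the ladder relations $dT^l(\Pa[0])e_k=k\,e_k$, $dT^l(\Pa[+])e_k=-\sqrt{(l+k)(l-k+1)}\,e_{k-1}$, $dT^l(\Pa[-])e_k=-\sqrt{(l-k)(l+k+1)}\,e_{k+1}$; reading off the matrix entries in the row/column convention fixed for $T^l$ then reproduces exactly the stated formulas for $\sigma_0,\sigma_+,\sigma_-$.

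An entirely equivalent route, and the one taken in \cite{RT2009,RT2013}, bypasses the polynomial model: apply the explicit vector fields (\ref{LISU(2)})--(\ref{CANSU(2)}) directly to the explicit matrix coefficients $T^l_{nm}(\Omega(\varphi,\theta,\psi))=e^{-i(n\varphi+m\psi)}P^l_{nm}(\cos\theta)$ --- the $\varphi$- and $\psi$-derivatives produce the scalars $-in$ and $-im$, while the $\theta$-derivative combination is resolved by the classical three-term contiguity relation for the functions $P^l_{nm}$, which shows $\Pa[\pm]T^l_{nm}$ is a constant multiple of $T^l_{n,m\pm1}$ --- and then contract with $\big(T^l\big)^*$. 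I do not expect any analytic difficulty in either argument. The one step that genuinely demands care is the bookkeeping of several competing conventions: the normalization of the metric (the factor $4$ in the Killing form, equivalently the $\tfrac12$ in the Pauli matrices), the signs and the $e^{\pm i\psi}$ prefactors in (\ref{CANSU(2)}), the placement of rows versus columns in $T^l_{nm}$, and the half-integer ranges of $m,n$, all of which must be tracked consistently to land precisely on the displayed signs and square roots. This purely clerical issue is presumably why the formulas are simply cited rather than re-derived.
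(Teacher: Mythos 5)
The paper offers no proof of Theorem \ref{SymPa} at all: the formulas are quoted directly from Ruzhansky--Turunen \cite{RT2009} (see also \cite{RT2013}), so your derivation is genuinely additional content rather than a parallel of an argument in the text. Your reduction is correct: for a left-invariant first-order operator $X$ the symbol $\sigma[X](x,l)=T^l(x)^*(XT^l)(x)$ equals the constant matrix $\tfrac{d}{dt}T^l(\omega(t))\big|_{t=0}$, so the theorem is exactly the computation of the derived representation, and the ladder coefficients you obtain on the monomial model are the right ones: $dT^l(\partial_0)e_k=k\,e_k$, $dT^l(\partial_+)e_k=-\sqrt{(l+k)(l-k+1)}\,e_{k-1}$, $dT^l(\partial_-)e_k=-\sqrt{(l-k)(l+k+1)}\,e_{k+1}$, which are precisely the numbers in the statement. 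The one step you should make explicit rather than merely flag is the transpose hidden in the conventions: with the paper's literal definition $(T^l(x)f)(z)=f(xz)$ on column vectors, $x\mapsto T^l(x)$ is an anti-homomorphism, so the operator matrix of $dT^l$ in the basis $e_k$ (nonzero entries of $dT^l(\partial_-)$ sitting at positions $(k+1,k)$) is the \emph{transpose} of the multiplier asserted in the theorem (nonzero entries of $\sigma_-$ at $(n-1,n)$); the identification of the model matrix with Vilenkin's entry functions $T^l_{nm}$ absorbs exactly this transpose, and done carelessly the polynomial-model route returns $\sigma_\pm$ transposed, i.e.\ interchanged in appearance. Your second route avoids this pitfall entirely, since it computes the symbol directly from the entry functions fixed in the paper: applying (\ref{CANSU(2)}) to $T^l_{nm}\big(\Omega(\varphi,\theta,\psi)\big)=e^{-i(n\varphi+m\psi)}P^l_{nm}(\cos\theta)$ gives at once $\partial_0T^l_{nm}=mT^l_{nm}$, hence the diagonal multiplier, and the contiguity relations for $P^l_{nm}$ give the column shifts for $\partial_\pm$; this is the computation actually carried out in \cite{RT2009}, \cite{RT2013}, and is the cleaner way to land on the displayed signs and index placement.
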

\begin{theorem}\label{Df_q}
Suppose $a$ is a Fourier multiplier on $\SU(2)$, i.e. for each $l\in\hN$, the value $a(l)\in\mathrm{End}(\Hh[l])$. Then the RT difference operators $\Df_+$, $\Df_-$, $\Df_0$ corresponding to $\mathscr{Q}_+$, $\mathscr{Q}_-$, $\mathscr{Q}_0$ respectively are 
$$
(\Df_+a)(l)_{nm}
=\frac{\sqrt{(l + m)(l - n)}}{2l + 1}a(l^-)_{n^+m^-}-\frac{\sqrt{(l - m+1)(l + n+1)}}{2l + 1}a(l^+)_{n^+m^-},
$$
$$
(\Df_-a)(l)_{nm}
=\frac{\sqrt{(l - m)(l + n)}}{2l + 1}a(l^-)_{n^-m^+}-\frac{\sqrt{(l + m+1)(l - n+1)}}{2l + 1}a(l^+)_{n^-m^+},
$$
$$
\begin{aligned}
(\Df_0a)(l)_{nm}
&=\frac{\sqrt{(l - m)(l - n)}}{2l + 1}a(l^-)_{n^+m^+}+\frac{\sqrt{(l + m+1)(l + n+1)}}{2l + 1}a(l^+)_{n^+m^+}\\
&\quad -\frac{\sqrt{(l + m)(l + n)}}{2l + 1}a(l^-)_{n^-m^-}-\frac{\sqrt{(l - m+1)(l - n+1)}}{2l + 1}a(l^+)_{n^-m^-}
\end{aligned}
$$
Here $k^{\pm}=k\pm1/2$. In particular,
$$
\Df_\mu \sigma_\nu=\delta_{\mu\nu}\cdot\sigma_{\mathrm{Id}},\quad \mu,\nu=+,\,-,\,0.
$$
\end{theorem}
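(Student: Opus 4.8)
The plan is to reduce the computation to the multiplication law for matrix coefficients of $\SU(2)$ together with Peter--Weyl orthogonality. Reading off the $T^{1/2}$ table above, one sees that the three functions in (\ref{SU(2)q}) are, up to signs, matrix entries of the fundamental representation:
$$
\mathscr{Q}_+=T^{1/2}_{1/2,-1/2},\qquad \mathscr{Q}_-=T^{1/2}_{-1/2,1/2},\qquad \mathscr{Q}_0=T^{1/2}_{-1/2,-1/2}-T^{1/2}_{1/2,1/2}.
$$
Since $q\mapsto\Df_q$ is linear, it suffices to compute $\Df_{T^{1/2}_{ab}}$ acting on an arbitrary Fourier multiplier $a=a(l)$ for each pair of indices $a,b\in\{\pm1/2\}$, and then take the appropriate linear combinations ($\Df_0$ being the difference of the two diagonal cases).

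First I would unwind Definition \ref{Difference} on a multiplier, which gives $(\Df_q a)(l)=\Ft{q\cdot\check a}(l)$, where $\check a(y)=\sum_{l'}(2l'+1)\sum_{m',n'}a(l')_{m'n'}T^{l'}_{n'm'}(y)$ is the associated right convolution kernel. Multiplying by $q=T^{1/2}_{ab}$ and expanding via the Clebsch--Gordan decomposition $T^{1/2}\otimes T^{l'}\cong T^{l'-1/2}\oplus T^{l'+1/2}$ in the entrywise form
$$
T^{1/2}_{ab}(y)\,T^{l'}_{n'm'}(y)=\sum_{l''\in\{l'-1/2,\,l'+1/2\}}\ \sum_{n'',m''}\ \overline{C^{l'',n''}_{1/2,a;l',n'}}\ C^{l'',m''}_{1/2,b;l',m'}\ T^{l''}_{n''m''}(y),
$$
where the $C$'s are the (real) $\SU(2)$ Clebsch--Gordan coefficients coupling spin $1/2$ with spin $l'$, one substitutes into $\check a$, applies $\Ft{\,\cdot\,}(l)$, and invokes the orthogonality of matrix entries (Theorem \ref{PeterWeyl}) to keep only the terms with $l''=l$, i.e. $l'=l\pm1/2$. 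The Clebsch--Gordan coefficients vanish unless the magnetic indices add up, so each of the two surviving terms contains a single entry of $a(l^-)$ or $a(l^+)$; this already yields the two-term structure of the claimed formulas, with the half-integer shifts $n\mapsto n^{\pm}$ and $m\mapsto m^{\mp}$ forced by which spin-$1/2$ component ($a$, $b$) of the coupling is selected.

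It then remains to insert the elementary closed forms of the spin-$1/2$ Clebsch--Gordan coefficients, which are square roots of ratios of the shape $(l'\pm m+\tfrac12)/(2l'+1)$, and to simplify. The uniform prefactor $(2l+1)^{-1}$ in the statement is exactly what survives once the dimension factor $2l'+1$ carried by the kernel cancels against the two factors $(2l'+1)^{-1/2}$ from the Clebsch--Gordan coefficients, leaving only the $(2l+1)^{-1}$ present in the Fourier-transform convention (\ref{FtSU(2)}). Running this through for $(a,b)=(1/2,-1/2)$, $(-1/2,1/2)$ and $(\mp1/2,\mp1/2)$ reproduces the displayed formulas for $\Df_+$, $\Df_-$ and $\Df_0$ respectively. (A more pedestrian route, avoiding Clebsch--Gordan machinery, instead substitutes the explicit $T^l_{nm}$ of the preceding subsection, written through the functions $P^l_{nm}$, and uses the classical contiguity relations for those functions, producing the same identities.) For the last assertion, $\sigma_\nu$ is the symbol of the left-invariant vector field $\partial_\nu$, whose right convolution kernel is a first-order derivative of the Dirac mass at $\mathrm{Id}$; multiplying by $\mathscr{Q}_\mu$, which vanishes at $\mathrm{Id}$, and using Taylor's formula (Proposition \ref{TaylorGp}) with $(\partial_\nu\mathscr{Q}_\mu)(\mathrm{Id})=-\delta_{\mu\nu}$ collapses the kernel to $\delta_{\mu\nu}$ times the Dirac mass at $\mathrm{Id}$, whose Fourier transform is $\sigma_{\mathrm{Id}}$.

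I expect the only genuine difficulty to be bookkeeping: tracking the transpose built into the Fourier-transform convention (\ref{FtSU(2)}) (which is why the claimed formulas carry $a(l^\mp)$ with indices flipped relative to $(\Df_q a)(l)_{nm}$), the normalization of the $T^l_{nm}$, and the half-integer index shifts, so that every square root and every sign is pinned down correctly; this is precisely where the computation of \cite{RT2009} lives. Once the formulas are in hand, two built-in consistency checks are available: the identity $\Df_\mu\sigma_\nu=\delta_{\mu\nu}\sigma_{\mathrm{Id}}$ itself, and, more stringently, the Leibniz-type relation (\ref{Leibniz}) tested on products of fundamental matrix entries.
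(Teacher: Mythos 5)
The paper does not actually prove this theorem — the formulas are quoted directly from \cite{RT2009} — and your outline is essentially the derivation behind that citation: identify $\mathscr{Q}_+,\mathscr{Q}_-,\mathscr{Q}_0$ with entries of the fundamental representation $T^{1/2}$ (your identifications are correct under the paper's Euler-angle table), expand the products $T^{1/2}_{ab}\,T^{l'}_{n'm'}$ through the Clebsch--Gordan decomposition $T^{1/2}\otimes T^{l'}\cong T^{l'-1/2}\oplus T^{l'+1/2}$ (equivalently, the contiguity relations for $P^l_{nm}$), and project with Peter--Weyl orthogonality, leaving only the insertion of the explicit spin-$1/2$ coefficients and the transpose in (\ref{FtSU(2)}) to pin down the square roots and signs. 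The structure is sound and your consistency checks do hold: substituting $\sigma_+,\sigma_-,\sigma_0$ into the displayed formulas gives, for instance, $(\Df_+\sigma_+)(l)_{nn}=\frac{(l+1)^2-n^2-(l^2-n^2)}{2l+1}=1$ and $\Df_+\sigma_-=0$, confirming $\Df_\mu\sigma_\nu=\delta_{\mu\nu}\sigma_{\mathrm{Id}}$, and your distributional argument for that last identity (using $q(\mathrm{Id})=0$ and $(\Pa[\nu]\mathscr{Q}_\mu)(\mathrm{Id})=-\delta_{\mu\nu}$) is also correct.
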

\begin{remark}\label{Para-Classical}
We are actually able to manipulate para-differential operators arising from classical differential operators on $\SU(2)$ fairly easily. For example, let $Y=a\Pa[+]$ and $Z=b\Pa[-]$ be vector fields with coefficients $a,b\in C^r$ with $r>1$. Then the commutator 
$$
[Y,Z]
=a\Pa[+]b\Pa[-]-b\Pa[-]a\Pa[+]
+2ab\Pa[0]
$$
On the other hand, the symbols of $Y,Z$ are respectively
$$
\sigma[Y]=a\sigma_+,
\quad
\sigma[Z]=b\sigma_-,
$$
so a direct computation using Theorem \ref{Df_q} implies a precise equality
$$
\begin{aligned}
\sigma[Y]\#_{r;\mathscr{Q}}\sigma[Z]-\sigma[Z]\#_{r;\mathscr{Q}}\sigma[Y]
&=ab[\sigma_+,\sigma_-]+a\Df_+\sigma_+\Pa[+]b\sigma_--b\Df_-\sigma_-\Pa[-]a\sigma_+\\
&=2ab\sigma_0+a\Pa[+]b\sigma_--b\Pa[-]a\sigma_+.
\end{aligned}
$$
Consequently, Theorem \ref{Compo2} ensures that $[T_Y,T_Z]$ is exactly the para-differential operator corresponding to $[Y,Z]$, modulo a smoothing operator in $\Op\Sigma_{<1/2}^{1-r}$. Consequently, with $\{Y,Z\}$ being the symbol of $[Y,Z]$,
$$
[T_Y,T_Z]=T_{\{Y,Z\}}+\Op\Sigma_{<1/2}^{1-r}.
$$
\end{remark}

\subsection{Passage to the 2-Sphere via Hopf Fiberation}
We finally describe how pseudo-differential calculus can be constructed on $\mathbb{S}^2$. Although $\mathbb{S}^2$ does not carry any Lie group structure, it is still a homogeneous space corresponding to $\SU(2)$, and the projection $\SU(2)\to \mathbb{S}^2$ is \emph{Hopf fiberation}, well-known in geometric topology. To be consistent with \emph{left-invariant differential operators} we have been employing so far, following \cite{RT2013}, we consider $\mathbb{S}^2$ as a \emph{right} $\SU(2)$-homogeneous space, that is, the set of \emph{right cosets} 
$$
\mathbf{T}_3\setminus\SU(2):=\Big\{\mathbf{T}_3\cdot x:\,x\in\SU(2)\Big\}
$$
endowed with the quotient topology, on which $\SU(2)$ \emph{acts from the right}. A point that one should keep in notice is that, in the Euler angle representation as in (\ref{EulerAngle1}), the angle $\varphi\in[0,2\pi)$ only exhausts half of the subgroup $\mathbf{T}_3$, so the orbit of a given element $\Omega(\varphi_0,\theta_0,\psi_0)\in\SU(2)$ is
$$
\Big\{\pm\Omega(\varphi,\theta_0,\psi_0):\,
\varphi\in[0,2\pi)\Big\}.
$$
We thus realize the Hopf map $\SU(2)\to \mathbb{S}^2$ as
$$
\left(\begin{matrix}
x_1 & x_2 \\
-\bar{x}_2 & \bar{x}_1
\end{matrix}\right)
\to
\left(\begin{matrix}
-2\mathrm{Im}(x_1\bar x_2) \\
2\mathrm{Re}(x_1\bar x_2) \\
|x_1|^2-| x_2|^2
\end{matrix}\right)\in\mathbb{R}^3,
\quad
\text{where}
\quad
|x_1|^2+|x_2|^2=1,
$$
or in Euler angles,
$$
\Omega(\varphi,\theta,\psi)=\left(\begin{matrix}
e^{i(\varphi+\psi)/2}\cos\frac{\theta}{2} & ie^{i(\varphi-\psi)/2}\sin\frac{\theta}{2} \\
ie^{-i(\varphi-\psi)/2}\sin\frac{\theta}{2} & e^{-i(\varphi+\psi)/2}\cos\frac{\theta}{2}
\end{matrix}\right)
\to
\left(\begin{matrix}
\sin\theta\cos\psi \\
\sin\theta\sin\psi \\
\cos\theta
\end{matrix}\right).
$$
We note that the Hopf fiberation is a Riemannian submersion if $\SU(2)$ is endowed with the metric $G_0$ and $\mathbb{S}^2$ is endowed with the standard metric $g_0$.

A function on $\mathbb{S}^2$ is lifted to $\SU(2)$ as a function independent to the Euler angle $\varphi$ and symmetric with respect to the reflection map $x\to -x$. Given a function $f$ (distribution) on $\mathbb{S}^2$, we denote by 
\begin{equation}\label{Fsharp}
f^\sharp\big(\Omega(\varphi,\theta,\psi)\big)
:=f\big(\sin\theta\cos\psi,\sin\theta\sin\psi,\cos\theta\big)
\end{equation}
its lift to the $\mathbf{T}_3$-invariant function (distribution) on $\SU(2)$. We refer smooth functions (distributions respectively) on $\SU(2)$ with such symmetry as being \emph{$\mathbf{T}_3$-invariant}, and denote the space of them by $C^\infty(\mathbf{T}_3\setminus\SU(2))$ ($\mathcal{D}'(\mathbf{T}_3\setminus\SU(2))$ respectively). We emphasize that sometimes it is necessary to distinguish between $C^\infty(\mathbf{T}_3\setminus\SU(2))$ and $C^\infty(\mathbb{S}^2)$. Further definitions can be formulated for operators and symbols:
\begin{definition}
A linear operator $A:C^\infty(\SU(2))\to \mathcal{D}'(\SU(2))$ is said to be $\mathbf{T}_3$-invariant, if it maps $C^\infty(\mathbf{T}_3\setminus\SU(2))$ to $\mathcal{D}'(\mathbf{T}_3\setminus\SU(2))$.
\end{definition}
\begin{definition}
A symbol $a(x,l)$ on $\SU(2)$ is said to be $\mathbf{T}_3$-invariant, if for any element $\omega_3(t)\in\mathbf{T}_3$, there holds $a(\omega_3(t)x,l)=a(x,l)$, for all $x\in\SU(2)$ and $l\in\hN$. In Euler angles, $\mathbf{T}_3$-invariant symbols are exactly those that do not depend on the Euler angle $\varphi$ and remain invariant under reflection $x\to-x$. 
\end{definition}
Obviously an operator is $\mathbf{T}_3$-invariant if and only if its symbol is $\mathbf{T}_3$-invariant. We can thus lift pseudo-differential operators on $\mathbb{S}^2$ to $\SU(2)$ via the realization of Hopf fiberation that we fixed above. There are necessarily infinitely many ways of lifting and there is no ``natural" one among them. However, for classical differential operators and pseudo-differential operators constructed out of the Laplacian $\Delta_{g_0}$, lifting to $\SU(2)$ as $\Delta_{G_0}$ is direct. The advantage of this approach is that operators on $\SU(2)$ admit an explicitly manipulable symbolic calculus. In the formulas for symbolic calculus, if $a,b$ are both $\mathbf{T}_3$-invariant, then the symbols
\begin{equation}\label{ParaCompoSU2}
(a\#_{r;\mathscr{Q}} b)(x,l)=\sum_{|\alpha|\leq r}\Df^\alpha_{\mathscr{Q},l}a(x,l)\cdot\partial_x^{(\alpha)}b(x,l),
\end{equation}
and
\begin{equation}\label{ParaAdjSU2}
a^{\bullet;r,\mathscr{Q}}(x,l)=\sum_{|\alpha|\leq r}\Df^\alpha_{\mathscr{Q},l}\partial_x^{(\alpha)}a^*(x,l).
\end{equation}
are still $\mathbf{T}_3$-invariant, since the differential operators $\partial_x^{(\alpha)}$ are left-invariant, hence specifically invariant under left translation via $\omega_3(\varphi)$. Briefly speaking, \emph{symbolic calculus for $\mathbf{T}_3$-invariant operators preserves $\mathbf{T}_3$-invariance}. Consequently, symbolic calculus on $\mathbb{S}^2$ can be constructed by lifting operators on $\mathbb{S}^2$ to $\mathbf{T}_3$-invariant operators on $\SU(2)$, and manipulated without concern for $\mathbf{T}_3$-invariance. For a $\mathbf{T}_3$-invariant rough symbol $a\in\mathcal{A}^m_r$, the corresponding para-differential operator $T_a:=\Op(a^\chi)$ is still $\mathbf{T}_3$-invariant, since spectral cut-off preserves $\mathbf{T}_3$-invariance.

It is more illustrative to write down the Fourier series development. For example, given a function $f\in C^\infty(\mathbb{S}^2)$, we denote its lift to $\SU(2)$ as $f^\sharp\in C^\infty(\mathbf{T}_3\setminus\SU(2))$. By (\ref{FtSU(2)}), The Fourier transform $\Ft{f^\sharp}(l)$ does not vanish only when $l\in\mathbb{N}_0$, and for $l\in\mathbb{N}_0$, the only possible non-vanishing entries are those with $m=0$, i.e. the column with index 0. In that case, we have
$$
\Ft{f^\sharp}(l)_{0n}=\int_{\mathbb{S}^2}f(x)Y^{-n}_l(x)d\mu_0,
$$
with $Y^n_l(x)$ being the standard complex spherical harmonic on $\mathbb{S}^2$. Furthermore, for a $\mathbf{T}_3$-invariant symbol $a(x,l)$, the action $\Op(a)f^\sharp$ reads
$$
\sum_{l\in\mathbb{N}_0}(2l+1)\Tr\big[a(x,l)\Ft{f^\sharp}(l)T^l(x)\big].
$$
Under Euler angle representation, the matrix $\Ft{f^\sharp}(l)T^l(x)$ consists of entries depending only on $\psi$ and $\theta$, so each summand, hence $\Op(a)f^\sharp$, is a function of Euler angles $\theta$ and $\psi$ only. The reflection invariance is checked similarly. As a result, $\Op(a)f^\sharp\in C^\infty(\mathbf{T}_3\setminus \SU(2))$, hence projects to a function defined on $\mathbb{S}^2$. 

\section{Para-linearization of Dirichlet-Neumann Operator for a Distorted Sphere}\label{4}
In this section, we prove the para-linearization formula for the Dirichlet-Neumann operator of a distorted sphere in $\mathbb{R}^3$. We are going to lift our discussion to $\SU(2)$, so that the toolbox of para-differential calculus is available. The idea is to ``factorize" the Laplacian and then reduce to the boundary.

\subsection{The Dirichlet-Neumann Operator}
Recall that we set $\iota_0:\mathbb{S}^2\hookrightarrow\mathbb{R}^3$ to be the standard embedding. We write $\rho$ for the Euclidean distance function to $0$ in $\mathbb{R}^3$, and define
$$
\mathcal{U}=\left\{\frac{1}{2}<\rho<\frac{3}{2}\right\},
$$
so that $\mathcal{U}$ is a tubular neighbourhood of the standard unit sphere, with smooth boundary. Obviously ${\mathcal{U}}$ is diffeomorphic to the product manifold $\mathbb{S}^2\times(-1/2,1/2)$, with smooth boundary $\mathbb{S}^2\times\{-1/2,1/2\}$. 

We shall use $x$ to mark points on $\mathbb{S}^2$ or $\SU(2)$ (there is usually no risk of confusion). We already used $g_0$ to denote the induced metric and $N_0$ to denote the outward pointing normal vector field of $\mathbb{S}^2$, also known as the Gauss map. Note that $N_0$ coincides with $\iota_0$.

Just as shown in Figure \ref{Height}, the distorted sphere $M_\zeta$ we shall consider will be given by the graph of a ``height function" $\zeta\in C^r(\mathbb{S}^2)$ with $|\zeta|_{L^\infty}<1/2$, i.e.
$$
M_\zeta:=\{(1+\zeta(x))N_0(x):x\in \mathbb{S}^2\}.
$$
We may thus abbreviate $\iota_\zeta=(1+\zeta) N_0$, and $M_0=\iota_0(\mathbb{S}^2)$. If $r\geq1$, $M_\zeta$ is $C^r$-diffeomorphic to $\mathbb{S}^2$ and is itself a $C^r$ differentiable hypersurface, but this is the highest regularity to expect for generic $\zeta\in C^r(\mathbb{S}^2)$. From now on, we shall fix $r>2$. 

The most convenient coordinate system that we shall use is the distorted normal coordinate with respect to $M_0$, that is a $C^r$ diffeomorphism mapping a point $(x,y)\in \mathbb{S}^2\times[-1/2,1/2]$ to 
$$
(1+\zeta(x)+y)N_0(x)\in\bar{\mathcal{U}}.
$$
Thus, under our choice of coordinate, we have 
$$
\rho(x,y)=1+\zeta(x)+y.
$$

We then pull everything on $\bar{\mathcal{U}}$ back to $\mathbb{S}^2\times[-1/2,1/2]$ through this specific diffeomorphism. As a starting point, the pulled-back Euclidean metric reads
\begin{equation}\label{g_E}
g_{\mathrm{E}}=\rho^2g_0(x)+(d\zeta+dy)\otimes (d\zeta+dy),
\quad 
\rho=1+\zeta+y.
\end{equation}
In deriving the expression for $g_{\mathrm{E}}$, we used the second fundamental form $\langle d\iota_0(x), dN_0(x)\rangle$, where the bracket denotes the pairing $\langle a_idx^i, c_jdx^j\rangle:=(a_i\cdot c_j)dx^i\otimes dx^j$ between $\mathbb{R}^{3}$-valued differential 1-forms on $\mathbb{S}^2$. Note that for the sphere, the second fundamental form coincides with $g_0$. The following $C^r$-Riemannian manifold (with boundary) is thus isometric to $\bar{\mathcal{U}}$:
$$
\bar{\mathcal{W}}:=\Big(\mathbb{S}^2\times\left[-\frac{1}{2},\frac{1}{2}\right],g_{\mathrm{E}}\Big).
$$
The induced metric on $M_\zeta$, being the hypersurface $y=0$ in $\bar{\mathcal{W}}$, also takes a simple form
$$
g_\zeta=(1+\zeta)^2g_0+d\zeta\otimes d\zeta.
$$
We use the following formulas for block matrices to compute the dual metric of $g_{\mathrm{E}}$: 
$$
\left(
\begin{matrix}
A+c c^\top & c \\
c^{\top} & 1
\end{matrix}
\right)^{-1}
=\left(
\begin{matrix}
A^{-1} & -A^{-1}c \\
-(A^{-1}c)^{\top} & 1+(A^{-1}c)\cdot c
\end{matrix}
\right),
\quad
\det\left(
\begin{matrix}
A+c c^\top & c \\
c^{\top} & 1
\end{matrix}
\right)=\det A
$$
for an $2\times 2$ positive definite symmetric matrix $A$  and $c\in\mathbb{R}^2$. We find the dual metric of $g_{\mathrm{E}}$ reads
\begin{equation}\label{g_E^{-1}}
\begin{aligned}
g_{\mathrm{E}}^{-1}
&=\frac{g_0^{-1}(x)}{\rho^{2}}
+\left(1+\frac{|\nabla_0\zeta|_{g_0}^2}{\rho^{2}}\right)\partial_y\otimes\partial_y
-\frac{\nabla_0\zeta\otimes\partial_y+\partial_y\otimes\nabla_0\zeta}{\rho^{2}}.
\end{aligned}
\end{equation}
Here $g_0^{-1}$ stands for the dual metric of $g_0$.

For a function $\Phi$ defined on $\bar{\mathcal{U}}$, we pull it back to $\bar{\mathcal{W}}$ as
\begin{equation}\label{Psi}
\Psi(x,y):=\Phi\big((1+\zeta(x)+y)N_0(x)\big).
\end{equation}
Regarding $\Psi$ as a function defined on $\mathbb{S}^2$ depending on parameter $y\in[-1/2,0]$, the gradient of $\Psi$ is computed as
$$
\nabla_\mathrm{E}\Psi
=\frac{\nabla_0\Psi-\partial_y\Psi\nabla_0\zeta}{\rho^{2}}
+\left[\left(1+\frac{|\nabla_0\zeta|_{g_0}^2}{\rho^{2}}\right)\partial_y\Psi-\frac{\nabla_0\zeta\cdot\nabla_0\Psi}{\rho^{2}}\right]\partial_y.
$$
The hypersurface $y=0$ corresponds to $M_\zeta$ in $\mathcal{U}$, so the vector field
$$
\nabla_{\mathrm{E}}y=-\frac{\nabla_0\zeta}{\rho^{2}}
+\left(1+\frac{|\nabla_0\zeta|_{g_0}^2}{\rho^{2}}\right)\partial_y\Bigg|_{y=0}
$$
along $\{y=0\}$ is perpendicular to the hypersurface. Similarly, the hypersurface $y+\zeta(x)=0$ corresponds to the undisturbed sphere in $\mathcal{U}$, so 
$$
N_0(x)=\partial_y|_{y=-\zeta(x)}.
$$
Consequently, using the expression (\ref{g_E}) of $g_{\mathrm{E}}$, the Dirichlet-Neumann operator ${D[\zeta]\phi}$ in (\ref{EQ}) is computed as
\begin{equation}\label{Dzetaphi}
\begin{aligned}
D[\zeta]\phi
&=\frac{\nabla_{\mathrm{E}}\Psi\cdot\nabla_{\mathrm{E}}y}{N_0\cdot\nabla_{\mathrm{E}}y}\Bigg|_{y=0}\\
&=\left(1+\frac{|\nabla_0\zeta|_{g_0}^2}{\rho^{2}}\right)\partial_y\Psi\Bigg|_{y=0}-\frac{\nabla_0\zeta\cdot\nabla_0\Psi}{\rho^{2}}\Bigg|_{y=0}.
\end{aligned}
\end{equation}
The $g_{\mathrm{E}}$-Laplacian of $\Psi$, which is just the Euclidean Laplacian of $\Psi$ pulled back to $\bar{\mathcal{W}}$, is computed as
$$
\begin{aligned}
\Delta_{\mathrm{E}}\Psi
&=\frac{\Delta_0\Psi}{\rho^{2}}
+\left(1+\frac{|\nabla_0\zeta|_{g_0}^2}{\rho^{2}}\right)\partial_y^2\Psi
-\frac{2\nabla_0\zeta\cdot(\nabla_0\partial_y\Psi)}{\rho^{2}}-\frac{\Delta_0\zeta-2\rho}{\rho^{2}}\partial_y\Psi.
\end{aligned}
$$
Note that we used the conformality of $\rho^{2}g_0$ with $g_0$ and the fact that $\mathbb{S}^2$ has dimension 2.

The regularity of $D[\zeta]\phi$ can be deduced by a standard elliptic regularity argument using the expressions of $\Delta_{\mathrm{E}}$ under the coordinate system $(x,y)$. The following proposition is proved similarly as Proposition 2.7. of \cite{ABZ2011}:
\begin{proposition}\label{RegDN}
Suppose $s>3$, $s'\leq s$, and $\zeta\in H^{s+0.5}$, $\phi\in H^{s'}$, such that $|\zeta|_{L^\infty}<1/2$. Then there is an increasing function $K$ such that
$$
\|D[\zeta]\phi\|_{H^{s'-1}}
\leq K\big(\|\zeta\|_{H^{s+0.5}}\big)\|\phi\|_{H^{s'}}.
$$
\end{proposition}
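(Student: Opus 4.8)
I would prove this by reducing it to an elliptic regularity estimate for the pulled-back Laplace equation in a collar of the free surface, and then reading $D[\zeta]\phi$ off the explicit formula (\ref{Dzetaphi}). Since $\Phi$ is harmonic in $\Omega_t$, the pullback $\Psi$ of (\ref{Psi}) solves $\Delta_{\mathrm{E}}\Psi=0$ on $\mathbb{S}^2\times(-1/2,0)$ with $\Psi|_{y=0}=\phi$, while (\ref{Dzetaphi}) exhibits $D[\zeta]\phi$ as a combination of $\partial_y\Psi|_{y=0}$ and $\nabla_0\Psi|_{y=0}$ with coefficients built from $\zeta$. So it is enough to propagate regularity of $\Psi$ from the top face downward a little, with a constant of the form $K(\|\zeta\|_{H^{s+0.5}})$ and \emph{linear} in $\|\phi\|_{H^{s'}}$ (the linearity is automatic since $\phi\mapsto D[\zeta]\phi$ is linear).

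First I would localise to a shrunken collar $\mathcal W_0=\mathbb{S}^2\times[-\delta_0,0]$, whose inner face $\{y=-\delta_0\}$ is an interior surface of $\Omega_t$ sitting at a positive distance from $M_\zeta$ --- quantitatively bounded below in terms of $\|\zeta\|_{H^{s+0.5}}$. On that face $\Phi$ is harmonic in a fixed-size neighbourhood inside $\Omega_t$; the variational characterisation of the harmonic extension gives $\|\bar\nabla\Phi\|_{L^2(\Omega_t)}\le K(\|\zeta\|_{H^{s+0.5}})\|\phi\|_{H^{1/2}}$ (comparing $H^{1/2}$-norms on $M_\zeta$ and on $\mathbb{S}^2$ through $\iota_\zeta$, and using a bounded extension operator into $\Omega_t$), and interior elliptic estimates then bound $\Phi$ in every $C^k$ near $\{y=-\delta_0\}$ by $C_k\,K(\|\zeta\|_{H^{s+0.5}})\|\phi\|_{H^{1/2}}$. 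Composing with the $H^{s+0.5}$ parametrisation of that surface, a composition estimate --- legitimate since $s+0.5>1$, and linear in $\Phi$, hence in $\|\phi\|_{H^{1/2}}$ --- places $\Psi|_{y=-\delta_0}$ in $H^{s'}(\mathbb{S}^2)$ with norm $\le K(\|\zeta\|_{H^{s+0.5}})\|\phi\|_{H^{s'}}$. From here on we work only in $\mathcal W_0$, with Dirichlet datum $\phi$ on the top face and this controlled datum on the bottom.

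The heart of the matter is the elliptic estimate for $\Delta_{\mathrm{E}}\Psi=0$ in $\mathcal W_0$. Multiplying the displayed expression for $\Delta_{\mathrm{E}}\Psi$ by $\rho^2$ and solving for $\partial_y^2\Psi$ puts it in non-divergence form, $\partial_y^2\Psi=c\big(-\Delta_0\Psi+2\,\nabla_0\zeta\cdot\nabla_0\partial_y\Psi+(\Delta_0\zeta-2\rho)\,\partial_y\Psi\big)$ with $c=(\rho^2+|\nabla_0\zeta|^2)^{-1}$ bounded above and below. The base gradient bound $\|\nabla_{\mathrm{E}}\Psi\|_{L^2(\mathcal W_0)}\le K(\|\zeta\|_{H^{s+0.5}})\|\phi\|_{H^{1/2}}$ is inherited from the global variational bound; higher regularity I would get by commuting tangential operators --- spherical derivatives, or dyadic pieces of the Littlewood--Paley decomposition of Section \ref{2} --- through the equation, re-running the energy estimate, controlling the commutators with the coefficients $c$, $\rho\in H^{s+0.5}$, $\nabla_0\zeta\in H^{s-0.5}$, $\Delta_0\zeta\in H^{s-1.5}$ via the paraproduct and product estimates (Theorem \ref{ParaPrd}, Remark \ref{R(a)Symbol}), and then recovering the $y$-derivatives from the equation itself. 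It is clean to organise this through a paradifferential factorisation $\rho^2\Delta_{\mathrm{E}}\equiv(\partial_y-T_A)(\partial_y-T_a)$ modulo smoothing, with $a,A$ first-order tangential symbols --- the same factorisation used for Theorem \ref{Thm1}, of which here only the resulting regularity count is needed. The decisive index check is for the roughest product $(\Delta_0\zeta)\,\partial_y\Psi$: since $\Delta_0\zeta\in H^{s-1.5}$ with $s-1.5>1$ and $\partial_y\Psi$ is sought at regularity $H^{s'-1}$ in $x$, the product lands in $H^{s'-2}$ precisely because $s-1.5\ge s'-2$, i.e. $s'\le s+0.5$, which holds as $s'\le s$; the other coefficient-factor products are strictly better behaved. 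One concludes that $\Psi\in C^0_yH^{s'}_x$ with $(\nabla_0\Psi,\partial_y\Psi)\in C^0_yH^{s'-1}_x\cap L^2_yH^{s'-0.5}_x$ on $\mathcal W_0$, all bounded by $K(\|\zeta\|_{H^{s+0.5}})\|\phi\|_{H^{s'}}$; for $s'$ below the range in which the energy method applies directly, one descends by duality.

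Restricting to $y=0$, $\nabla_0\Psi|_{y=0}$ and $\partial_y\Psi|_{y=0}$ lie in $H^{s'-1}(\mathbb{S}^2)$ with the same bound, and substituting into (\ref{Dzetaphi}) --- where multiplication by the $H^{s+0.5}$-coefficients $1+|\nabla_0\zeta|^2/\rho^2$ and $\nabla_0\zeta/\rho^2$ preserves $H^{s'-1}$ --- yields $\|D[\zeta]\phi\|_{H^{s'-1}}\le K(\|\zeta\|_{H^{s+0.5}})\|\phi\|_{H^{s'}}$, as claimed. I expect the main obstacle to be the elliptic regularity step itself: carrying the estimate up to the boundary with coefficients of only finite Sobolev smoothness, while keeping the constant of the shape $K(\|\zeta\|_{H^{s+0.5}})$, keeping the dependence on $\phi$ linear, and tracking every Sobolev exponent so that it closes throughout the range $s'\le s$ --- the half-derivative gap between the regularity of $\zeta$ and that of $\phi$ is precisely what still lets the worst coefficient $\Delta_0\zeta\in H^{s-1.5}$ act on $\partial_y\Psi$. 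The geometric bookkeeping on the curved base $\mathbb{S}^2$, by contrast, is routine now that the Littlewood--Paley and paraproduct tools of Sections \ref{2}--\ref{3} are available.
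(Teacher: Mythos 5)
Your plan — pull the harmonic function back to the collar coordinates, run a standard elliptic regularity argument for $\Delta_{\mathrm{E}}\Psi=0$ with the finite-smoothness coefficients built from $\zeta$, and then read $D[\zeta]\phi$ off the trace formula (\ref{Dzetaphi}) — is exactly the route the paper takes, which simply invokes this "standard elliptic regularity argument" in the $(x,y)$ coordinates, proved as in Proposition 2.7 of \cite{ABZ2011}. So your proposal is correct and essentially coincides with the paper's (cited) proof, just spelled out in more detail.
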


Furthermore, the linearization of $D[\zeta]\phi$ can be computed similarly as in the Euclidean case. We directly state the following proposition, which resembles Proposition 2.11. of \cite{ABZ2011}; the proof is quite standard as in \cite{Lannes2005}, and is omitted.
\begin{proposition}\label{LinDN}
Suppose $s>3$, $s'\leq s$, and $\zeta\in H^{s+0.5}$, $\phi\in H^{s'}$, such that $|\zeta|_{L^\infty}<1/2$. There is a neighbourhood $\mathfrak{U}$ of $\zeta$ in $H^{s+0.5}$ in which the mapping 
$$
\mathfrak{U}\ni\zeta\to D[\zeta]\phi\in H^{s'-1}
$$
is differentiable, and the differential along direction $\zeta_1$ is 
$$
-D[\zeta]\big(\mathfrak{b}\zeta_1\big)-\mathrm{div}_{g_0}(\mathfrak{v}\zeta_1),
$$
where 
$$
\mathfrak{b}=\left(1+\frac{|\nabla_{0}\zeta|^2}{(1+\zeta)^{2}}\right)^{-1}\left(
D[\zeta]\phi+\frac{\nabla_{0}\zeta\cdot\nabla_{0}\phi}{(1+\zeta)^{2}}\right),
\quad
\mathfrak{v}=\frac{\nabla_0\phi-\mathfrak{b}\nabla_0\zeta}{(1+\zeta)^2},
$$
as in Theorem \ref{Thm1}.
\end{proposition}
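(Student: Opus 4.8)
The plan is to adapt to the present spherical geometry the classical shape-derivative computation for the Dirichlet--Neumann operator (Lannes \cite{Lannes2005}; see also Proposition 2.11 of \cite{ABZ2011}). Fix $\phi$, and for $\zeta_1\in H^{s+0.5}$ and $\epsilon$ small set $\zeta_\epsilon=\zeta+\epsilon\zeta_1$; let $\Phi_\epsilon$ be the harmonic function on the region bounded by $M_{\zeta_\epsilon}$ whose boundary value pulled back by $\iota_{\zeta_\epsilon}=(1+\zeta_\epsilon)N_0$ equals $\phi$, so that $D[\zeta_\epsilon]\phi=\big(\bar\nabla\Phi_\epsilon\cdot N(\iota_{\zeta_\epsilon})\big)/\big(N_0\cdot N(\iota_{\zeta_\epsilon})\big)$ along $M_{\zeta_\epsilon}$. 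First I would establish that $\epsilon\mapsto\Phi_\epsilon$ is $C^1$: straightening the domains by the distorted normal coordinate (\ref{Psi}) turns $\Phi_\epsilon$ into $\Psi_\epsilon$ on the \emph{fixed} collar $\mathbb{S}^2\times(-1/2,0]$ with the \emph{$\epsilon$-independent} Dirichlet datum $\Psi_\epsilon|_{y=0}=\phi$, solving $\Delta_{\mathrm{E}}\Psi_\epsilon=0$ for the metric $g_{\mathrm{E}}$ of (\ref{g_E}) built from $\zeta_\epsilon$; differentiating in $\epsilon$, the shape derivative $\dot\Psi=\partial_\epsilon\Psi_\epsilon|_{\epsilon=0}$ solves the linearised elliptic problem with $\dot\Psi|_{y=0}=0$ and inhomogeneity linear in $(\zeta_1,\nabla_0\zeta_1)$, obtained from (\ref{g_E^{-1}}) and the expression for $\Delta_{\mathrm{E}}\Psi$ recorded after it. The elliptic estimates required here with rough coefficients are of the type underlying Proposition \ref{RegDN}, and together with the trace estimate they give differentiability of $\mathfrak{U}\ni\zeta\mapsto D[\zeta]\phi\in H^{s'-1}$.

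To identify the differential it is cleaner to pass to Eulerian variables, in which the ambient flat Laplacian does not move with $\zeta$: write $\dot\Phi$ for the derivative at $\epsilon=0$ of the harmonic extensions evaluated at a fixed point of $\mathbb{R}^3$ (equivalently, $\dot\Phi$ is recovered from $\dot\Psi$ of the previous step by undoing the straightening and adding the radial transport term $\zeta_1\,\partial_y\Psi$). Then $\dot\Phi$ is harmonic in the fluid region, and differentiating $\Phi_\epsilon(\iota_{\zeta_\epsilon}(x))=\phi(x)$ at $\epsilon=0$ together with $\partial_\epsilon\iota_{\zeta_\epsilon}|_0=\zeta_1 N_0$ shows that its trace, pulled back by $\iota_\zeta$, is $-(\bar\nabla\Phi\cdot N_0)|_{M_\zeta}\zeta_1=-\mathfrak{b}\zeta_1$, the identity $\mathfrak{b}=(\bar\nabla\Phi\cdot N_0)|_{M_\zeta}$ being read off from (\ref{Dzetaphi}) and the definition of $\mathfrak{b}$. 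Writing $D[\zeta_\epsilon]\phi=\mathcal{N}_{\zeta_\epsilon}[\Phi_\epsilon]$ with $\mathcal{N}_\zeta$ the operation ``weighted normal derivative along $M_\zeta$'', the chain rule yields
$$
\frac{d}{d\epsilon}D[\zeta_\epsilon]\phi\Big|_{\epsilon=0}=\mathcal{N}_\zeta[\dot\Phi]+\mathcal{G}_\zeta[\Phi],
$$
where $\mathcal{G}_\zeta$ is the $\epsilon$-derivative of $\zeta_\epsilon\mapsto\mathcal{N}_{\zeta_\epsilon}$ with $\Phi$ frozen. Since $\dot\Phi$ is the harmonic extension of its own trace $-\mathfrak{b}\zeta_1$, the first term equals $D[\zeta](-\mathfrak{b}\zeta_1)=-D[\zeta](\mathfrak{b}\zeta_1)$, the first half of the claimed formula.

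The remaining task is to show $\mathcal{G}_\zeta[\Phi]=-\mathrm{div}_{g_0}(\mathfrak{v}\zeta_1)$. I would compute it back in the distorted normal coordinates by differentiating (\ref{Dzetaphi}) in the two places where $\zeta_\epsilon$ enters --- the coefficients $1+|\nabla_0\zeta|_{g_0}^2/\rho^2$ and $\nabla_0\zeta/\rho^2$, and the evaluation hypersurface $\{y=0\}$ --- holding $\Psi$ fixed. One then simplifies exactly as in the flat case: use $\Delta_{\mathrm{E}}\Psi=0$ to trade the second radial derivative $\partial_y^2\Psi|_{y=0}$ for $\Delta_0\Psi|_{y=0}$, $\partial_y\Psi|_{y=0}=\mathfrak{b}$ and a mixed term, and then invoke the Leibniz rule for $\mathrm{div}_{g_0}$ applied to $\mathfrak{v}=(\nabla_0\phi-\mathfrak{b}\nabla_0\zeta)/(1+\zeta)^2$ --- the $\mathbb{S}^2$-tangential part of $\bar\nabla\Phi|_{M_\zeta}$ --- so that all terms collapse to $-\mathrm{div}_{g_0}(\mathfrak{v}\zeta_1)$. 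Two structural facts make this cancellation work on the sphere just as on the plane: the conformality of the horizontal part of $g_{\mathrm{E}}$ with $g_0$ together with $\dim\mathbb{S}^2=2$ keeps $\Delta_{\mathrm{E}}$ in the simple form recorded after (\ref{g_E^{-1}}), with no residual curvature contributions; and normalising $D[\zeta]$ by $N_0\cdot N(\iota)$ rather than by the induced area element is precisely what turns the answer into a $g_0$-divergence rather than a weighted one.

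I expect the main obstacle to be this last algebraic collapse: keeping accurate track of the factor $\rho=1+\zeta$, the denominator $N_0\cdot N(\iota)$, and the many terms produced by differentiating (\ref{Dzetaphi}) and by commuting $\nabla_0$ past restriction to $\{y=0\}$, and verifying that they reorganise into a single $g_0$-divergence. A secondary difficulty is the low-regularity justification of the shape derivative in the first step, where the linearised elliptic problem must be handled with the rough (roughly $C^2$) coefficients of Proposition \ref{RegDN} rather than for smooth data; this is exactly the point that is ``quite standard as in \cite{Lannes2005}''.
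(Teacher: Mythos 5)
Your overall strategy is the one the paper itself points to (the paper omits the proof, citing Lannes and Proposition 2.11 of \cite{ABZ2011}), and the first two steps are sound: differentiability via the straightened collar plus elliptic estimates, the identity $\mathfrak{b}=\partial_y\Psi|_{y=0}=\bar\nabla\Phi\cdot N_0|_{M_\zeta}$, and the identification of the harmonic-extension part of the shape derivative with $-D[\zeta](\mathfrak{b}\zeta_1)$ are all correct.

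The gap is in the last step, precisely at the point you flag as the main obstacle and then wave through: the frozen-potential contribution does \emph{not} collapse to $-\mathrm{div}_{g_0}(\mathfrak{v}\zeta_1)$ with ``no residual curvature contributions''. If you carry out the plan you describe --- differentiate (\ref{Dzetaphi}) in its coefficients, add the transport term $\zeta_1\partial_y\Psi$ coming from $\dot\Psi=\widetilde{\dot\Phi}+\zeta_1\partial_y\Psi$, and use $\Delta_{\mathrm{E}}\Psi=0$ at $y=0$ to trade $\partial_y^2\Psi$ for tangential quantities --- the first-order term $\tfrac{2}{\rho}\,\partial_y\Psi$ in the straightened Laplacian (the spherical analogue of $\tfrac{2}{r}\partial_r$, which has no counterpart over a flat bottom) survives, and the computation lands on
$$
\frac{d}{d\epsilon}D[\zeta+\epsilon\zeta_1]\phi\Big|_{\epsilon=0}
=-D[\zeta]\big(\mathfrak{b}\zeta_1\big)-\mathrm{div}_{g_0}(\mathfrak{v}\zeta_1)-\frac{2\,\mathfrak{b}\,\zeta_1}{1+\zeta}.
$$
A test case confirms the extra zeroth-order term: take $\zeta\equiv0$, $\zeta_1\equiv1$, $\phi=Y_n$ a degree-$n$ spherical harmonic; then $D[\epsilon]\phi=\tfrac{n}{1+\epsilon}Y_n$ exactly, so the derivative is $-nY_n$, whereas $-D[0](D[0]\phi)-\Delta_0\phi=-n^2Y_n+n(n+1)Y_n=+nY_n$; the mismatch $-2nY_n$ is exactly $-2\mathfrak{b}\zeta_1$ at $\zeta=0$. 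So your argument, executed honestly, cannot terminate at the two-term expression you (and the proposition as printed) assert; you must either carry the additional term $-2\mathfrak{b}\zeta_1/(1+\zeta)$, or state the formula only modulo such zeroth-order, lower-regularity corrections --- which is in fact all that is used downstream (the proposition enters only through boundedness of the differential when estimating $\partial_t\mathfrak{b}$, as in Lemma 3.27 of \cite{ABZ2011}), but it is not what your sketch claims to prove.
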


In order to para-linearize the Dirichlet-Neumann operator, we will lift all the quantities of interest from $\mathbb{S}^2\times[-1/2,0]$ to $\SU(2)\times[-1/2,0]$. Suppose $\Phi$ is the velocity potential inside the region enclosed by $M_\zeta$, and $\phi(x):=\Phi\big((1+\zeta(x))N_0(x)\big)$ is the Dirichlet boundary value of $\Phi$ on $M_\zeta$. Then the corresponding pulled-back $\Psi$ satisfies $\Delta_{\mathrm{E}}\Psi=0$ on $\mathbb{S}^2\times[-1/2,0)$, and $\Psi(x,0)=\phi(x)$. Writing $\rho^\sharp=1+\zeta^\sharp+y$, defining the elliptic differential operator 
\begin{equation}\label{L_zeta}
\begin{aligned}
\mathcal{L}_\zeta
&:=\Delta_{G_0}
+\left((\rho^\sharp)^{2}+|\nabla_{G_0}\zeta^\sharp|_{G_0}^2\right)\partial_y^2
-2\nabla_{G_0}\zeta^\sharp\cdot(\nabla_{G_0}\partial_y)
-\left(\Delta_{G_0}\zeta^\sharp-2\rho^\sharp\right)\partial_y
\end{aligned}
\end{equation}
on $\SU(2)\times[-1/2,0]$, the lift $\Psi^\sharp$ satisfies $\mathcal{L}_\zeta\Psi^\sharp=0$ on $\SU(2)\times[-1/2,0)$, and $\Psi^\sharp(x,0)=\phi^\sharp(x)$. In fact, $\mathcal{L}_\zeta$ is just the lift of $\rho^2\Delta_{\mathrm{E}}$. The quantity $D[\zeta]\phi$ is lifted as the boundary value of
\begin{equation}\label{N_zeta}
\mathcal{N}_\zeta\Psi^\sharp
:=\left(1+\frac{|\nabla_{G_0}\zeta^\sharp|_{G_0}^2}{(\rho^\sharp)^{2}}\right)\partial_y\Psi^\sharp
-\frac{\nabla_{G_0}\zeta^\sharp\cdot\nabla_{G_0}\Psi^\sharp}{(\rho^\sharp)^{2}}
\end{equation}
at $y=0$. Our task now becomes the following:

\begin{task}\label{TaskDN}
Let $\mathcal{L}_\zeta$ and $\mathcal{N}_\zeta$ be as in (\ref{L_zeta}) and (\ref{N_zeta}). Find a suitable $\mathbf{T}_3$-invariant symbol $a$ on $\SU(2)$ such that if $\mathcal{L}_\zeta\Psi^\sharp=0$ on $\SU(2)\times[-1/2,0)$, and $\Psi^\sharp(x,0)=\phi^\sharp(x)$, then 
$$
\mathcal{N}_\zeta\Psi^\sharp\big|_{y=0}=T_a\phi^\sharp+\mathbf{T}_3\text{-invariant smoothing terms}.
$$
\end{task}
The rest of this section is devoted to finding out this paralinearization formula. The idea of accomplishing Task \ref{TaskDN} is simple: considering the ``height" coordinate $y$ as a parameter of evolution, the quantity $\mathcal{N}_\zeta\Psi^\sharp$ is the final state of an elliptic evolutionary problem $\mathcal{L}_\zeta\Psi^\sharp=0$; by suitable factorization of the operator $\mathcal{L}_\zeta$, the boundary value $\mathcal{N}_\zeta\Psi^\sharp$ then admits an explicit expression. 

The idea is exactly the one employed by, for example, Appendix C of Chapter 12 of Taylor's book \cite{Taylor2013}. Its para-differential version is exactly the argument employed by \cite{AM2009} and \cite{ABZ2011}. We will basically follow \cite{AM2009} and \cite{ABZ2011}, to whose idea our argument is very close.

From now on, we will fix a real number $s>3$. We assume that the height function $\zeta\in H^{s+0.5}(\mathbb{S}^2)$, so that the surface $M_\zeta$ is of class $C^{s-0.5}_*\subset C^{2.5+}$. We also assume that the boundary value $\phi\in H^{s}(\mathbb{S}^2)\subset C^{s-1}_*(\mathbb{S}^2)\subset C^{2+}(\mathbb{S}^2)$. Although the lifting operator $\sharp$ from functions on $\mathbb{S}^2$ to functions on $\SU(2)$ does not improve Sobolev regularity, it obviously does not undermine Zygmund regularity either: 
\begin{proposition}\label{SobolevForT3}
For $\mathbf{T}_3$-invariant functions on $\SU(2)$, the Sobolev embedding
$$
H^s\big(\mathbf{T}_3\setminus\SU(2)\big)
\subset C^{s-1}_*\big(\mathbf{T}_3\setminus\SU(2)\big)
$$
is valid for $s>1$.
\end{proposition}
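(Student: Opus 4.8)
The plan is to transfer the statement to the $2$-sphere, where it becomes the classical Sobolev embedding in dimension $2$. The mechanism is that the Hopf fibration $\SU(2)\to\mathbb{S}^2$ is a Riemannian submersion when $\SU(2)$ carries $G_0$ and $\mathbb{S}^2$ carries $g_0$, so the lift $\sharp$ intertwines the Laplacians on $\mathbf{T}_3$-invariant functions, $(\Delta_{g_0}f)^\sharp=\Delta_{G_0}f^\sharp$; equivalently, $-\Delta_{G_0}$ has eigenvalue $l(l+1)$ on $\mathcal{M}_l$, which is exactly the eigenvalue of $-\Delta_{g_0}$ on the degree-$l$ spherical harmonics. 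In particular the Littlewood--Paley blocks commute with lifting, $\psi_t(|\nabla_{G_0}|)f^\sharp=\big(\psi_t(|\nabla_{g_0}|)f\big)^\sharp$, and likewise $\phi(|\nabla_{G_0}|)f^\sharp=\big(\phi(|\nabla_{g_0}|)f\big)^\sharp$.

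First I would set up the norm dictionary
$$
\|f^\sharp\|_{H^s(\mathbf{T}_3\setminus\SU(2))}\simeq\|f\|_{H^s(\mathbb{S}^2)},
\qquad
\|f^\sharp\|_{C^r_*(\mathbf{T}_3\setminus\SU(2))}\simeq\|f\|_{C^r_*(\mathbb{S}^2)}
\qquad(s,r\in\mathbb{R}),
$$
where the Zygmund norm on the left is the intrinsic one on $\SU(2)$ from Proposition \ref{LPZyg}. The Zygmund equivalence follows immediately from the block identity above, the fact that $\sharp$ preserves the $L^\infty$ norm (the Hopf map is onto), and Proposition \ref{LPZyg}. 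For the Sobolev equivalence one combines the block identity with Proposition \ref{LPHs} and the observation that $\sharp$ is an $L^2$-isometry up to a fixed constant: this is the coarea formula for the submersion (all Hopf fibers share the same length, by homogeneity), or equivalently Peter--Weyl once one checks that the weight $d_l=2l+1$ in the Plancherel identity on $\SU(2)$ is exactly absorbed by the $L^2$-normalization of the $\mathbf{T}_3$-invariant matrix coefficients $T^l_{n0}$, which restrict to the standard degree-$l$ spherical harmonics on $\mathbb{S}^2$.

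With the dictionary in hand the proof is immediate: for $s>1$, a distribution $f^\sharp\in H^s(\mathbf{T}_3\setminus\SU(2))$ corresponds to $f\in H^s(\mathbb{S}^2)$, hence $f\in C^{s-1}_*(\mathbb{S}^2)$ by the classical Sobolev embedding on the $2$-dimensional closed manifold $\mathbb{S}^2$ (which loses exactly $\dim/2=1$ derivative on the Zygmund scale), and transporting back gives $f^\sharp\in C^{s-1}_*(\mathbf{T}_3\setminus\SU(2))$ with the quantitative bound inherited from the $2$-dimensional embedding.

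The one delicate point — and the only place where the gain over a generic three-dimensional Sobolev embedding is used — is the $L^2$-isometry claim, i.e.\ that lifting to $\SU(2)$ creates no spurious shift of Sobolev exponent despite the extra dimension. Concretely it is equivalent to the improved Bernstein inequality $\|\psi_t(|\nabla_{G_0}|)g\|_{L^\infty(\SU(2))}\lesssim t\,\|\psi_t(|\nabla_{G_0}|)g\|_{L^2(\SU(2))}$ for $\mathbf{T}_3$-invariant $g$, a full power of $t$ better than the generic bound, and valid because the $\mathbf{T}_3$-invariant part of a frequency-$t$ spectral band is spanned by spherical harmonics of degree $\lesssim t$ and hence has effective dimension $O(t^2)$ rather than $O(t^3)$. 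Granting this, one may also bypass $\mathbb{S}^2$ altogether: $\sup_j 2^{j(s-1)}\|\psi_{2^j}(|\nabla_{G_0}|)f^\sharp\|_{L^\infty}\lesssim\sup_j 2^{js}\|\psi_{2^j}(|\nabla_{G_0}|)f^\sharp\|_{L^2}\lesssim\|f^\sharp\|_{H^s(\SU(2))}$ (the middle step is just that a single dyadic block is controlled by the $\ell^2$-aggregate), giving $f^\sharp\in C^{s-1}_*(\mathbf{T}_3\setminus\SU(2))$ directly.
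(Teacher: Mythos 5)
Your argument is correct and is exactly the transference mechanism the paper relies on: the paper states Proposition \ref{SobolevForT3} without proof, and the evident intended justification is precisely yours — a $\mathbf{T}_3$-invariant function carries only the spherical-harmonic part of the spectrum (integer $l$, the $m=0$ column, eigenvalue $l(l+1)$ matching $\Delta_{g_0}$), so Littlewood--Paley blocks, $L^2$ norms, and hence the $H^s$ and $C^r_*$ norms all descend to $\mathbb{S}^2$, where the two-dimensional embedding loses only one derivative; your closing direct route via the improved Bernstein inequality for the invariant band is a clean self-contained version of the same point. One cosmetic slip: the bound $\|\psi_t(|\nabla_{G_0}|)g\|_{L^\infty}\lesssim t\,\|\psi_t(|\nabla_{G_0}|)g\|_{L^2}$ gains half a power of $t$ over the generic three-dimensional bound $t^{3/2}$, not a full power, but the inequality you actually state and use is the correct one, so nothing in the proof is affected.
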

For example, we still have 
$$
\begin{aligned}
&\zeta^\sharp\in C^{s-0.5}_*\big(\mathbf{T}_3\setminus\SU(2)\big)
\subset C^{2.5+}\big(\mathbf{T}_3\setminus\SU(2)\big),\\
&\phi^\sharp\in C^{s-1}_*\big(\mathbf{T}_3\setminus\SU(2)\big)
\subset C^{2+}\big(\mathbf{T}_3\setminus\SU(2)\big),
\end{aligned}
$$
although $H^s(\SU(2))\not\subset C^{s-1}_*(\SU(2))$. From now on we we will always be dealing with symbols or functions on $\SU(2)$ with $\mathbf{T}_3$-invariance described above, so this improved Sobolev embedding always holds.

\subsection{Alinhac's Good Unknown}
We turn to the para-linearization of equation $\mathcal{L}_\zeta\Psi^\sharp=0$. We shall look at the Laplace equation $\Delta_{\mathrm{E}}\Psi=0$ inside $M_\zeta$, and after lifting to $\SU(2)\times[-1/2,0]$, we shall find the corresponding \emph{good unknown} in the sense of Alinhac \cite{Alinhac1989}; that is, we need to find the optimal unknown function which eliminates all loss of regularity due to the non-smooth coordinate change. The idea was originally introduced by Alinhac in \cite{Alinhac1989}, Subsection 3.1.. Lannes \cite{Lannes2005} noticed its importance to the study of the Dirichlet-Neumann operator. The para-differential version of the good unknown was employed by \cite{AM2009} and \cite{ABZ2011}. We will basically follow the approach of \cite{AM2009} and \cite{ABZ2011}. 

We start by finding out a harmonic function inside $M_\zeta$ that envelopes all information of the normal direction velocity. We observe a nice commutation property between $\Delta_{\mathrm{E}}$ with $\rho\Pa[\rho]$, the infinitesimal generator of dilation:
$$
\big[\rho^2\Delta_\mathrm{E},\rho\Pa[\rho]\big]
=(2-2\rho^2)\Delta_{\mathrm{E}},
$$
we find that $\rho\partial_\rho\Phi$ is a harmonic function inside $M_\zeta$. Since $\rho=1+\zeta(x)+y$ for our choice of coordinates $(x,y)\in\mathbb{S}^2\times[-1/2,0]$, by pulling back to $\bar{\mathcal{W}}$, it follows that 
$$
\Delta_\mathrm{E}\mathfrak{B}=0,
\quad\text{where}\quad
\mathfrak{B}=\rho\partial_y\Psi.
$$
Lifting to $\SU(2)\times[-1/2,0]$, setting $\rho^\sharp=1+\zeta^\sharp+y$, we find
$$
\mathcal{L}_\zeta\mathfrak{B}^\sharp=0,
\quad\text{where}\quad
\mathfrak{B}^\sharp=\rho^\sharp\partial_y\Psi^\sharp.
$$
We define scalar functions
\begin{equation}\label{Beta13}
\begin{aligned}
\beta_1(y;x)&=\big(\rho^\sharp(x,y)\big)^{2}+|\nabla_{G_0}\zeta^\sharp(x)|_{G_0}^2,
\\
\beta_3(y;x)&=-\Delta_{G_0}\zeta^\sharp(x)+2\rho^\sharp(x,y),
\end{aligned}
\end{equation}
and then define a classical differential symbol
\begin{equation}\label{Beta2}
\begin{aligned}
\beta_2(x,l)
&=\sum_{j=1}^3X_j\zeta^\sharp(x)\cdot\sigma[X_j](l)\\
&=\text{the symbol of }\nabla_{G_0}\zeta^\sharp\cdot\nabla_{G_0},
\end{aligned}
\end{equation}
where the left-invariant vector fields $X_j$ are as in (\ref{LISU(2)}). The operator $\mathcal{L}_\zeta$ is now re-written as
$$
\mathcal{L}_\zeta
=\Delta_{G_0}+\beta_1\partial_y^2-2\nabla_{G_0}\zeta^\sharp\cdot\nabla_{G_0}\partial_y+\beta_3\partial_y.
$$
We are now at the place to introduce the good unknown in the sense of Alinhac:
\begin{proposition}\label{GoodUnknown}
Write $\mathfrak{B}=\rho\partial_y\Psi$. Defining the good unknown $W^\sharp:=\Psi^\sharp-T_{1/\rho^\sharp}T_{\mathfrak{B}^\sharp}\zeta^\sharp$ and the para-differential operator on $\SU(2)\times[-1/2,0]$
$$
\mathcal{P}_\zeta
:=\Delta_{G_0}+T_{\beta_1}\partial_y^2-2T_{\beta_2}\partial_y+T_{\beta_3}\partial_y
$$
corresponding to $\mathcal{L}_\zeta$, we have
$$
\big\|\mathcal{P}_\zeta W^\sharp\big\|_{C^0_yH_x^{s+0.5}}
\lesssim K\big(\|\zeta\|_{H^{s+0.5}}\big)\|\phi\|_{H^s}
$$
on $\SU(2)\times[-1/2,0]$, where $K$ is an increasing function, approximately linear when the argument is small.
\end{proposition}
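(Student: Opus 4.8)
\emph{Plan of proof.} The argument follows Alazard--M\'etivier \cite{AM2009} and Alazard--Burq--Zuily \cite{ABZ2011}, carried out inside the $\SU(2)$ para-differential calculus of Sections \ref{2}--\ref{3}; since every paraproduct, symbolic composition and spectral truncation used below preserves $\mathbf T_3$-invariance, the identities produced on $\SU(2)\times[-\tfrac12,0]$ descend to $\mathbb S^2$. The only input about the harmonic lift is a priori elliptic regularity: $\mathcal L_\zeta$ is uniformly elliptic with coefficients controlled by $K(\|\zeta\|_{H^{s+0.5}})$, so from $\mathcal L_\zeta\Psi^\sharp=0$, $\Psi^\sharp|_{y=0}=\phi^\sharp\in H^s$, the boot-strap behind Proposition \ref{RegDN} (the spherical analogue of Proposition 2.7 of \cite{ABZ2011}) gives
\[
\|\partial_y^j\Psi^\sharp\|_{C^0_yH^{s-j}_x}+\|\partial_y^j\mathfrak B^\sharp\|_{C^0_yH^{s-j}_x}\le K(\|\zeta\|_{H^{s+0.5}})\,\|\phi\|_{H^s},\qquad j=0,1,2,
\]
(with the usual smoothing gain on compact subsets of $\SU(2)\times[-\tfrac12,0)$, which covers a neighbourhood of $y=-\tfrac12$), and by the commutation identity $[\rho^2\Delta_{\mathrm E},\rho\partial_\rho]=(2-2\rho^2)\Delta_{\mathrm E}$ recorded above, $\mathfrak B^\sharp=\rho^\sharp\partial_y\Psi^\sharp$ again solves $\mathcal L_\zeta\mathfrak B^\sharp=0$.

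Next I would paralinearize the equation. Applying Bony's decomposition (Theorem \ref{ParaPrd}, with the gain of Remark \ref{R(a)Symbol} afforded by $\zeta^\sharp\in C^{s-0.5}_*$, cf.\ Proposition \ref{SobolevForT3}) to the three products in $\mathcal L_\zeta\Psi^\sharp$, and using $\mathcal L_\zeta\Psi^\sharp=0$, I get
\[
\mathcal P_\zeta\Psi^\sharp=-\,T_{\partial_y^2\Psi^\sharp}\beta_1+2\,T_{\nabla_{G_0}\partial_y\Psi^\sharp}\!\cdot\nabla_{G_0}\zeta^\sharp-T_{\partial_y\Psi^\sharp}\beta_3+\mathcal R,
\]
where $\mathcal R$ collects the paraproduct remainders, the low-frequency terms, and --- after one further paralinearization of $\beta_1,\beta_3$ as functions of $\zeta^\sharp$ (Theorem \ref{Bony}) --- the contributions at least quadratic in $\zeta$; by Theorem \ref{ParaPrd}, Remark \ref{R(a)Symbol} and the elliptic estimates, $\|\mathcal R\|_{C^0_yH^{s+0.5}_x}\le K(\|\zeta\|_{H^{s+0.5}})\|\phi\|_{H^s}$. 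The three displayed paraproducts are the ``bad'' terms: each involves too many derivatives of $\Psi^\sharp$ --- the worst being the piece $T_{\partial_y\Psi^\sharp}\Delta_{G_0}\zeta^\sharp$ of $-T_{\partial_y\Psi^\sharp}\beta_3$ --- to be controlled in $H^{s+0.5}_x$ by $\|\phi\|_{H^s}$ on their own.

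The heart of the proof is to show that $\mathcal P_\zeta$ applied to the corrector $\mathcal C:=T_{1/\rho^\sharp}T_{\mathfrak B^\sharp}\zeta^\sharp$ reproduces exactly these three terms modulo a remainder bounded by $K(\|\zeta\|_{H^{s+0.5}})\|\phi\|_{H^s}$ in $C^0_yH^{s+0.5}_x$. For this I would: (i) push $\partial_y$ and the $\nabla_{G_0}$'s through $T_{1/\rho^\sharp}$ and $T_{\mathfrak B^\sharp}$, using the exact Leibniz rule for paraproducts on $\SU(2)$ (valid because left-invariant fields commute with $|\nabla_{G_0}|$) and $\partial_y\rho^\sharp=1$, so that $\partial_yT_{1/\rho^\sharp}=-T_{1/(\rho^\sharp)^2}$ and $\partial_yT_{\mathfrak B^\sharp}=T_{\partial_y\Psi^\sharp}+T_{\rho^\sharp\partial_y^2\Psi^\sharp}$; (ii) reorganize the resulting compositions by the symbolic calculus (Theorem \ref{Compo2}, Corollary \ref{ParaComm}), e.g.\ $T_{\beta_1}T_{1/\rho^\sharp}=T_{\beta_1/\rho^\sharp}+(\text{order }{-1})$ and $T_{1/\rho^\sharp}T_{\mathfrak B^\sharp}=T_{\partial_y\Psi^\sharp}+(\text{order }{-1})$, absorbing the lower-order commutators into the remainder; (iii) isolate the contribution in which the full second-order part of $\mathcal P_\zeta$ falls on the factor $\mathfrak B^\sharp$ and observe that, because $\mathcal L_\zeta\mathfrak B^\sharp=0$, Step 2 applied to $\mathfrak B^\sharp$ makes $\mathcal P_\zeta\mathfrak B^\sharp$ itself a remainder, so that the would-be most singular contribution drops out. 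What survives of $\mathcal P_\zeta\mathcal C$ matches $-T_{\partial_y^2\Psi^\sharp}\beta_1+2T_{\nabla_{G_0}\partial_y\Psi^\sharp}\cdot\nabla_{G_0}\zeta^\sharp-T_{\partial_y\Psi^\sharp}\beta_3$ term by term --- in particular the dangerous $T_{\partial_y\Psi^\sharp}\Delta_{G_0}\zeta^\sharp$ is cancelled by the $\Delta_{G_0}$ in $\mathcal P_\zeta$ acting through $T_{1/\rho^\sharp}T_{\mathfrak B^\sharp}$ onto $\zeta^\sharp$ --- so $\mathcal P_\zeta W^\sharp=\mathcal P_\zeta\Psi^\sharp-\mathcal P_\zeta\mathcal C$ obeys the asserted bound, which is approximately linear in $\|\zeta\|_{H^{s+0.5}}$ for small data since every surviving term carries one factor of $\zeta^\sharp$ and one of $\Psi^\sharp$ at low derivative order.

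I expect step (iii), together with the bookkeeping linking the two paralinearizations, to be the main obstacle: one must check that every term in which a $\partial_y$- or $\nabla_{G_0}$-derivative lands on the roughest available factor ($\Delta_{G_0}\zeta^\sharp$ inside $\beta_3$, or $\partial_y^2\Psi^\sharp$) is either cancelled by the corrector or, after invoking $\mathcal L_\zeta\mathfrak B^\sharp=0$, rewritten so as to recover the regularity that would otherwise obstruct membership in $H^{s+0.5}_x$; this is exactly where the $C^{s-0.5}_*$-regularity of $\zeta^\sharp$ and the order-$(-1)$ gain of Corollary \ref{ParaComm} are used, and one must also confirm that the cancellation persists uniformly down to $y=-\tfrac12$, where $\Psi^\sharp$ is only as regular as the $C^{s-0.5}_*$ coordinate change allows but the roughest parts of $\mathcal P_\zeta\Psi^\sharp$ and of $\mathcal P_\zeta\mathcal C$ coincide. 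No phenomenon absent from the Euclidean treatment of \cite{ABZ2011} arises; the work is entirely the transcription into the $\SU(2)$ calculus, whose $\mathbf T_3$-invariance guarantees that the resulting formula descends to $\mathbb S^2$.
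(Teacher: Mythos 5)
Your outline has the same skeleton as the paper's proof: paralinearize via $\mathcal{P}_\zeta-\mathcal{L}_\zeta$ to isolate the three bad paraproducts, then show that $\mathcal{P}_\zeta$ applied to the corrector $T_{1/\rho^\sharp}T_{\mathfrak{B}^\sharp}\zeta^\sharp$ reproduces them, the key structural input being $\mathcal{L}_\zeta\mathfrak{B}^\sharp=0$. But the place you yourself flag as the main obstacle is where your plan, as written, does not close, and the missing ingredient is precisely the paper's Step 2. You propose to dispose of the factor $T_{1/\rho^\sharp}$ by generic symbolic composition, e.g. $T_{1/\rho^\sharp}T_{\mathfrak{B}^\sharp}=T_{\partial_y\Psi^\sharp}+(\text{order }-1)$, ``absorbing the lower-order commutators into the remainder''. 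The gain in that composition error is capped by the Zygmund regularity of the rough factor $\mathfrak{B}^\sharp\in C^{s-2}_*$ (and by $C^{s-4}_*$ for the $\partial_y^2$-coefficients produced in your step (i)), so when such an error subsequently acts on the roughest operands of the cancellation --- $\Delta_{G_0}\zeta^\sharp$, $\beta_3$, $\beta_1$, all only in $H^{s-1.5}$ or $H^{s-0.5}$ --- you land at best in $H^{2s-3.5}$, which is strictly weaker than the required $H^{s+0.5}$ whenever $3<s<4$; with the ``order $-1$'' count you even only get $H^{s-0.5}$. The paper avoids this by never splitting $T_{1/\rho^\sharp}$ off against the rough coefficient: it commutes $T_{1/\rho^\sharp}$ with the whole operator $\mathcal{P}_\zeta$ and uses the exact classical identity $[\mathcal{L}_\zeta,1/\rho^\sharp]=-2\partial_y$, which rests on $\Delta_{\mathrm{E}}(1/\rho)=0$ and on $\nabla_{\mathrm{E}}(1/\rho)$ being purely radial; the smoothing errors in passing to $[\mathcal{P}_\zeta,T_{1/\rho^\sharp}]$ are then governed by the regularity of $1/\rho^\sharp,\beta_1,\beta_2$ (at worst $C^{s-2.5}_*$, with compensating negative orders), and the surviving $-2\partial_y$ is harmless because it falls on the $y$-dependent coefficient $\mathfrak{B}^\sharp$ inside $T_{\mathfrak{B}^\sharp}\zeta^\sharp$ (as $\zeta^\sharp$ is $y$-independent), giving $T_{\partial_y\mathfrak{B}^\sharp}\zeta^\sharp\in C^0_yH^{s+0.5}_x$. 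Had the commutator contained a tangential term $\nabla_{G_0}$, it would hit $\zeta^\sharp$ and only produce $H^{s-0.5}$; this radial structure is the point your plan does not capture.

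A second, smaller inaccuracy: in your step (iii) you say that $\mathcal{L}_\zeta\mathfrak{B}^\sharp=0$ makes ``$\mathcal{P}_\zeta\mathfrak{B}^\sharp$ itself a remainder''. That is false at this regularity: with $\phi\in H^s$ one only has $\partial_y^2\mathfrak{B}^\sharp\in C^0_yH^{s-3}_x$, so paralinearizing $\mathcal{L}_\zeta\mathfrak{B}^\sharp=0$ gives $\mathcal{P}_\zeta\mathfrak{B}^\sharp$ only in roughly $C^0_yH^{s-0.5}_x$, far from $H^{s+0.5}$. What the cancellation actually requires --- and what the paper proves in its Step 3 --- is the coefficient-level statement: inside $\mathcal{P}_\zeta T_{\mathfrak{B}^\sharp}\zeta^\sharp$, the terms in which all derivatives fall on the paraproduct coefficient reassemble (via $XT_au=T_{Xa}u+T_aXu$, the composition theorem, and Bony's paralinearization of $\beta_1$) into a paraproduct whose coefficient is $\mathcal{L}_\zeta\mathfrak{B}^\sharp=0$, modulo errors that are acceptable because they act on $\zeta^\sharp\in H^{s+0.5}$ rather than on its second derivatives. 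So the verdict is: right strategy, but the two structural facts that make the corrector $T_{1/\rho^\sharp}T_{\mathfrak{B}^\sharp}\zeta^\sharp$ (rather than $T_{\partial_y\Psi^\sharp}\zeta^\sharp$) work at $s>3$ --- the harmonicity/radiality of $1/\rho$ behind $[\mathcal{L}_\zeta,1/\rho^\sharp]=-2\partial_y$, and the coefficient-level use of $\mathcal{L}_\zeta\mathfrak{B}^\sharp=0$ --- are missing or misstated, and without them the order bookkeeping you propose fails in the range $3<s<4$ covered by the proposition.
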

\begin{remark}
Another reason that $W^\sharp$ has better regularity than $\Psi^\sharp$ is that, by Theorem \ref{Compo2}, we have $T_{1/\rho^\sharp}T_{\mathfrak{B}^\sharp}=T_{\partial_y\Psi^\sharp}\mod\Op\Sigma_{<1/2}^{-(s-2)}$, so
$$
W^\sharp=\Psi^\sharp-T_{\partial_y\Psi^\sharp}\zeta^\sharp
\mod C^0_yH_x^{2s-1.5},
$$
and the difference resembles the remainder term in Bony's para-linearization theorem, i.e. Theorem \ref{Bony}, for the composition $\big[\Phi\big((1+\zeta+y)N_0\big)\big]^\sharp$.
\end{remark}

\begin{notation}\label{Mod}
For any index $s'>0$, we use
$$
\mod C^0_yH_x^{s'}
\quad\text{or simply}\quad
\mod H_x^{s'}
$$
to refer to equality between quantities whose difference has $C^0_yH_x^{s'}$ (or $H^{s'}_x$) norm on $\mathbf{T}_3\setminus\SU(2)$ controlled by $K\big(\|\zeta\|_{H^{s+0.5}}\big)\|\phi\|_{H^s}$.
\end{notation}

\begin{proof}[Proof of Proposition \ref{GoodUnknown}]
Throughout the proof, we employ the ``mod" notation defined in \ref{Mod}.

The proof is divided into three steps.

\textbf{Step 1: intermediate equation for $\mathcal{P}_\zeta w^\sharp$.} Since $\Delta_{\mathrm{E}}\Psi=0$, by exactly the same argument as in \cite{ABZ2011}, Section 3, we obtain
$$
\nabla_0\Psi,\,\partial_y\Psi\in C^0_yH^{s-1}_x(\mathbb{S}^2),
\quad
\partial_y^2\Psi\in C^0_yH^{s-2}_x(\mathbb{S}^2).
$$
Upon lifting to $\SU(2)$, it follows that
$$
\nabla_{G_0}\Psi^\sharp,\,\partial_y\Psi^\sharp\in C^0_yH^{s-1}_x\big(\mathbf{T}_3\setminus\SU(2)\big),
\quad
\partial_y^2\Psi^\sharp\in C^0_yH^{s-2}_x\big(\mathbf{T}_3\setminus\SU(2)\big).
$$
In fact
$$
\big\|\nabla_{G_0}\Psi^\sharp\big\|_{C^0_yH^{s-1}_x}+
\big\|\partial_y\Psi^\sharp\big\|_{C^0_yH^{s-1}_x}
\lesssim \|\phi\|_{H^s},
\quad
\big\|\partial_y^2\Psi^\sharp\big\|_{C^0_yH^{s-2}_x}
\lesssim \|\phi\|_{H^s}.
$$
Using the para-product estimate, i.e. Theorem \ref{ParaPrd} and Remark \ref{R(a)Symbol}, since $\zeta^\sharp\in H^{s+0.5}\big(\mathbf{T}_3\setminus\SU(2)\big)$, we obtain
$$
\begin{aligned}
\mathcal{P}_\zeta\Psi^\sharp
&=\big(\mathcal{P}_\zeta-\mathcal{L}_\zeta\big)\Psi^\sharp
\quad\text{(precise equality)}\\
&=-\left(T_{\partial_y^2\Psi^\sharp}\beta_1-2\sum_{j=1}^3T_{X_j\partial_y\Psi^\sharp}X_j\zeta^\sharp+T_{\partial_y\Psi^\sharp}\beta_3\right)
\mod C_y^0H_x^{s+0.5}.
\end{aligned}
$$

Thus we compute
\begin{equation}\label{P_zetaU}
\begin{aligned}
\mathcal{P}_\zeta w^\sharp
&=\mathcal{P}_\zeta\left(\Psi^\sharp-T_{1/\rho^\sharp}T_{\mathfrak{B}^\sharp}\zeta^\sharp\right)
\quad\text{(precise equality)}\\
&=-\left(T_{1/\rho^\sharp}\mathcal{P}_\zeta T_{\mathfrak{B}^\sharp}\zeta^\sharp +T_{\partial_y^2\Psi^\sharp}\beta_1-2\sum_{j=1}^3T_{X_j\partial_y\Psi^\sharp}X_j\zeta^\sharp
+T_{\partial_y\Psi^\sharp}\beta_3\right)\\
&\quad-\left[\mathcal{P}_\zeta,T_{1/\rho^\sharp}\right]T_{\mathfrak{B}^\sharp}\zeta^\sharp
\mod C_y^0H_x^{s+0.5}.
\end{aligned}
\end{equation}

\textbf{Step 2: elimination of commutator.} To eliminate the commutator $\left[\mathcal{P}_\zeta,T_{1/\rho^\sharp}\right]T_{\mathfrak{B}^\sharp}\zeta^\sharp$ in (\ref{P_zetaU}), we need the composition formula for para-differential operators, i.e. Theorem \ref{Compo2}. The ``classical version" of $[\mathcal{P}_\zeta,T_{1/\rho^{\sharp}}]$ is just $[\mathcal{L}_\zeta,1/\rho^\sharp]$, whose projection to $\mathbb{S}^2\times[-1/2,0]$ is simply 
$$
\left[\rho^2\Delta_{\mathrm{E}},\frac{1}{\rho}\right]
=-2\partial_y\cdot\nabla_{\mathrm{E}}
=-2\partial_y.
$$
Here we used $\Delta_{\mathrm{E}}(1/\rho)=0$. Lifting to $\SU(2)\times[-1/2,0]$, it follows that $[\mathcal{L}_\zeta,1/\rho^\sharp]=-2\partial_y$. 

To pass to the para-differential version $[\mathcal{P}_\zeta,T_{1/\rho^{\sharp}}]$, we shall keep exploiting the fact that $\mathcal{L}_\zeta$ is a classical differential operator. Here with a little abuse of notation, given symbols $a,b$ on $\SU(2)$ leading to classical differential operators, we write $\{a,b\}$ for \emph{the symbol of the commutator} $[\Op(a),\Op(b)]$ (in usual notation the Poisson bracket $\{\cdot,\cdot\}$ only involves first order differentiation of symbols). Now writing down
\begin{equation}\label{[P_zeta,T]}
\begin{aligned}
\big[\mathcal{P}_\zeta,T_{1/\rho^{\sharp}}\big]
&=\big[\Delta_{G_0},T_{1/\rho^{\sharp}}\big]+\big[T_{\beta_1}\partial_y^2,T_{1/\rho^{\sharp}}\big]
-2\big[T_{\beta_2}\partial_y,T_{1/\rho^{\sharp}}\big]
+\big[T_{\beta_3}\partial_y,T_{1/\rho^{\sharp}}\big],
\end{aligned}
\end{equation}
it is not hard to see, with the aid of Theorem \ref{Compo2}, Formula (\ref{SU(2)Compo})  and also Remark \ref{Para-Classical}, that each term in the right-hand-side corresponds to exactly its classical counterpart in $[\mathcal{L}_\zeta,1/\rho^\sharp]$ with an affordable regularizing error. 

For example, by assumption, the number $r:=s-1.5>1.5$, so with $\#_{r;\mathscr{Q}}$ as in Formula (\ref{SU(2)Compo}), the third term in (\ref{[P_zeta,T]}) becomes
$$
\begin{aligned}
\big[T_{\beta_2}\partial_y,T_{1/\rho^{\sharp}}\big]
&=T_{\beta_2}T_{\partial_y(1/\rho^{\sharp})}
+T_{\beta_2}T_{1/\rho^{\sharp}}\partial_y
-T_{1/\rho^\sharp}T_{\beta_2}\partial_y
\quad\text{(precise equality)}\\
&=T_{\beta_2\#_{r;\mathscr{Q}}\partial_y(1/\rho^{\sharp})}
+T_{\{\beta_2,1/\rho^\sharp\}}\partial_y
\mod\Op\Sigma_{<1/2}^{-r}+\Op\Sigma_{<1/2}^{-r}\partial_y,
\end{aligned}
$$
while on the other hand 
$$
\begin{aligned}
\left[\nabla_{G_0}\zeta^\sharp\cdot\nabla_{G_0}\partial_y,\frac{1}{\rho^\sharp}\right]
&=\nabla_{G_0}\zeta^\sharp\cdot\nabla_{G_0}\left(\partial_y\left(\frac{1}{\rho^\sharp}\right)\cdot\right)
+\left[\nabla_{G_0}\zeta^\sharp\cdot\nabla_{G_0},\frac{1}{\rho^\sharp}\right]\partial_y\\
&=\Op\left(\beta_2\#_{r;\mathscr{Q}}\partial_y(1/\rho^{\sharp})\right)+\Op\left(\{\beta_2,1/\rho^\sharp\}\right)\partial_y
\end{aligned}
$$
in $[\mathcal{L}_\zeta,1/\rho^\sharp]$. Here we used the fact that $\beta_2$ is the symbol of a classical first order differential operator. In other words, Remark \ref{Para-Classical} ensures that $[\mathcal{P}_\zeta,T_{1/\rho^\sharp}]$ must coincide with the para-differential operator corresponding to $[\mathcal{L}_\zeta,1/\rho^\sharp]$, which is merely $-2\partial_y$, up to an error term:
$$
\begin{aligned}
\big[\mathcal{P}_\zeta,T_{1/\rho^{\sharp}}\big]
=-2\partial_y
\mod\Op\Sigma_{<1/2}^{-(s-1.5)}\partial_y^2+\Op\Sigma_{<1/2}^{-(s-2.5)}\partial_y+\Op\Sigma_{<1/2}^{-(s-2.5)},
\end{aligned}
$$
The reason that there is an error term of order $-(s-2.5)$ is that the $\mathbf{T}_3$-invariant symbols are $\beta_1,\beta_2$ are of $H^{s-0.5}$ regularity in $x$, while $\beta_3\in H^{s-1.5}\subset C_*^{s-2.5}$.

Since $\partial_y^k\Psi^\sharp\in C_y^0H^{s-k}_x\big(\mathbf{T}_3\setminus\SU(2)\big)$, we find that $\partial_yT_{\mathfrak{B}^\sharp}\zeta^\sharp=T_{\partial_y\mathfrak{B}^\sharp}\zeta^\sharp$ is still of class $C_y^0H^{s+0.5}_x$. On the other hand, the function $\partial_y^2T_{\mathfrak{B}^\sharp}\zeta^\sharp=T_{\partial_y^2\mathfrak{B}^\sharp}\zeta^\sharp$ is of class $C_y^0H^{s-0.5}_x$, since $\partial^2_y\mathfrak{B}^\sharp\in H_x^{s-3}\subset C^{-1}_*$, so the para-product operator $T_{\partial^2_y\mathfrak{B}^\sharp}$ has order 1, according to Proposition \ref{T_aNegIndex}. Taking into account the regularizing operators, we successfully proved that the commutator $\left[\mathcal{P}_\zeta,T_{1/\rho^\sharp}\right]T_{\mathfrak{B}^\sharp}\zeta^\sharp\in C_y^0H^{s+0.5}_x$. Summarizing, we find that (\ref{P_zetaU}) in fact implies
\begin{equation}\label{GoodUnknownTemp1}
\mathcal{P}_\zeta w^\sharp
=-\left(T_{1/\rho^\sharp}\mathcal{P}_\zeta T_{\mathfrak{B}^\sharp}\zeta^\sharp +T_{\partial_y^2\Psi^\sharp}\beta_1-2\sum_{j=1}^3T_{X_j\partial_y\Psi^\sharp}X_j\zeta^\sharp+T_{\partial_y\Psi^\sharp}\beta_3\right)
\mod C_y^0H_x^{2s-2.5}.
\end{equation}
Note that $s>3$, hence $2s-2.5>s+0.5$.

\textbf{Step 3: final elimination.} Now similarly as in the proof of Lemma 3.17. in \cite{ABZ2011}, we compute
\begin{equation}\label{GoodUnknownTemp2}
\begin{aligned}
\mathcal{P}_\zeta T_{\mathfrak{B}^\sharp}\zeta^\sharp
&=\left(\Delta_{G_0}+T_{\beta_1}\partial_y^2-2T_{\beta_2}\partial_y+T_{\beta_3}\partial_y\right)T_{\mathfrak{B}^\sharp}\zeta^\sharp\\
&=2\sum_{j=1}^3T_{X_j\mathfrak{B}^\sharp}X_j\zeta^\sharp
+\sum_{j=1}^3T_{X_j\zeta^\sharp\partial_y\mathfrak{B}^\sharp}X_j\zeta^\sharp
+T_{\mathfrak{B}^\sharp}\Delta_{G_0}\zeta^\sharp
\mod C_y^0H_x^{s+0.5}.
\end{aligned}
\end{equation}
Here we used the $XT_au=T_{Xa}u+T_aXu$ for a left-invariant vector field $X\in\mathfrak{su}(2)$, and the fact that $\mathcal{L}_\zeta\mathfrak{B}^\sharp=0$. Thus
\begin{equation}\label{GoodUnknownTemp3}
\begin{aligned}
\mathcal{P}_\zeta T_{\mathfrak{B}^\sharp}\zeta^\sharp
&=2\sum_{j=1}^3T_{X_j\mathfrak{B}^\sharp}X_j\zeta^\sharp
-T_{\partial_y\mathfrak{B}^\sharp}\beta_1
-T_{\mathfrak{B}^\sharp}\beta_3
&\mod C_y^0H_x^{s+0.5}\\
&=2\sum_{j=1}^3T_{\rho^\sharp X_j\partial_y\Psi^\sharp}X_j\zeta^\sharp
-T_{\rho^\sharp\partial_y^2\Psi^\sharp}\beta_1
-T_{\rho^\sharp\partial_y\Psi^\sharp}\beta_3
&\mod C_y^0H_x^{s+0.5}
\end{aligned}
\end{equation}
Here in the first equality of (\ref{GoodUnknownTemp3}), when computing 
$$
\begin{aligned}
\beta_1
&=|\nabla_{G_0}\zeta^\sharp|_{G_0}^2
&\mod H^{s+0.5}_x\\
&=\sum_{j=1}^3|X_j\zeta^\sharp|^2
&\mod H^{s+0.5}_x\\
&=2\sum_{j=1}^3T_{X_j\zeta^\sharp}X_j\zeta^\sharp
&\mod H^{s+0.5}_x,
\end{aligned}
$$
we used the para-linearization theorem of Bony, i.e. Theorem \ref{Bony}; in the third equality, we used 
$$
\begin{aligned}
2\sum_{j=1}^3T_{\partial_y\Psi^\sharp X_j\rho^\sharp}X_j\zeta^\sharp-T_{\partial_y\Psi^\sharp}\beta_1
&=2\sum_{j=1}^3T_{\partial_y\Psi^\sharp}T_{X_j\zeta^\sharp}X_j\zeta-T_{\partial_y\Psi^\sharp}\sum_{j=1}^3|X_j\zeta^\sharp|^2
&\mod C_y^0H^{2s-2.5}_x\\
&=0 
&\mod C_y^0H^{2s-2.5}_x.
\end{aligned}
$$
The proof is completed by noting that the right-hand-side of (\ref{GoodUnknownTemp2}) cancels with (\ref{GoodUnknownTemp1}) modulo $C_y^0H_x^{s+0.5}$, since $T_{1/\rho^\sharp}T_{\rho^\sharp}=\mathrm{Id}+\Op\Sigma_{<1/2}^{-(s-0.5)}$ by Theorem \ref{Compo2}.
\end{proof}

\subsection{Factorization}
With the elliptic para-differential equation satisfied by the good unknown $w^\sharp$ at hand, we are now at the place to factorize the equation to obtain the behaviour of $\partial_y\Psi^\sharp$ near the boundary $y=0$, to accomplish Task \ref{TaskDN}. This is the essential step for which an explicit, global symbolic calculus on $\SU(2)$ is necessary. The framework of our argument is parallel to Lemma 3.18. of \cite{ABZ2011}, but the details are quite different, since we are obviously working with a different geometry.

So we still assume $s>3$, $\zeta\in H^{s+0.5}(\mathbb{S}^2)$, and set $\beta_1$, $\beta_2$ and $\beta_3$ as in (\ref{Beta13})-(\ref{Beta2}). Imitating Lemma 3.18. of \cite{ABZ2011}, we seek for first-order rough symbols $a,A$ on $\SU(2)$, depending on the parameter $y\in[-1/2,0]$, such that
\begin{equation}\label{Factorization}
\begin{aligned}
\mathcal{P}_\zeta
&=\Delta_{G_0}+T_{\beta_1}\partial_y^2-2T_{\beta_2}\partial_y+T_{\beta_3}\partial_y\\
&=T_{\beta_1}(\partial_y-T_a)(\partial_y-T_A)+R_0+R_1\partial_y.
\end{aligned}
\end{equation}
Here we write $\delta=\min(0.5,s-3)>0$, and suppose $R_0\in\Op\Sigma_{<1/2}^{0.5-\delta}$, $R_1\in\Op\Sigma_{<1/2}^{-0.5-\delta}$.

We need to apply Theorem \ref{Compo2} and the composition formula (\ref{SU(2)Compo}) on $\SU(2)$, with the left-invariant vector fields $\Pa[+,-,0]$ given in (\ref{CANSU(2)}) and RT-admissible tuple $\mathscr{Q}_{+,-,0}$ given in (\ref{SU(2)q}). Suppose that $a,A$ are both of class $C_y^1\mathcal{A}^1_{0.5+\delta}(\SU(2))$. Write $a=a_1+a_0$, $A=A_1+A_0$, with subscript $1,0$ denoting symbols in $\mathcal{A}^1_{1.5+\delta}(\SU(2))$ and $\mathcal{A}^0_{0.5+\delta}(\SU(2))$ respectively. Then we propose the system to be solved as
\begin{equation}\label{AaSystemFull}
\begin{aligned}
a_1A_1+\sum_{\mu\in\Ind}\Df_{\mu}a_1\partial_{\mu}A_1
+a_1A_0+a_0A_1-\partial_yA_1
&=\frac{\sigma[\Delta_{G_0}]}{\beta_1}\\
a_1+A_1+a_0+A_0
&=\frac{2\beta_2-\beta_3}{\beta_1}.
\end{aligned}
\end{equation}
By Theorem \ref{Compo2} and Formula (\ref{SU(2)Compo}), we find that $T_{1/\beta_1}T_{\beta_1}=\mathrm{Id}\mod\Sigma_{<1/2}^{-(s-1.5)}$, so comparing symbols of same order, we find that the resulting $R_0$ and $R_1$ match with our requirement in (\ref{Factorization}).

Now we are at the place to solve (\ref{AaSystemFull}). Preserving the highest order symbols, we obtain
$$
\begin{aligned}
a_1A_1
=\frac{\sigma[\Delta_{G_0}]}{\beta_1},
\quad
a_1+A_1
=\frac{2\beta_2}{\beta_1}.
\end{aligned}
$$
Note that $\sigma[\Delta_{G_0}]=-l(l+1)\I[l]$, so for each label of representation $l\in\hN$ we solve, with the convention that the Hermitian part of $A_1(y;x,l)$ is positive definite for $l$ large,
\begin{equation}\label{AsSystem1}
\begin{aligned}
a_1(y;x,l)&=\frac{\beta_2(x,l)}{\beta_1(y;x)}-\tilde{\lambda}_1(y;x,l)\\
A_1(y;x,l)&=\frac{\beta_2(x,l)}{\beta_1(y;x)}+\tilde{\lambda}_1(y;x,l).
\end{aligned}
\end{equation}
Here we set
$$
\tilde{\lambda}_1(y;x,l)
:=\frac{\sqrt{\beta_2(x,l)^2+\beta_1(y;x)l(l+1)}}{\beta_1(y;x)}
:=\sqrt{\frac{l(l+1)}{\beta_1(y;x)}}\sum_{k=0}^\infty\binom{k}{1/2}\left(\frac{\beta_2(x,l)^2}{\beta_1(y;x)l(l+1)}\right)^k.
$$
At this level there is no issue of commutativity, as the matrix $\beta_2(x,l)$ commutes with $\beta_1(y;x)l(l+1)$, which is simply a scaling transformation on each $\Hh[l]$. Corollary \ref{CoroPSSymbol} immediately implies that the power series in the definition of square root sums to a symbol of class $\mathcal{A}^0_{s-1.5}$. Thus the symbols $\tilde{\lambda}_1(y;x,l)$, $a_1(y;x,l)$ and $A_1(y;x,l)$ are indeed of class $\mathcal{A}^1_{s-1.5}$. This justifies our search for symbols $a,A$ up to highest order. 

Now the equations for $a_0,A_0$ read
$$
\begin{aligned}
a_1A_0+a_0A_1
&=\partial_yA_1-\sum_{\mu\in\Ind}\Df_{\mu}a_1\partial_{\mu}A_1\\
a_0+A_0
&=-\frac{\beta_3}{\beta_1}.
\end{aligned}
$$
The issue here is that the symbol $\partial_\mu A_1$ does not commute with the symbol $\beta_2$ anymore, though all other terms do. Fortunately, we may content ourselves with solutions up to error of order $-1$. Since $A_1$ and $a_0$ are of order 1 and 0 in the sense of Definition \ref{2Order}, we find that $[a_0,A_1]$ should be a symbol of order $0$, thus negligible. Replacing $a_0A_1$ by $A_1a_0+[a_0,A_1]$, we can then re-write the system as
$$
\begin{aligned}
a_1A_0+A_1a_0
&=\partial_yA_1-\sum_{\mu\in\Ind}\Df_{\mu}a_1\partial_{\mu}A_1
\mod \mathcal{A}^{0}_{s-2.5}\\
a_0+A_0
&=-\frac{\beta_3}{\beta_1}.
\end{aligned}
$$
Multiplying the second equation by $A_1$ from the left, subtracting it from the first, we can explicitly solve $A_0$, and then $a_0$, up to an error in $\mathcal{A}^{-1}_{s-2.5}$.

It is thus safe to choose
\begin{equation}\label{AsSystem0}
\begin{aligned}
a_0&=\frac{\tilde{\lambda}_1^{-1}}{2}\partial_yA_1
+\frac{\tilde{\lambda}_1^{-1}\beta_3\beta_2-2\beta_1\beta_3+1}{2\beta_1^2}
-\frac{\tilde{\lambda}_1^{-1}}{2}\sum_{\mu\in\Ind}\Df_{\mu}a_1\partial_{\mu}A_1\\
A_0&=-\frac{\tilde{\lambda}_1^{-1}}{2}\partial_yA_1
-\frac{\tilde{\lambda}_1^{-1}\beta_3\beta_2+1}{2\beta_1^2}
+
\frac{\tilde{\lambda}_1^{-1}}{2}\sum_{\mu\in\Ind}\Df_{\mu}a_1\partial_{\mu}A_1.
\end{aligned}
\end{equation}
Applying Proposition \ref{PSSymbol} and the Corollary, we justify $a_0,A_0\in\mathcal{A}^0_{s-2.5}$, and this final result justifies the legitimacy of neglecting $[a_0,A_1]$.

To sum up, we have successfully obtained the desired factorization (\ref{Factorization}), with $a,A$ given by (\ref{AsSystem1})-(\ref{AsSystem0}), and 
$$
\|R_0\|_{H^{s+0.5-\delta}\to H^s},\,
\|R_1\|_{H^{s-0.5-\delta}\to H^s}
\lesssim K\big(\|\zeta\|_{H^{s+0.5}}\big)\|\phi\|_{H^s}.
$$

The next step is to find out what the boundary value $\partial_yw^\sharp\big|_{y=0}$ looks like. We state the following lemma, whose proof is identical to that of Proposition 3.19. in \cite{ABZ2011}:
\begin{lemma}[Regularity for Evolutionary Elliptic Para-Differential Equation]\label{MaxReg}
Suppose $r>1$, the $y$-dependent symbol $a\in C^1_y\mathcal{A}^1_{r}(\SU(2))$ for $y\in[-1/2,0]$, such that the Hermitian part of $a$ is $\geq c|l|$ for some $c>0$ when $l$ is sufficiently large. Suppose $N<0$, $W\in C^0_yH^{-N}_x$ solves the evolutionary elliptic para-differential equation
$$
\partial_yW+T_aW=f\in C^0_yH^{s}_x,\quad s\in\mathbb{R},
$$
then in fact $W(0)\in H^{s+1-\varepsilon}$:
$$
\|W(0)\|_{H^{s+1-\varepsilon}}
\lesssim_{\varepsilon}\|f\|_{C^0_yH^{s}_x}+\|w\|_{C^0_yH^{-N}_x}.
$$
\end{lemma}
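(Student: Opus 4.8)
The plan is to follow the proof of Proposition 3.19 in \cite{ABZ2011}, replacing the Euclidean Littlewood--Paley and para-differential tools by those of Sections \ref{2} and \ref{3}. The heuristic is that, because the Hermitian part of $a(y;x,l)$ is $\geq c|l|$ for large $|l|$, the equation $\partial_yW+T_aW=f$ is a \emph{forward parabolic} evolution as $y$ runs from $-1/2$ to $0$: the propagator $\exp\big(-\int_z^0T_{a(\tau)}\,d\tau\big)$ acts on the $|l|$-shell like $e^{-c(0-z)|l|}$, so in the Duhamel term this decay turns the elementary integral $\int_{-1/2}^0 e^{-c(0-z)|l|}\,dz\sim|l|^{-1}$ into one extra derivative on the source $f$, which is the claimed gain. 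The $\varepsilon$ is the price for $W(-1/2)$ being known only in $H^{-N}$ and for $a$ being only $C^1_y\mathcal{A}^1_r$ with $r>1$.

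I would proceed in three steps. First I would localize near the good boundary: pick $\chi\in C^\infty(\mathbb{R})$ with $\chi\equiv0$ for $y\leq-1/2$ and $\chi\equiv1$ for $y\geq-1/4$, so that $W_1:=\chi W$ solves $\partial_yW_1+T_aW_1=f_1$ with $f_1:=\chi f+\chi'W\in C^0_yH^{-N}_x$ and $W_1(-1/2)=0$. Second I would establish coercivity: by Theorem \ref{ParaAdj} and Theorem \ref{Compo2} one has $T_a+T_a^*=2T_{\mathrm{Re}\,a}$ modulo an operator of order $1-r<0$, and since $\mathrm{Re}\,a\gtrsim|l|$ for large $|l|$ this yields a G\aa rding-type bound $\mathrm{Re}\langle T_aV,V\rangle\geq c'\||\nabla|^{1/2}V\|_{L^2}^2-C\|V\|_{L^2}^2$, which is stable under conjugation by the elliptic multiplier $\langle\nabla\rangle^\mu$ since the latter does not change principal symbols. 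Third I would run the dyadic estimate: applying $\psi_t(|\nabla|)$ to the equation gives $\partial_yW_{1,t}+T_aW_{1,t}=f_{1,t}+[T_a,\psi_t(|\nabla|)]W_1$; on each representation space $\mathcal{H}_l$ this is a matrix-valued ODE in $y$ with coefficient of Hermitian part $\gtrsim|l|$ and vanishing data at $y=-1/2$, so Gr\"onwall's inequality gives $\|W_{1,t}(0)\|_{L^2}\lesssim t^{-1}\big(\|f_{1,t}\|_{C^0_yL^2}+\|[T_a,\psi_t(|\nabla|)]W_1\|_{C^0_yL^2}\big)$; multiplying by $t^{s+1-\varepsilon}$ and taking the $L^2(dt/t)$ norm (Proposition \ref{LPHs}), the $f_{1,t}$ contribution is bounded by $\|f\|_{C^0_yH^s_x}$, while the boundary term, absent here because $W_1(-1/2)=0$ (and in any case super-exponentially small in $|l|$ and hence harmless), contributes $\|W\|_{C^0_yH^{-N}_x}$.

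The main obstacle is the commutator $[T_a,\psi_t(|\nabla|)]W_1$. By the composition calculus (Theorem \ref{Compo2} and formula (\ref{ParaCompoSU2})) this operator is of order $\leq0$ and carries an extra $t$-localization gain of size $t^{-(r-1)}$ coming from the $C^r$-regularity of $a$ in $x$, exactly as on $\mathbb{R}^n$; but this gain is strictly less than one derivative, so a single pass does not reach $H^{s+1}$. I would therefore close the argument by a finite bootstrap: starting from $W\in C^0_yH^{-N}_x$, each application of the dyadic estimate promotes the regularity of $W$ near $y=0$ by a fixed positive amount governed by $\min(1,r-1)$, re-localizing at each step with nested cut-offs $\chi_k$ shrinking toward $y=0$ so that the lower-order remainder $\chi_k'W$ from the previous step is already controlled, and stopping just below the ceiling $s+1$ imposed by $f\in C^0_yH^s_x$; the borderline losses in the commutator estimates and in this bootstrap are precisely what is absorbed into $\varepsilon$. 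Finally I would note that every operation above --- the cut-off in $y$, the spectral projectors $\psi_t(|\nabla|)$, the operators $T_a$ and $T_a^*$, and the frequency-frozen ODEs on the $\mathcal{H}_l$ --- preserves $\mathbf{T}_3$-invariance, as spectral multipliers and the symbolic calculus do (Section \ref{3}); hence $W(0)$ is again the lift of a function on $\mathbb{S}^2$ and the estimate descends to $\mathbb{S}^2$.
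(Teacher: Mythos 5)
Your overall architecture --- cut off in $y$ so that the data at $y=-1/2$ disappears, use the ellipticity of the Hermitian part of $a$ to get parabolic smoothing shell by shell, control the commutator with the Littlewood--Paley localizer, accept an $\varepsilon$-loss from taking $\sup_y$, and bootstrap with nested cut-offs --- is the right one, and it is the same circle of ideas as the argument the paper invokes verbatim from Proposition 3.19 of \cite{ABZ2011} (the paper gives no independent proof). The genuine gap is the coercivity step. You assert that $T_a+T_a^*=2T_{\mathrm{Re}\,a}$ modulo an operator of order $1-r$, and from this a G\r{a}rding-type bound $\mathrm{Re}\langle T_aV,V\rangle\geq c'\,\||\nabla|^{1/2}V\|_{L^2}^2-C\|V\|_{L^2}^2$. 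The first claim is false as stated: by Theorem \ref{ParaAdj} the adjoint symbol is $a^*+\sum_{|\alpha|=1}\Df^\alpha X^{(\alpha)}a^*+\cdots$, and the $|\alpha|=1$ correction is of order $0$, not $1-r$ (harmless for an energy estimate, but it shows the bookkeeping is off). More seriously, the G\r{a}rding inequality itself --- for para-differential operators on $\SU(2)$ whose symbols are matrix-valued on each $\mathcal{H}_l$, merely $C^r_*$ in $x$, with Hermitian part $\geq c|l|$ --- is not among the results of Sections \ref{2}--\ref{3} and is not automatic in the non-commutative calculus; it has to be proved, e.g.\ by writing $\mathrm{Herm}(a)=\tfrac{c}{2}\kappa+\big(\mathrm{Herm}(a)-\tfrac{c}{2}\kappa\big)$ and using $T_b^*T_b\geq0$ with $b=\big(\mathrm{Herm}(a)-\tfrac{c}{2}\kappa+M\big)^{1/2}$ together with Theorems \ref{Compo2} and \ref{ParaAdj}, which in turn requires verifying that this matrix square root of a rough, non-commuting symbol lies in $\mathcal{A}^{1/2}_{r}$ (a Corollary \ref{CoroPSSymbol}-type check). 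Your Step 3 rests on exactly this point: applying $\psi_t(|\nabla|)$ does not literally decouple the equation into matrix ODEs on the spaces $\mathcal{H}_l$, since $T_a$ is not a Fourier multiplier; what you actually run is an $L^2$ energy/Gr\"onwall inequality for the localized piece, and that inequality is the missing G\r{a}rding bound. Without it the proposal is an outline, not a proof; alternatively one can bypass G\r{a}rding entirely by the parametrix/Duhamel route, quantizing $\exp\big(-\int_z^0 a\,d\tau\big)$ and using only the mapping theorems of the calculus.

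Two smaller points. As written, your single-pass estimate claims the $f_{1,t}$ contribution is bounded by $\|f\|_{C^0_yH^s_x}$, but $f_1=\chi f+\chi' W$ contains $\chi'W$, which is only in $C^0_yH^{-N}_x$; your nested-cut-off bootstrap does repair this (on the support of $\chi_k'$ the regularity from the previous step is available), but then the first pass only yields $H^{-N+1-\varepsilon}$ and Step 3 should say so rather than claim the $H^{s+1-\varepsilon}$ bound outright. Second, the commutator $[T_a,\psi_t(|\nabla|)]$ is already of order $0$ once $a$ is Lipschitz in $x$ (the multiplier is scalar on each $\mathcal{H}_l$, so the zeroth-order terms cancel exactly and the leading contribution is $\sum_{|\alpha|=1}\Df^\alpha\psi_t\cdot X^{(\alpha)}a$); your claimed order $2-r$ is weaker than the truth, so the bootstrap still closes, but the gain per pass is a full derivative (minus $\varepsilon$), not $\min(1,r-1)$, and no iteration on the commutator regularity is actually needed.
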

With the factorization (\ref{Factorization}), we set $\tilde W=(\partial_y-T_A)W^\sharp$ and find
$$
\partial_y\tilde W+T_a\tilde W=R_0\tilde W+R_0\partial_y\tilde W
\mod C^0_yH^{2s-1.5}.
$$
From the expression $a=a_1+a_0$, where $a_1,a_0$ are as in (\ref{AsSystem1})-(\ref{AsSystem0}), we find that $a$ in fact depends smoothly on $y$. On the other hand, $\tilde W\in C^0_yH^{s-1}_x$, with norm controlled in terms of $\|\zeta\|_{H^{s+0.5}}$ and $\|\phi\|_{H^s}$, so we can apply Lemma \ref{MaxReg} to conclude
\begin{equation}\label{AUy=0}
\left\|\partial_yW^\sharp\big|_{y=0}-T_AW^\sharp\big|_{y=0}\right\|_{H^{s+0.5}}
\lesssim K\big(\|\zeta\|_{H^{s+0.5}}\big)\|\phi\|_{H^s},
\end{equation}
where 
$$
A=A_1+A_0,
$$
with $A_1$ given in (\ref{AsSystem1}) and $A_0$ given in (\ref{AsSystem0}). All the symbols and functions of concern are $\mathbf{T}_3$-invariant.

\subsection{Concluding the Proof of Theorem \ref{Thm1}}
We are at the stage to complete Task \ref{TaskDN}. With a little abuse of notation, if $b=\sum_{j}b^jX_j$ is a vector field, then we write
$$
T_b\cdot\nabla_{G_0}=\sum_{j}T_{b^j}X_j.
$$
Thus we compute, with Bony's para-linearization theorem (and still the ``mod" notation in \ref{Mod}),
\begin{equation}\label{GoodUnknownTemp4}
\begin{aligned}
(\rho^\sharp)^2\mathcal{N}_\zeta\Psi^\sharp
&=\beta_1\partial_y\Psi^\sharp-\nabla_{G_0}\zeta^\sharp\cdot\nabla_{G_0}\Psi^\sharp
\quad\text{(precise equality)}\\
&=T_{\beta_1}\partial_y\Psi^\sharp
+2T_{\partial_y\Psi^\sharp\nabla_{G_0}\zeta^\sharp}\cdot\nabla_{G_0}\zeta^\sharp
-T_{\nabla_{G_0}\zeta^\sharp}\cdot\nabla_{G_0}\Psi^\sharp-T_{\nabla_{G_0}\Psi^\sharp}\cdot\nabla_{G_0}\zeta^\sharp
\mod C_y^0H^{s+0.5}_x
\end{aligned}
\end{equation}
Substituting in $\Psi^\sharp=W^\sharp+T_{1/\rho^\sharp}T_{\mathfrak{B}^\sharp}\zeta^\sharp$, we find that the second and third term in (\ref{GoodUnknownTemp4}) cancel out the $\nabla_{G_0}\zeta^\sharp$ within them:
$$
T_{\partial_y\Psi^\sharp\nabla_{G_0}\zeta^\sharp}\cdot\nabla_{G_0}\zeta^\sharp
-T_{\nabla_{G_0}\zeta^\sharp}\cdot\nabla_{G_0}\Psi^\sharp
=-T_{\nabla_{G_0}\zeta^\sharp}\cdot\nabla_{G_0}W^\sharp
\mod C_y^0H^{s+0.5}_x.
$$
Thus using the fact $\mathfrak{B}^\sharp,\partial_y\mathfrak{B}^\sharp\in C^0_yH^{s+0.5}_{x}$, (\ref{GoodUnknownTemp4}) reduces to
$$
(\rho^\sharp)^2\mathcal{N}_\zeta\Psi^\sharp
=T_{\beta_1}\partial_yW^\sharp-T_{\nabla_{G_0}\zeta^\sharp}\cdot\nabla_{G_0}W^\sharp
-T_{\nabla_{G_0}\Psi^\sharp-\partial_y\Psi^\sharp\nabla_{G_0}\zeta^\sharp}\cdot\nabla_{G_0}\zeta^\sharp
\mod C_y^0H^{s+0.5}_x.
$$
Using (\ref{AUy=0}), substituting in (\ref{AsSystem1})-(\ref{AsSystem0}) at $y=0$, we obtain the desired result: the Dirichlet-Neumann operator ${D[\zeta]\phi}$ is lifted to $\SU(2)$ as
\begin{equation}\label{DNParaLin}
\big(D[\zeta]\phi\big)^\sharp
=T_{\lambda}\big(\phi^\sharp-T_{\mathfrak{b}^\sharp}\zeta^\sharp\big)
-T_{\mathfrak{v}^\sharp}\cdot\nabla_{G_0}\zeta^\sharp
\mod H^{s+0.5}_x.
\end{equation}
Here with $\beta_1,\beta_2,\beta_3$ given in (\ref{Beta13})-(\ref{Beta2}), $\tilde{\lambda}_1$ and $A_0$ given in (\ref{AsSystem1})-(\ref{AsSystem0}) (evaluated at $y=0$), we have
\begin{equation}\label{DNlambda}
\begin{aligned}
\lambda(x,l)
&=\frac{\beta_1(x)\tilde{\lambda}_1(x,l)}{(1+\zeta^\sharp)^2}
+\frac{\beta_1(x)A_0(x,l)}{(1+\zeta^\sharp)^2}\\
&=\frac{\sqrt{\beta_2(x,l)^2+\beta_1(x)l(l+1)}}{(1+\zeta^\sharp)^2}
+\frac{\beta_1(x)A_0(x,l)}{(1+\zeta^\sharp)^2}\\
&=:\lambda^{(1)}(x,l)+\lambda^{(0)}(x,l).
\end{aligned}
\end{equation}
The quantities $\mathfrak{b},\mathfrak{v}$ on $\mathbb{S}^2$ are
\begin{equation}\label{bvExpression}
\begin{aligned}
\mathfrak{b}
&=\partial_y\Psi\big|_{y=0}
=\left(1+\frac{|\nabla_{0}\zeta|^2}{(1+\zeta)^{2}}\right)^{-1}\left(
{D[\zeta]\phi}+\frac{\nabla_{0}\zeta\cdot\nabla_{0}\phi}{(1+\zeta)^{2}}\right),\\
\\
\mathfrak{v}
&=\frac{\nabla_0\Psi-\partial_y\Psi\nabla_0\zeta}{\rho^2}\Bigg|_{y=0}
=\frac{\nabla_0\phi-\mathfrak{b}\nabla_0\zeta}{(1+\zeta)^2}.
\end{aligned}
\end{equation}
Note that we used $T_{\beta_2}=T_{\nabla_{G_0}\zeta^\sharp}\cdot\nabla_{G_0}$ and $\beta_1(y;x)A_1(y;x,l)-\beta_2(x,l)=\tilde{\lambda}_1(y;x,l)$. When $\zeta$ is close to 0, the error term is obviously quadratic in $(\zeta,\phi)\in H^{s+0.5}\times H^s$.

This finishes the proof of Theorem \ref{Thm1}.

\section{Para-differential Form of the System}\label{5}
In this section, we conclude the proof of Theorem \ref{Thm2} using the results we have obtained so far. Although the method is quite standard, the symmetrization process is more complicated than that in \cite{ABZ2011} due to non-commutativity of symbolic calculus on $\SU(2)$. We thus mainly focus on what is different from \cite{ABZ2011} due to this non-commutativity, and sketch everything else that is parallel.

\subsection{Para-linearizing the Full System}
We start by deriving the para-linearized form of the full system (\ref{EQ}). We still write $\rho=1+\zeta$, $\beta_1=\rho^2+|\nabla_0\zeta|_{g_0}^2$ at the level set $\{y=0\}$. As a consequence of the para-linearization formula for the Dirichlet-Neumann operator, we already know that 
$$
\begin{aligned}
\partial_t\zeta^\sharp
&=\big(D[\zeta]\phi\big)^\sharp\\
&=T_{\lambda}\big(\phi^\sharp-T_{\mathfrak{b}^\sharp}\zeta^\sharp\big)
-T_{\mathfrak{v}^\sharp}\cdot\nabla_{G_0}\zeta^\sharp
+f_1(\zeta,\phi),
\end{aligned}
$$
where $\lambda,\mathfrak{b},\mathfrak{v}$ are as in (\ref{DNlambda})-(\ref{bvExpression}),  $f_1(\zeta,\phi)$ is $\mathbf{T}_3$-invariant and 
$$
\|f_1(\zeta,\phi)\|_{H_x^{s+0.5}}
\lesssim \left(\|\zeta\|_{H_x^{s+0.5}}+\|\phi\|_{H_x^s}\right)^2.
$$

To proceed further, we introduce the actual \emph{good unknown} of interest, being the boundary value of $W^\sharp$ in Proposition \ref{GoodUnknown} modulo $H_x^{s+0.5}$:
\begin{equation}\label{ActualGUknown}
w^\sharp:=\phi^\sharp-T_{\mathfrak{b}^\sharp}\zeta^\sharp.
\end{equation}
Then with $\lambda,\mathfrak{b},\mathfrak{v}$ as in (\ref{DNlambda})-(\ref{bvExpression}), we find that the mapping
$$
\left(\begin{matrix}
\zeta^\sharp \\
\phi^\sharp
\end{matrix}\right)
\to
\left(\begin{matrix}
\zeta^\sharp \\
w^\sharp
\end{matrix}\right)
$$
is a diffeomorphism in a neighbourhood of $(0,0)\in H^{s+0.5}_x\times H^s_x$. In fact, the mapping $\phi^\sharp\to\phi^\sharp-T_{\mathfrak{b}^\sharp}\zeta^\sharp$ is linear in $\phi^\sharp$, and estimates in Section \ref{2} on the para-product operator imply
$$
\|T_{\mathfrak{b}^\sharp}\zeta^\sharp\|_{H^s_x}
\lesssim \|\zeta\|_{H_x^{s+0.5}}\|\phi\|_{H_x^s}.
$$
Thus when $\zeta$ is close to 0 in $H^{s+0.5}_x$, the linear operator $\phi^\sharp\to\phi^\sharp-T_{\mathfrak{b}^\sharp}\zeta^\sharp$ is invertible, and the inverse is smooth in $\zeta^\sharp$. Consequently, the passage of unknown from $(\zeta^\sharp,\phi^\sharp)$ to $(\zeta^\sharp,w^\sharp)$ does not lose any information. It then suffices to derive an equation governing $(\zeta^\sharp,w^\sharp)$.

To derive the evolution equation for $w^\sharp$, we need to study the terms in $\partial_t\phi$ separately. We start with the mean curvature.

\begin{proposition}\label{H(zeta)}
Suppose $s>3$, $\zeta\in H^{s+0.5}$. Still write $\rho=1+\zeta$, and $\beta_1$ for the lift of $\rho^2+|\nabla_0\zeta|_{g_0}^2$. The mean curvature $H(\zeta)$ admits a para-linearization formula
$$
\big(H(\zeta)\big)^\sharp=-T_{h}\zeta^\sharp
\mod H_x^{2s-3}.
$$
Here $h=h^{(2)}+h^{(1)}$ is a classical differential symbol, with $h^{(2)}=(\rho^\sharp)^3\beta_1^{-3/2}(\lambda^{(1)})^2$ being a multiplication of $(\lambda^{(1)})^2$, and $h^{(1)}$ takes the form
$$
h^{(1)}
=\sum_{j=1}^3 h_{1j}\big(\zeta^\sharp,\nabla_{G_0}\zeta^\sharp,\nabla_{G_0}^2\zeta^\sharp\big)\sigma[X_j]
+h_{10}\big(\zeta^\sharp,\nabla_{G_0}\zeta^\sharp,\nabla_{G_0}^2\zeta^\sharp\big),
$$
where the coefficients $h_{1j}$ are all $C^\infty$ in its arguments.
\end{proposition}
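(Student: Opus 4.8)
The plan is to compute the scalar mean curvature of the graph surface $M_\zeta = \{(1+\zeta(x))N_0(x) : x \in \mathbb{S}^2\}$ explicitly from the induced metric $g_\zeta = (1+\zeta)^2 g_0 + d\zeta \otimes d\zeta$ and the second fundamental form, then para-linearize the resulting nonlinear expression term by term using Bony's theorem (Theorem \ref{Bony}) and the para-product decomposition (Theorem \ref{ParaPrd}). First I would write $H(\zeta)$ as a rational function of $\zeta$, $\nabla_0\zeta$, $\nabla_0^2\zeta$, with coefficients that are smooth functions of $\zeta$ and $\nabla_0\zeta$ only. Since the surface is parametrized as a normal graph over the round sphere, the standard formula gives $H(\zeta)$ as (schematically) a divergence-form expression $\mathrm{div}_{g_0}$ applied to a vector field depending on $(\zeta, \nabla_0\zeta)$, plus lower order terms; the highest-order part is linear in $\nabla_0^2\zeta$ with coefficient a smooth function of $(\zeta, \nabla_0\zeta)$. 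I would lift everything to $\SU(2)$ via $\sharp$, replacing $\nabla_0, \Delta_0$ with $\nabla_{G_0}, \Delta_{G_0}$ and using that the Hopf fibration is a Riemannian submersion.

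The para-linearization then proceeds by applying Bony's para-linearization theorem (Theorem \ref{Bony}) to each nonlinear function of $(\zeta^\sharp, \nabla_{G_0}\zeta^\sharp)$ appearing as a coefficient, and the para-product rule $F(u)u = T_{F'(u)}u + \ldots$ together with the fact that for $\zeta^\sharp \in C^{s-0.5}_* \subset C^{2.5+}$ the para-product remainders gain regularity $2(s-0.5) - (\text{something})$; since $s > 3$, these remainders land in $H^{2s-3}_x$. The terms quadratic in $\nabla_{G_0}\zeta^\sharp$ (and the term $|\nabla_{G_0}\zeta^\sharp|^2$ in the denominators, expanded as a power series) contribute, after para-linearization, the subprincipal symbol $h^{(1)}$, which is a first-order classical differential symbol $\sum_j h_{1j}(\zeta^\sharp, \nabla_{G_0}\zeta^\sharp, \nabla_{G_0}^2\zeta^\sharp)\sigma[X_j] + h_{10}(\cdots)$ by inspection of which derivatives of $\zeta^\sharp$ survive after extracting the top-order para-differential operator $T_{h^{(2)}}$. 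The principal symbol $h^{(2)}$ should match $(\rho^\sharp)^3 \beta_1^{-3/2}(\lambda^{(1)})^2$: here I would verify that the principal symbol of the second-order operator coming from $\mathrm{div}_{g_0}(\text{coeff}\cdot\nabla_0\zeta)$ equals, after the conformal rescaling by $\rho^2$ and the quotient metric structure, exactly $(\rho^\sharp)^3\beta_1^{-3/2}$ times the square of $\lambda^{(1)} = \beta_1^{-1}\sqrt{\beta_2^2 + \beta_1 l(l+1)}$, using $\beta_1 = (\rho^\sharp)^2 + |\nabla_{G_0}\zeta^\sharp|^2$ and $\beta_2 = $ symbol of $\nabla_{G_0}\zeta^\sharp \cdot \nabla_{G_0}$. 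This is a bookkeeping identity: expanding $(\lambda^{(1)})^2 = \beta_1^{-2}(\beta_2^2 + \beta_1\sigma[-\Delta_{G_0}])$ and multiplying by $(\rho^\sharp)^3\beta_1^{-3/2}$ should reproduce the full second-order symbol of the mean curvature operator, including the cross term involving $\beta_2^2$ (the $(\nabla_0\zeta\cdot\nabla_0)^2$ piece of the linearized mean curvature).

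The main obstacle I anticipate is twofold. First, keeping careful track of exactly which second-derivative terms of $\zeta^\sharp$ belong in the principal symbol $T_{h^{(2)}}$ versus which get absorbed into $h^{(1)}$: when one para-linearizes a product like $(\text{smooth function of }\nabla\zeta) \cdot \nabla^2\zeta$, Bony's theorem puts the full $\nabla^2\zeta$ dependence into a para-product, but the coefficient's own dependence on $\nabla^2\zeta$ (through $\nabla(\nabla\zeta)$ inside $F'$) must be separated out — this requires the observation that $h^{(1)}$ is allowed to depend on $\nabla_{G_0}^2\zeta^\sharp$ as stated, which makes the separation cleaner since I need only check that no $\nabla^2\zeta$ appears in $h^{(2)}$ and that everything else of order $\le 1$ is swept into $h^{(1)}$ with smooth coefficients. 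Second, the non-commutativity of the $\SU(2)$ symbolic calculus does not actually bite here because $\beta_2(x,l)$ and the scalar multipliers commute on each $\Hh[l]$ (as already noted in the factorization step of Section \ref{4}), so the identity $h^{(2)} = (\rho^\sharp)^3\beta_1^{-3/2}(\lambda^{(1)})^2$ is a genuine equality of matrix symbols, not merely an equality modulo lower order; I would make sure to state this commutativity explicitly. The remainder estimate $\mod H^{2s-3}_x$ then follows by collecting all Bony remainders and para-product remainders, each of which gains at least $s - 0.5 - 1 > 1$ derivatives over the worst term $\nabla_{G_0}^2\zeta^\sharp \in H^{s-1.5}_x$, landing in $H^{s-1.5 + (s-1.5)}_x = H^{2s-3}_x$.
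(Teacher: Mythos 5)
Your plan is correct and follows essentially the same route as the paper: obtain the explicit expression of $H(\zeta)$ as a second-order nonlinear differential expression in $(\zeta,\nabla_0\zeta,\nabla_0^2\zeta)$, lift via $\sharp$, apply Bony's theorem (Theorem \ref{Bony}) together with the para-product remainder estimate to extract $T_{h^{(2)}}$, and sweep all order-$\le 1$ contributions into $h^{(1)}$, with the remainder count $H^{s-1.5+(s-1.5)}=H^{2s-3}$ exactly as in the paper. The only methodological difference is how the explicit formula is derived: you compute directly from the induced metric and second fundamental form of the normal graph, whereas the paper derives (\ref{H(zeta)Expr}) from the first variation of the area functional $A(\zeta)=\int_{\mathbb{S}^2}\rho\sqrt{\rho^2+|\nabla_0\zeta|_{g_0}^2}\,d\mu_0$; both yield the same expression, but the variational derivation has the side benefit of setting up the second-variation argument used later to prove the self-adjointness identity for $h$ in Proposition \ref{T_hAdjoint}, so it is worth keeping in mind. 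One concrete correction: you quote $\lambda^{(1)}=\beta_1^{-1}\sqrt{\beta_2^2+\beta_1 l(l+1)}$, but that is $\tilde{\lambda}_1$ from (\ref{AsSystem1}); the symbol appearing in the statement is $\lambda^{(1)}=(\rho^\sharp)^{-2}\sqrt{\beta_2^2+\beta_1 l(l+1)}$ from (\ref{DNlambda}). With your normalization the claimed identity $h^{(2)}=(\rho^\sharp)^3\beta_1^{-3/2}(\lambda^{(1)})^2$ would come out off by a factor $(\rho^\sharp)^4\beta_1^{-2}$, whereas with the paper's definition it reduces precisely to $h^{(2)}=\dfrac{l(l+1)}{\rho^\sharp\beta_1^{1/2}}+\dfrac{\beta_2^2}{\rho^\sharp\beta_1^{3/2}}$, matching the lifted second-order part of (\ref{H(zeta)Expr})). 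Also, when verifying the $\beta_2^2$ cross term you will need the identity $\mathrm{Hess}_{G_0}(\zeta^\sharp)(\nabla_{G_0}\zeta^\sharp,\nabla_{G_0}\zeta^\sharp)=\Op(\beta_2^2)\zeta^\sharp$, which uses $\nabla_{X_i}X_j=\tfrac12[X_i,X_j]$ for the bi-invariant metric; this is the one non-obvious piece of the ``bookkeeping'' you defer.
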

\begin{proof}
We are going to use the first variation formula to compute the mean curvature. The area of $M_\zeta$ is 
$$
A(\zeta)=\int_{\mathbb{S}^2}\rho\sqrt{\rho^2+|\nabla_0\zeta|_{g_0}^2}d\mu_0.
$$
Given any $\zeta_1\in C^\infty(\mathbb{S}^2)$, we consider the slightly varied surfaces $M_{\zeta+\theta\zeta_1}$, i.e. the surfaces given by the graphs $x\to (1+\zeta(x)+\theta\zeta_1(x))N_0(x)$. Recall that under the coordinate system $(x,y)$ introduced in the previous section, the hypersurface $y=0$ corresponds to the distorted sphere $M_\zeta$, and the vector field
$$
\nabla_{\mathrm{E}}y=-\frac{\nabla_0\zeta}{\rho^{2}}
+\left(1+\frac{|\nabla_0\zeta|_{g_0}^2}{\rho^{2}}\right)\partial_y
$$
along $\{y=0\}$ is perpendicular to the hypersurface. Thus $$
N(\iota)=\frac{\nabla_{\mathrm{E}}y}{|\nabla_{\mathrm{E}}y|}
$$
is the unit outer normal vector field along $M_\zeta$. Note that
$$
|\nabla_{\mathrm{E}}y|=\sqrt{1+\frac{|\nabla_0\zeta|_{g_0}^2}{\rho^2}}.
$$
Under this choice of coordinate, the variational vector field of this variation is $\zeta_1\partial_y\big|_{y=0}$, so 
$$
N(\iota)\cdot \big(\zeta_1\partial_y)
=\frac{\zeta_1\rho}{\sqrt{\rho^2+|\nabla_0\zeta|_{g_0}^2}}. 
$$

The first variation formula of area then gives
$$
\begin{aligned}
\frac{d}{d\theta}A(\zeta+\theta\zeta_1)\Bigg|_{\theta=0}
=\int_{\mathbb{S}^2} H(\zeta)\zeta_1\rho^2d\mu_0.
\end{aligned}
$$
On the other hand, a direct computation with integration by parts gives that the left-hand-side in the first variation formula reads
$$
\begin{aligned}
\frac{d}{d\theta}A(\zeta+\theta\zeta_1)\Bigg|_{\theta=0}
&=\int_{\mathbb{S}^2}
\left(-\rho\Delta_{g_0}\zeta
+\frac{\rho\mathrm{Hess}_{g_0}(\zeta)(\nabla_0\zeta,\nabla_0\zeta)}{\rho^2+|\nabla_0\zeta|_{g_0}^2}
+2\rho^2\right)
\frac{\zeta_1}{\sqrt{\rho^2+|\nabla_0\zeta|_{g_0}^2}}d\mu_0,
\end{aligned}
$$
Thus the mean curvature operator reads
\begin{equation}\label{H(zeta)Expr}
H(\zeta)
=\frac{1}{\rho\sqrt{\rho^2+|\nabla_0\zeta|_{g_0}^2}}\left(\Delta_{g_0}\zeta
-\frac{\mathrm{Hess}_{g_0}(\zeta)(\nabla_0\zeta,\nabla_0\zeta)}{\rho^2+|\nabla_0\zeta|_{g_0}^2}\right)
+\frac{2}{\rho\sqrt{\rho^2+|\nabla_0\zeta|_{g_0}^2}}.
\end{equation}

We lift (\ref{H(zeta)Expr}) to $\SU(2)$. Recalling that $\sum_{j=1}^3X_j\zeta^\sharp\sigma[X_j]$, where the left-invariant vector fields $X_i$ are as in (\ref{LISU(2)}), we compute the lift of the Hessian form as
$$
\begin{aligned}
\mathrm{Hess}_{G_0}(\zeta^\sharp)(\nabla_{G_0}\zeta^\sharp,\nabla_{G_0}\zeta^\sharp)
&=\sum_{i,j=1}^3\left(X_iX_j\zeta^\sharp-(\nabla_{X_i}X_j)\zeta^\sharp\right)\cdot X_i\zeta^\sharp\cdot X_j\zeta^\sharp\\
&=\Op(\beta_2(x,l)^2)\zeta^\sharp.
\end{aligned}
$$
Here we used that $\nabla_{X_i}X_j=[X_i,X_j]/2$ (with respect to the bi-invariant metric) on the Lie group $\SU(2)$. Recalling the symbol $\lambda^{(1)}=\sqrt{\beta_2^2+\beta_1l(l+1)}/(\rho^\sharp)^2$ in (\ref{DNlambda}), we find that the first term in (\ref{H(zeta)Expr}) are lifted as
\begin{equation}\label{H(zeta)2nd}
\begin{aligned}
\frac{\Delta_{G_0}\zeta^\sharp}{\rho^\sharp(x)\beta_1(x)^{1/2}}
-\frac{\mathrm{Hess}_{G_0}(\zeta^\sharp)(\nabla_{G_0}\zeta^\sharp,\nabla_{G_0}\zeta^\sharp)}{\rho^\sharp(x)\beta_1(x)^{3/2}}
&=\Op\left(\frac{-l(l+1)}{\rho^\sharp(x)\beta_1(x)^{1/2}}
-\frac{\beta_2^2(x,l)}{\rho^\sharp(x)\beta_1(x)^{3/2}}\right)\zeta^\sharp\\
&=-\Op\big(h^{(2)}(x,l)\big)\zeta^\sharp.
\end{aligned}
\end{equation}
By Bony's para-linearization theorem, (\ref{H(zeta)2nd}) is then para-linearized as (with the ``mod" notation in \ref{Mod} used)
$$
-T_{h^{(2)}}\zeta^\sharp
+T_{\tilde h^{(1)}}\cdot\nabla_{G_0}\zeta^\sharp
+T_{\tilde h_0}\zeta^\sharp
\mod H^{2s-3}_x,
$$
where $\tilde h^{(1)},\tilde h_0$ are $C^\infty$ function of $\big(\zeta^\sharp,\nabla_{G_0}\zeta^\sharp,\nabla_{G_0}^2\zeta^\sharp\big)$. Finally, the last term in (\ref{H(zeta)Expr}) is also lifted and para-linearized similarly. This completes the proof.
\end{proof}

We are now at the place to para-linearize the second order terms in the expression of $\partial_t\phi$, and derive the evolution equation of $w^\sharp$.
\begin{proposition}\label{Quad}
Suppose $s>3$ and $\zeta\in H^{s+0.5}$, $\phi\in H^s$. The evolution of $w^\sharp$ is given by
$$
\partial_tw^\sharp
=-T_{\mathfrak{v}^\sharp}\cdot\nabla_{G_0}w^\sharp
-T_{h}\zeta^\sharp
+f_2(\zeta,\phi).
$$
The error term satisfies 
$$
\|f_2(\zeta,\phi)\|_{H^{s}_x}\leq K\big(\|\zeta\|_{H^{s+0.5}_x}\big)\|\phi\|_{H^s_x},
$$
where $K$ is an increasing function vanishing linearly at zero.
\end{proposition}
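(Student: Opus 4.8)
The plan is to para-linearize the right-hand side of the $\phi$-equation in (\ref{EQ}), then substitute the formulas already proved for $(D[\zeta]\phi)^\sharp$ (Theorem~\ref{Thm1}) and $(H(\zeta))^\sharp$ (Proposition~\ref{H(zeta)}) and reorganize the resulting para-products around the good unknown $w^\sharp = \phi^\sharp - T_{\mathfrak{b}^\sharp}\zeta^\sharp$. The skeleton is that of Lemma~3.17 of \cite{ABZ2011} and the computation following it; the one new feature is that every reordering of para-products is carried out with the non-commutative calculus of Section~\ref{2}, where commutators of para-differential operators gain one order (Corollary~\ref{ParaComm}), so those reorderings produce admissible errors automatically.

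First I would record the algebraic identities expressing the nonlinearity through $\mathfrak{b}$ and $\mathfrak{v}$. From the definitions of $\mathfrak{b},\mathfrak{v}$ one has $(1+\zeta)^2 D[\zeta]\phi + \nabla_0\zeta\cdot\nabla_0\phi = \mathfrak{b}\big((1+\zeta)^2 + |\nabla_0\zeta|_{g_0}^2\big)$ and $\nabla_0\phi = (1+\zeta)^2\mathfrak{v} + \mathfrak{b}\nabla_0\zeta$; substituting these into (\ref{EQ}) collapses the quadratic part of $\partial_t\phi$ to $\frac12\mathfrak{b}^2 - \mathfrak{b}\,g_0(\mathfrak{v},\nabla_0\zeta) - \frac12(1+\zeta)^2|\mathfrak{v}|_{g_0}^2$ and yields the companion identity $D[\zeta]\phi = \mathfrak{b} - g_0(\mathfrak{v},\nabla_0\zeta)$, so that $\partial_t\phi = \frac12\mathfrak{b}^2 - \mathfrak{b}\,g_0(\mathfrak{v},\nabla_0\zeta) - \frac12(1+\zeta)^2|\mathfrak{v}|_{g_0}^2 + H(\zeta) + p_e$.

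Next I would para-linearize. Since $s>3$ one has $\zeta\in H^{s+0.5}$, $\phi\in H^s$, and by Proposition~\ref{RegDN} $D[\zeta]\phi,\nabla_0\phi\in H^{s-1}$, hence $\mathfrak{b},\mathfrak{v}\in H^{s-1}$; applying Bony's theorem (Theorem~\ref{Bony}) to each product, and using the improved Sobolev embedding for $\mathbf{T}_3$-invariant functions (Proposition~\ref{SobolevForT3}), all remainders land in $H^{2s-3}\subset H^s$, a strict inclusion because $s>3$. The curvature is para-linearized by Proposition~\ref{H(zeta)}, and the constant $p_e=-2$ cancels the constant $H(0)=2$. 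Lifting to $\SU(2)$ and writing $\partial_t w^\sharp = \partial_t\phi^\sharp - T_{\mathfrak{b}^\sharp}\,\partial_t\zeta^\sharp - T_{\partial_t\mathfrak{b}^\sharp}\zeta^\sharp$, I would use $\partial_t\zeta^\sharp = (D[\zeta]\phi)^\sharp = \big(\mathfrak{b}-g_0(\mathfrak{v},\nabla_0\zeta)\big)^\sharp$ so the $T_{\mathfrak{b}^\sharp}\mathfrak{b}^\sharp$ contributions cancel, then regroup the para-products carrying $\mathfrak{v}$ — converting $\nabla_0\phi$ back to $\nabla_{G_0}w^\sharp$ via $\nabla_0\phi = (1+\zeta)^2\mathfrak{v} + \mathfrak{b}\nabla_0\zeta$ and commuting $T_{\mathfrak{v}^\sharp}$ past $\nabla_{G_0}$ and past other para-products with Theorem~\ref{Compo2} — into $-T_{\mathfrak{v}^\sharp}\cdot\nabla_{G_0}w^\sharp$ modulo $H^s$. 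This regrouping is the bulk of the work; it closes because the transport term has order $1$ while each commutator improves by one order and the target error has order $0$.

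I expect the delicate point to be the term $T_{\partial_t\mathfrak{b}^\sharp}\zeta^\sharp$. Differentiating $D[\zeta]\phi$ in time produces, through Bernoulli's relation, the third-order-in-$\zeta$ quantity $D[\zeta]H(\zeta)$, so $\partial_t\mathfrak{b}$ is only of regularity $H^{s-2.5}\subset C^{s-3.5}_*$, a negative Zygmund index when $s<3.5$. One therefore estimates $\partial_t\mathfrak{b}$ by differentiating the $\phi$-equation and re-using the elliptic bounds of Section~\ref{4}, and then invokes Proposition~\ref{T_aNegIndex}: $T_{\partial_t\mathfrak{b}^\sharp}$ has order $\max(0,\,3.5-s)<1$, so $T_{\partial_t\mathfrak{b}^\sharp}\zeta^\sharp$ still lies in $H^{\min(s+0.5,\,2s-3)}\subset H^s$ for $s>3$; the quantitative version of this, together with the bounds on the Bony remainders and on the para-product commutator errors, gives the stated control on $f_2$. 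Finally one checks that lifting and every subsequent step preserves $\mathbf{T}_3$-invariance, so the identity descends to $\mathbb{S}^2$.
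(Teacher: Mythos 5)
Your proposal is correct and follows essentially the same route as the paper's proof: the paper applies Bony's theorem to the composite $F(\rho,D[\zeta]\phi,\nabla_0\zeta,\nabla_0\phi)$ and computes $F_a=\mathfrak{b}$, $F_b=\mathfrak{b}\mathfrak{v}$, $F_c=\mathfrak{b}\nabla_0\zeta/\rho^2$, which is just another packaging of your exact identities $\tfrac12\mathfrak{b}^2-\mathfrak{b}\,g_0(\mathfrak{v},\nabla_0\zeta)-\tfrac12(1+\zeta)^2|\mathfrak{v}|_{g_0}^2$ and $D[\zeta]\phi=\mathfrak{b}-g_0(\mathfrak{v},\nabla_0\zeta)$, and then performs the same good-unknown cancellation $T_{\mathfrak{b}^\sharp}\partial_t\zeta^\sharp=T_{\mathfrak{b}^\sharp}(D[\zeta]\phi)^\sharp$ and regrouping into $-T_{\mathfrak{v}^\sharp}\cdot\nabla_{G_0}w^\sharp$ via Theorems \ref{Compo2} and \ref{ParaPrd}. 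Your treatment of $T_{\partial_t\mathfrak{b}^\sharp}\zeta^\sharp$ (regularity $H^{s-2.5}\subset C^{s-3.5}_*$ for $\partial_t\mathfrak{b}$, then Proposition \ref{T_aNegIndex}) is the same estimate the paper obtains by invoking Propositions \ref{RegDN}--\ref{LinDN} and Lemma 3.27 of \cite{ABZ2011}.
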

\begin{proof}
We start by looking at the quadratic terms in the equation for $\partial_t\phi$. By a direct computation using our coordinate system $(x,y)$ introduced in last section, we argue just as in Lemma 3.26 of \cite{ABZ2011} as follows. Still write $\rho=1+\zeta$. Then with
$$
F(\mu,a,b,c)=\frac{\left(\mu^2a+b\cdot c\right)^2}{2\mu^2\big(\mu^2+|b|_{g_0}^2\big)},
$$
the quadratic part in the right-hand-side of the equation for $\phi$ in (\ref{EQ}) may be written as
\begin{equation}\label{RHSphi}
\begin{aligned}
-\frac{|\nabla_0\phi|_{g_0}^2}{2\rho^2}
+\frac{\left(\rho^2D[\zeta]\phi+\nabla_0\zeta\cdot\nabla_0\phi\right)^2}{2\rho^2\big(\rho^2+|\nabla_0\zeta|_{g_0}^2\big)}
=-\frac{|\nabla_0\phi|_{g_0}^2}{2\rho^2}
+F(\rho,D[\zeta]\phi,\nabla_0\zeta,\nabla_0\phi).
\end{aligned}
\end{equation}
Bony's para-linearization theorem implies that
\begin{equation}\label{|Dphi|^2}
\begin{aligned}
\left(-\frac{|\nabla_0\phi|_{g_0}^2}{2\rho^2}\right)^\sharp
&=-T_{(\rho^\sharp)^{-2}\nabla_{G_0}\phi^\sharp}\cdot\nabla_{G_0}\phi^\sharp\\
&=-T_{(\rho^\sharp)^{-2}\mathfrak{v}^\sharp}\cdot\nabla_{G_0}\phi^\sharp
-T_{(\rho^\sharp)^{-2}\mathfrak{b}^\sharp\nabla_{G_0}\zeta^\sharp}\cdot\nabla_{G_0}\phi^\sharp
\mod H^{2s-3}_x,
\end{aligned}
\end{equation}
and (here subscript for $F$ stands for partial differentiation)
$$
\big(F(\rho,D[\zeta]\phi,\nabla_0\zeta,\nabla_0\phi)\big)^\sharp
=T_{F_\mu^\sharp}\rho^\sharp
+T_{F_a^\sharp}\big(D[\zeta]\phi\big)^\sharp
+T_{F_b^\sharp}\cdot\nabla_{G_0}\zeta^\sharp
+T_{F_c^\sharp}\cdot\nabla_{G_0}\phi^\sharp
\mod H^{2s-3}_x.
$$
Since $\rho\in H^{s+0.5}_x$, we find $T_{F_\mu^\sharp}\rho^\sharp\in H^{s+0.5}_x$ and thus can be disregarded. At $(\mu,a,b,c)=(\rho,D[\zeta]\phi,\nabla_0\zeta,\nabla_0\phi)$, we compute, recalling (\ref{bvExpression}),
$$
\begin{aligned}
F_a
&=\frac{\mu^2a+b\cdot c}{\mu^2+|b|_{g_0}^2}
=\mathfrak{b},\\
F_b
&=\frac{\mu^2a+b\cdot c}{\mu^2\big(\mu^2+|b|_{g_0}^2\big)}
\left(c-\frac{(\mu^2a+b\cdot c)b}{\mu^2+|b|_{g_0}^2}\right)
=\mathfrak{bv},\\
F_c
&=\frac{\mu^2a+b\cdot c}{\mu^2\big(\mu^2+|b|_{g_0}^2\big)}b
=\frac{\mathfrak{b}\nabla_0\zeta}{\rho^2}.
\end{aligned}
$$
Thus
\begin{equation}\label{F^sharp}
\big(F(\rho,D[\zeta]\phi,\nabla_0\zeta,\nabla_0\phi)\big)^\sharp
=T_{\mathfrak{b}^\sharp}\big(D[\zeta]\phi\big)^\sharp
+T_{\mathfrak{b}^\sharp}T_{\mathfrak{v}^\sharp}\cdot\nabla_{G_0}\zeta^\sharp
+T_{(\rho^\sharp)^{-2}\mathfrak{b}^\sharp\nabla_{G_0}\phi^\sharp}\cdot\nabla_{G_0}\phi^\sharp
\mod H^{s}_x.
\end{equation}
Combining (\ref{|Dphi|^2}) and (\ref{F^sharp}), after suitable cancellation and commutation (causing acceptable error), we find that (\ref{RHSphi}) lifts to $\SU(2)$ as 
$$
-T_{\mathfrak{v}^\sharp}\cdot\nabla_{G_0}w^\sharp
+T_{\mathfrak{b}^\sharp}\big(D[\zeta]\phi\big)^\sharp
\mod H_x^{s}.
$$
Consequently, noticing that $\partial_tw^\sharp=\partial_t\phi^\sharp-T_{\partial_t\mathfrak{b}^\sharp}\phi^\sharp-T_{\mathfrak{b}^\sharp}\partial_t\zeta^\sharp$, we find
$$
\partial_t w^\sharp
=-T_{\mathfrak{v}^\sharp}\cdot\nabla_{G_0}w^\sharp
-T_{h}\zeta^\sharp
+T_{\partial_t\mathfrak{b}^\sharp}\phi^\sharp
\mod H_x^{s}.
$$
The error caused by para-linearization is quadratic in $\zeta$ and $\phi$, since the para-linearization already includes the linear part of the nonlinear expression. Finally, the regularity of $T_{\partial_t\mathfrak{b}^\sharp}\phi^\sharp$ is obtained by applying Proposition \ref{RegDN}-\ref{LinDN}, identically as in Lemma 3.27. of \cite{ABZ2011}. This finishes the proof.
\end{proof}

Just as in \cite{ABZ2011}, we obtain the para-linearization of system (\ref{EQ}), lifted to $\SU(2)$:
\begin{equation}\label{EQPara}
\begin{aligned}
\partial_t\zeta^\sharp
&=T_\lambda w^\sharp-T_{\mathfrak{v}^\sharp}\cdot\nabla_{G_0}\zeta^\sharp+f_1(\zeta,\phi)\\
\partial_tw^\sharp&=-T_{\mathfrak{v}^\sharp}\cdot\nabla_{G_0}w^\sharp
-T_{h}\zeta^\sharp
+f_2(\zeta,\phi),
\end{aligned}
\end{equation}
where the errors $f_1,f_2$ takes the explicit form
\begin{equation}\label{f1f2}
\begin{aligned}
f_1(\zeta,\phi)
&=\big(D[\zeta]\phi\big)^\sharp-T_\lambda\big(w^\sharp-T_{\mathfrak{v}^\sharp}\cdot\nabla_{G_0}\zeta^\sharp\big)\\
f_2(\zeta,\phi)
&=\left(-\frac{|\nabla_0\phi|_{g_0}^2}{2\rho^2}
+\frac{\left(\rho^2D[\zeta]\phi+\nabla_0\zeta\cdot\nabla_0\phi\right)^2}{2\rho^2\big(\rho^2+|\nabla_0\zeta|_{g_0}^2\big)}
+H(\zeta)\right)^\sharp
+T_{\mathfrak{v}^\sharp}\cdot\nabla_{G_0}w^\sharp
+T_{h}\zeta^\sharp
-T_{\partial_t\mathfrak{b}^\sharp}w^\sharp,
\end{aligned}
\end{equation}
and are affordable and in fact quadratic:
$$
\|f_1(\zeta,\phi)\|_{H_x^{s+0.5}}+\|f_2(\zeta,\phi)\|_{H_x^{s}}
\lesssim\left(\|\zeta\|_{H_x^{s+0.5}}+\|\phi\|_{H_x^s}\right)^2.
$$
We already know that (\ref{EQPara}) is equivalent to (\ref{EQ}) at least when $\|\zeta(t)\|_{H^{s+0.5}_x}$ remains sufficiently small. Thus we have successfully derived the para-linear form of (\ref{EQ}).

\subsection{Adjoint of Para-differential Operators}
In order to symmetrize the system (\ref{EQPara}), we need to understand the adjoint properties of the Dirichlet-Neumann operator and the mean curvature operator. We start from the former. We still employ the notation $\rho=1+\zeta$, and define the symbol $\lambda$ as in (\ref{DNlambda}). The following proposition reflects the self-adjointness of the Dirichlet-Neumann operator.

\begin{proposition}\label{AdjDN}
Still fix $s>3$ and $\zeta\in H^{s+0.5}$, $\phi\in H^s$. Then the symbol $\lambda$ of the Dirichlet-Neumann operator satisfies
\begin{equation}\label{AdjT_Lambda}
(\rho^\sharp)^{-2}\lambda^{\bullet;1,\mathscr{Q}}=
\lambda\#_{1;\mathscr{Q}}(\rho^\sharp)^{-2}
\mod\mathcal{A}^{-1}_{s-2.5}
\end{equation}
Here the operations $\#_{r;\mathscr{Q}}$ and ${\bullet;r,\mathscr{Q}}$ are defined in (\ref{ParaCompoSU2})-(\ref{ParaAdjSU2}). Thus the para-differential operator $T_\lambda$ satisfies
$$
T_{(\rho^\sharp)^{-2}}T_\lambda^*=T_\lambda T_{(\rho^\sharp)^{-2}}\mod\Op\Sigma^{-0.5}_{<1/2}.
$$
\end{proposition}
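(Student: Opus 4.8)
The plan is to reduce the operator statement to the symbol identity (\ref{AdjT_Lambda}) through the symbolic calculus, and then to prove (\ref{AdjT_Lambda}) by separating its order-$1$ and order-$0$ components. For the reduction: since $\lambda\in\mathcal{A}^1_{s-1.5}$ with $s>3$, Theorem \ref{ParaAdj} gives $T_\lambda^*=T_{\lambda^{\bullet;1,\mathscr{Q}}}$ modulo $\Op\Sigma^{2.5-s}_{<1/2}\subset\Op\Sigma^{-0.5}_{<1/2}$, the terms with $|\alpha|\ge 2$ of the adjoint expansion being of order $\le-1$. Because $(\rho^\sharp)^{-2}$ is a scalar Fourier multiplier its RT-differences vanish, so $(\rho^\sharp)^{-2}\#_{r;\mathscr{Q}}b=(\rho^\sharp)^{-2}b$ for every $r$; applying Theorem \ref{Compo2} to the order-$1$ and order-$0$ pieces of $\lambda^{\bullet;1,\mathscr{Q}}$ and of $\lambda$ separately, with the regularity index taken large enough for each piece so that the remainder lies in $\Op\Sigma^{-0.5}_{<1/2}$, one obtains $T_{(\rho^\sharp)^{-2}}T_\lambda^*=T_{(\rho^\sharp)^{-2}\lambda^{\bullet;1,\mathscr{Q}}}$ and $T_\lambda T_{(\rho^\sharp)^{-2}}=T_{\lambda\#_{1;\mathscr{Q}}(\rho^\sharp)^{-2}}$ modulo $\Op\Sigma^{-0.5}_{<1/2}$. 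Thus everything comes down to (\ref{AdjT_Lambda}).

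Cancelling the scalar factor $(\rho^\sharp)^{-2}$, the order-$1$ part of (\ref{AdjT_Lambda}) is the assertion $(\lambda^{(1)})^*=\lambda^{(1)}$. This is the symbolic shadow of the self-adjointness of the Dirichlet--Neumann operator for the weighted measure $(1+\zeta)^2\,d\mu_0$: Green's identity for two harmonic functions inside $M_\zeta$ gives $\int_{M_\zeta}(\Phi_1\partial_N\Phi_2-\Phi_2\partial_N\Phi_1)\,d\mu(\iota)=0$, and inserting $\partial_N\Phi_j=(N_0\cdot N(\iota))D[\zeta]\phi_j$ together with the identities from Section \ref{4} --- namely $\det g_\zeta=(1+\zeta)^2\beta_1\det g_0$, hence $d\mu(\iota)=(1+\zeta)\sqrt{\beta_1}\,d\mu_0$, and $N_0\cdot N(\iota)=(1+\zeta)/\sqrt{\beta_1}$ --- yields $\int_{\mathbb{S}^2}(D[\zeta]\phi_1)\phi_2(1+\zeta)^2\,d\mu_0=\int_{\mathbb{S}^2}(D[\zeta]\phi_2)\phi_1(1+\zeta)^2\,d\mu_0$, which upon lifting to $\SU(2)$ is exactly the weight entering (\ref{AdjT_Lambda}). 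The equality $(\lambda^{(1)})^*=\lambda^{(1)}$ can also be read off directly: by Theorem \ref{SymPa} one has $\sigma_+^*=\sigma_-$ and $\sigma_0^*=\sigma_0$, so each $\sigma[X_j]$ is skew-Hermitian, hence $\beta_2=\sum_{j=1}^3(X_j\zeta^\sharp)\,\sigma[X_j]$ is skew-Hermitian, $\beta_2^2=-\beta_2^*\beta_2$ is Hermitian, and therefore $\sqrt{\beta_2^2+\beta_1\,l(l+1)}$ --- and with it $\lambda^{(1)}=(1+\zeta^\sharp)^{-2}\sqrt{\beta_2^2+\beta_1\,l(l+1)}$ --- is Hermitian.

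The substance is the order-$0$ part. After multiplying (\ref{AdjT_Lambda}) by $(\rho^\sharp)^2$ and using the order-$1$ identity, it becomes equivalent, modulo $\mathcal{A}^{-1}_{s-2.5}$, to
$$
(\lambda^{(0)})^*-\lambda^{(0)}
=-\sum_{\mu\in\Ind}\Df_\mu\Pa[\mu]\lambda^{(1)}
-2\sum_{\mu\in\Ind}\bigl(\Pa[\mu]\log\rho^\sharp\bigr)\Df_\mu\lambda^{(1)},
$$
i.e. the anti-Hermitian part of the sub-principal symbol $\lambda^{(0)}$ is a prescribed first-order expression in $\lambda^{(1)}$ (the $\log\rho^\sharp$ term being the contribution of the first-order part of $\#_{1;\mathscr{Q}}$). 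To verify this I would substitute $\lambda^{(0)}=(\rho^\sharp)^{-2}\beta_1A_0$ with $A_0$ from (\ref{AsSystem0}) evaluated at $y=0$, and expand all $\Df_\mu$ and $\Pa[\mu]$ acting on the quasi-homogeneous symbols $\tilde\lambda_1$ and $\lambda^{(1)}$ by Proposition \ref{Dkappa} and Proposition \ref{PSSymbol}; the only non-routine algebraic inputs are the skew-Hermiticity of $\beta_2$ and of the $\sigma[X_j]$, the relations $A_1^*=-a_1$, $a_1^*=-A_1$ (immediate from (\ref{AsSystem1}) and $\tilde\lambda_1^*=\tilde\lambda_1$), and the fact that $\beta_1,\beta_3,\kappa,\rho^\sharp$ are real scalars. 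The main obstacle --- and the point where the argument departs from \cite{ABZ2011} --- is precisely this order-$0$ bookkeeping: on $\SU(2)$ the operators $\Df_\mu$ and $\Pa[\mu]$ act on \emph{matrix-valued} symbols, they do not commute, and they do not interact with adjunction in the naive scalar way (for instance $(\Pa[+]a)^*=-\Pa[-]a^*$ since $\Pa[+]=iX_1-X_2$), so the sub-principal terms generated by the factorization of Section \ref{4} must be tracked with care. The weighted self-adjointness established above is what forces these terms to combine into the displayed identity, and serves both as the organizing principle and as a consistency check for the computation, which is otherwise parallel to the corresponding lemma in \cite{ABZ2011}.
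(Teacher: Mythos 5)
Your reduction of the operator statement to the symbol identity, and your treatment of the principal part, are sound: $(\lambda^{(1)})^*=\lambda^{(1)}$ does follow from the skew-Hermiticity of $\beta_2$, and the weighted Green identity $\int_{\mathbb{S}^2}D[\zeta]\phi_1\cdot\phi_2\,(1+\zeta)^2d\mu_0=\int_{\mathbb{S}^2}\phi_1\cdot D[\zeta]\phi_2\,(1+\zeta)^2d\mu_0$ is exactly the right geometric fact. The gap is at sub-principal order, which is where the whole content of the proposition lies. You reduce (\ref{AdjT_Lambda}) to an identity prescribing the anti-Hermitian part of $\lambda^{(0)}$ and then say you ``would substitute $A_0$ from (\ref{AsSystem0}) and expand''; that expansion is never carried out, and it is precisely the computation the paper judges extremely involved and deliberately avoids. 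Listing the ingredients ($A_1^*=-a_1$, skew-Hermiticity of the $\sigma[X_j]$, Propositions \ref{Dkappa}--\ref{PSSymbol}) is not a verification: $A_0$ contains $\partial_yA_1$ and the non-commutative sum $\sum_{\mu}\Df_{\mu}a_1\partial_{\mu}A_1$, and whether their anti-Hermitian parts recombine, modulo order $-1$, into the displayed right-hand side is exactly what has to be proved; moreover even the target identity carries unchecked normalization, since the first-order operators $X^{(\alpha)}_{\mathscr{Q}}$ of Proposition \ref{TaylorGp} agree with $\Pa[+],\Pa[-],\Pa[0]$ only up to the convention fixed by $(\Pa[\mu]\mathscr{Q}_\mu)(\mathrm{Id})=-1$. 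Saying that the weighted self-adjointness ``forces'' the terms to combine is circular unless you explain how an operator-level identity is transferred to the level of symbols.

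That transfer is in fact how the paper proves the proposition, and it is the natural completion of your second paragraph, but it is absent from your proposal. For \emph{smooth} $\zeta$, the para-linearization of Section \ref{4} identifies the lifted Dirichlet--Neumann operator $\mathfrak{D}$ with $\Op(\lambda)$ modulo $\Op\mathscr{S}^{-1}_{1,0}$; combining your Green identity, written as $(\rho^\sharp)^{-2}\mathfrak{D}^*=\mathfrak{D}\circ(\rho^\sharp)^{-2}$, with the exact symbolic calculus of Theorems \ref{RegCompo}--\ref{RegAdj} then yields $(\rho^\sharp)^{-2}\lambda^{\bullet;1,\mathscr{Q}}-\lambda\#_{1;\mathscr{Q}}(\rho^\sharp)^{-2}\in\mathscr{S}^{-1}_{1,0}$ with no sub-principal bookkeeping at all. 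One further step is then needed, also missing from your write-up: for $\zeta\in H^{s+0.5}$ the symbols are rough, so one observes that the difference $Q(\zeta;x,l)$ depends on at most two derivatives of $\zeta$ and passes from smooth $\zeta$ to $H^{s+0.5}$ by approximation, landing in $\mathcal{A}^{-1}_{s-2.5}$; your argument never addresses this limited-regularity issue. So either carry out your brute-force expansion in full (including the normalization of the first-order Taylor operators), or upgrade your Green-identity paragraph into the operator-level argument just described; as written, the decisive step is asserted rather than proved.
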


\begin{proof}
If we directly manipulate with (\ref{AsSystem0}) and (\ref{DNlambda}), the algebra in the proof of (\ref{AdjT_Lambda}) would be extremely involved. Thus we proceed differently by recalling the geometric interpretation of the Dirichlet-Neumann operator. 

Assume for the moment that $\zeta$ is \emph{smooth}. Recall that the linear operator taking the Dirichlet boundary value $\phi$ to the Neumann boundary value $\bar\nabla\Phi\cdot N(\iota)=\big(N_0\cdot N(\iota)\big)D[\zeta]\phi$ on $M_\zeta$ is self-adjoint with respect to the surface measure of $M_\zeta$; this is the starting point of our argument. Recall that $N_0\cdot N(\iota)
=\rho/\sqrt{\rho^2+|\nabla_0\zeta|_{g_0}^2}$, while the surface measure of $M_\zeta$ is $d\mu(\iota)=\rho\sqrt{\rho^2+|\nabla_0\zeta|_{g_0}^2}d\mu_0$. Consequently, we find, for smooth functions $\phi$, $\phi_1$ defined on $\mathbb{S}^2$,
$$
\int_{\mathbb{S}^2}D[\zeta]\phi\cdot\phi_1 \rho^2d\mu_0
=\int_{\mathbb{S}^2}\phi\cdot D[\zeta]\phi_1\cdot \rho^2d\mu_0.
$$
Lifting to $\SU(2)$, this is exactly
$$
\int_{\SU(2)}\mathfrak{D}\phi^\sharp\cdot\phi_1^\sharp \cdot(\rho^\sharp)^2d\mu(G_0)
=\int_{\SU(2)}\phi^\sharp\cdot \mathfrak{D}\phi_1^\sharp\cdot (\rho^\sharp)^2d\mu(G_0),
$$
where $\mathfrak{D}\phi^\sharp:=\big(D[\zeta]\phi\big)^\sharp$. As a result, we have
\begin{equation}\label{DNAdjTemp1}
(\rho^\sharp)^{-2}\mathfrak{D}^*
=\mathfrak{D}\circ( \rho^\sharp)^{-2}.
\end{equation}
On the other hand, if $\zeta$ is smooth, the para-linearization formula of the Dirichlet-Neumann operator shows that in fact
$$
\mathfrak{D}=\Op(\lambda)+\Op\mathscr{S}^{-1}_{1,0}.
$$
Converting (\ref{DNAdjTemp1}) to the symbolic side modulo symbols of order less than $-1$, this gives
\begin{equation}\label{DNAdjTemp2}
(\rho^\sharp)^{-2}\lambda^{\bullet;1,\mathscr{Q}}-
\lambda\#_{1;\mathscr{Q}}(\rho^\sharp)^{-2}
\in\mathscr{S}^{-1}_{1,0}.
\end{equation}
If we denote the left-hand-side of (\ref{DNAdjTemp2}) as $Q(\zeta;x,l)$, then (\ref{DNAdjTemp2}) of course implies
\begin{equation}\label{DNAdjTemp3}
\begin{aligned}
\big\|\Df_\mu^\alpha Q(\zeta;x,l)\big\|_{C^{s-2.5}_*}
&=O(l^{-1-|\alpha|}),
\quad
\text{for all multi-index }\alpha.
\end{aligned}
\end{equation}
This matrix norm estimate has been deduced for smooth $\zeta$. However, as we can see from the definition of $\lambda$ and symbolic operation, the concrete expression of entries of $Q(\zeta;x,l)$ only contains up to second order derivative of $\zeta$. Thus we can approximate a given $\zeta\in H^{s+0.5}$ by smooth functions and conclude that (\ref{DNAdjTemp3}) continues to hold for $\zeta\in H^{s+0.5}$. Thus for $\zeta\in H^{s+0.5}$, 
$$
(\rho^\sharp)^{-2}\lambda^{\bullet;1,\mathscr{Q}}-
\lambda\#_{1;\mathscr{Q}}(\rho^\sharp)^{-2}
$$
is a symbol of order $-1$. This proves (\ref{AdjT_Lambda}).
\end{proof}

We then proceed to the mean curvature.
\begin{proposition}\label{T_hAdjoint}
Suppose $s>3$, $\zeta\in H^s$. Then with the symbol $h=h^{(2)}+h^{(1)}$ as in Proposition \ref{H(zeta)}, there holds a precise equality
\begin{equation}\label{Adjh}
(\rho^\sharp)^{-2}h^{\bullet;2,\mathscr{Q}}
=h\#_{2,\mathscr{Q}}(\rho^\sharp)^{-2}.
\end{equation}
Thus
$$
T_{(\rho^\sharp)^{-2}}T_h^*=T_hT_{(\rho^\sharp)^{-2}} \mod\Op\Sigma^{0.5}_{<1/2}.
$$
\end{proposition}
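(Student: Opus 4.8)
The plan is to imitate the proof of Proposition \ref{AdjDN}: rather than manipulating the explicit formulas (\ref{AsSystem1})--(\ref{AsSystem0}) and (\ref{DNlambda}) for $h$, which would be hopeless, I exploit the variational meaning of the mean curvature. Recall from the proof of Proposition \ref{H(zeta)} that the area functional $A(\zeta)=\int_{\mathbb{S}^2}(1+\zeta)\sqrt{(1+\zeta)^2+|\nabla_0\zeta|_{g_0}^2}\,d\mu_0$ has first variation $DA(\zeta)[\zeta_1]=\int_{\mathbb{S}^2}H(\zeta)\,\zeta_1\,(1+\zeta)^2\,d\mu_0$; equivalently, $G(\zeta):=(1+\zeta)^2H(\zeta)$ is the $L^2(d\mu_0)$-gradient of the scalar functional $A$. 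Hence the Fréchet derivative $DG(\zeta)$ is the Hessian of $A$ and is formally self-adjoint with respect to $d\mu_0$. Since $DG(\zeta)=M_{2(1+\zeta)H(\zeta)}+M_{(1+\zeta)^2}\circ DH(\zeta)$ and multiplication by the real function $2(1+\zeta)H(\zeta)$ is self-adjoint, the operator $M_{(1+\zeta)^2}\circ DH(\zeta)$ is formally self-adjoint on $L^2(\mathbb{S}^2,d\mu_0)$.

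First I would assume $\zeta$ smooth, so that everything in sight is a genuine classical differential operator. By construction in Proposition \ref{H(zeta)}, $-h$ is precisely the lifted symbol of the Fréchet derivative $DH(\zeta)$: writing $(\cdot)^\sharp$ for the canonical lift in which $\Delta_{g_0},\nabla_{g_0},g_0,\mu_0$ are replaced by $\Delta_{G_0},\nabla_{G_0},G_0,\mu(G_0)$, one has $\Op(h)=-\big(DH(\zeta)\big)^\sharp$ as differential operators on $\SU(2)$, and hence $M_{(\rho^\sharp)^2}\circ\Op(h)=-\big(M_{(1+\zeta)^2}DH(\zeta)\big)^\sharp$. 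The key geometric point is that this canonical lift preserves formal self-adjointness, because the Hopf fibration is a Riemannian submersion with totally geodesic (hence minimal) fibres, so $\mathrm{div}_{G_0}$ of a horizontally lifted vector field equals the lift of $\mathrm{div}_{g_0}$ and $(\cdot)^\sharp$ commutes with taking formal transposes; concretely, one writes the (formally self-adjoint, scalar, second-order) operator $M_{(1+\zeta)^2}DH(\zeta)$ in divergence form $\mathrm{div}_{g_0}(\mathbf{a}\,\nabla_{g_0}\cdot)+M_c$ and observes that its canonical lift $\mathrm{div}_{G_0}(\mathbf{a}^\sharp\,\nabla_{G_0}\cdot)+M_{c^\sharp}$, which coincides with $-M_{(\rho^\sharp)^2}\Op(h)$, is manifestly self-adjoint on all of $L^2(\SU(2),d\mu(G_0))$. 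Thus $P:=M_{(\rho^\sharp)^2}\circ\Op(h)$ is formally self-adjoint on $\SU(2)$.

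Next I translate this to the symbol level, where — and this is why (\ref{Adjh}) is a \emph{precise} equality rather than an equality modulo a lower-order symbol as in Proposition \ref{AdjDN} — the symbolic calculus for differential symbols is exact. The symbol of a multiplication operator $M_g$ is $g(x)\,\mathrm{Id}_l$, which is annihilated by every difference operator $\Df_\mu$; moreover $h$ has total degree $\le 2$ in the multipliers $\sigma_+,\sigma_-,\sigma_0$, so $\Df_\mu^\alpha h=0$ for $|\alpha|>2$. Consequently the composition and adjoint formulas (\ref{ParaCompoSU2})--(\ref{ParaAdjSU2}), i.e. Theorems \ref{RegCompo}--\ref{RegAdj}, terminate after $|\alpha|\le2$ with no remainder, giving $\sigma[\Op(h)\circ M_{(\rho^\sharp)^{-2}}]=h\#_{2,\mathscr{Q}}(\rho^\sharp)^{-2}$ and $\sigma[M_{(\rho^\sharp)^{-2}}\circ\Op(h)^*]=(\rho^\sharp)^{-2}h^{\bullet;2,\mathscr{Q}}$ exactly. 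The relation $P=P^*$ reads $M_{(\rho^\sharp)^2}\Op(h)=\Op(h)^*M_{(\rho^\sharp)^2}$; multiplying on the left by $M_{(\rho^\sharp)^{-2}}$ and then on the right by $M_{(\rho^\sharp)^{-2}}$ turns this into $\Op(h)\circ M_{(\rho^\sharp)^{-2}}=M_{(\rho^\sharp)^{-2}}\circ\Op(h)^*$, which on passing to symbols is exactly (\ref{Adjh}). It remains to remove smoothness: both sides of (\ref{Adjh}) are, entrywise, fixed rational expressions — with denominators bounded below since $\beta_1\ge(1+\zeta)^2\ge\tfrac14$ — in $\zeta^\sharp$ and its covariant derivatives up to finite order, and the identity just proved for all smooth $\zeta$ forces the corresponding formal polynomial identity in the jet variables; since the right-hand side visibly involves $\zeta^\sharp$ only through $\nabla_{G_0}^{\le2}\zeta^\sharp$, the apparent higher derivatives on the left must cancel, so (\ref{Adjh}) persists for $\zeta\in H^{s+0.5}$ with $s>3$. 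The final operator statement then follows by combining (\ref{Adjh}) with Theorems \ref{Compo2} and \ref{ParaAdj}, using $h^{(2)}\in\mathcal{A}^2_{s-1.5}$ and $h^{(1)}\in\mathcal{A}^1_{s-2.5}$ so that all para-differential errors lie in $\Op\Sigma^{3.5-s}_{<1/2}\subset\Op\Sigma^{0.5}_{<1/2}$ for $s>3$.

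The step I expect to be the main obstacle is the second paragraph: showing that the $\SU(2)$-lift of the (manifestly symmetric) operator $M_{(1+\zeta)^2}DH(\zeta)$ is formally self-adjoint as an operator on the \emph{whole} of $\SU(2)$, not merely on the $\mathbf{T}_3$-invariant subspace — equivalently, that the canonical lift commutes with the formation of formal transposes. Once this is settled, via the divergence-form representation or by checking directly from the lifted form of (\ref{H(zeta)Expr}) (through (\ref{LISU(2)})) that the first-order part of $M_{(\rho^\sharp)^2}\Op(h)$ is exactly the one forced by its real scalar principal symbol, the remaining ingredients — the two variational identities, the exactness of the differential-symbol calculus, and the density argument — are routine.
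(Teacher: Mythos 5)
Your proposal follows essentially the same route as the paper's own proof: the paper likewise derives the precise identity from the symmetry of the second variation of the area functional (equivalently, formal self-adjointness of $\rho^2H'(\zeta)$ with respect to $\mu_0$), lifts to $\SU(2)$, identifies $h$ as the symbol of the lifted second-order differential operator $\big(H'(\zeta)\big)^\sharp$, and concludes via the symbolic calculus, which is exact for differential symbols — your degree-$\le 2$ termination argument and the density step are just more explicit versions of what the paper compresses into ``by the symbolic calculus theorem of para-differential operators.'' The one point you single out as the main obstacle — that the specific lift $(\rho^\sharp)^2\Op(h)$ is self-adjoint on all of $L^2(\SU(2))$ and not merely on the $\mathbf{T}_3$-invariant subspace — is exactly the step the paper dispatches with the single phrase ``lifting to $\SU(2)$,'' so your divergence-form/Riemannian-submersion justification supplies detail the paper omits rather than deviating from its argument.
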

\begin{proof}
Of course we may directly deal with the expression of $h(x,l)$. But there is a method that could tactfully avoid such lengthy computation. Consider a variation of the surface $M_\zeta$:
$$
x\to\big(1+\zeta(x)+\theta_1\zeta_1(x)+\theta_2\zeta_2(x)\big)N_0.
$$
By second variation formula for area functional, the bilinear form
$$
\frac{\partial^2 A}{\partial\theta_1\partial\theta_2}(\zeta+\theta_1\zeta_1+\theta_2\zeta_2)\Bigg|_{\theta=0}
$$
is symmetric in $\zeta_1,\zeta_2$. On the other hand, the second variation obviously equals
$$
\int_{\mathbb{S}^2}H'(\zeta)\zeta_2\cdot\zeta_1\rho^2d\mu_0
+2\int_{\mathbb{S}^2}H(\zeta)\zeta_1\zeta_2\rho d\mu_0.
$$
Thus the second order differential operator $\rho^2H'(\zeta)$ is self-adjoint with respect to $\mu_0$. Lifting to $\SU(2)$, it follows that the second order differential operator $(\rho^\sharp)^2\big(H'(\zeta)\big)^\sharp$ is self-adjoint with respect to $\mu(G_0)$. But we also notice that the symbol $h$ is nothing but the symbol of the second order differential operator $\big(H'(\zeta)\big)^\sharp$. By the symbolic calculus theorem of para-differential operators, this proves (\ref{Adjh}).
\end{proof}

\subsection{Symmetrization}\label{Symmetrization}
We are at the place to symmetrize the para-differential system (\ref{EQPara}). The general procedure is identical to that in \cite{ABZ2011} but with different computational details due to the non-flat geometry in our scenario. 

The symmetrizer that we are looking for should take the form
$$
\left(\begin{matrix}
T_p & \\
 & T_q
\end{matrix}\right),
$$
where $p\in\mathcal{A}^{1/2}_{s-2.5}$ should be close to the symbol of $|\nabla_{G_0}|^{0.5}$, and $q=q(x)$ is a scalar function. We require that for some elliptic symbol $\gamma$ of order 1.5, there holds
\begin{equation}\label{Symmetrizer}
\left(\begin{matrix}
T_p & \\
 & T_q
\end{matrix}\right)\left(\begin{matrix}
 & T_\lambda \\
-T_h & 
\end{matrix}\right)
=\left(\begin{matrix}
 & T_\gamma \\
-T_\gamma & 
\end{matrix}\right)\left(\begin{matrix}
T_p & \\
 & T_q
\end{matrix}\right)
\mod
\left(\begin{matrix}
 & \Op\Sigma^{0}_{<1/2} \\
\Op\Sigma^{0.5}_{<1/2} & 
\end{matrix}\right)
\end{equation}
The operator norms of the error term will be bounded linearly by $\|\zeta\|_{H_x^{s+0.5}}$. 

The elliptic symbol $\gamma$ will be the one that satisfy
\begin{equation}\label{T_gamma}
T_\gamma^2=\frac{T_hT_\lambda+T_\lambda^*T_h^*}{2}
\mod\Op\Sigma^{0.5}_{<1/2}.
\end{equation}
This ensures that $T_\gamma$ is approximately self-adjoint. Our symmetrization is finished by several steps.

\textbf{Step 1: solve $\gamma$.} We look for a solution $\gamma=\gamma^{(1.5)}+\gamma^{(0.5)}\in\mathcal{A}^{1.5}_{s-1.5}+\mathcal{A}^{0.5}_{s-2.5}$ of (\ref{T_gamma}). The highest order symbol $\gamma^{(1.5)}$ is just fixed as
$$
\gamma^{(1.5)}=\sqrt{\lambda^{(1)}h^{(2)}},
$$
which is Hermitian on each representation space $\Hh[l]$. Note that we used the commutativity of $h^{(2)}$ and $\lambda^{(1)}$, as shown in Proposition \ref{H(zeta)}. For the next order symbol $\gamma^{(0.5)}$, we just have to fix it by looking at the next order symbols in the composition formula of para-differential operators. That is, we require 
$$
\gamma\#_{1;\mathscr{Q}}\gamma
=\frac{h\#_{1,\mathscr{Q}}\lambda+\lambda^{\bullet;1,\mathscr{Q}}\#_{1,\mathscr{Q}}h^{\bullet;1,\mathscr{Q}}}{2},
$$
or equivalently,
$$
\begin{aligned}
\sqrt{\lambda^{(1)}h^{(2)}}\gamma^{(0.5)}+\gamma^{(0.5)}\sqrt{\lambda^{(1)}h^{(2)}}
&+\sum_{\mu\in\Ind}\Df_\mu\sqrt{\lambda^{(1)}h^{(2)}}\partial_\mu\sqrt{\lambda^{(1)}h^{(2)}}\\
&=
\frac{h\#_{1,\mathscr{Q}}\lambda
+\lambda^{\bullet;1,\mathscr{Q}}\#_{1,\mathscr{Q}}h^{\bullet;1,\mathscr{Q}}}{2}
-\lambda^{(1)}h^{(2)}.
\end{aligned}
$$
Since $[\sqrt{\lambda^{(1)}h^{(2)}},\gamma^{(0.5)}]$ is a symbol of order 1 by Proposition \ref{2OrderComm}, we can just replace $\gamma^{(0.5)}\gamma^{(1.5)}$ by $\gamma^{(1.5)}\gamma^{(0.5)}$, and solve $\gamma^{(0.5)}$ as
$$
\gamma^{(0.5)}
=\frac{(\lambda^{(1)}h^{(2)})^{-1/2}}{2}\left(
\frac{h\#_{1,\mathscr{Q}}\lambda
+\lambda^{\bullet;1,\mathscr{Q}}\#_{1,\mathscr{Q}}h^{\bullet;1,\mathscr{Q}}}{2}
-\lambda^{(1)}h^{(2)}
-\sum_{\mu\in\Ind}\Df_\mu\sqrt{\lambda^{(1)}h^{(2)}}\partial_\mu\sqrt{\lambda^{(1)}h^{(2)}}
\right).
$$

\textbf{Step 2: solve $q$}. We seek for a scalar function $q=q(x)$ defined on $\SU(2)$ such that 
\begin{equation}\label{Tq}
T_qT_hT_\lambda
=\frac{T_hT_\lambda+T_\lambda^*T_h^*}{2}\cdot T_q
\mod\Op\Sigma^{1.5}_{<1/2}.
\end{equation}
This is the step that we should employ the almost self-adjoint properties of $T_\lambda$ and $T_h$. Note that we have already proved, in Proposition \ref{AdjDN}-\ref{T_hAdjoint}, that
$$
\begin{aligned}
T_{(\rho^\sharp)^{-2}}T_\lambda^*&=T_\lambda T_{(\rho^\sharp)^{-2}}
&\mod\Op\Sigma^{-0.5}_{<1/2}\\
T_{(\rho^\sharp)^{-2}}T_h^*&=T_hT_{(\rho^\sharp)^{-2}}
&\mod\Op\Sigma^{0.5}_{<1/2}.
\end{aligned}
$$
So (\ref{Tq}) becomes 
$$
2T_qT_hT_\lambda=\left(T_hT_\lambda
+T_{(\rho^\sharp)^2\lambda}T_hT_{(\rho^\sharp)^{-2}}\right)T_q
\mod\Op\Sigma^{1.5}_{<1/2}.
$$
Here we used that $\rho\in H^{s+0.5}$, so $T_{(\rho^\sharp)^{2}}T_h=T_{(\rho^\sharp)^{2}h}\mod\Op\Sigma^{3.5-s}_{<1/2}$. Since only symbols of order $\geq2$ is of our concern in the above equality, we can simply use the composition formula of para-differential operators and then neglect symbols of order $\leq1$. Thus in order that (\ref{Tq}) holds, it suffices to require 
\begin{equation}\label{SymmTemp}
\begin{aligned}
2q\sum_{\mu\in\Ind}\Df_\mu h^{(2)}\partial_\mu \lambda^{(1)}
&=q\sum_{\mu\in\Ind}\Df_\mu h^{(2)}\partial_\mu \lambda^{(1)}\\
&\quad+2\sum_{\mu\in\Ind}\Df_\mu(\lambda^{(1)}h^{(2)})\partial_\mu q\\
&\quad+q(\rho^\sharp)^{2}\sum_{\mu\in\Ind}\Df_\mu\lambda^{(1)}\partial_\mu\left(\frac{h^{(2)}}{(\rho^\sharp)^{2}}\right)\\
&\quad+q(\rho^\sharp)^{2}\sum_{\mu\in\Ind}\lambda^{(1)}\Df_\mu h^{(2)}\partial_\mu(\rho^\sharp)^{-2}
\mod\mathcal{A}^{1}_{s-3}.
\end{aligned}
\end{equation}

Being tedious at a first glance, there are several key features of the above equality that can be exploited. Recall that $h^{(2)}=(\rho^\sharp)^3\beta_1^{-3/2}(\lambda^{(1)})^2$, and from the expression of $\lambda^{(1)}$, i.e. formula (\ref{DNlambda}), we known that $\lambda^{(1)}$ is a quasi-homogeneous symbol of order 1 in the sense of Definition \ref{QuasiHomoSym}, where the vector field is just $\nabla_{G_0}\zeta^\sharp/\beta_1(x)$. Proposition \ref{PSSymbol} thus ensures that the commutators $[\lambda^{(1)},\partial_\mu\lambda^{(1)}]$ and $[\lambda^{(1)},\Df_\mu\lambda^{(1)}]$ are rough symbols of order 1 and 0 respectively. Consequently, we find 
$$
\begin{aligned}
\Df_\mu h^{(2)}
&=\frac{2(\rho^\sharp)^3}{\beta_1^{3/2}}\Df_\mu\lambda^{(1)}\cdot\lambda^{(1)}
&\mod\mathcal{A}^{1}_{s-3},\\
\Df_\mu(\lambda^{(1)}h^{(2)})
&=\frac{3(\rho^\sharp)^3}{\beta_1^{3/2}}\Df_\mu\lambda^{(1)}\cdot(\lambda^{(1)})^2
&\mod\mathcal{A}^{1}_{s-3},\\
\partial_\mu\left(\frac{h^{(2)}}{(\rho^\sharp)^{2}}\right)
&=(\lambda^{(1)})^2\partial_\mu\big(\rho^\sharp\beta_1^{-3/2}\big)+\frac{2\rho^\sharp\lambda^{(1)}\partial_\mu\lambda^{(1)}}{\beta_1^{3/2}}
&\mod\mathcal{A}^{1}_{s-3},
\end{aligned}
$$
leading to cancellation between the left-hand-side and the first/thrid terms in the right-hand-side of (\ref{SymmTemp}) modulo symbol of order $1$. Thus (\ref{SymmTemp}) is in fact equivalent to
$$
\begin{aligned}
\frac{6(\rho^\sharp)^3}{\beta_1^{3/2}}\sum_{\mu\in\Ind}
\Df_\mu\lambda^{(1)}\cdot(\lambda^{(1)})^2\partial_\mu q
&+2q\big(\rho^\sharp\big)^2\sum_{\mu\in\Ind}\Df_\mu\lambda^{(1)}\cdot(\lambda^{(1)})^2\partial_\mu\big(\rho^\sharp\beta_1^{-3/2}\big)\\
&+\frac{2q(\rho^\sharp)^5}{\beta_1^{3/2}}\sum_{\mu\in\Ind}\Df_\mu\lambda^{(1)}\cdot(\lambda^{(1)})^2\partial_\mu(\rho^\sharp)^{-2}
=0\mod\mathcal{A}^{1}_{s-3}.    
\end{aligned}
$$
In order that this equality holds, it suffices to require 
$$
\frac{6(\rho^\sharp)^3}{\beta_1^{3/2}}\frac{\nabla_{G_0}q}{q}
+2\big(\rho^\sharp\big)^2\nabla_{G_0}\big(\rho^\sharp\beta_1^{-3/2}\big)
+\frac{2(\rho^\sharp)^5}{\beta_1^{3/2}}\nabla_{G_0}(\rho^\sharp)^{-2}=0,
$$
which can be solved as
\begin{equation}\label{Symmq}
q
={\beta_1^{3/2}}{(\rho^\sharp)^{1/3}}
=(1+\zeta^\sharp)^{1/3}\sqrt{(1+\zeta^\sharp)^2+|\nabla_{G_0}\zeta^\sharp|_{G_0}^2}.
\end{equation}

\textbf{Step 3: solve $p$.} Finally, the symbol $p=p^{(0.5)}+p^{(-0.5)}$ is solved directly from the equation 
$$
T_pT_\lambda=T_\gamma T_q
\mod\Op\Sigma^0_{<1/2},
$$
or equivalently
$$
p\#_{1;\mathscr{Q}}\lambda=\gamma\#_{1;\mathscr{Q}}q
\mod\Op\Sigma^0_{<1/2}.
$$
The principal symbol of $p$ is just 
\begin{equation}\label{Symmp}
p^{(0.5)}=\frac{(\rho^\sharp)^{1.5}}{\beta_1^{3/4}}\sqrt{\lambda^{(1)}}.
\end{equation}
The sub-principal symbol $p^{(-0.5)}$ is then fixed as
$$
p^{(-0.5)}
=(\lambda^{(1)})^{-1}\gamma^{(0.5)}q+(\lambda^{(1)})^{-1}\sum_{\mu\in\Ind}\Df_\mu\gamma^{(1.5)}\partial_\mu q.
$$
Thus we have had all symbols $\gamma,p,q$ defined and matching (\ref{Symmetrizer}). The operator norm of the error term in (\ref{Symmetrizer}) is bounded by $\|\zeta\|_{H_x^{s+0.5}}$ linearly when $\zeta\simeq0$, as a consequence of Theorem \ref{Compo2}-\ref{ParaAdj}.

\textbf{Step 4: symmetrization.} We now apply the symmetrizer $\left(\begin{matrix}
T_p & \\ & T_q\end{matrix}\right)$ to the para-differential sysmtem (\ref{EQPara}). We first notice that the expressions for $\partial_tp$ and $\partial_tq$ actually do not explicitly contain time derivative of $\zeta$ and $\phi$, since we assumed \emph{in a priori} that $(\zeta,\phi)$ solves the autonomous partial differential equation (\ref{EQ}). In order not to cause unnecessary confusion, we denote the symbols
\begin{equation}\label{DtpDtq}
a+b
:=\partial_tp^{(0.5)}+\partial_tp^{(-0.5)}
\in\mathcal{A}^{0.5}_{s-3}+\mathcal{A}^{-0.5}_{s-4},
\quad 
c:=\partial_tq\in\mathcal{A}^{0}_{s-2}.
\end{equation}
Here the regularity of $\partial_tp^{(-0.5)}$ is justified by noting that it is of the type indicated in Proposition \ref{SymbolVeryRough}.

Hence we compute
\begin{equation}\label{SymmEQTemp}
\begin{aligned}
\partial_t\left(\begin{matrix}
T_p\zeta^\sharp \\
T_qw^\sharp
\end{matrix}\right)
&=\left(\begin{matrix}
 & T_\gamma \\
-T_\gamma & 
\end{matrix}\right)\left(\begin{matrix}
T_p\zeta^\sharp \\
T_qw^\sharp
\end{matrix}\right)
-T_{\mathfrak{v}^\sharp}\cdot\nabla_{G_0}
\left(\begin{matrix}
T_p\zeta^\sharp \\
T_qw^\sharp
\end{matrix}\right)\\
&\quad
+\left(\begin{matrix}
(T_pT_\lambda-T_\gamma T_q)w^\sharp \\
(-T_qT_h+T_\gamma T_p)\zeta^\sharp
\end{matrix}\right)
+\left(\begin{matrix}
\left(T_{a+b}-\big[T_{\mathfrak{v}^\sharp}\cdot\nabla_{G_0},T_p\big]\right)\zeta^\sharp \\
\left(T_c-\big[T_{\mathfrak{v}^\sharp}\cdot\nabla_{G_0},T_q\big]\right)w^\sharp
\end{matrix}\right)
+\left(\begin{matrix}
T_pf_1(\zeta,\phi) \\
T_qf_2(\zeta,\phi)
\end{matrix}\right),
\end{aligned}
\end{equation}
where the mappings $f_1,f_2$ are as in (\ref{f1f2}). We just define the sum of last three terms as the mapping $f_3(\zeta,\phi)$.

Due to the para-differential reduction, we know that $f_1,f_2$ are controlled by $\left(\|\zeta\|_{H_x^{s+0.5}}+\|\phi\|_{H_x^s}\right)^2$ as (\ref{f1f2}) indicates. Due to (\ref{Symmetrizer}), the operator norms of $T_pT_\lambda-T_\gamma T_q$ and $-T_qT_h+T_\gamma T_p$ are controlled linearly by $\|\zeta\|_{H^{s+0.5}_x}$ when $\zeta$ is small.

It remains to show that given $\alpha\in\mathbb{R}$,
\begin{equation}\label{HsBdd1}
\big[T_{\mathfrak{v}^\sharp}\cdot\nabla_{G_0},T_p\big]
\quad\text{and}\quad
T_{a+b}
\end{equation}
both have operator norms in $\mathcal{L}(H^{\alpha+0.5},H^\alpha)$ controlled linearly by $\|\zeta\|_{H^{s+0.5}_x}+\|\phi\|_{H_x^s}$ when $(\zeta,\phi)\simeq0$, while
\begin{equation}\label{HsBdd2}
\big[T_{\mathfrak{v}^\sharp}\cdot\nabla_{G_0},T_q\big]
\quad\text{and}\quad
T_c\zeta^\sharp
\end{equation}
both have operator norms in $\mathcal{L}(H^{\alpha},H^\alpha)$ controlled linearly by $\|\zeta\|_{H^{s+0.5}_x}+\|\phi\|_{H_x^s}$ when $(\zeta,\phi)\simeq0$.

The claim for first operators in (\ref{HsBdd1})-(\ref{HsBdd2}) follows from the fact that
$\big[T_{\mathfrak{v}^\sharp}\cdot\nabla_{G_0},T_p\big]
$ and $\big[T_{\mathfrak{v}^\sharp}\cdot\nabla_{G_0},T_q\big]$
are para-differential operators of order 1/2 and 0 respectively, since commutator with a vector field does not increase the order of the operator. The control of the norms is a consequence of Corollary \ref{ParaComm}. AS for the claim for second operators in (\ref{HsBdd1})-(\ref{HsBdd2}), this is exactly the content of Lemma 4.10. of \cite{ABZ2011}:
\begin{lemma}[Equivalent form of Lemma 4.10., \cite{ABZ2011}]\label{ABZ4-10}
The para-differential operators $T_{a}$ and $T_{b}$ are both of order $0.5$: for all $\alpha\in\mathbb{R}$, there is an increasing function $K$ vanishing linearly at 0, such that
$$
\big\|T_{a}\big\|_{\mathcal{L}(H^\alpha,H^{\alpha-0.5})}
+\big\|T_{b}\big\|_{\mathcal{L}(H^\alpha,H^{\alpha-0.5})}
\leq K\big(\|\zeta\|_{H_x^{s+0.5}}\big).
$$
The para-differential operator $T_c$ has order 0:
for all $\alpha\in\mathbb{R}$, there is an increasing function $K$ anishing linearly at 0, such that
$$
\big\|T_c\big\|_{\mathcal{L}(H^\alpha,H^{\alpha})}
\leq K\big(\|\zeta\|_{H_x^{s+0.5}}\big).
$$
\end{lemma}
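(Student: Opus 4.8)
Since $(\zeta,\phi)$ solves the autonomous system (\ref{EQ}), the symbols $p=p^{(0.5)}+p^{(-0.5)}$ and $q$ depend on $t$ only through $\zeta^\sharp$, and every $\partial_t$ that occurs acts, by the chain rule, on $\zeta^\sharp$ alone. The plan is to differentiate the explicit formulas (\ref{Symmp}), (\ref{Symmq}) and the formula for $p^{(-0.5)}$ using the Leibniz rules of the $\SU(2)$ symbolic calculus, track the order and the Zygmund regularity in $x$, and then read off the mapping properties from the toolbox. First I would record that $\partial_t\zeta^\sharp=(D[\zeta]\phi)^\sharp$, so by Proposition \ref{RegDN} (with $s'=s$) one has $\partial_t\zeta^\sharp\in H^{s-1}_x$ with $\|\partial_t\zeta^\sharp\|_{H^{s-1}_x}\le K(\|\zeta\|_{H^{s+0.5}})\|\phi\|_{H^s}$, and by Proposition \ref{SobolevForT3} also $\nabla_{G_0}\partial_t\zeta^\sharp\in H^{s-2}_x\subset C^{s-3}_{*x}$ and $\nabla_{G_0}^2\partial_t\zeta^\sharp\in H^{s-3}_x\subset C^{s-4}_{*x}$, with the same bound. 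Each of $a,b,c$ is a smooth functional of $(\zeta^\sharp,\nabla_{G_0}\zeta^\sharp,\nabla_{G_0}^2\zeta^\sharp)$ and of $(\partial_t\zeta^\sharp,\nabla_{G_0}\partial_t\zeta^\sharp,\nabla_{G_0}^2\partial_t\zeta^\sharp)$ that vanishes when $(\zeta,\phi)=0$; this bookkeeping is what will give membership in the classes announced in (\ref{DtpDtq}) together with the linear vanishing of $K$ at the origin.

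For $c=\partial_t q$ and $a=\partial_t p^{(0.5)}$ the order count is short. By (\ref{Symmq}) the function $q$ is \emph{scalar}, so $\partial_t q$ is again a scalar function assembled from $\zeta^\sharp,\nabla_{G_0}\zeta^\sharp,\partial_t\zeta^\sharp,\nabla_{G_0}\partial_t\zeta^\sharp$; hence $c\in\mathcal{A}^{0}_{s-2}$ and $T_c$ is bounded on every $H^\alpha$ by Theorem \ref{Stein'}. For $p^{(0.5)}$ in (\ref{Symmp}), the genuinely spectral factor is $\sqrt{\lambda^{(1)}}$, of order $0.5$; by (\ref{DNlambda}) the symbol $\lambda^{(1)}$ is a $t$-dependent scalar function times the quasi-homogeneous order-$1$ symbol $\kappa f(b/\kappa)$, $f(z)=\sqrt{1+z^2}$, with $b$ the symbol of the vector field $\nabla_{G_0}\zeta^\sharp/\beta_1$ and $\kappa=|\nabla_{G_0}|$ \emph{independent of $t$}. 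Applying the quasi-homogeneous calculus of Proposition \ref{PSSymbol} and Corollary \ref{CoroPSSymbol} with $\partial_t$ in place of a left-invariant vector field (legitimate, as $\partial_t$ acts only on the coefficients), $\partial_t\lambda^{(1)}$ comes out of order $1$ (not $2$, precisely because $\kappa$ is frozen in $t$); solving the Sylvester equation for $\partial_t\sqrt{\lambda^{(1)}}$ and commuting $(\lambda^{(1)})^{-1/2}$ past $\partial_t\lambda^{(1)}$ via Proposition \ref{2OrderComm} yields $\partial_t\sqrt{\lambda^{(1)}}=\tfrac12(\lambda^{(1)})^{-1/2}\partial_t\lambda^{(1)}$ modulo a symbol of order $-0.5$, in particular of order $0.5$. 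The remaining (scalar) prefactor of $p^{(0.5)}$ has $t$-derivative of regularity at least $C^{s-3}_{*x}$, so altogether $a\in\mathcal{A}^{0.5}_{s-3}$, whence $T_a\in\mathcal{L}(H^\alpha,H^{\alpha-0.5})$ for all $\alpha$ by Theorem \ref{Stein'}, with norm $\lesssim K(\|\zeta\|_{H^{s+0.5}})\|\phi\|_{H^s}$.

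The hard part will be $b=\partial_t p^{(-0.5)}$, and this is the step I expect to be the main obstacle. The symbol $p^{(-0.5)}$ involves $\gamma^{(0.5)}$ and $\Df_\mu\gamma^{(1.5)}$, and through $h^{(1)}$ — whose coefficients are smooth functions of $\nabla_{G_0}^2\zeta^\sharp$ — it genuinely sees the Hessian of $\zeta$; differentiating in $t$ therefore introduces $\nabla_{G_0}^2\partial_t\zeta^\sharp\in H^{s-3}_x\subset C^{s-4}_{*x}$, and for $3<s<4$ the index $s-4$ is negative, so $b$ has \emph{negative-regularity} coefficients and is not covered by the plain calculus. The way through is to recognize $b$ as being of exactly the form treated in Proposition \ref{SymbolVeryRough} (a convergent series in $\kappa^{-1}$ paired with iterated products $[\,\cdot\,]_n(\,\cdot\,)$ of a rough vector-field symbol with smooth ones), which places it in $\mathcal{A}^{-0.5}_{s-4}$, and then to invoke Proposition \ref{T_aNegIndex} — or equivalently the identity $T_b=[\Delta,T_{\Delta^{-1}b}]-2T_{\nabla_{G_0}\Delta^{-1}b\cdot\nabla_{G_0}}$ recorded after it, applicable since $4-s\in(0,1)$ — to identify $T_b$ with a bona fide para-differential operator of order $-0.5+\max(0,4-s)<0.5$, hence bounded $H^\alpha\to H^{\alpha-0.5}$, with norm again $\lesssim K(\|\zeta\|_{H^{s+0.5}})\|\phi\|_{H^s}$. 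Combining the three estimates closes the lemma, and the stated linear vanishing at $(\zeta,\phi)=0$ follows from the observation above that $a,b,c$ are smooth functionals vanishing together with $\partial_t\zeta^\sharp=(D[\zeta]\phi)^\sharp$, whose norm is controlled through Propositions \ref{RegDN} and \ref{LinDN}.
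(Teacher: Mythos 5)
Your plan is correct and follows essentially the same route as the paper: the paper defers to Lemma 4.10 of \cite{ABZ2011} and, in the accompanying remark and in (\ref{DtpDtq}), records exactly your classification $a\in\mathcal{A}^{0.5}_{s-3}$, $b\in\mathcal{A}^{-0.5}_{s-4}$, $c\in\mathcal{A}^{0}_{s-2}$, handling the rough symbol $b$ via Proposition \ref{SymbolVeryRough} and Proposition \ref{T_aNegIndex} to get order $-0.5+(4-s)_+<0.5$, just as you propose. Your extra detail on $\partial_t\sqrt{\lambda^{(1)}}$ and the bound $K(\|\zeta\|_{H^{s+0.5}})\|\phi\|_{H^s}$ is consistent with (and slightly more explicit than) the paper's sketch.
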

\begin{remark}
The only technical difficulty with the proof of this Lemma is that the symbol $b(x,l)$ is too rough in $x$. In fact, from the expression of $p^{(-0.5)}$, one concludes that $p^{(-0.5)}$ has $H^{s-1.5}$ regularity in $x$. Differentiating in $t$ reduces its regularity to $H^{s-3}\subset C_*^{s-4}$. Thus $b(x,l)$ is a symbol of order $-0.5$ in $l$ and $C_*^{s-4}$ regularity in $x$. By Proposition \ref{T_aNegIndex}, $T_{b}$ is a para-differential operator of order at most $0.5$.
\end{remark}

To summarize, introducing the diagonal unknown $\left(\begin{matrix}
T_p\zeta^\sharp \\ T_q w^\sharp \end{matrix}\right)$, the original system (\ref{EQ}) is equivalent to
\begin{equation}\label{EQSymm'}
\partial_t\left(\begin{matrix}
T_p\zeta^\sharp \\ T_q w^\sharp \end{matrix}\right)=\left(\begin{matrix}
 & T_\gamma \\
-T_\gamma & 
\end{matrix}\right)\left(\begin{matrix}
T_p\zeta^\sharp \\ T_q w^\sharp \end{matrix}\right)
-T_{\mathfrak{v}^\sharp}\cdot\nabla_{G_0} u
+f_3(\zeta,\phi),
\end{equation}
where the error $f_3(\zeta,\phi)$ collects the last three terms in (\ref{SymmEQTemp}), and $\|f_3(\zeta,\phi)\|_{H^s_x}\lesssim\left(\|\zeta\|_{H_x^{s+0.5}}+\|\phi\|_{H_x^s}\right)^2$. This is exactly (\ref{EQSymm}), We hence complete the proof of Theorem \ref{Thm2}.

\section{Energy Estimate and Local Well-posedness}\label{6}
We finally complete the energy estimate for (\ref{EQSymm}), and prove local well-posedness of the original system (\ref{EQ}). For simplicity, we sketch the proof of local well-posedness for small amplitude solutions. Since the argument is rather standard, and all the key estimates are already done in \cite{ABZ2011}, we will only present in detail the parts that are different from \cite{ABZ2011}; for those that are identical to \cite{ABZ2011}, we will directly cite the corresponding Lemmas from \cite{ABZ2011} and show how they aid our proof.

\subsection{Energy Estiamte}
In \cite{ABZ2011}, the authors used the $2s/3$'th order power of the principal symbol $\gamma$ to construct a suitable energy functional. The reason is that the Poisson bracket $\{(\gamma^{(1.5)})^{2\alpha/3},\gamma\}$ vanishes up to order $\alpha-0.5$, hence $[T_{(\gamma^{(1.5)})^{2\alpha/3}},T_\gamma]$ is in fact of order $\leq \alpha$. A similar result applies to the spherical system (\ref{EQSymm}) as well, although not as trivial as in the Euclidean case. 

\begin{proposition}\label{OrderS}
Given $\zeta\in H^{s+0.5}$, let $\gamma^{(1.5)}$ be the rough symbol as given in Subsection \ref{Symmetrization}. Then $\big[T_{(\gamma^{(1.5)})^{2\alpha/3}},T_\gamma\big]$ is a para-differential operator of order $\leq \alpha$, instead of just $\alpha+0.5$. Quantitatively, its operator norm for $H^s\to H^s$ is controlled linearly by $\|\zeta\|_{H_x^{s+0.5}}$ when $\zeta\simeq0$.
\end{proposition}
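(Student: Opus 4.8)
The plan is to show that the leading-order discrepancy in the commutator $\big[T_{(\gamma^{(1.5)})^{2\alpha/3}},T_\gamma\big]$ cancels, exactly as in the Euclidean Poisson-bracket computation, but carried out in the non-commutative symbolic calculus on $\SU(2)$. First I would write $\gamma = \gamma^{(1.5)} + \gamma^{(0.5)}$ and observe that since $\gamma^{(0.5)}\in\mathcal{A}^{0.5}_{s-2.5}$ is of order $0.5$ and $(\gamma^{(1.5)})^{2\alpha/3}\in\mathcal{A}^{\alpha}_{s-1.5}$ is of order $\alpha$, the commutator of symbols $\big[(\gamma^{(1.5)})^{2\alpha/3},\gamma^{(0.5)}\big]$ has order $\alpha+0.5-1=\alpha-0.5$ by Proposition \ref{2OrderComm}, hence the associated para-differential operator is harmless; so it suffices to treat $\big[T_{(\gamma^{(1.5)})^{2\alpha/3}},T_{\gamma^{(1.5)}}\big]$. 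By Corollary \ref{ParaComm} this commutator is automatically of order $\le \alpha+1.5-1=\alpha+0.5$, and I must gain one further half-order.

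Next I would invoke the composition formula, Theorem \ref{Compo2}, with a tuple $q$ drawn from the fundamental tuple $Q$ of $\SU(2)$ (the tuple $\mathscr{Q}$), expanding $T_{a}\circ T_{b}$ to first order: writing $a=(\gamma^{(1.5)})^{2\alpha/3}$ and $b=\gamma^{(1.5)}$, the symbol of $[T_a,T_b]$ agrees modulo $\Op\Sigma^{\alpha+0.5-1}_{<1/2}=\Op\Sigma^{\alpha-0.5}_{<1/2}$ with
$$
(a\#_{1;\mathscr{Q}} b - b\#_{1;\mathscr{Q}} a)(x,l)
= [a,b](x,l) + \sum_{\mu\in\Ind}\big(\Df_\mu a\cdot\partial_\mu b - \Df_\mu b\cdot\partial_\mu a\big)(x,l).
$$
The term $[a,b]$ vanishes identically because $a$ is a scalar (spectral) function of $b$: on each $\Hh[l]$, $(\gamma^{(1.5)})^{2\alpha/3}$ is obtained by functional calculus from the Hermitian matrix $\gamma^{(1.5)}$, so they commute. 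For the remaining sum I would use the fact that $\gamma^{(1.5)}=\sqrt{\lambda^{(1)}h^{(2)}}$ is a quasi-homogeneous symbol in the sense of Definition \ref{QuasiHomoSym} (built from the vector-field symbol $\beta_2$ and $\kappa$), and apply Proposition \ref{PSSymbol} and Proposition \ref{Dkappa}: these give approximate Leibniz rules $\Df_\mu\big((\gamma^{(1.5)})^{2\alpha/3}\big) = \tfrac{2\alpha}{3}(\gamma^{(1.5)})^{2\alpha/3-1}\Df_\mu\gamma^{(1.5)}$ modulo a symbol of order $\alpha-2$, and similarly for $\partial_\mu$, all modulo commutator errors of order one less than the naive count. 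Substituting, the sum becomes $\tfrac{2\alpha}{3}\sum_\mu (\gamma^{(1.5)})^{2\alpha/3-1}\big(\Df_\mu\gamma^{(1.5)}\,\partial_\mu\gamma^{(1.5)} - \Df_\mu\gamma^{(1.5)}\,\partial_\mu\gamma^{(1.5)}\big)$ up to lower order, i.e. it cancels to order $\le\alpha$; the cancellations need Proposition \ref{PSSymbol}(2) to move the scalar factor $(\gamma^{(1.5)})^{2\alpha/3-1}$ past $\Df_\mu\gamma^{(1.5)}$ and $\partial_\mu\gamma^{(1.5)}$ at a cost of order one below principal. Finally, the quantitative bound on the $H^s\to H^s$ operator norm being linear in $\|\zeta\|_{H_x^{s+0.5}}$ near $\zeta\simeq 0$ follows from tracking the $\mathbf{W}^{m;r}_{l;\mathscr{Q}}$-norms through Theorem \ref{Compo2} and Corollary \ref{ParaComm}, each of the relevant symbols being at least affine in $\zeta$ to leading order with remainder quadratic.

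The main obstacle I anticipate is the bookkeeping in the $\SU(2)$ setting: unlike on $\mathbb{R}^n$, the "Poisson bracket" term $\sum_\mu(\Df_\mu a\,\partial_\mu b - \Df_\mu b\,\partial_\mu a)$ does not vanish for free, and the three sources of error — the non-commutativity $[a,b]$ (zero here by scalar functional calculus), the failure of the exact Leibniz rule for $\Df_\mu$ and $\partial_\mu$ on powers of $\gamma^{(1.5)}$, and the non-commutativity of $(\gamma^{(1.5)})^{2\alpha/3-1}$ with $\Df_\mu\gamma^{(1.5)}$ — must each be shown to land one half-order below what a crude estimate gives. This is precisely what the quasi-homogeneous symbol machinery of Propositions \ref{Dkappa} and \ref{PSSymbol} is designed to deliver, so the argument is structurally parallel to \cite{ABZ2011} but requires care that every "order $-2$" or "order $m-1$" clause is applied to the correct symbol. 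A minor additional point is verifying $\mathbf{T}_3$-invariance is preserved throughout, which is automatic since all symbols involved are functions of $\lambda$, $h$ and $\kappa$, all of which are $\mathbf{T}_3$-invariant.
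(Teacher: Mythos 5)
Your proposal is correct and follows essentially the same route as the paper: reduce to the $\gamma^{(1.5)}$ part (the $\gamma^{(0.5)}$ contribution being of order $\alpha-0.5$ by Corollary \ref{ParaComm}), expand the commutator via Theorem \ref{Compo2} to the Poisson-bracket-type symbol, and use the quasi-homogeneous structure $\gamma^{(1.5)}=g\,\kappa^{1.5}f(b/\kappa)$ with Propositions \ref{Dkappa} and \ref{PSSymbol} to obtain the approximate chain rules $\Df_\mu(\gamma^{(1.5)})^{2\alpha/3}\approx\tfrac{2\alpha}{3}(\gamma^{(1.5)})^{2\alpha/3-1}\Df_\mu\gamma^{(1.5)}$ (and likewise for $\partial_\mu$), so the bracket cancels to order $\leq\alpha$. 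Your explicit remarks on the vanishing of the matrix commutator by spectral calculus and on commuting $(\gamma^{(1.5)})^{2\alpha/3-1}$ past $\Df_\mu\gamma^{(1.5)}$ are exactly the points the paper handles implicitly.
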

\begin{proof}
Recall from Subsection \ref{Symmetrization} that 
$$
\gamma^{(1.5)}=\sqrt{\lambda^{(1)}h^{(2)}}
=(\rho^\sharp)^{1.5}\beta_1^{-3/4}(\lambda^{(1)})^{1.5},
$$
which may be abbreviated as
$$
\gamma^{(1.5)}(x,\xi)=g(x)\kappa(\xi)^{1.5}f\left(\frac{b(x,\xi)}{\kappa(\xi)}\right),
$$
where $g(x)$ is some scalar function, $b$ is the symbol of a vector field, all depending on first order derivatives of $\zeta$; $\kappa$ is the symbol of $|\nabla_{G_0}|$, and $f(z)=(1+z^2)^{3/4}$. By our symbolic calculus formula for para-differential operators, namely Theorem \ref{Compo2}, the commutator $\big[T_{(\gamma^{(1.5)})^{2\alpha/3}},T_\gamma\big]$ in fact equals the para-differential operator corresponding to
\begin{equation}\label{GammaCommu}
\sum_{\mu\in\Ind}\Df_{\mu}(\gamma^{(1.5)})^{2\alpha/3}\partial_\mu\gamma^{(1.5)}
-\sum_{\mu\in\Ind}\Df_{\mu}\gamma^{(1.5)}\partial_\mu(\gamma^{(1.5)})^{2\alpha/3},
\end{equation}
modulo an operator of class $\Op\Sigma^{\alpha}_{<1/2}$. Obviously $\kappa(\xi)^{1.5}f\left(\frac{b(x,\xi)}{\kappa(\xi)^2}\right)$ is a quasi-homogeneous symbol of order 1.5 in the sense of Definition \ref{QuasiHomoSym}. We can now apply Proposition \ref{Dkappa}-\ref{PSSymbol} as follows.

We first compute, by Proposition \ref{Dkappa}, that
$$
\Df_\mu\kappa^{1.5}=\frac{3}{2}\kappa^{0.5}\Df_\mu\kappa
\mod\mathscr{S}^{-0.5}_{1,0},
\quad
\Df_\mu\kappa^{2\alpha/3}=\frac{2\alpha}{3}\kappa^{2\alpha/3-1}\Df_\mu\kappa
\mod\mathscr{S}^{\alpha-2}_{1,0}.
$$
Thus by Proposition \ref{PSSymbol}, together with the Leibniz property of $\Df_\mu$,
$$
\begin{aligned}
\Df_{\mu}(\gamma^{(1.5)})^{2\alpha/3}
&=\alpha g^{2\alpha/3}\kappa^{\alpha-1}f\left(\frac{b}{\kappa}\right)^{2\alpha/3}\Df_\mu\kappa
+\frac{2\alpha}{3}g^{2\alpha/3}\kappa^{\alpha-1}
f\left(\frac{b}{\kappa}\right)^{2\alpha/3-1}f'\left(\frac{b}{\kappa}\right)\Df_\mu b\\
&\quad+\frac{2\alpha}{3} g^{2\alpha/3}\kappa^{\alpha-2}f\left(\frac{b}{\kappa}\right)^{2\alpha/3-1}f'\left(\frac{b}{\kappa}\right)\Df_\mu \kappa b
&\mod \mathcal{A}^{\alpha-2}_{s-1.5}\\
&=\frac{2\alpha}{3}(\gamma^{(1.5)})^{2\alpha/3-1}\Df_\mu\gamma^{(1.5)}
&\mod \mathcal{A}^{\alpha-2}_{s-1.5}.
\end{aligned}
$$
On the other hand, by Proposition \ref{PSSymbol},
$$
\begin{aligned}
\partial_\mu(\gamma^{(1.5)})^{2\alpha/3}
&=\frac{2\alpha}{3}g^{2\alpha/3-1}\kappa^\alpha\partial_\mu gf\left(\frac{b}{\kappa}\right)^{2\alpha/3}
+\frac{2\alpha}{3}g^{2\alpha/3}\kappa^\alpha f\left(\frac{b}{\kappa}\right)^{2\alpha/3-1}f'\left(\frac{b}{\kappa}\right)\partial_\mu b
&\mod \mathcal{A}^{\alpha-2}_{s-2.5}\\
&=\frac{2\alpha}{3}(\gamma^{(1.5)})^{2\alpha/3-1}\partial_\mu\gamma^{(1.5)}
&\mod \mathcal{A}^{\alpha-2}_{s-2.5}.
\end{aligned}
$$
Thus (\ref{GammaCommu}) in fact equals $0\mod\mathcal{A}^{\alpha-1}_{s-2.5}$. Consequently  $\big[T_{(\gamma^{(1.5)})^{2\alpha/3}},T_\gamma\big]$ is a para-differential operator of order $\leq\alpha$, not just $\alpha+0.5$.
\end{proof}

We can now state the energy estimate:
\begin{proposition}\label{Energy}
Let
$$
(\zeta,\phi)\in \big(C_TH^{s+0.5}_x\times C_TH^s_x\big)\cap\big(C_T^1H^{s-1}_x\times C_T^1H^{s-1.5}_x\big)
$$
be given, with norm in this space less than some small $R$. We define the good unknown $w^\sharp$ as in (\ref{ActualGUknown}), the symbols $\lambda,h,\mathfrak{b}^\sharp,\mathfrak{v}^\sharp$ as in Section \ref{5}. Suppose $f\in C_TH^{s+0.5}_x\times C_TH^s_x$. 
Then the \emph{linear} Cauchy problem 
\begin{equation}\label{CauchyLin}
\partial_t\left(\begin{matrix}
\eta^\sharp \\
v^\sharp
\end{matrix}\right)
=
\left(\begin{matrix}
-T_{\mathfrak{v}^\sharp}\cdot\nabla_{G_0} & T_\lambda v^\sharp \\
-T_{h} & -T_{\mathfrak{v}^\sharp}\cdot\nabla_{G_0}
\end{matrix}\right)\left(\begin{matrix}
\eta^\sharp \\
v^\sharp
\end{matrix}\right)
+f
\end{equation}
in the unknown $(\eta,v)$ admits a unique solution 
$$
(\eta,v)\in \big(C_TH^{s+0.5}_x\times C_TH^s_x\big)\cap\big(C_T^1H^{s-1}_x\times C_T^1H^{s-1.5}_x\big),
$$
satisfying the following energy estimate for any real number $\alpha\leq s$:
$$
\begin{aligned}
\|\eta(t)\|_{H^{\alpha+0.5}_x}
+\|v(t)\|_{H^{\alpha}_x}
\leq C_\alpha e^{C_\alpha Rt}\left(\|\eta(0)\|_{H^{\alpha+0.5}_x}
+\|v(0)\|_{H^{\alpha}_x}
+\int_0^t\big\|f(\tau);{H^{\alpha+0.5}_x\times  H^\alpha_x}\big\|d\tau\right).
\end{aligned}
$$
\end{proposition}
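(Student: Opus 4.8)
The plan is to derive the energy estimate from the symmetrization of Subsection~\ref{Symmetrization} together with the commutator gain of Proposition~\ref{OrderS} by a Gronwall argument, and then to obtain existence from the resulting a priori estimate by a Galerkin scheme.

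First I would apply the symmetrizer $\mathrm{diag}(T_p,T_q)$ of Subsection~\ref{Symmetrization} to~(\ref{CauchyLin}) and set $\Theta^\sharp:=T_p\eta^\sharp$, $V^\sharp:=T_q v^\sharp$. Using the symmetrization identity~(\ref{Symmetrizer}), the composition and commutator calculus (Theorem~\ref{Compo2}, Corollary~\ref{ParaComm}, Theorem~\ref{ParaAdj}), and the fact that $\partial_t p,\partial_t q$ have norms $\lesssim R$ because $(\zeta,\phi)\in C^1_TH^{s-1}_x\times C^1_TH^{s-1.5}_x$ and $\partial_t\zeta=D[\zeta]\phi\in H^{s-1}_x$ by Proposition~\ref{RegDN}, one checks --- exactly as in Step~4 of Subsection~\ref{Symmetrization} --- that $(\Theta^\sharp,V^\sharp)$ satisfies a system of the shape
$$
\partial_t\begin{pmatrix}\Theta^\sharp\\ V^\sharp\end{pmatrix}
=\begin{pmatrix}& T_\gamma\\ -T_\gamma &\end{pmatrix}\begin{pmatrix}\Theta^\sharp\\ V^\sharp\end{pmatrix}
-T_{\mathfrak{v}^\sharp}\cdot\nabla_{G_0}\begin{pmatrix}\Theta^\sharp\\ V^\sharp\end{pmatrix}
+\mathcal{R}\begin{pmatrix}\eta^\sharp\\ v^\sharp\end{pmatrix}
+\begin{pmatrix}T_p & \\ & T_q\end{pmatrix}f ,
$$
where $\mathcal{R}$ maps $H^{\alpha+0.5}_x\times H^\alpha_x$ into $H^\alpha_x\times H^\alpha_x$ with norm $\lesssim R$ (its roughest piece, $T_{\partial_t p^{(-0.5)}}$, is of merely $C^{s-4}_{*x}$ regularity and is handled via Proposition~\ref{T_aNegIndex} as in Lemma~\ref{ABZ4-10}).

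Next, writing $E:=T_{(\gamma^{(1.5)})^{2\alpha/3}}$, I would run the energy estimate on $\mathcal{E}_\alpha(t):=\|E\Theta^\sharp(t)\|_{L^2}^2+\|E V^\sharp(t)\|_{L^2}^2$ --- more precisely, on a suitably weighted sum of $\mathcal{E}_\alpha,\mathcal{E}_{\alpha-1},\dots$, so as to absorb the lower-order remainders in the parametrix equivalence $\mathcal{E}_\alpha(t)^{1/2}\simeq\|\eta(t)\|_{H^{\alpha+0.5}_x}+\|v(t)\|_{H^\alpha_x}$, which holds since $p^{(0.5)},\gamma^{(1.5)}$ are elliptic and $q$ is a positive scalar. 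Differentiating and substituting the equation: the skew block contributes $2\,\mathrm{Re}\langle ET_\gamma V^\sharp,E\Theta^\sharp\rangle-2\,\mathrm{Re}\langle ET_\gamma\Theta^\sharp,EV^\sharp\rangle$, and since $[E,T_\gamma]$ has order $\le\alpha$ with norm $\lesssim R$ by Proposition~\ref{OrderS} (this is the decisive point --- the naive bound would only give order $\alpha+0.5$) and $T_\gamma-T_\gamma^*$ has order $0$, both inner products equal $\langle EV^\sharp,T_\gamma E\Theta^\sharp\rangle$ up to $O(R\mathcal{E}_\alpha)$, so the skew block contributes $O(R\mathcal{E}_\alpha)$; the transport block $-T_{\mathfrak{v}^\sharp}\cdot\nabla_{G_0}$ contributes, after commuting $E$ through (the commutator again of order $\le\alpha$ by Corollary~\ref{ParaComm}), an expression governed by the self-adjoint defect of $T_{\mathfrak{v}^\sharp}\cdot\nabla_{G_0}$, which has order $0$ because $\mathfrak{v}^\sharp$ is real-valued of $C^{s-2}_{*x}$ regularity and the left-invariant fields are skew-adjoint --- again $O(R\mathcal{E}_\alpha)$; the term from $\partial_t E=T_{\partial_t(\gamma^{(1.5)})^{2\alpha/3}}$ has order $\alpha$ and norm $\lesssim R$; $\mathcal{R}$ is harmless; and the forcing contributes $\lesssim\mathcal{E}_\alpha^{1/2}\|f(t);H^{\alpha+0.5}_x\times H^\alpha_x\|$. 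This gives $\tfrac{d}{dt}\mathcal{E}_\alpha\lesssim R\mathcal{E}_\alpha+\mathcal{E}_\alpha^{1/2}\|f;H^{\alpha+0.5}_x\times H^\alpha_x\|$, and Gronwall together with the norm equivalence yields the stated estimate.

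Finally, uniqueness follows by applying the estimate to the difference of two solutions. For existence I would use a Galerkin scheme: project~(\ref{CauchyLin}) onto $\bigoplus_{|\xi|\le N}\mathcal{M}_\xi$ intersected with the $\mathbf{T}_3$-invariant subspace, obtaining a linear ODE with continuous-in-$t$ coefficients and hence a unique global solution $(\eta_N,v_N)$; the commutator estimates of Section~\ref{2} make the energy argument above apply to the truncated problem with constants uniform in $N$, so that weak-$*$ compactness produces a limit $(\eta,v)$ solving~(\ref{CauchyLin}) and lying in $C_TH^{s+0.5}_x\times C_TH^s_x$ (strong continuity in $t$ following from continuity of the energy), while the $C^1_TH^{s-1}_x\times C^1_TH^{s-1.5}_x$ regularity is read off directly from the equation. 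The main obstacle throughout is the bookkeeping of error terms: one must verify that every correction --- the inexactness of~(\ref{Symmetrizer}), the time-dependence of the symmetrizer and of $E$, the commutator $[E,T_\gamma]$, and the low-frequency remainders in the norm equivalence --- is of order $0$ relative to the current energy level and carries a factor $R$, and it is precisely here that Proposition~\ref{OrderS} (rather than the crude $\alpha+0.5$ bound) is indispensable.
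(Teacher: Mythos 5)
Your a priori estimate is essentially the paper's own argument: apply the symmetrizer $\mathrm{diag}(T_p,T_q)$ of Subsection \ref{Symmetrization} to (\ref{CauchyLin}), conjugate by $\Gamma_\alpha=T_{(\gamma^{(1.5)})^{2\alpha/3}}$, use Proposition \ref{OrderS} for $[\Gamma_\alpha,T_\gamma]$, the approximate (anti-)self-adjointness of $T_\gamma$ and $T_{\mathfrak{v}^\sharp}\cdot\nabla_{G_0}$, Lemma \ref{ABZ4-10} for the rough time-derivative symbols, and Gr\"onwall, together with the norm equivalence $\|\Gamma_\alpha u\|_{L^2}\simeq\|u\|_{H^\alpha}$; your remark about absorbing the lower-order remainders of the parametrix when converting back to $(\eta,v)$ is a point the paper itself glosses over. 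Where you genuinely diverge is the passage from the a priori estimate to existence: the paper invokes the duality argument for linear hyperbolic systems (H\"ormander, Section 6.3), which is exactly why the estimate is stated for all real $\alpha\le s$ including negative $\alpha$ (needed for the adjoint problem), whereas you propose a Galerkin scheme. That route can be made to work, but as written it has a soft spot: with the \emph{sharp} projections onto $\bigoplus_{|\xi|\le N}\mathcal{M}_\xi$, the commutators of the projection with the variable-coefficient operators $T_\gamma$, $T_{\mathfrak{v}^\sharp}\cdot\nabla_{G_0}$ (equivalently, the error term $\langle \mathcal{L}u_N,(I-P_N)\Gamma_\alpha^*\Gamma_\alpha u_N\rangle$) are not uniformly of lower order near the frequency boundary $|\xi|\sim N$, so ``constants uniform in $N$'' does not follow directly from the calculus of Section \ref{2}; the standard fix is to truncate with smooth spectral cutoffs $\chi(|\nabla|/N)$ (still finite rank on $\SU(2)$ since the spectrum is discrete), whose symbols lie uniformly in $\mathscr{S}^0_{1,0}$ so that the commutator estimates apply with $N$-independent bounds. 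One further small correction: in this \emph{linear} proposition $(\zeta,\phi)$ is an arbitrary given pair, not a solution of (\ref{EQ}), so the bounds on $\partial_t p,\partial_t q$ must be drawn from the assumed $C^1_TH^{s-1}_x\times C^1_TH^{s-1.5}_x$ regularity (as you also note), not from the identity $\partial_t\zeta=D[\zeta]\phi$; this matters precisely when the proposition is used inside the fixed-point argument, where the input pair does not solve the equation.
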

\begin{proof}
If we define symbols $p,q,\gamma,a,b,c$ corresponding to $(\zeta,\phi)$ as in Subsection \ref{Symmetrization} (especially (\ref{DtpDtq})), and set $u=(T_p\zeta^\sharp,T_qv^\sharp)$, then the procedure in Subsection \ref{Symmetrization} transforms (\ref{CauchyLin}) into the following equivalent form:
\begin{equation}\label{CauchySymm}
\begin{aligned}
\partial_tu
&=\left(\begin{matrix}
 & T_\gamma \\
-T_\gamma & 
\end{matrix}\right)u
-T_{\mathfrak{v}^\sharp}\cdot\nabla_{G_0}
u
+\left(\begin{matrix}
\left(T_{a+b}-\big[T_{\mathfrak{v}^\sharp}\cdot\nabla_{G_0},T_p\big]\right)T_p^{-1} \\
\left(T_c-\big[T_{\mathfrak{v}^\sharp}\cdot\nabla_{G_0},T_q\big]\right)T_q^{-1}
\end{matrix}\right)u
\\
&\quad
+\left(\begin{matrix}
 & (T_pT_\lambda-T_\gamma T_q)T_q^{-1} \\
(-T_qT_h+T_\gamma T_p)T_p^{-1} & 
\end{matrix}\right)u
+\left(\begin{matrix}
T_p & \\
 & T_q
\end{matrix}\right)f.
\end{aligned}
\end{equation}

For any $\alpha\leq s$, we simply apply the operator $\Gamma_\alpha:=T_{(\gamma^{(1.5)})^{2\alpha/3}}$ to (\ref{CauchySymm}). We first notice that
$$
\big\|\Gamma_\alpha u\big\|_{L^2_x}
\simeq
\|u\|_{H^\alpha_x},
$$
and the implicit constants in this equivalence of norms depend only on $\|\zeta\|_{H^{s+0.5}_x}$. In fact, $\gamma^{(1.5)}(x,l)^{2\alpha/3}$ is a symbol of order $\alpha$ and involves only up to first order derivative of $\zeta$. Furthermore, by the symbolic calculus formula, $\Gamma_\alpha\cdot\Gamma_{-\alpha}=\mathrm{Id}+\Op\Sigma^{-1}_{<1/2}$, so at least when $\|\zeta\|_{H^{s+0.5}_x}$ stays close to 0 (which is the case by our assumption), the inverse of the operator $\Gamma_\alpha$ is a para-differential operator of order $-\alpha$. This gives the equivalence of norms. The estimate on the implicit constants is a direct consequence of Theorem \ref{Compo2}.

By Proposition \ref{OrderS}, $\big[\Gamma_\alpha,T_{\gamma}\big]$ is a para-differential operator of order $\alpha$. The symbols in this commutator involve only up to second order derivatives of $\zeta$. We next notice that $\big[\Gamma_\alpha,T_{\mathfrak{v}^\sharp}\cdot\nabla_{G_0}\big]$ is also a para-differential operators of order $\leq \alpha$. This is because commuting with a vector field does not increase the order of an operator. The commutator of $\Gamma_\alpha$ with
$$
\left(T_{a+b}-\big[T_{\mathfrak{v}^\sharp}\cdot\nabla_{G_0},T_p\big]\right)T_p^{-1},
\quad
\left(T_c-\big[T_{\mathfrak{v}^\sharp}\cdot\nabla_{G_0},T_q\big]\right)T_q^{-1}
$$
and 
$$
(T_pT_\lambda-T_\gamma T_q)T_q^{-1},
\quad
(-T_qT_h+T_\gamma T_p)T_p^{-1}
$$
are all of order $\leq\alpha$, due to Lemma \ref{ABZ4-10} and (\ref{Symmetrizer}). Furthermore, Proposition \ref{PSSymbol} ensures that
$$
\partial_t\Gamma_\alpha
=\frac{2\alpha}{3} T_{(\gamma^{(1.5)})^{2\alpha/3-1}\partial_t\gamma^{(1.5)}}
+\text{lower order para-differential operator}.
$$
Since $\gamma^{(1.5)}$ involves first order derivative of $\zeta$, while by the assumption we have $\partial_t\zeta\in C_T^0H^{s-1}_x$, it follows that $\partial_t\gamma^{(1.5)}$ is a symbol with $H^{s-2}\subset C^{s-3}_*$ regularity in $x$, so $\partial_t\Gamma_\alpha$ is still a para-differential operator of order $\alpha$. The operator norms of these are all controlled linearly by $R$, as a consequence of Theorem \ref{Compo2}.

To summarize, for any real number $\alpha\leq s$, we have
\begin{equation}\label{CauchySymmalpha}
\partial_t\Gamma_\alpha u=\left(\begin{matrix}
 & T_\gamma \\
-T_\gamma & 
\end{matrix}\right)\Gamma_\alpha u
-T_{\mathfrak{v}^\sharp}\cdot\nabla_{G_0} \Gamma_\alpha u
+\Upsilon_\alpha u
+\Gamma_\alpha\left(\begin{matrix}
T_p & \\
 & T_q
\end{matrix}\right)f,
\end{equation}
where $\Upsilon_\alpha$ is a para-differential operator of order $\alpha$, with operator norm controlled linearly by $R$. We now find, using that $T_\gamma$ and $T_{\mathfrak{v}^\sharp}\cdot\nabla_{G_0}$ are both approximately anti-self adjoint, the following a priori inequality for a solution $u$ of (\ref{CauchySymm}): for any real number $\alpha\leq s$,
$$
\frac{d}{dt}\|\Gamma_\alpha u(t)\|_{L^2_x}^2
\lesssim R\|\Gamma_\alpha u(t)\|_{L^2_x}^2
+\big\|f(\tau);{H^{\alpha+0.5}_x\times  H^\alpha_x}\big\|\cdot\|\Gamma_\alpha u(t)\|_{L^2_x}.
$$
Using the Grönwall inequality and norm equaivalence $\big\|\Gamma_\alpha u\big\|_{L^2_x}
\simeq
\|u\|_{H^\alpha_x}$, this implies the \emph{a priori} energy inequalities for a solution $u$ of (\ref{CauchySymm}), provided that it does exist:
$$
\|u(t)\|_{H^\alpha_x}
\leq C_\alpha e^{C_\alpha Rt}\left(\|u(0)\|_{H^\alpha_x}+\int_0^t\big\|f(\tau);{H^{\alpha+0.5}_x\times  H^\alpha_x}\big\|d\tau\right).
$$
The regularity $\partial_tu\in C^0_TH^{s-1.5}_x$ follows from the equation itself.

We may then apply the standard duality argument for linear hyperbolic systems. These a priori energy estimates do imply well-posedness of the linear Cauchy problem (\ref{CauchySymm}). It is important that $\alpha$ are allowed to be negative. See for example Section 6.3 in H\"{o}rmander's textbook \cite{Hormander1997}. The estimates for $u$ are then converted back to estimates for $(\eta,v)$.
\end{proof}

\subsection{Banach Fixed Point Argument}
We now formulate a Banach fixed point problem. We shall fix a sufficiently small $R>0$ to measure the size of the solution. As we shall see shortly, the major issue in closing a contraction argument is that we need to control some operator norms under \emph{weaker} regularity assumptions than $(\zeta,\phi)\in C_TH_x^{s+0.5}\times C_TH_x^s$. We thus seek aid from Proposition \ref{T_aNegIndex} constantly.

\textbf{The Metric Space and the Map.} 
We will be dealing with the good unknown $(\zeta,w)$ instead of $(\zeta,\phi)$. We set
$$
\mathcal{G}:\left(\begin{matrix}
\zeta \\
\phi
\end{matrix}\right)
\to
\left(\begin{matrix}
\zeta \\
w
\end{matrix}\right)
$$
the assignment of $(\zeta,\phi)$ to the corresponding good unknown. We already know that $\mathcal{G}$ is a diffeomorphism in a neighbourhood of zero in $C_TH^{s+0.5}_x\times C_TH^s_x$. 

Fix $\bar{\mathfrak{X}}_{R}$ to be the set of all 
$$
(\zeta,w)\in \big(C_TH^{s+0.5}_x\times C_TH^s_x\big)\cap\big(C_T^1H^{s-1}_x\times C_T^1H^{s-1.5}_x\big)
$$ 
such that $(\zeta,w)$ has norm $\leq R$ in that space, and also $(\zeta(0),\phi(0))$ has norm $\leq A_sR$ in that space, for some $A_s\ll1$ to be specified. We equip $\bar{\mathfrak{X}}_{R}$ with a \emph{weaker} metric
$$
\begin{aligned}
d\big((\zeta_1,w_1),(\zeta_2,w_2)\big)
:=\|\zeta_1-\zeta_2\|_{C_TH^{s-1}_x}
+\|w_1-w_2\|_{C_TH^{s-1.5}_x}.
\end{aligned}
$$
By weak compactness of bounded closed convex sets in Hilbert spaces, $(\bar{\mathfrak{X}}_R,d)$ is a complete metric space.

Now let $(\zeta,w)\in\bar{\mathfrak{X}}_R$ be given, and $(\zeta,\phi)=\mathcal{G}^{-1}(\zeta,w)$. We define the symbols $\lambda,h,\mathfrak{b}^\sharp,\mathfrak{v}^\sharp$ as in Section \ref{5}, and the mappings $f_1,f_2$ as in (\ref{f1f2}). Let us consider the \emph{linear} Cauchy problem for the unknown $(\eta,v)\in C_TH^{s+0.5}_x\times C_TH^s_x$:
\begin{equation}\label{ParaLin'}
\partial_t\left(\begin{matrix}
\eta^\sharp \\
v^\sharp
\end{matrix}\right)
=
\left(\begin{matrix}
-T_{\mathfrak{v}^\sharp}\cdot\nabla_{G_0} & T_\lambda \\
-T_{h} & -T_{\mathfrak{v}^\sharp}\cdot\nabla_{G_0}
\end{matrix}\right)\left(\begin{matrix}
\eta^\sharp \\
v^\sharp
\end{matrix}\right)
+\left(\begin{matrix}
f_1(\zeta,\phi)\\
f_2(\zeta,\phi)
\end{matrix}\right).
\end{equation}
Using the operator norm estimates in Subsection \ref{Symmetrization}, together with the quadratic estiamte for $f_1,f_2$ in (\ref{f1f2}), Proposition \ref{Energy} makes sure that the Cauchy problem for this linear hyperbolic system can be uniquely solved with given initial data. Thus, (\ref{ParaLin'}) has a unique solution $(\eta,v)\in C_TH_x^{s+0.5}\times C_TH^s_x$ with initial value being $(\zeta(0),w(0))$, satisfying the energy estimates
\begin{equation}\label{Energy'}
\begin{aligned}
\|\eta(t)\|_{H^{s+0.5}_x}+\|v(t)\|_{H^s_x}
&\leq C_se^{Rt}(A_sR+R^2t),\\
\|\partial_t\eta(t)\|_{H^{s-1}_x}+\|\partial_tv(t)\|_{H^{s-1.5}_x}
&\leq C_se^{Rt}(A_sR+R^2t)+C_sR^2.
\end{aligned}
\end{equation}
We set the \emph{solution operator} $\mathcal{S}$ to be the unique solution $(\eta,v)$ of (\ref{ParaLin'}) with initial data being $(\zeta(0),\phi(0))$.

The map $\mathscr{F}$ on $\bar{\mathfrak{X}}_R$ under consideration is thus defined by
$$
\mathscr{F}\left(\begin{matrix}
\zeta \\
w 
\end{matrix}\right)
:=\mathcal{S}\mathcal{G}^{-1}\left(\begin{matrix}
\zeta \\
w 
\end{matrix}\right).
$$
When $R\simeq0$ and $T$ is suitably small, if $A_s$ is also suitably small, then for $(\zeta,w)\in\bar{\mathfrak{X}}_R$, (\ref{Energy'}) implies 
$$
\begin{aligned}
\big\|\mathscr{F}(\zeta,w);{C_TH^{s+0.5}_x\times C_TH^{s}_x}\big\|
&\leq C_se^{C_sT}(A_s+C_sTR)R
\ll R\\
\big\|\partial_t\mathscr{F}(\zeta,w);{C_TH^{s-1}_x\times C_TH^{s-1.5}_x}\big\|
&\leq C_se^{C_sT}(A_s+C_sTR)R+C_sR^2
\ll R
\end{aligned}
$$
Thus a suitable choice of $A_s$ and $T$ makes sure that $\mathscr{F}$ maps $\bar{\mathfrak{X}}_{R}$ to itself.

\textbf{Contraction Argument.}
Let us now consider two different $(\zeta,w),(\zeta_1,w_1)\in\bar{\mathfrak{X}}_R$. We use void subscript or subscript 1 to denote functions or symbols constructed out of $(\zeta,w),(\zeta_1,w_1)\in\bar{\mathfrak{X}}_R$ respectively. Write $(\eta_j,v_j)=\mathscr{F}(\zeta_j,w_j)$. We would like to prove that for small $R>0$, there holds
\begin{equation}\label{Contraction}
\|\eta-\eta_1\|_{C_TH^{s-1}_x}
+\|v-v_1\|_{C_TH^{s-1.5}_x}
\ll d\big((\zeta,w),(\zeta_1,w_1)\big),
\end{equation}
which ensures $\mathscr{F}:\bar{\mathfrak{X}}_R\to\bar{\mathfrak{X}}_R$ is a contraction.

Write for simplicity $U=\eta-\eta_1$, $V=v-v_1$. We compute, by the definition of solution operator, that $U(0)=0$, $V(0)=0$, and
\begin{equation}\label{UV}
\begin{aligned}
\partial_t\left(\begin{matrix}
U \\
V
\end{matrix}\right)
&=\left(\begin{matrix}
-T_{\mathfrak{v}^\sharp}\cdot\nabla_{G_0} & T_{\lambda} \\
-T_{h} & -T_{\mathfrak{v}^\sharp}\cdot\nabla_{G_0}
\end{matrix}\right)\left(\begin{matrix}
U \\
V
\end{matrix}\right)\\
&\quad
+\left(\begin{matrix}
T_{\mathfrak{v}^\sharp-\mathfrak{v}_1^\sharp}\cdot\nabla_{G_0} & -T_{\lambda-\lambda_1} \\
T_{h-h_1} & T_{\mathfrak{v}^\sharp-\mathfrak{v}_1^\sharp}\cdot\nabla_{G_0}
\end{matrix}\right)\left(\begin{matrix}
\eta \\
v
\end{matrix}\right)
+\left(\begin{matrix}
f_1(\zeta,w)-f_1(\zeta_1,w_1)\\
f_2(\zeta,w)-f_2(\zeta_1,w_1)
\end{matrix}\right).
\end{aligned}
\end{equation}
(\ref{UV}) is exactly in the form indicated by Proposition \ref{Energy}. We just need to show that the $H^{s-1}_x\times H^{s-1.5}_x$ norm of the last two terms of the right-hand-side in (\ref{UV}) is controlled linearly by $d\big((\zeta,w),(\zeta_1,w_1)\big)$. But this is exactly the content of Lemma 6.6, Corollary 6.7 and Lemma 6.8 of \cite{ABZ2011}, which concerns with exactly the same quantities. If we want to reproduce the proof, we just constantly use Proposition \ref{SymbolVeryRough} and Proposition \ref{T_aNegIndex}. For example, the first-order symbol $h^{(1)}-h_1^{(1)}$ contains up to second order derivative of $\zeta$ and $\zeta_1$, so 
$$
\big\|h^{(1)}(x,l)-h_1^{(1)}(x,l)\big\|_{C^{s-4}_*}
\leq Cl\|\zeta-\zeta_1\|_{H^{s-1}_x}.
$$
By Proposition \ref{T_aNegIndex}, this implies that the para-differential operator $T_{h^{(1)}-h_1^{(1)}}$ is of order 2, and
$$
\big\|T_{h^{(1)}-h_1^{(1)}}\eta\big\|_{H^{s-1.5}_x}
\lesssim \|\zeta-\zeta_1\|_{H^{s-1}_x}\|\eta\|_{H^{s+0.5}_x}
\lesssim Rd\big((\zeta,w),(\zeta_1,w_1)\big).
$$

The final step then becomes simple. Applying Proposition \ref{Energy} to (\ref{UV}) with $\alpha=s-1.5$, we find
$$
\|U\|_{C_TH^{s-1}_x}+\|V\|_{C_TH^{s-1.5}_x}
\leq C_se^{C_sRT}TRd\big((\zeta,w),(\zeta_1,w_1)\big).
$$
Thus for $T$ suitably small, the mapping $\mathscr{F}$ is a contraction from $\bar{\mathfrak{X}}_R$ to itself, hence has a unique fixed point. The fixed point of $\mathscr{F}$ is exactly the solution of the para-linearized system (\ref{EQPara}), thus the original (\ref{EQ}).

\begin{remark}
In \cite{ABZ2011}, the existence and uniqueness for solution of the symmetrized capillary-gravity water waves system is proved as follows: one first prove energy estimate for a smoothed system with some parameter $\varepsilon$ (e.g. artificial viscosity), and show that a solution sequence converges to a genuine solution of the original one as $\varepsilon\to0$ by a weak compactness argument, then prove uniqueness by considering the energy estimate for the difference of two solutions, which is just (\ref{UV}). The reason that we would rather formulate a Banach fixed point theorem is that the latter is more constructive, and is in fact equivalent to the iterative method used in most literature on quasi-linear hyperbolic systems, for example Chapter 6 of \cite{Hormander1997}.
\end{remark}

\section*{Acknowledgement}
The author would like to thank Professor Carlos Kenig for weekly discussion on this project, and Professor Gigliola Staffilani for constant support. The author benefits a lot from discussion with Professor Thomas Alazard, Jean-Marc Delort, Veronique Fischer, Isabelle Gallagher and David Jerison. Thanks also goes to the author's friend, Kai Xu, for comments on representation theory.

\bibliographystyle{alpha}
\bibliography{References}

\end{spacing}
\end{document}